\def\epsilon{\varepsilon}
\newcommand{\set}[1]{\left\{#1\right\}}
\newcommand{\sett}[2]{\left\{#1 ~\Big| ~ #2\right\}}
\newcommand{\M}[0]{\mathcal{M}}
\newcommand{\E}[0]{\mathcal{E}}
\newcommand{\lewk}[2]{#1 \leq_{wk} #2}
\def\zh{\hat{0}}
\def\oneh{\hat{1}}
\newcommand{\inv}[2]{\text{inv}_{#2}\left(#1\right)}
\newcommand{\precdot}{\prec\mathrel{\mkern-5mu}\mathrel{\cdot}}
\newcommand{\lex}[2]{#1<_{lex}#2}
\newcommand{\cleq}[2]{#1 \preceq #2}
\newcommand{\cord}[2]{{#1}_{#2}(2)}
\newcommand{\cordot}[2]{#1 \precdot #2}
\newcommand{\lm}[0]{\lambda}
\newcommand{\cl}[3]{{#1 \prec_{#3} #2}}
\newcommand{\EL}[0]{EL-labeling}
\newcommand{\CL}[0]{CL-labeling}
\newcommand{\cordoti}[3]{#1 \precdot_{#3} #2}
\newcommand{\cleqi}[3]{#1 \preceq_{#3} #2}
\newcommand{\Y}[0]{\mathcal{Y}}
\newcommand{\Lin}[0]{\mathcal{L}}
\newtheorem{theorem}{Theorem}[section]
\newtheorem{lemma}[theorem]{Lemma}
\newtheorem{corollary}[theorem]{Corollary}
\newtheorem{proposition}[theorem]{Proposition}
\newtheorem{definition}[theorem]{Definition}
\theoremstyle{remark}
\newtheorem{remark}[theorem]{Remark}
\theoremstyle{definition}
\newtheorem{example}[theorem]{Example}
\title{Maximal chain descent orders}
\author{Stephen Lacina\footnote{The author was supported by NSF grants DMS-1953931 and DMS-2039316.}}
\date{}
\begin{document}

\maketitle

\begin{abstract}
This paper introduces a partial order on the maximal chains of any finite bounded poset $P$ which has a {\CL} $\lm$. We call this the maximal chain descent order induced by $\lm$, denoted $\cord{P}{\lm}$. As a first example, letting $P$ be the Boolean lattice and $\lm$ its standard {\EL} gives $\cord{P}{\lm}$ isomorphic to the weak order of type A. We discuss in depth other seemingly well-structured examples: the max-min {\EL} of the partition lattice gives maximal chain descent order isomorphic to a partial order on certain labeled trees, and particular cases of the linear extension {\EL}s of finite distributive lattices produce maximal chain descent orders isomorphic to partial orders on standard Young tableaux.

We observe that the order relations which one might expect to be the cover relations, those given by the ``polygon moves" whose transitive closure defines the maximal chain descent order, are not always cover relations. Several examples illustrate this fact. Nonetheless, we characterize the {\EL}s for which every polygon move gives a cover relation, and we prove many well known {\EL}s do have the expected cover relations. 

One motivation for $\cord{P}{\lm}$ is that its linear extensions give all of the shellings of the order complex of $P$ whose restriction maps are defined by the descents with respect to $\lm$. This yields strictly more shellings of $P$ than the lexicographic ones induced by $\lm$. Thus, the maximal chain descent order $\cord{P}{\lm}$ might be thought of as encoding the structure of the set of shellings induced by $\lm$.
\end{abstract}

\section{Introduction}

Since its inception, lexicographic shellability of partially ordered sets (posets) has become an important tool for understanding the topological, combinatorial, and algebraic structure of posets, a central topic within the larger field of topological combinatorics. For instance, such shellings are used to compute the homotopy types and the $h$-vectors of poset order complexes and to determine Cohen-Macaulayness of the associated Stanley-Reisner rings. In this paper, we introduce a partial order on the maximal chains of any finite bounded poset $P$ endowed with a {\CL} $\lm$ which encodes the structure of the set of shellings induced by $\lm$, including the lexicographic shellings. The goal is to further understand the structure of this set of shellings. We call this partial order the maximal chain descent order induced by $\lm$, denoted by $\cord{P}{\lm}$. 

A good first example is the Boolean lattice with its standard {\EL}. In this case, the maximal chain descent order is isomorphic to the weak order of type A (\cref{thm:boolean lat cord weak order symm grp}). Analyzing one instance of a Boolean lattice lets us exhibit the construction of a maximal chain descent order and provides a running example with which to interpret our main results. 

Consider the poset of all subsets of $[3]=\set{1,2,3}$ ordered by subset inclusion. This is the Boolean lattice $B_3$ which is shown in \cref{fig:B3 weak order example} (a). We label a cover relation $B\lessdot B\cup \set{i}$ in the Hasse diagram of $B_3$ by $\lm(B,B\cup \set{i})=i$. The labeling $\lm$ is a well known {\EL}. We form a partial order on the maximal chains of $B_3$ by taking the transitive closure of moves of the form: $(\emptyset \lessdot \set{1} \lessdot \set{1,2} \lessdot \set{1,2,3}) \to (\emptyset \lessdot \set{1} \lessdot \set{1,3} \lessdot \set{1,2,3})$ where $(\emptyset \lessdot \set{1} \lessdot \set{1,3} \lessdot \set{1,2,3})$ contains exactly one element which is not in $(\emptyset \lessdot \set{1} \lessdot \set{1,2} \lessdot \set{1,2,3})$ and $(\emptyset \lessdot \set{1} \lessdot \set{1,2} \lessdot \set{1,2,3})$ is lexicographically first (thus, ascending) with respect to the labeling by $\lm$ in the interval $[\set{1},\set{1,2,3}]$ on which the chains differ. We call such moves on maximal chains polygon moves (see \cref{def:increase and polygon} for a precise definition). The resulting maximal chain descent order $\cord{{B_3}}{\lm}$ is shown in \cref{fig:B3 weak order example} (b) with the maximal chains of $B_3$ represented by their label sequences since the label sequences are all distinct. In this case, the partial order is precisely the weak order on the symmetric group $S_3$ as we expected.

\begin{figure}[H]
    \centering
    \subfigure[$B_3$ with {\EL} $\lm$.]{
    \scalebox{0.8}{
    \begin{tikzpicture}[very thick]
    \node (zh) at (0,0) {$\emptyset$};
    \node (1) at (-2,2) {$\set{1}$};
    \node (2) at (0,2) {$\set{2}$};
    \node (3) at (2,2) {$\set{3}$};
    \node (12) at (-2,4) {$\set{1,2}$};
    \node (13) at (0,4) {$\set{1,3}$};
    \node (23) at (2,4) {$\set{2,3}$};
    \node (123) at (0,6) {$\set{1,2,3}$};

    \node[blue] (1 13) at (-1.2,2.5) {$3$};
    \node[blue] (2 12) at (-0.3,2.7) {$1$};
    \node[blue] (2 23) at (0.3,2.7) {$3$};
    \node[blue] (3 13) at (1.2,2.5) {$1$};
    
    \draw (zh) --node[below left,blue]{$1$} (1) --node[left,blue]{$2$} (12) --node[above left,blue]{$3$} (123) --node[above right,blue]{$1$} (23) --node[right,blue]{$2$} (3) --node[below right,blue]{$3$} (zh);
    \draw (zh) --node[left,blue]{$2$} (2) -- (12);
    \draw (1) -- (13) -- (3);
    \draw (2) -- (23);
    \draw (13) -- node[left,blue]{$2$} (123);
    \end{tikzpicture}
    }
    }
    \hspace{10mm}
    \subfigure[Maximal chain descent order $\cord{{B_3}}{\lm}$.]{
    \scalebox{1}{
    \begin{tikzpicture}[very thick]
    \node[blue] (123) at (0,0) {$123$};
    \node[blue] (132) at (-1,1) {$132$};
    \node[blue] (213) at (1,1) {$213$};
    \node[blue] (312) at (-1,2) {$312$};
    \node[blue] (231) at (1,2) {$231$};
    \node[blue] (321) at (0,3) {$321$};
    
    \draw (123) -- (132) -- (312) -- (321);
    \draw (123) -- (213) -- (231) -- (321);
    \end{tikzpicture}
    }
    }
    \caption{Boolean lattice with an {\EL} and its maximal chain descent order.}
    \label{fig:B3 weak order example}
\end{figure}
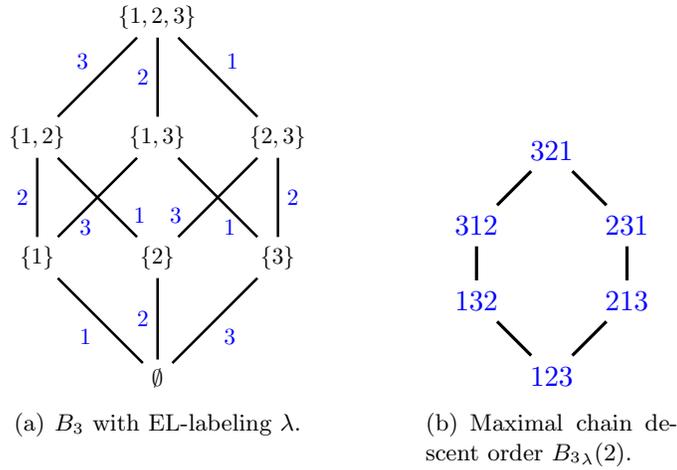

Our polygon moves on maximal chains are somewhat reminiscent of the ``polygon flips" between monotone paths in polytopes (oriented by a linear functional) which were employed by Athanasiadis, Edelman, and Reiner in \cite{monotonepathsathanedelmanreiner2000}. Similar moves on the maximal chains of a finite poset with an $S_n$ {\EL} were considered by McNamara in \cite{snelsupersolvmcnamara2003} to define a $0$-Hecke algebra action on the maximal chains of such posets. Related ``diamond moves" on thin posets were considered by Chandler in \cite{thinposetsandcatschandler2019}, but without the partial order structure on maximal chains from a {\CL} which we introduce here.  

Our first main theorem is that $\cord{P}{\lm}$ precisely encodes all of the shellings which can be ``derived from $\lm$." Namely, we prove the following:
\begin{theorem}\label{thm:introduction shelling order with descent restriction iff linext of cord}
Let $P$ be a finite, bounded poset with a {\CL} $\lm$. For any total order $\Omega:m_1,m_2,\dots, m_t$ on the maximal chains of $P$, the following are equivalent:
\begin{itemize}
    \item [(1)] $\Omega$ is a linear extension of the maximal chain descent order $\cord{P}{\lm}$.
    \item [(2)] $\Omega$ induces a shelling order of the order complex $\Delta(P)$ with the property that for each $1\leq i\leq t$ the restriction face $R(m_i)$ of $m_i$ is precisely the face \\ $R(m_i)=\sett{x\in m_i}{\text{x is a descent of $m_i$ w.r.t. $\lm$}}$.
\end{itemize}
\end{theorem}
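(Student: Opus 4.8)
The plan is to identify $\cord{P}{\lm}$ with a transparent ``descent containment'' order and then feed that identification into the standard restriction‑map description of shellings. Write $D(m)$ for the set of descent elements of a maximal chain $m$ with respect to $\lm$, viewed as a face of $\Delta(P)$, and let $\sqsubseteq$ be the transitive closure of the relation $\{(m,m'):m\ne m'\text{ and }D(m)\subseteq m'\}$ on maximal chains of $P$. The whole theorem reduces to the identity $\cord{P}{\lm}=\sqsubseteq$.

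One inclusion is immediate. If $m\to m'$ is a polygon move across a rank‑two interval $[x,z]$, then the element $y$ of $m$ that is replaced lies strictly inside the sub‑chain $x\lessdot y\lessdot z$, which is the lexicographically first, hence ascending, chain of $[x,z]$; thus $y$ is an ascent of $m$, so $y\notin D(m)$ and $D(m)\subseteq m\setminus\{y\}=m'\setminus\{y'\}\subseteq m'$, giving $m\sqsubseteq m'$. Passing to transitive closures yields $\cord{P}{\lm}\subseteq\sqsubseteq$. The reverse inclusion is the crux: I must show that $D(m)\subseteq m'$ and $m\ne m'$ imply $m<m'$ in $\cord{P}{\lm}$. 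I would argue by induction on the rank of $P$, with one external ingredient: for every finite bounded poset $Q$ with a {\CL}, $\cord{Q}{\lm}$ has a minimum element, namely the unique increasing maximal chain of $Q$. (This is presumably available earlier; in any event it follows from finiteness of $Q$ and antisymmetry of $\cord{Q}{\lm}$, since a maximal chain admits an incoming polygon move precisely when it has a descent, making the increasing chain the unique source.) Now, given $D(m)\subseteq m'$ with $m\ne m'$, let $u$ be the largest element of $m$ with $u<\oneh$ and $u\in m'$. Every interior element of $m$ above $u$ lies off $m'$, hence is an ascent, so $m|_{[u,\oneh]}$ is the increasing maximal chain of $[u,\oneh]$ (with the labeling rooted by $m|_{[\zh,u]}$); applying the minimum‑element fact to $[u,\oneh]$ and lifting the resulting polygon moves to $P$ gives $m\le\bar m$ in $\cord{P}{\lm}$, where $\bar m$ is $m$ with its portion above $u$ replaced by that of $m'$. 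If $\bar m=m'$ we are done; otherwise $u>\zh$, the chains $\bar m$ and $m'$ agree on $[u,\oneh]$ and differ on $[\zh,u]$, one checks that $D(\bar m|_{[\zh,u]})\subseteq m'|_{[\zh,u]}$ (the descents of $\bar m$ below $u$ are exactly those of $m$, since $\bar m$ and $m$ coincide and carry the same labels there), and the inductive hypothesis applied to $[\zh,u]$ together with lifting gives $\bar m<m'$ in $\cord{P}{\lm}$. What makes this work despite $\lm$ being only a {\CL}, and not an {\EL}, is that we only ever splice in a \emph{top} portion of a chain and then recurse \emph{downward}: lifting polygon moves out of the lower interval $[\zh,u]$ is automatic and leaves the chain below $u$ (and its labels) untouched, while the chain‑dependence of {\CL} labels propagates only upward, into the part already processed or into elements already on $m'$.

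Granting $\cord{P}{\lm}=\sqsubseteq$, the equivalence is short. I would use the standard fact that a total order $\Omega=m_1,\dots,m_t$ on the facets of a pure complex is a shelling with restriction map $m\mapsto R(m)$ exactly when, for each $i$, a face of $m_i$ lies on some earlier $m_j$ if and only if it fails to contain $R(m_i)$; in that case $R(m_i)$ is the set of $x\in m_i$ for which $m_i\setminus\{x\}$ lies on an earlier chain, and every face containing $R(m_i)$ is new. For $(1)\Rightarrow(2)$: let $\Omega$ be a linear extension of $\cord{P}{\lm}$. For each descent $x$ of $m_i$ there is a polygon move $m^-\to m_i$ in which $m^-$ and $m_i$ differ only at $x$; since $m^-<m_i$ in $\cord{P}{\lm}$ the chain $m^-$ precedes $m_i$ in $\Omega$, so the facet $m_i\setminus\{x\}\subseteq m^-$ lies on an earlier chain, whence every face of $m_i$ missing some descent is old. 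Conversely, if $D(m_i)\subseteq m_j$ with $j<i$ then $m_i<m_j$ in $\cord{P}{\lm}=\sqsubseteq$, contradicting that $\Omega$ extends $\cord{P}{\lm}$; so $D(m_i)$, and every face containing it, is new. Thus $\Omega$ is a shelling with $R(m_i)=D(m_i)$. For $(2)\Rightarrow(1)$ only the easy inclusion $\cord{P}{\lm}\subseteq\sqsubseteq$ is needed: if $\Omega$ has restriction map $D$ but $m$ fails to precede $m'$ for some pair with $m\ne m'$ and $D(m)\subseteq m'$, then $D(m)=R(m)$ is a face of the earlier chain $m'$, contradicting that a restriction face lies on no earlier facet; hence $\Omega$ extends $\sqsubseteq$ and a fortiori $\cord{P}{\lm}$.

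I expect the main obstacle to be the reverse inclusion $\sqsubseteq\subseteq\cord{P}{\lm}$, specifically the interplay between the chain‑dependence of {\CL} labels and the ``splice the top, recurse on the bottom'' induction; the careful verification that every interval restriction preserves the hypothesis $D(\cdot)\subseteq m'$ is where I expect most of the necessary but routine bookkeeping to live.
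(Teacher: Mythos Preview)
Your proof is correct and follows essentially the same route as the paper. Your reverse inclusion $\sqsubseteq\subseteq\cord{P}{\lm}$ is exactly the paper's \cref{cor:order rel from ascending on differing intervals}, and your top-down splice-and-recurse argument parallels the paper's proof of that corollary (which inducts on the number of differing intervals rather than on the length of $P$, but splices in the top-most interval first for the same CL-compatibility reason you identify). The two directions of the equivalence are then assembled just as in the paper's \cref{lem:linextsgiveshellings} and \cref{lem:converse to lin ext shellings}.

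Two small wrinkles worth cleaning up. First, in the non-graded setting a polygon move $m\to m'$ need not take place across a rank-two interval: only $m'$ has length two on the differing interval, while $m$ may contribute several elements there, all of which are ascents since $m$ is the ascending chain of that rooted interval. Your conclusion $D(m)\subseteq m'$ survives unchanged, but the phrasing ``the element $y$ of $m$ that is replaced'' should allow for a set of replaced elements. Second, your parenthetical justification of the minimum-element fact only shows that the ascending chain is the unique \emph{minimal} element (it alone has no incoming polygon move); it does not by itself show that this chain lies below every other chain. The paper's \cref{prop:non ranked cords have zero hat} supplies the missing step by induction on lexicographic position, and you are right that it is available earlier.
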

We prove this as \cref{thm:shelling order with descent restriction iff linext of cord}. It is perhaps worth noting that the analogous statement for {\EL}s holds as well, because {\EL}s are instances of {\CL}s. The lexicographic shellings from a {\CL} are given by ordering the maximal chains of $P$ according to lexicographic order on their label sequences and breaking ties arbitrarily. We show that the linear extensions of $\cord{P}{\lm}$ give all of the lexicographic shellings as well as additional shellings, though our proof that the linear extensions of $\cord{P}{\lm}$ are shellings does follow a similar line of reasoning to the proof of Bj{\"o}rner and Wachs for the lexicographic ones (see their proof in \cite{shllblnonpureIbjornerwachs1996}). We offer examples illustrating these additional shellings. This can already be observed in the example of the Boolean lattice with its standard {\EL} whose maximal chain descent order is isomorphic to the weak order of type A. In this case, \cref{thm:introduction shelling order with descent restriction iff linext of cord} allows us to recover Bj{\"o}rner's result (in type A) from \cite{coxetercomplexesbjorner1984} that any linear extension of the weak order on a Coxeter group gives rise to a shelling order of its Coxeter complex. 

Besides encoding the structure of the set of shellings induced by {\CL} $\lm$, maximal chain descent orders possess seemingly interesting structure as posets in their own right. For instance, every cover relation in $\cord{P}{\lm}$ comes from a polygon move, as one can easily observe, a fact which might lead one to assume that all such polygon moves give cover relations. This is true for some examples such as the example above. However, this is not generally the case as exhibited by the poset in \cref{fig:noncover diamond move example} with an {\EL} and its induced maximal chain descent order. Specifically, there is a polygon move between the maximal chain labeled $123$ and the maximal chain labeled $543$ despite the fact that $543$ does not cover $123$ in the maximal chain descent order.

\begin{figure}[H]
    \centering
    \subfigure[$P$ with {\EL} $\lm$]{
    \scalebox{1}{
    \begin{tikzpicture}[very thick]
    \node (zh) at (0,0) {$\zh$};
    \node (a1) at (-1,1) {$z_1$};
    \node (a2) at (1,1) {$z_2$};
    \node (b1) at (-1,2) {$x_1$};
    \node (b2) at (1,2) {$x_2$};
    \node (oneh) at (0,3) {$\oneh$};

    \node (spacer) at (1.5,0) {};

    \node (5) at (-0.3,1.1) {\textcolor{blue}{$5$}};
    \node (4) at (0.3,1.1) {\textcolor{blue}{$4$}};
    
    \draw (zh) --node [blue, below left]{$1$} (a1) --node [blue, left]{$2$} (b1) --node [blue, above left]{$3$} (oneh);
    \draw (zh) --node [blue, below right]{$5$} (a2) --node [blue, right]{$3$} (b2) --node [blue, above right]{$4$} (oneh);
    \draw (a1) -- (b2);
    \draw (a2) -- (b1);
    \end{tikzpicture}
    }
    }
    \subfigure[$\cord{P}{\lm}$]{
    \scalebox{1}{
    \begin{tikzpicture}[very thick]
    \node[blue] (123) at (0,0) {$123$};
    \node[blue] (154) at (0,1) {$154$};
    \node[blue] (534) at (0,2) {$534$};
    \node[blue] (543) at (0,3) {$543$};
    
    \draw (123) -- (154) -- (534) -- (543);

    \coordinate (O) at (0.3,0,0);
    \coordinate (A) at (0.3,3,0);

    \draw[dashed] (O) to [bend right=35] (A);

    \node (dm) at (3, 2) {\text{polygon move}};
    \node (dm) at (3, 1.5) {\text{that does not}};
    \node (dm) at (3, 1) {\text{give a cover relation}}; 
    \node (spacer) at (-2.5,0) {};
    \end{tikzpicture}
    }
    }
    
    \caption{{\EL} which is not polygon complete.}
    \label{fig:noncover diamond move example}
\end{figure}
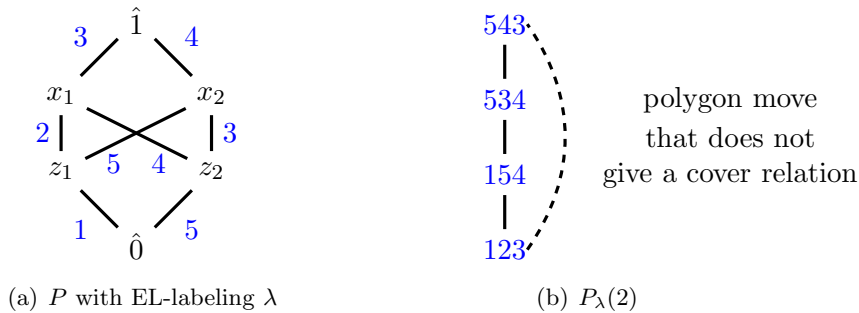

Informally, one might view this phenomenon as it sometimes being possible to ``go around the back" to prevent a polygon move from giving a cover relation. We call a {\CL} polygon complete if every polygon move gives a cover relation (see \cref{def:polygon complete}). Our second main result is a technical characterization of the polygon complete {\EL}s in \cref{thm:characterization of non-polygon completeness}.

In addition, we give two quite checkable sufficient conditions for polygon completeness. These are much simpler than the full characterization for {\EL}s. The first of these conditions is a condition on {\EL}s which we call polygon strong. It is a weakening of Bj{\"o}rner's notion of strongly lexicographically shellable from \cite{shellblposets}. We show that polygon strong implies polygon complete in \cref{thm:polygonstrongELincreasesgivecovers}, and several well known families of {\EL}s are shown to be polygon strong in \cref{sec:examples of polygon strong ELs}. For the second condition, we introduce a notion of inversions for {\CL}s in \cref{sec:inversions in ELs}. Then we formulate a condition on such inversions which we show to be sufficient for polygon completeness in \cref{thm:inversion condition implies ranked cord}. Moreover, \cref{thm:inversion condition implies ranked cord} shows that this condition on inversions with respect to a {\CL} $\lm$ also implies that $\cord{P}{\lm}$ is ranked and implies that the homology facets of the induced shellings of the proper part of $P$ are determined by rank in the maximal chain descent order. The same notion of inversions, but in the setting of $S_n$ {\EL}s in the sense of McNamara \cite{snelsupersolvmcnamara2003}, was considered in \cite{snelsupersolvmcnamara2003} to make induction arguments about supersolvability and $0$-Hecke algebra actions on maximal chains of lattices with an $S_n$ {\EL}. We also prove in \cref{lem:easyincrnocovercondition} that a simple condition on labelings of certain induced subposets guarantees a {\CL} is not polygon complete. 

We also develop the following more technical structural properties of maximal chain descent orders. In \cref{prop:non ranked cords have zero hat}, we prove every maximal chain descent order has a unique minimal element given by the unique ascending maximal chain of $P$ with respect to $\lm$. We prove in \cref{lem:number downward cord covers} that the number of downward cover relations in $\cord{P}{\lm}$ from a maximal chain $m$ of $P$ is bounded above by the number of descents of $m$ with respect to $\lm$. In \cref{cor:homology facets from downward covers}, we show that certain homology facets of the induced shellings of the proper part of $P$ can be detected from the poset structure of $\cord{P}{\lm}$ and the lengths of maximal chains in $P$. We prove in \cref{lem:restrictedrelslift} that maximal chain descent orders additionally satisfy a certain lifting property which reflects the recursive nature of {\CL}s. 

Along with the weak order of type A, there are many more examples of maximal chain descent orders. We analyse the structure of a few of these in \cref{sec:first examples}. In \cref{thm:Sn cords weak lower intervals}, we prove that all intervals in any maximal chain descent order induced by Stanley's $M$-chain {\EL}s of any finite supersolvable lattice (see \cite{stanleysupersolvablelats1972}) are isomorphic to intervals in the weak order of type A, doing so via the map assigning to each maximal chain its label sequence. We use this in \cref{thm:dist lat Sn cord isom weak ideal} to show that the linear extension {\EL}s of any finite distributive lattice produce maximal chain descent orders isomorphic to certain order ideals of the type A weak order. When the finite distributive lattice is any interval in Young's lattice, \cref{thm:youngs lat cords tableau swaps} shows that the maximal chain descent orders are isomorphic to partial orders on standard Young tableaux or standard skew tableaux. A result of Bj{\"o}rner and Wachs in \cite{genquotcxtrgrpsbjornerwachs1988} then implies that particular maximal chain descent orders are isomorphic to the weak order on generalized quotients of type A introduced by Bj{\"o}rner and Wachs in \cite{genquotcxtrgrpsbjornerwachs1988}. Lastly, we prove in \cref{thm:partition cord and tree poset isom} that the ``min-max" {\EL} of the partition lattice $\Pi_{n}$ yields a maximal chain descent order isomorphic to a natural partial order on certain labeled trees.

The structure of this paper is as follows: \cref{sec:bkgrnd} contains the necessary background on partial orders, simplicial complexes, lexicographic shellability and weak order on the symmetric group. In \cref{sec:cord def and properties}, we present the definition of a maximal chain descent order along with the natural example of the Boolean lattice and the weak order of type A. We also show several structural properties of these orders including proving \cref{thm:introduction shelling order with descent restriction iff linext of cord}. Then \cref{sec:polygon completeness} addresses cover relations of maximal chain descent orders and polygon completeness of {\EL}s. We prove the characterization of polygon complete {\EL}s in \cref{thm:characterization of non-polygon completeness}. We also give the sufficient condition for polygon complete {\EL}s called polygon strong. We then prove many well known {\EL}s are polygon strong, and so polygon complete. \cref{sec:inversions in ELs} presents a sufficient condition for polygon complete {\CL}s by introducing a notion of inversions for {\CL}s. In \cref{sec:first examples}, we discuss in depth several other examples of maximal chain descent orders. \cref{sec:first examples} is largely independent of the previous sections, and so can be read first if desired.

\section{Background on Posets, Lexicographic Shellability, and Weak Order on the Symmetric Group}\label{sec:bkgrnd}

\begin{subsection}{Posets and Simplicial Complexes} \label{subsec:bkgrposetssimpcx}
A partially ordered set or poset is a pair $(P,\leq)$ of 
a set $P$ and a binary relation $\leq$ on $P$ which is reflexive, transitive, and antisymmetric. For $x,y\in P$ satisfying $x\leq y$, the \textbf{closed interval} from $x$ to $y$ is the set $[x,y]=\sett{z\in P}{x\leq z\leq y}$. The \textbf{open interval} from $x$ to $y$ is defined analogously with strict inequalities and denoted $(x,y)$. We say that $y$ \textbf{covers} $x$, denoted $x\lessdot y$, if $x\leq z\leq y$ implies $z=x$ or $z=y$. $P$ is a \textbf{lattice} if each pair $x,y\in P$ has a unique least upper bound called the \textbf{join}, denoted $x\vee y$, and a unique greatest lower bound called the \textbf{meet}, denoted $x\wedge y$. We denote by $\zh$ (respectively $\oneh$) the unique minimal (respectively unique maximal) element of $P$ if such elements exist. If $P$ has both a $\zh$ and a $\oneh$, we say $P$ is \textbf{bounded}. If $P$ is bounded, we denote the proper part of $P$ as $\overline{P}=P\setminus \set{\zh,\oneh}$. We note that finite lattices are always bounded. The elements which cover $\zh$ are called \textbf{atoms}, and the elements which are covered by $\oneh$ are called \textbf{coatoms}. For $x,y\in P$ satisfying $x\leq y$, a \textbf{$\mathbf{k}$-chain from $\mathbf{x}$ to $\mathbf{y}$} in $P$ is a subset $C=\set{x_0,x_1,\dots,x_k}\subseteq P$ such that $x=x_0<x_1<\dots< x_k=y$. A chain $C$ is said to be \textbf{saturated} if $x_i\lessdot x_{i+1}$ for all $i$. A chain is said to be \textbf{maximal} if it is not properly contained in any other chain. We denote by $\mathbf{\boldsymbol\E(P)}$ the set of edges in the Hasse diagram of $P$, that is, the set of cover relations of $P$. And we denote by $\mathbf{\boldsymbol\M(P)}$ the set of maximal chains of $P$. We say P is \textbf{ranked} if there is a function $rk:P\to \mathbb{N}$ such that $rk(x)=0$ if $x$ is minimal in $P$ and $rk(y)=rk(x)+1$ if $x\lessdot y$ in $P$. We say $P$ is graded if all maximal chains of $P$ have the same length. We note that for non-bounded posets, ranked and graded are distinct concepts. We recall this fact because the distinction appears in this work. A map $f:P\to Q$ between posets $P$ and $Q$ is called \textbf{order preserving} or a \textbf{poset map} if $x\leq y$ in $P$ implies $f(x)\leq f(y)$ in $Q$. An order preserving bijection $f$ is called a \textbf{poset} isomorphism if $f^{-1}$ is also order preserving. An order preserving bijection $e:P\to [|P|]$ where $[|P|]$ has its usual total order is called a \textbf{linear extension} of $P$.

We will use the following four pieces of notation when working with chains. These notations will be ubiquitous in this work.

\begin{definition}\label{def:chain notations}
If $c_1$ and $c_2$ are chains such that the largest element of $c_1$ is less than or equal to the smallest element of $c_2$ in $P$, we denote the concatenated chain $c_1\cup c_2$ by $\mathbf{c_1*c_2}$. 

For a chain $c$ containing $x\in P$, we denote the subchain $c\cap [\zh,x]$ by $\mathbf{c^x}$, i.e. everything not above $x$ in $c$. We denote the subchain $c\cap [x,\oneh]$ by $\mathbf{c_x}$, i.e. everything not below $x$ in $c$. Then for $y\in c$ satisfying $x\leq y$, the subchain $c\cap [x,y]$ is denoted $\mathbf{c^y_x}$.

If $m$ and $c$ are chains, then $\mathbf{m\setminus c}$ denotes the subchain of $m$ with all elements of $c$ removed.
\end{definition}

The \textbf{order complex} of $P$, denoted $\Delta(P)$, is the abstract simplicial complex with vertices the elements of $P$ and $i$-dimensional faces the $i$-chains of $P$. The maximal chains of $P$ are precisely the facets of $\Delta(P)$. For $x,y\in P$, we denote by $\Delta(x,y)$ the order complex of the open interval $(x,y)$ as an induced subposet of $P$. Thus, when we refer to topological properties of $P$, we mean the topological properties of any geometric realization of $\Delta(P)$. In particular, the homotopy type of $P$ refers to the homotopy type of $\Delta(P)$. It is a well known theorem of P. Hall (see \cite{rotamobiusfunctions1964}) that $\mu_P$, the M{\"o}bius function of $P$, satisfies $\mu_P(x,y) = \tilde{\chi}(\Delta(x,y))$. Here, $\tilde{\chi}$ is the reduced Euler characteristic. This provides one of the important connections between the combinatorial and enumerative structure of a poset and its topology.
 
A \textbf{shelling} of a simplicial complex $\Delta$ is a total order $F_1,F_2,\dots, F_t$ of the facets of $\Delta$ such that $\overline{F_j}\cap (\cup_{i<j} \overline{F_i})$ is a pure codimension one subcomplex of $\overline{F_j}$ for each $1<j\leq t$. A simplicial complex which possesses a shelling is said to be \textbf{shellable}. A facet $F_j$ such that $\overline{F_j}\cap (\cup_{i<j} \overline{F_i})=\partial F_j$ is called a \textbf{homology facet}. Shellable simplicial complexes are homotopy equivalent to a (possibly empty) wedge of spheres with the spheres of each dimension indexed by the homology facets of that dimension. Let $\Delta_k =\bigcup_{1\leq i \leq k} \overline{F_i}$ be the subcomplex of $\Delta$ formed by deleting the last $t-k$ facets in the shelling for each $1\leq k\leq t$. A shelling induces a \textbf{restriction face} of each facet $F_j$ which is the minimal face of $F_j$ which is not contained in any facet prior to $F_j$ in the shelling. Denoting the restriction face of $F_j$ by $\mathbf{R(F_j)}$, we have $R(F_j)=\sett{x\in F_j}{F_j\setminus \set{x}\in \Delta_{j-1}}$. Facet $F_j$ is a homology facet if and only if $R(F_j)=F_j$. Taking the restriction face of each facet defines the \textbf{restriction map} $R$ of the shelling, a map from the facets to $\Delta$. For a face $f$ contained in facet $F$, we denote the set of faces of $F$ which contain $f$ by $[f,F]$. Then the following proposition gives a useful reformulation of a shelling.

\begin{proposition}[Proposition 2.5 \cite{shllblnonpureIbjornerwachs1996}]\label{prop:restriction version of shelling}
For a total ordering $F_1,F_2,\dots,F_t$ of the facets of a simplicial complex $\Delta$ and a map $R:\set{F_1,F_2,\dots,F_t}\to \Delta$, the following are equivalent:
\begin{itemize}
    \item [(1)] $F_1,F_2,\dots,F_t$ is a shelling and $R$ is its restriction map.
    \item [(2)] $\Delta$ is the disjoint union $\Delta=\bigsqcup_{1\leq i\leq t}[R(F_i),F_i]$ and $R(F_i)\subseteq F_j$ implies $i\leq j$ for all $i,j$.
\end{itemize}
\end{proposition}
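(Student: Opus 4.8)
The statement to prove is a known result of Björner–Wachs (Proposition 2.5 in \cite{shllblnonpureIbjornerwachs1996}), so the plan is really to reconstruct its proof. The key objects are: a total order $F_1,\dots,F_t$ on the facets of $\Delta$, a candidate map $R$ sending each facet into $\Delta$, and the subcomplexes $\Delta_k = \bigcup_{i\le k}\overline{F_i}$. The plan is to prove the two implications $(1)\Rightarrow(2)$ and $(2)\Rightarrow(1)$ separately, in each case translating the combinatorial content of ``shelling'' into the language of the intervals $[R(F_i),F_i]$ (the set of faces of $F_i$ containing $R(F_i)$).

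For $(1)\Rightarrow(2)$: assume $F_1,\dots,F_t$ is a shelling with restriction map $R$, so by definition $R(F_j) = \{x\in F_j : F_j\setminus\{x\}\in\Delta_{j-1}\}$, and the shelling condition says $\overline{F_j}\cap\Delta_{j-1}$ is pure of codimension one in $\overline{F_j}$ for each $j>1$. First I would record the elementary fact that a face $f\subseteq F_j$ lies in some earlier $\overline{F_i}$ (i.e. $f\in\Delta_{j-1}$) if and only if $f$ is contained in some maximal face of $\overline{F_j}\cap\Delta_{j-1}$; since that intersection is pure of codimension one, its maximal faces are exactly the facets $F_j\setminus\{x\}$ with $F_j\setminus\{x\}\in\Delta_{j-1}$, equivalently the facets $F_j\setminus\{x\}$ with $x\in R(F_j)$. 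Hence $f\in\Delta_{j-1}$ iff $f\subseteq F_j\setminus\{x\}$ for some $x\in R(F_j)$, iff $f\not\supseteq R(F_j)$. Contrapositively, the faces $f$ of $F_j$ that are \emph{not} in $\Delta_{j-1}$ are exactly those with $R(F_j)\subseteq f\subseteq F_j$, i.e. the interval $[R(F_j),F_j]$. Since every face of $\Delta$ lies in some facet, and one can take the \emph{first} facet $F_j$ in the order that contains a given face $f$, the face $f$ then belongs to $[R(F_j),F_j]$ and to no earlier interval; this gives the disjointness and the covering of all of $\Delta$, so $\Delta=\bigsqcup_i[R(F_i),F_i]$. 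The containment statement ``$R(F_i)\subseteq F_j\Rightarrow i\le j$'' follows because $R(F_i)$, being a face not in $\Delta_{i-1}$, cannot be a face of any $F_j$ with $j<i$.

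For $(2)\Rightarrow(1)$: assume $\Delta=\bigsqcup_i[R(F_i),F_i]$ with the monotonicity condition. I would first check $R(F_i)\subseteq F_i$ (forced by nonemptiness of each interval, or by the disjoint-union hypothesis placing $F_i$ itself in exactly one interval — which must be $[R(F_i),F_i]$), so the intervals make sense. Then fix $j>1$ and I want to show $\overline{F_j}\cap\Delta_{j-1}$ is pure of codimension one. A face $f\subseteq F_j$ lies in $\Delta_{j-1}$ iff $f\in[R(F_i),F_i]$ for some $i<j$; using the disjoint-union property, $f$ lies in \emph{exactly one} interval $[R(F_k),F_k]$, and $f\in\Delta_{j-1}$ forces (via the monotonicity condition applied carefully, plus the observation that $f\subseteq F_j$ means $f$'s interval index is $\le j$ — actually $f\in[R(F_j),F_j]$ precisely when $k=j$) that $k<j$, equivalently $R(F_j)\not\subseteq f$. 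So $\overline{F_j}\cap\Delta_{j-1} = \{f\subseteq F_j : R(F_j)\not\subseteq f\}$. This latter complex is exactly $\bigcup_{x\in R(F_j)}\overline{F_j\setminus\{x\}}$, a union of codimension-one faces of $F_j$, hence pure of codimension one (and nonempty as long as $j>1$ — one needs here that $R(F_j)\ne\emptyset$ for $j>1$, which follows because $F_j$ shares at least one codimension-one face with some earlier facet in a connected-enough situation; more cleanly, if $R(F_j)=\emptyset$ then $[R(F_j),F_j]$ would be all subsets of $F_j$, forcing $F_j$'s proper faces to not appear in any earlier interval, contradicting that $\emptyset$ or small faces already appeared — I would spell this out). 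Finally, the fact that $R$ so defined is genuinely the restriction map of this shelling is just the identity ``$R(F_j)=\{x : F_j\setminus\{x\}\in\Delta_{j-1}\}$,'' which drops out of the description of $\overline{F_j}\cap\Delta_{j-1}$ just obtained.

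The main obstacle, and the place to be careful, is the purity argument: extracting that $\overline{F_j}\cap\Delta_{j-1}$ is \emph{pure} of the correct codimension from the set-theoretic description $\{f\subseteq F_j : R(F_j)\not\subseteq f\}$, and handling the degenerate possibility $R(F_j)=\emptyset$ for $j>1$. Everything else is a straightforward dictionary between ``faces not in earlier facets'' and ``faces containing $R(F_j)$,'' repeatedly using that every face sits in a unique interval $[R(F_i),F_i]$.
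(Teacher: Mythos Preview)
The paper does not actually supply a proof of this proposition: it is quoted verbatim as Proposition~2.5 of Bj\"orner--Wachs \cite{shllblnonpureIbjornerwachs1996} and used as a black box, so there is no ``paper's own proof'' to compare against. Your reconstruction is the standard argument and is essentially correct; the one place to tighten is the handling of $R(F_j)=\emptyset$ in $(2)\Rightarrow(1)$, which is cleanest via the monotonicity clause: the empty face lies in exactly one interval $[R(F_i),F_i]$, forcing $R(F_i)=\emptyset$, and then $R(F_i)=\emptyset\subseteq F_1$ gives $i\le 1$, so $R(F_1)=\emptyset$ and $R(F_j)\ne\emptyset$ for all $j>1$.
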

\end{subsection} \label{sec:bkgrposetssimpcx}

\begin{subsection}{Lexicographic Shellability} \label{sec:bkgrlexshell}
In this subsection, we recall the notion of lexicographic shellability, both EL-labelings and CL-labelings. 

An \textbf{edge labeling} of poset $P$ is a map $\lambda:\E(P) \to \Lambda$ for a poset $\Lambda$, that is, a label $\lambda(x, y)$ for each cover relation $x\lessdot y$ in $P$. An edge labeling $\lambda$ induces a label sequence $\lambda(m)$ for each maximal chain $m\in \M(P)$ with $m:x_0\lessdot x_1 \lessdot \dots \lessdot x_{n-1}\lessdot x_n$ and $\lambda(m):\lambda_1(m),\lm_2(m)\dots,\lambda_n(m)$ where $\lm_i(m)=\lm(x_{i-1},x_i)$. We say $m\in \M(P)$ is an \textbf{ascending chain} with respect to $\lm$ if the label sequence $\lambda(m)$ is non-decreasing with respect to $\Lambda$. Further, we say that $m$ has a \textbf{descent} at position $i$ or we say that $x_i$ is a \textbf{descent} of $m$ if $\lambda_i(m)\not \leq \lambda_{i+1}(m)$ in $\Lambda$. We say that $m$ has an \textbf{ascent} at position $i$ or that $x_i$ is an \textbf{ascent} of $m$ if $\lambda_i(m)\leq \lambda_{i+1}(m)$ in $\Lambda$. Then $\Lambda$ induces a \textbf{lexicographic order} on maximal chains with $\lex{m}{m'}$ for $m,m'\in \M(P)$ if $i$ is the first index of the label sequences of $m$ and $m'$ at which they disagree and $\lambda_i(m)<\lambda_i(m')$ in $\Lambda$. We break ties in the lexicographic order arbitrarily, that is, maximal chains with identical label sequences are thought of as incomparable in the lexicographic order until an arbitrary total order is assigned to the maximal chains which share the same label sequence. For our constructions, we use a well known kind of edge labeling called an \textbf{EL-labeling} and a widely used generalization called a \textbf{CL-labeling}. EL-labelings and the notion of lexicographic shellability were introduced by Bj{\"o}rner in \cite{shellblposets}. CL-labelings were then introduced by Bj{\"o}rner and Wachs in \cite{bruhatshellbjornerwachs1982} and more deeply understood, particularly their recursive nature, by the same authors in \cite{lexshellposetsbjornerwachs1983}. Bj{\"o}rner and Wachs' initial work on shelling was focused on graded posets (pure simplicial complexes), but they extended the ideas of shelling, including both {\EL}s and {\CL}s, to non-graded posets (non-pure simplicial complexes) in \cite{shllblnonpureIbjornerwachs1996} and \cite{shllblnonpureIIbjornerwachs1997}.

\begin{definition}[Section 2 \cite{shellblposets}]\label{def:ellabeling}
An edge labeling $\lambda$ of a finite, bounded poset $P$ is an \textbf{EL-labeling} if for each pair $x,y\in P$ satisfying $x<y$, there is a unique ascending maximal chain with respect to $\lm$ in the closed interval $[x,y]$ and this ascending chain lexicographically precedes all other maximal chains in $[x,y]$.
\end{definition}

\begin{theorem}[Proof of Theorem 2.3 \cite{shellblposets} and Theorem 5.8 \cite{shllblnonpureIbjornerwachs1996}]\label{thm:elshell}
If finite, bounded poset $P$ admits an EL-labeling $\lm$, then any total order of the maximal chains of $P$ which is compatible with the lexicographic order on maximal chains induced by $\lm$ is a shelling order of the order complex $\Delta(P)$. Moreover, the restriction map of any such shelling is given by $R(m)=\sett{x\in m}{\text{x is a descent of $m$ w.r.t. $\lm$}}$ for any maximal chain $m\in \M(P)$, and the homology facets of the induced shelling of $\Delta(\overline{P})$ are given by the maximal chains $m\in \M(P)$ with descending label sequence with respect to $\lm$.
\end{theorem}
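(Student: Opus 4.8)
The plan is to reduce the theorem to the already-cited results by unpacking what the references establish. Bj\"orner's original Theorem~2.3 of \cite{shellblposets} handles the graded case, and Bj\"orner--Wachs' Theorem~5.8 of \cite{shllblnonpureIbjornerwachs1996} extends lexicographic shellability to the non-graded (non-pure) setting. So the real content to assemble here is: (i) that a linear extension of the lexicographic order is a shelling order, (ii) the explicit formula for the restriction map, and (iii) the identification of the homology facets of $\Delta(\overline{P})$ with the strictly descending maximal chains. The cleanest route is to verify condition (2) of \cref{prop:restriction version of shelling} directly with the candidate restriction map $R(m)=\sett{x\in m}{x\text{ is a descent of }m}$.

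First I would fix a maximal chain $m:\,\zh=x_0\lessdot x_1\lessdot\cdots\lessdot x_n=\oneh$ and analyze the faces $f\subseteq m$. Write $f=\{x_{i_1}<\cdots<x_{i_k}\}$ together with the endpoints implicitly; the key observation is that $f$ lies in some earlier maximal chain (earlier in a fixed linear extension $\Omega$ of the lexicographic order) if and only if $f$ fails to ``see'' some descent of $m$, i.e. if and only if $R(m)\not\subseteq f$. The forward direction uses the \EL{} property: if $x_i\in R(m)$ is a descent not recorded by $f$, then between the two elements of $f$ straddling $x_i$ (say $x_a<x_b$ with $a<i<b$, no element of $f$ strictly between), the subchain $m^{x_b}_{x_a}$ is not ascending, so by \cref{def:ellabeling} there is a lexicographically earlier maximal chain of $[x_a,x_b]$; splicing it into $m$ gives a maximal chain $m'$ of $P$ containing $f$ with $\lex{m'}{m}$, hence $m'$ precedes $m$ in $\Omega$. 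Conversely, if $R(m)\subseteq f$, I would argue $m$ is the lexicographically first maximal chain through $f$: on each interval $[x_{i_j},x_{i_{j+1}}]$ cut out by consecutive elements of $f$, the subchain of $m$ is ascending (it contains no descents of $m$ by hypothesis), hence is the unique ascending and lexicographically least chain of that interval, so any maximal chain through $f$ is lexicographically $\geq m$ on each such interval, thus $\geq m$ overall; since $\Omega$ refines the lexicographic order, no maximal chain through $f$ precedes $m$ in $\Omega$. This shows each face $f$ of $\Delta(P)$ lies in $[R(m),m]$ for exactly one $m$ (namely the lexicographically first chain containing $f$), giving the disjoint-union half of \cref{prop:restriction version of shelling}(2); the implication $R(m_i)\subseteq m_j\Rightarrow i\leq j$ follows because $R(m_i)\subseteq m_j$ forces $m_i$ to be lexicographically $\leq m_j$ by the same ``ascending on each $f$-interval'' argument, hence $i\le j$ in $\Omega$.

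Having established via \cref{prop:restriction version of shelling} that $\Omega$ is a shelling with restriction map $R$, the homology-facet statement is essentially a corollary: a facet $m$ of $\Delta(\overline P)$ is a homology facet iff $R(m)$ equals the full proper chain $\overline m = m\setminus\{\zh,\oneh\}$, i.e. iff every $x_i$ with $1\le i\le n-1$ is a descent of $m$; one then checks that this is equivalent to $\lm(m)$ being strictly decreasing, taking care with the convention at the top, namely $x_{n-1}$ is a descent of $m$ as a chain of $P$ exactly when $\lm_{n-1}(m)\not\le\lm_n(m)$ and observing that in an \EL{} the label sequence of a chain with all interior elements descents is forced strictly decreasing. (One must note here the distinction between descents of $m$ in $\Delta(P)$ versus the proper part; the restriction face of $m$ in $\Delta(P)$ can include $x_n=\oneh$'s predecessor data, but in $\Delta(\overline P)$ we intersect with the interior.)

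The main obstacle I expect is bookkeeping around the interval structure: ensuring that when $f$ misses a descent $x_i$ of $m$, the two elements of $f$ bracketing $x_i$ really do bound a subinterval on which $m$ is non-ascending, and that the spliced chain $m'$ is genuinely lexicographically earlier \emph{and} still contains all of $f$ — this requires that the lexicographic comparison is decided \emph{within} the bracketing interval, which is exactly where the \EL{} axiom (ascending chain is lex-first in \emph{every} closed interval) is used. The rest is careful but routine translation between the face-lattice language of \cref{prop:restriction version of shelling} and the chain language of \cref{def:ellabeling}.
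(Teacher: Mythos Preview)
The paper does not supply its own proof of this theorem: it is stated in the background section as a known result, with attribution to Bj\"orner \cite{shellblposets} and Bj\"orner--Wachs \cite{shllblnonpureIbjornerwachs1996}, and no proof environment follows. So there is nothing in the paper to compare against beyond the citation itself.

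That said, your proposal is correct and is essentially the standard argument from the cited sources, organized around \cref{prop:restriction version of shelling}: show that the candidate map $R(m)=\{\text{descents of }m\}$ partitions $\Delta(P)$ into the boolean intervals $[R(m),m]$, and that $R(m_i)\subseteq m_j$ forces $i\le j$. Your two directions (missing a descent lets you splice in a lex-earlier ascending subchain; containing all descents makes $m$ ascending on every $f$-gap and hence lex-first through $f$) are exactly the classical steps. One minor point of language: in the homology-facet clause you write ``strictly decreasing,'' but since labels live in a poset $\Lambda$, the correct condition is that every interior position is a descent, i.e.\ $\lm_i(m)\not\le\lm_{i+1}(m)$ for all $i$, which need not be the same as $\lm_i(m)>\lm_{i+1}(m)$ when $\Lambda$ is not totally ordered. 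This does not affect the argument, only the phrasing.
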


A generalization of an edge labeling is a \textbf{chain edge labeling}. Intuitively, a chain edge labeling is an edge labeling which depends on a choice of saturated chain from $\zh$ to the bottom of the edge. Let $\M\E(P)$ be the set of pairs $(m,x\lessdot y)$ for a maximal chain $m\in \M(P)$ and cover relation $x\lessdot y \in \E(P)$ such that $x\lessdot y$ is a cover relation in $m$. A \textbf{chain edge labeling} is a map $\lambda:\M\E(P) \to \Lambda$ for a poset $\Lambda$ such that if two maximal chains agree along their bottom $d$ edges then their labels of those edges are the same. Just as with an edge labeling, a chain edge labeling induces a label sequence of each maximal chain. To make an analogy with EL-labeling, we must restrict the label sequences of maximal chains to closed intervals $[x,y]$, but their is no unique restriction to $[x,y]$. However, fixing a maximal chain $r$ of $[\zh,x]$ determines a unique restriction of $\lambda$ to $\M\E([x,y])$. We call $r$ a \textbf{root} and the pair $[x,y]_r$ a \textbf{rooted interval}. Thus, we may refer to ascending and descending chains, the lexicographic order on maximal chains, and ascents and descents all with respect to $\lm$ and a root $r$.

\begin{definition}[Definition 3.2 \cite{bruhatshellbjornerwachs1982}]\label{def:cllabeling}
A chain edge labeling $\lm$ of a finite, bounded poset $P$ is a \textbf{CL-labeling} if each rooted interval $[x,y]_r$ has a unique ascending maximal chain with respect to $\lm$ and this ascending chain lexcographically precedes all other maximal chains in $[x,y]_r$.
\end{definition}

{\EL}s are {\CL}s in which edge labels do not depend on roots. Just like EL-labelings, CL-labelings induce lexicographic shellings of order complexes.

\begin{theorem}[Proof of Theorem 3.3 \cite{bruhatshellbjornerwachs1982} and Theorem 5.8 \cite{shllblnonpureIbjornerwachs1996}]\label{thm:clshell}
If finite, bounded poset $P$ admits a {\CL} $\lm$, then any total order of the maximal chains of $P$ which is compatible with the lexicographic order on maximal chains induced by $\lm$ is a shelling order of the order complex $\Delta(P)$. Moreover, the restriction map of any such shelling is given by $R(m)=\sett{x\in m}{\text{x is a descent of $m$ w.r.t. $\lm$}}$ for any maximal chain $m\in \M(P)$, and the homology facets of the induced shelling of $\Delta(\overline{P})$ are given by the maximal chains $m\in \M(P)$ with descending label sequence with respect to $\lm$.
\end{theorem}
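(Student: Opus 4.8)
The plan is to apply the reformulation of shellability in \cref{prop:restriction version of shelling}, taking as candidate restriction map $R(m)=\sett{x\in m}{x\text{ is a descent of }m\text{ w.r.t. }\lm}$ for each $m\in\M(P)$. By that proposition it suffices to establish two things: (i) $\Delta(P)=\bigsqcup_{m\in\M(P)}[R(m),m]$; and (ii) whenever $m\neq m'$ in $\M(P)$ satisfy $R(m)\subseteq m'$, one has $\lex{m}{m'}$, so that $m$ precedes $m'$ in any total order compatible with the lexicographic order. Note that $R(m)\subseteq\overline{P}$ always, since positions $1$ and $n$ of a label sequence can never be descents; this is what will let the argument descend to $\Delta(\overline{P})$. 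Throughout, I will use the recursive nature of a {\CL}: if a maximal chain $m$ passes through $x$, then the labels of $m$ on edges inside $[x,y]$ depend only on the root $m^x$, so the descents of $m$ lying strictly inside $[x,y]$ are exactly the descents of the restriction of $m$ to the rooted interval $[x,y]_{m^x}$.

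For (i), fix a chain $c$ of $P$, list its elements as $z_1<\cdots<z_k$, and set $z_0=\zh$ and $z_{k+1}=\oneh$. Build a maximal chain $m_c\supseteq c$ by filling the gaps $[z_i,z_{i+1}]$ from the bottom up: given $m_c^{z_i}$, let the restriction of $m_c$ to $[z_i,z_{i+1}]$ be the unique ascending maximal chain of the rooted interval $[z_i,z_{i+1}]_{m_c^{z_i}}$, which exists and is unique by \cref{def:cllabeling}. The crucial point is that a maximal chain $m'$ satisfies $R(m')\subseteq c\subseteq m'$ if and only if $c\subseteq m'$ and, for every $i$, the restriction of $m'$ to $[z_i,z_{i+1}]$ is ascending with respect to $\lm$ rooted at $(m')^{z_i}$; indeed each element of $m'$ is either in $c$, or is $\zh$ or $\oneh$, or lies strictly inside some gap, and a descent of $m'$ sits strictly inside $[z_i,z_{i+1}]$ exactly when that gap restriction fails to be ascending. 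Imposing this gap by gap and inducting on $i$ (comparing $(m')^{z_i}$ with $m_c^{z_i}$) forces any such $m'$ to equal $m_c$. Hence $m_c$ is the unique maximal chain with $c\in[R(m_c),m_c]$, which gives both the disjointness and the exhaustiveness required in (i).

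For (ii), suppose $m\neq m'$ with $R(m)\subseteq m'$ and set $c=R(m)$; by (i), $m=m_c$ since $R(m)\subseteq R(m)\subseteq m$. Let $y$ be the largest element of $P$ with $m^y=(m')^y$; since $m\neq m'$ and both run from $\zh$ to $\oneh$, we have $y\neq\oneh$ and $m,m'$ diverge immediately above $y$. Let $z_i$ be the largest element of $c\cup\set{\zh}$ with $z_i\leq y$, so $z_i\leq y<z_{i+1}$ and $m^{z_i}=(m')^{z_i}$. Inside the rooted interval $[z_i,z_{i+1}]_{m^{z_i}}$, the restriction of $m=m_c$ is the ascending maximal chain by construction, whereas the restriction of $m'$ is a different maximal chain of that interval (they still disagree at $y<z_{i+1}$). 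By \cref{def:cllabeling} the former lexicographically precedes the latter, and moreover their label sequences genuinely differ: if they agreed, the $m'$-restriction would also be ascending, contradicting uniqueness of the ascending chain. Since $m$ and $m'$ have identical labels on every edge up to $z_i$, the first position at which $\lm(m)$ and $\lm(m')$ disagree lies inside this gap, and there $\lm(m)$ is strictly smaller; hence $\lex{m}{m'}$. Together with (i), \cref{prop:restriction version of shelling} now shows that every total order of $\M(P)$ compatible with the lexicographic order is a shelling of $\Delta(P)$ whose restriction map is $R$.

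Finally, intersecting the decomposition in (i) with $\Delta(\overline{P})$ and using $R(m)\subseteq\overline{P}$ yields $\Delta(\overline{P})=\bigsqcup_{m\in\M(P)}[R(m),\,m\setminus\set{\zh,\oneh}]$, with the order condition of \cref{prop:restriction version of shelling} inherited from $\Delta(P)$; so the same total order, with $\zh$ and $\oneh$ deleted from each facet, shells $\Delta(\overline{P})$ with restriction map again $R$. Since a facet of a shellable complex is a homology facet precisely when it equals its own restriction face, the homology facets of $\Delta(\overline{P})$ are the chains $m\setminus\set{\zh,\oneh}$ with $R(m)=m\setminus\set{\zh,\oneh}$, i.e.\ those $m:\zh=x_0\lessdot\cdots\lessdot x_n=\oneh$ all of whose interior elements $x_1,\dots,x_{n-1}$ are descents — exactly the maximal chains with descending label sequence with respect to $\lm$. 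I expect the only genuine subtlety to be in step (ii): keeping track of root-dependence so that ``ascending'' and lexicographic-minimality are invoked in the correct rooted interval, and using the \emph{uniqueness} clause of the definition of a {\CL} (not merely lexicographic-minimality of the ascending chain) to rule out two distinct maximal chains of a rooted interval having the same label sequence.
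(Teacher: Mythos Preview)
The paper does not prove this statement itself; it is recorded as background with a citation to Bj\"orner and Wachs, so there is no in-paper proof to compare against. Your argument via \cref{prop:restriction version of shelling} is the standard one and is correct in outline.

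The one place that needs tightening is step~(ii), in exactly the non-graded subtlety you anticipate. From ``the restriction of $m$ to $[z_i,z_{i+1}]_{m^{z_i}}$ lexicographically precedes that of $m'$, and their label sequences are not identical'' it does not immediately follow, when the two restrictions have different lengths, that the first position at which the \emph{full} sequences $\lm(m)$ and $\lm(m')$ disagree has $\lm(m)$ strictly smaller: if one restriction's label sequence were a proper prefix of the other's, that first positional disagreement would compare an edge of one chain already above $z_{i+1}$ against an edge of the other still inside the gap, and nothing you wrote controls that comparison. The fix is to argue at the divergence point $y$. Let $u\neq v$ be the elements covering $y$ in $m$ and $m'$ (both with root $m^y=(m')^y$). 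If the labels on $y\lessdot u$ and $y\lessdot v$ were equal, say to $\ell$, append the ascending chain of $[v,z_{i+1}]_{m^y*v}$ to $y\lessdot v$; lex-comparison with the ascending restriction of $m$ to $[y,z_{i+1}]_{m^y}$ forces the second label of this new chain to be at least $\ell$, so the new chain is itself ascending in $[y,z_{i+1}]_{m^y}$, contradicting uniqueness. Hence the label on $y\lessdot u$ is strictly smaller than that on $y\lessdot v$, so $\lm(m)$ and $\lm(m')$ already differ at the edge just above $y$, with $\lm(m)$ smaller, and $\lex{m}{m'}$ follows. This is precisely the uniqueness-versus-lex-minimality point you flagged, but it needs to be applied in $[y,z_{i+1}]_{m^y}$ rather than in $[z_i,z_{i+1}]_{m^{z_i}}$.
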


Another useful fact which we record here for later use is that EL-labelings and CL-labelings restrict to EL-labelings and CL-labelings of closed intervals and rooted intervals, respectively.

\begin{proposition}\label{prop:restrictinglabelings}
The restriction of an EL-labeling to any closed interval is an EL-labeling. The restriction of a CL-labeling to any closed rooted interval is a CL-labeling.
\end{proposition}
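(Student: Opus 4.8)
The plan is to exploit convexity of intervals. If $[u,v]$ is a closed interval of $P$, then for $x,y\in[u,v]$ the interval $[x,y]$ computed inside $[u,v]$ coincides with $[x,y]$ computed inside $P$, and $x\lessdot y$ holds in $[u,v]$ exactly when it holds in $P$; hence $\E([u,v])$ is precisely the set of edges of $\E(P)$ lying in $[u,v]$, and $\M([x,y])$ is the same whether computed in $[u,v]$ or in $P$. With this in hand both assertions become essentially a matter of unwinding definitions.

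For the {\EL} case, let $\lambda$ be an {\EL} of $P$ and let $\mu$ be its restriction to $\E([u,v])$. By the remark above, $\mu$ and $\lambda$ agree on every edge occurring in any $[x,y]$ with $x<y$ in $[u,v]$, so a maximal chain of $[x,y]$ is $\mu$-ascending (respectively $\mu$-lexicographically first) exactly when it is $\lambda$-ascending (respectively $\lambda$-lexicographically first). Thus the {\EL} axiom for $[u,v]$ with the labeling $\mu$ is literally the {\EL} axiom for $\lambda$ restricted to those pairs $x<y$ that happen to lie in $[u,v]$, and hence it holds.

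For the {\CL} case, let $\lambda$ be a {\CL} of $P$ and fix a rooted interval $[u,v]_r$, so $r$ is a saturated chain from $\zh$ to $u$. Define a chain edge labeling $\mu$ of $[u,v]$ by: for a maximal chain $M$ of $[u,v]$ and an edge $e$ of $M$, choose any maximal chain $m$ of $P$ containing the saturated chain $r*M$, and put $\mu(M,e)=\lambda(m,e)$. This is well defined since any two such $m$ agree along all edges of $r*M$, which include $e$, so they label $e$ identically; the same observation shows $\mu$ satisfies the chain edge labeling axiom. A rooted interval of $[u,v]$ has the form $[x,y]_{r'}$ with $x,y\in[u,v]$ and $r'$ a saturated chain from $u$ to $x$ in $[u,v]$; then $r*r'$ is a saturated chain from $\zh$ to $x$ in $P$, so $[x,y]_{r*r'}$ is a rooted interval of $P$. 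Unwinding the definition of $\mu$, the label sequence of any maximal chain $n$ of $[x,y]$ with respect to $\mu$ and root $r'$ equals the label sequence of $n$ with respect to $\lambda$ and root $r*r'$. Therefore $[x,y]_{r'}$ has a unique $\mu$-ascending maximal chain which is $\mu$-lexicographically first, precisely because the corresponding statement holds for $[x,y]_{r*r'}$ with $\lambda$; this is the {\CL} axiom for $\mu$.

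The whole argument is bookkeeping, and the only step requiring care is verifying in the {\CL} case that restriction to $[u,v]_r$ yields a genuine chain edge labeling — i.e. well-definedness of $\mu$ and the agreement-on-bottom-edges property — after which the recursive {\CL} condition transfers verbatim. I do not expect any substantive obstacle.
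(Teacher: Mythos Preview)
Your proof is correct and is essentially a careful unpacking of exactly what the paper has in mind: the paper's own proof is the single sentence ``These both follow from the recursive natures of \cref{def:ellabeling} and \cref{def:cllabeling},'' and your argument spells out precisely that recursion, including the only point requiring care (well-definedness of the restricted chain edge labeling in the {\CL} case). There is no substantive difference in approach.
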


\begin{proof}
These both follow from the recursive natures of \cref{def:ellabeling} and \cref{def:cllabeling}.
\end{proof}

\end{subsection}

\begin{subsection}{Weak Order on the Symmetric Group (Type A)}\label{sec:wkordsymmgroup}
Here we recall the definition and basics of the weak order of type A, that is, the weak order on the symmetric group of permutations. In \cref{sec:bkgrndyoungslat}, we briefly mention weak order on a general Coxeter group. For general Coxeter groups and proofs of the facts presented here, see \cite{bjornerbrenitcxgp} whose presentation we largely follow. 

Let $S_n$ be the symmetric group of permutations of $[n]$. Let $S$ be the set of adjacent transpositions in $S_n$, that is, $s_i=(i,i+1)$ for $i\in [n-1]$. We note that $(S_n,S)$ is isomorphic to the Coxeter system of type $A_{n-1}$. $(W,S)$ be a Coxeter system. The \textbf{length} of a permutation $w\in S_n$, denoted $l(w)$, is the minimal $k$ such that $w=s_{i_1}s_{i_2}\dots s_{i_k}$ for $s_{i_j}\in S$. We note that $l(w)$ is also the number of inversions of $w$, that is, the number of pairs of entries $i<j$ of $W$ which appear in decreasing order in the one line notation of $w$. We denote the set of inversions of $w$ by $\inv{w}{}$, so $l(w)=|\inv{w}{}|$. The \textbf{weak order} on $S_n$ is the partial order $\lewk{}{} $ defined by $\lewk{u}{w}$ if and only if $w=us_{i_1}s_{i_2}\dots s_{i_k}$ such that $s_{i_j}\in S$ and $l(us_{i_1}s_{i_2}\dots s_{i_j})=l(u)+j$ for each $1\leq j\leq k$. We will use a subscript of "$wk$" to refer to objects in weak order on $S_n$. An important property of weak order is that cover relations have a very nice form. Namely, $u\lessdot_{wk} w$ if and only $l(u)=l(w)-1$ and $u^{-1}w=s$ for some $s\in S$. Since we are dealing with right multiplication and right multiplication in the symmetric group corresponds to acting on the positions of permutations in one line notation, $u\lessdot_{wk} w$ for permutations $u,w\in S_n$ if and only if $w$ is obtained by transposing an ascent of $u$ in one line notation. The corresponding positions in $w$ will thus have a descent. The weak order on the symmetric group is well known to be a lattice. There is also another useful description of weak order on $S_n$ in terms of containment of inversion sets.
 
\begin{proposition}\label{prop:wkordinvsetcontainment}
For $u,w\in S_n$, $\lewk{u}{w}$ if and only if $\inv{u}{}\subseteq \inv{w}{}$.
\end{proposition}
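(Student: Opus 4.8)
The plan is to prove $\lewk{u}{w}$ if and only if $\inv{u}{}\subseteq\inv{w}{}$ by establishing each direction separately, relying throughout on the cover relation description recalled just above: namely $u\lessdot_{wk}w$ exactly when $l(w)=l(u)+1$ and $w$ is obtained from $u$ by transposing an adjacent pair of positions $i,i+1$ where $u$ has an ascent (so $u(i)<u(i+1)$ and $w(i)>w(i+1)$).

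For the forward direction, suppose $\lewk{u}{w}$, so there is a saturated chain $u = u_0 \lessdot_{wk} u_1 \lessdot_{wk} \cdots \lessdot_{wk} u_k = w$ in weak order. First I would prove the single-step claim: if $u \lessdot_{wk} v$, then $\inv{u}{} \subseteq \inv{v}{}$ and in fact $\inv{v}{}$ is obtained from $\inv{u}{}$ by adding exactly one pair. Indeed, passing from $u$ to $v$ by transposing positions $i,i+1$ at an ascent changes the relative order of exactly the pair of values $\{u(i),u(i+1)\}$ (every other pair of positions retains both its values in the same relative order), and since $u(i)<u(i+1)$ became a descent in $v$, this adds the inversion $\{u(i),u(i+1)\}$ to the inversion set without destroying any existing one — consistent with $l(v)=l(u)+1$. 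Composing along the chain gives $\inv{u}{}\subseteq\inv{u_1}{}\subseteq\cdots\subseteq\inv{w}{}$, hence $\inv{u}{}\subseteq\inv{w}{}$.

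For the reverse direction, suppose $\inv{u}{}\subseteq\inv{w}{}$. I would argue by induction on $|\inv{w}{}\setminus\inv{u}{}| = l(w)-l(u)$. If this is $0$ then $u=w$ and we are done. Otherwise $u\neq w$, so there is some pair of values appearing in increasing order in $u$ but in decreasing order in $w$; the key sublemma I need is that one can always find such a pair of values that moreover occupies \emph{adjacent} positions in $u$. Given that sublemma, transposing those two positions produces $u'$ with $u\lessdot_{wk}u'$ (it is an ascent of $u$ being flipped, so $l(u')=l(u)+1$), and $\inv{u'}{} = \inv{u}{}\cup\{\text{that pair}\}\subseteq\inv{w}{}$ since that pair is an inversion of $w$. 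Then $|\inv{w}{}\setminus\inv{u'}{}| < |\inv{w}{}\setminus\inv{u}{}|$, so by induction $\lewk{u'}{w}$, and combined with $u\lessdot_{wk}u'$ this yields $\lewk{u}{w}$.

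The main obstacle is the sublemma that an inversion of $w$ which is an ascent of $u$ can be chosen at adjacent positions of $u$. The proof: since $u\ne w$ there is at least one pair $(a,b)$ with $a<b$, $a$ appearing before $b$ in $u$, but $b$ before $a$ in $w$; among all such ``bad'' pairs choose one with $a$ and $b$ as close together as possible in the one-line notation of $u$. I claim they are adjacent in $u$. If not, there is some value $c$ strictly between their positions in $u$. Consider whether $c<a$, $a<c<b$ (impossible here unless $c$ genuinely lies strictly between $a$ and $b$ in value — handle by cases on where $c$ sits relative to $\{a,b\}$ both in value and position), or $c>b$: in each case one checks that either $(a,c)$ or $(c,b)$ (suitably ordered to be increasing in $u$) is again a pair that is increasing in $u$ but, because $\{a,b\}$ is an inversion of $w$ and $\inv{u}{}\subseteq\inv{w}{}$ forces certain relative orders in $w$, must be decreasing in $w$ — giving a bad pair closer together in $u$, contradicting minimality. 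This is the standard argument that in a permutation one can always reduce an inversion at adjacent positions; I would write it carefully but it is routine, and everything else in the proof is bookkeeping.
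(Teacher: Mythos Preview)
Your proof is correct. The forward direction is immediate from the cover-relation description, and your reverse direction via induction on $l(w)-l(u)$ together with the ``adjacent bad pair'' sublemma is the standard argument. The case analysis you sketch for the sublemma does go through: for $c$ strictly between $a$ and $b$ in position in $u$, one uses $\inv{u}{}\subseteq\inv{w}{}$ to force the relative order of $c$ with one of $a,b$ in $w$, and then transitivity in $w$ produces a closer bad pair (or, in the case $a<c<b$, at least one of $(a,c)$ or $(c,b)$ must be an inversion of $w$, else $a$ would precede $b$ in $w$). You should write this out fully rather than declare it routine, but there is no gap.

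As for comparison with the paper: the paper does not prove this proposition at all. It is stated in the background section as a well-known characterization of weak order, with the reader referred to Bj{\"o}rner and Brenti's book on Coxeter groups for proofs. So you have supplied a self-contained elementary proof where the paper simply cites the literature; your argument is essentially the classical one found in that reference (specialized to type~A).
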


\end{subsection}

\section{Maximal Chain Descent Orders: Definition and Fundamental Properties} \label{sec:cord def and properties}

\begin{subsection}{Introduction to Maximal Chain Descent Orders}\label{sec:def of cord}
 
Now we introduce the notion of a maximal chain descent order induced by a {\CL} and give several fundamental properties of these partial orders. We will not assume our posets are graded. Since {\EL}s are instances of {\CL}s, the following constructions and properties work for {\EL}s as well. We begin with two definitions which together describe the polygon moves giving rise to maximal chain descent orders.

\begin{definition}\label{def:differ by diamond}
Let $P$ be a finite, bounded poset. Let $m,m'\in \M(P)$ be maximal chains of $P$ with $m:x_0=\zh\lessdot x_1\lessdot \dots \lessdot x_{r-1} \lessdot x_r=\oneh$ and $m':x'_0=\zh\lessdot x'_1\lessdot \dots \lessdot x'_{r'-1} \lessdot x'_{r'}=\oneh$. Suppose $r' \leq r$ since $P$ is not necessarily graded. We say that $m$ and $m'$ \textbf{differ by a polygon} if there is some $1 \leq i\leq r-1$ such that $x'_j=x_j$ for all $j<i$, $x'_{i+1}=x_{i+l}$ for some $1\leq l$, $x'_i\neq x_{i+k}$ for all $0\leq k\leq l$, and $x'_{i+1+m}=x_{i+l+m}$ for all $0\leq m\leq r-i-l=r'-i-1$.
\end{definition}

Maximal chains differing by a polygon are illustrated in \cref{fig:differ by a polygon} below. Intuitively, $m$ and $m'$ differ by a polygon if they agree everywhere except on an interval where $m'$ has length two. 

\begin{figure}[H]
    \centering
    \scalebox{1}{
    \begin{tikzpicture}[very thick]
    \node (xi-1) at (0,0) {$x_{i-1}$};
    \node (xi) at (-1,1) {$x_i$};
    \node (xi+l) at (0,3) {$x_{i+l}$};
    \node (x'i) at (1,1.5) {$x'_i$};
    \node (zh) at (0,-2) {$\zh$};
    \node (oneh) at (0,5) {$\oneh$};
    \node (m) at (-2,1.5) {$m$};
    \node (m') at (2,1.5) {$m'$};
    
    \draw (xi-1) -- (xi);
    \draw (xi-1) -- (x'i) -- (xi+l);
    \draw [dashed] (xi) -- (xi+l);
    \draw [dashed] (zh) -- (xi-1);
    \draw [dashed] (xi+l) -- (oneh);
    \end{tikzpicture}
    }
    \caption{Maximal chains which differ by a polygon.}
    \label{fig:differ by a polygon}
\end{figure}
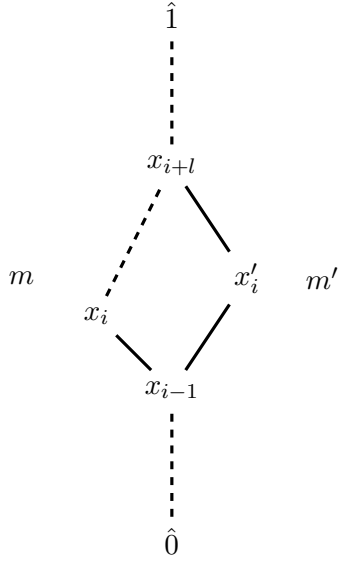

\begin{definition}\label{def:increase and polygon}
Let $P$ be a finite, bounded poset with a {\CL} $\lm$. We say that \textbf{$\mathbf{m}$ increases by a polygon move to $\mathbf{m'}$} and write $\mathbf{m \boldsymbol\to m'}$ if $m$ and $m'$ differ by a polygon as in \cref{def:differ by diamond} and $m$ restricted to the rooted interval $[x_{i-1},x'_{i+1}]_{m^{x_{i-1}}}$ is the unique ascending maximal chain with respect to $\lm$ while $m'$ restricted to the rooted interval $[x_{i-1},x'_{i+1}]_{m^{x_{i-1}}}$ is a descent with respect to $\lm$. We refer to the pair of saturated chains $m_{x_{i-1}}^{x'_{i+1}}$ and ${m'}_{x_{i-1}}^{x'_{i+1}}$ as the \textbf{polygon corresponding to $\mathbf{m\to m'}$}. Thus, \textbf{the bottom element of the polygon corresponding to $\mathbf{m\boldsymbol\to m'}$} refers to $x_{i-1}$ and \textbf{the top element of the polygon corresponding to $\mathbf{m\boldsymbol\to m'}$} refers to $x'_{i+1}$.
\end{definition}

Intuitively, $m\to m'$ if $m$ and $m'$ differ by a polygon and $m$ is the lexicographically least (hence, ascending) chain when restricted to the rooted interval where they differ. 

\begin{example}\label{ex:max chain increases and polygons}
\cref{fig:example of max chain increase and corresponding polygon} subfigure (a) shows a poset with an {\EL} which has two increases by polygon moves: $(\zh \lessdot a \lessdot b \lessdot c \oneh) \to (\zh \lessdot a \lessdot d \lessdot \oneh)$ and $(\zh \lessdot a \lessdot b \lessdot c \lessdot \oneh) \to (\zh \lessdot a \lessdot e \lessdot \oneh)$. The polygon corresponding to $(\zh \lessdot a \lessdot b \lessdot c \lessdot \oneh) \to (\zh \lessdot a \lessdot e \lessdot \oneh)$ is depicted in subfigure (b). We emphasize that $(\zh \lessdot a \lessdot d \lessdot \oneh) \not \to (\zh \lessdot a \lessdot e \lessdot \oneh)$ despite being lexicographically smaller because $(\zh \lessdot a \lessdot d \lessdot \oneh)$ is not the ascending (and thus, not the lexicographically least) saturated chain of $[a,\oneh]$.

\begin{figure}[H]
    \centering
    \subfigure[Poset with an {\EL}.]{
    \scalebox{1}{
    \begin{tikzpicture}[very thick]
    \node (zh) at (0,0) {$\zh$};
    \node (a) at (0,1) {$a$};
    \node (b) at (-1,2) {$b$};
    \node (c) at (-1,3) {$c$};
    \node (oneh) at (0,4) {$\oneh$};
    \node (d) at (1,2) {$d$};
    \node (e) at (2,2) {$e$};
    
    \draw (zh) --node[left,blue]{$1$} (a) --node[left,blue]{$2$} (b) --node[left,blue]{$3$} (c) --node[left,blue]{$4$} (oneh) --node[left,blue]{$2$}  (d) --node[above left,blue]{$4$} (a);
    \draw (a) --node[below right,blue]{$5$} (e) --node[right,blue]{$1$} (oneh);
    \end{tikzpicture}
    }
    }
    \hspace{5mm}
    \subfigure[Polygon corresponding to $(\zh \lessdot a \lessdot b \lessdot c \lessdot \oneh) \to (\zh \lessdot a \lessdot e \lessdot \oneh).$]{
    \scalebox{1}{
    \begin{tikzpicture}[very thick]
    \node (a) at (0,1) {$a$};
    \node (b) at (-1,2) {$b$};
    \node (c) at (-1,3) {$c$};
    \node (oneh) at (0,4) {$\oneh$};
    \node (e) at (2,2) {$e$};
    
    \draw (a) --node[left,blue]{$2$} (b) --node[left,blue]{$3$} (c) --node[left,blue]{$4$} (oneh);
    \draw (a) --node[below right,blue]{$5$} (e) --node[right,blue]{$1$} (oneh);
    \end{tikzpicture}
    }
    }
    \caption{An {\EL} with two increases by polygon moves.}
    \label{fig:example of max chain increase and corresponding polygon}
\end{figure}
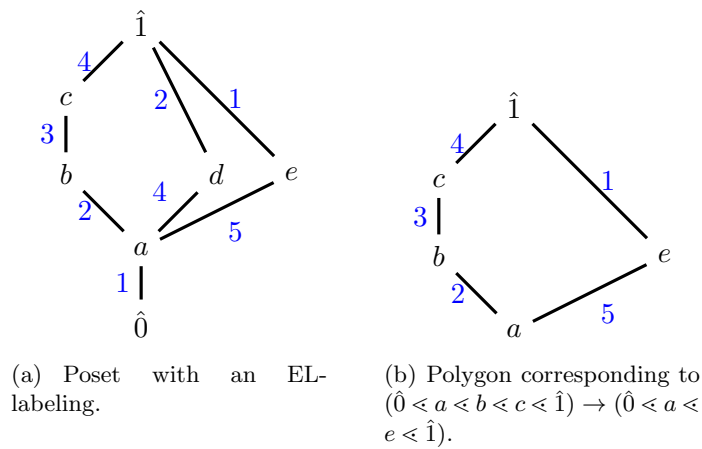
\end{example}

We give two simple and useful propositions about increases by polygon moves before proceeding to the definition of a maximal chain descent order.

\begin{proposition}\label{prop:nongradedmaxchainincreaselexincrease}
Let $P$ be a finite, bounded poset with a {\CL} $\lm$. If $m\to m'$, then $\lex{\lm(m)}{\lm(m')}$.
\end{proposition}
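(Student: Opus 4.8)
The plan is to unwind \cref{def:differ by diamond} and \cref{def:increase and polygon} and then invoke the defining lexicographic-minimality property of a {\CL} on a single rooted interval. Adopt the notation of \cref{def:differ by diamond}: write $m: x_0 \lessdot x_1 \lessdot \dots \lessdot x_r$ and $m': x'_0 \lessdot x'_1 \lessdot \dots \lessdot x'_{r'}$, and let $i$ be the index with $x_j = x'_j$ for all $j < i$, with $x'_{i+1} = x_{i+l}$ for some $l \ge 1$ and $x'_i \neq x_{i+k}$ for all $0 \le k \le l$; in particular $x'_i \neq x_i$. The first step is the routine observation that $m$ and $m'$ agree along their bottom $i-1$ edges, so the defining property of a chain edge labeling forces $\lm_j(m) = \lm_j(m')$ for $1 \le j \le i-1$. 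This also gives $m^{x_{i-1}} = {m'}^{x_{i-1}}$, so this common saturated chain from $\zh$ to $x_{i-1}$ is a legitimate root for the interval $[x_{i-1},x'_{i+1}]$.

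Next I would identify two maximal chains of the rooted interval $[x_{i-1},x'_{i+1}]_{m^{x_{i-1}}}$, namely $m^{x'_{i+1}}_{x_{i-1}}$ and ${m'}^{x'_{i+1}}_{x_{i-1}}$; these are distinct because $x'_i \neq x_i$, and by \cref{prop:restrictinglabelings} together with the defining property of a chain edge labeling (which makes the label of an edge depend only on the portion of the chain below it), the restriction of $\lm$ to this rooted interval assigns them the label sequences $(\lm_i(m), \lm_{i+1}(m), \dots, \lm_{i+l}(m))$ and $(\lm_i(m'), \lm_{i+1}(m'))$ respectively. By hypothesis, $m \to m'$ means precisely that $m^{x'_{i+1}}_{x_{i-1}}$ is the unique ascending maximal chain of this rooted interval, so \cref{def:cllabeling} tells us it lexicographically precedes every other maximal chain there; applied to the distinct chain ${m'}^{x'_{i+1}}_{x_{i-1}}$ this yields
\[
\lex{(\lm_i(m), \lm_{i+1}(m), \dots, \lm_{i+l}(m))}{(\lm_i(m'), \lm_{i+1}(m'))}.
\]

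To finish, I would observe that since $l \ge 1$ the left-hand sequence has length $l+1 \ge 2$, so it cannot be a proper prefix of the length-two right-hand sequence; hence the displayed strict inequality is witnessed at a genuine first index of disagreement $k \in \{0,1\}$, meaning $\lm_{i+j}(m) = \lm_{i+j}(m')$ for $0 \le j < k$ and $\lm_{i+k}(m) < \lm_{i+k}(m')$. Combining this with the agreement of the first $i-1$ labels established above, position $i+k$ is the first position at which the full label sequences $\lm(m)$ and $\lm(m')$ disagree and $\lm(m)$ has the smaller entry there, which is precisely $\lex{\lm(m)}{\lm(m')}$ by definition. I do not expect a substantive obstacle here; the argument is essentially a careful transcription of the definitions, and the only two points needing care are the identification of the restricted label sequences above with the relevant labels of $m$ and $m'$ (this is the role of \cref{prop:restrictinglabelings} and the chain edge labeling property), and the use of the hypothesis $l \ge 1$, which is exactly what rules out the degenerate ``prefix'' branch of the lexicographic order so that the comparison is always decided within the first two labels above $x_{i-1}$.
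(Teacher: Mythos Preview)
Your proof is correct and follows exactly the same idea as the paper's proof, which is the single sentence ``This follows from the fact that the ascending chain in any rooted interval lexicographically precedes all other maximal chains in that rooted interval.'' You have simply unpacked the definitions in full detail. One small remark: your prefix discussion is slightly more elaborate than necessary, since the {\CL} axiom (via the paper's definition of $<_{lex}$) already guarantees a genuine first index of disagreement between the restricted label sequences, and that index is forced to lie in $\{0,1\}$ because the right-hand sequence has only two entries; the case you worry about (one sequence a prefix of the other) is in fact automatically excluded by the ascent/descent hypothesis on the two restricted chains.
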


\begin{proof}
This follows from the fact that the ascending chain in any rooted interval lexicographically precedes all other maximal chains in that rooted interval.
\end{proof}

\begin{proposition}\label{prop:descents give unique incrs}
Let $P$ be a finite, bounded poset with a {\CL} $\lm$. If $m' \in \M(P)$ has a descent at $x\in m'$ with respect to $\lm$, then there is an unique $m\in \M(P)$ such that $m\to m'$ and $m'\setminus m=\set{x}$. 
\end{proposition}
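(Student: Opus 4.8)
The plan is to construct $m$ explicitly from the descent and then check that it is the unique chain with the required properties. Write $m'\colon \zh = x'_0 \lessdot x'_1 \lessdot \cdots \lessdot x'_{r'} = \oneh$ and let $i$ be the index with $x = x'_i$. Since $x$ is a descent of $m'$, we have $1 \le i \le r'-1$, so $w := x'_{i-1}$ and $z := x'_{i+1}$ both exist, $w \lessdot x \lessdot z$ is a saturated chain in $[w,z]$, and $x \in (w,z)$; in particular $[w,z] \ne \{w,z\}$, so every maximal chain of $[w,z]$ has length at least two. Let $r := (m')^{w}$, which is a maximal chain of $[\zh,w]$, and let $c$ be the unique ascending maximal chain of the rooted interval $[w,z]_{r}$, which exists because $\lm$ is a {\CL}. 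I would then set $m := (m')^{w} * c * (m')_{z}$, a saturated chain from $\zh$ to $\oneh$, hence an element of $\M(P)$.

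The crux of the verification is the claim that $x \notin c$. Indeed, if $x$ were an element of $c$, then since $w \lessdot x$ and $x \lessdot z$ are cover relations of $P$ and $c$ is saturated, $c$ would be forced to equal the chain $w \lessdot x \lessdot z$; but the label sequence of $w \lessdot x \lessdot z$ in the rooted interval $[w,z]_{r}$ is $(\lm_i(m'), \lm_{i+1}(m'))$, which is a descent precisely because $x$ is a descent of $m'$ — contradicting that $c$ is ascending. Given $x \notin c$, together with $x \notin (m')^{w}$ (as $x \not\le w$) and $x \notin (m')_{z}$ (as $x \not\ge z$), we obtain $m' \setminus m = \{x\}$. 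Moreover $m$ and $m'$ agree below $w$ and above $z$ and differ on $[w,z]$, where $m'$ has the length-two chain $w \lessdot x \lessdot z$ while $m$ has $c$; this is exactly the configuration of \cref{def:differ by diamond}. Since $m^{w} = (m')^{w} = r$, the restriction of $m$ to $[w,z]_{r}$ is $c$, the unique ascending chain, and the restriction of $m'$ to $[w,z]_{r}$ is the chain $w \lessdot x \lessdot z$, which is a descent; by \cref{def:increase and polygon} this gives $m \to m'$.

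For uniqueness, suppose $\tilde m \to m'$ with $m' \setminus \tilde m = \{x\}$. For any two maximal chains differing by a polygon with the shorter one being $m'$, the set $m' \setminus \tilde m$ consists of exactly the single element $x'_i$ playing the role of the polygon's middle vertex in \cref{def:differ by diamond} (all other elements of $m'$ appear among the agreements forced by that definition). Hence that middle vertex must be our $x$, which forces the bottom and top of the polygon to be $w = x'_{i-1}$ and $z = x'_{i+1}$. Then $\tilde m$ agrees with $m'$ below $w$ and above $z$, and its restriction to $[w,z]_{(\tilde m)^{w}} = [w,z]_{(m')^{w}}$ must be the unique ascending maximal chain, namely $c$. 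Therefore $\tilde m = (m')^{w} * c * (m')_{z} = m$.

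I expect the only genuinely substantive step to be the claim $x \notin c$: everything else is bookkeeping against \cref{def:differ by diamond} and \cref{def:increase and polygon}. Ruling out the degenerate possibility that the ascending chain of $[w,z]_{r}$ still passes through $x$ is where the hypothesis that $x$ is a \emph{descent}, not merely an element, of $m'$ is essential, and it is enabled by the rigidity that a saturated chain between two elements related by a cover relation must be trivial.
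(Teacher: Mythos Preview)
Your proof is correct and follows exactly the same construction as the paper: define $w$ and $z$ as the neighbors of $x$ in $m'$, take $c$ to be the unique ascending chain of $[w,z]_{(m')^{w}}$, and set $m = (m')^{w} * c * (m')_{z}$. The paper's proof is terser, leaving implicit the verification that $x \notin c$ and the details of uniqueness that you spell out, but the approach is identical.
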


\begin{proof}
Let $w$ and $z$ be the elements of $m'$ satisfying $w\lessdot x\lessdot z$. Let $c$ be the unique ascending maximal chain of the rooted interval $[w,z]_{m'^{w}}$ with respect to $\lm$ guarateed by the definition of a {\CL}. Set $m=m'^w * c * m'_z$. Then by \cref{def:increase and polygon}, we have $m\to m'$ with $m'\setminus m=\set{x}$ and $m$ is unique.
\end{proof}

Next we introduce the notion of a maximal chain descent order.

\begin{definition}[Hersh and L.]\label{def:nongradedmaxchainorders}
Let $P$ be a finite, bounded poset with a {\CL} $\lm$. The \textbf{maximal chain descent order induced by $\boldsymbol\lambda$} is the partial order $\mathbf{\cleq{ \boldsymbol}{_{\boldsymbol\lambda}}}$ on the maximal chains $\M(P)$ defined as the reflexive and transitive closure of the relation $m\to m'$, i.e. if $m$ increases by a polygon move to $m'$ with respect to $\lambda$. Denote the poset $(\M(P),\cleq{ }{_\lambda })$ by $\mathbf{\cord{P}{\boldsymbol\lambda}}$.
\end{definition}

\begin{remark}\label{rmk:cord def works for EL}
We note as above that \cref{def:nongradedmaxchainorders} applies to {\EL}s since they are themselves {\CL}s. 
\end{remark}

\begin{remark}\label{rmk:proper part shelling gives same poset}
Let $P$ be a finite, bounded poset with a {\CL} $\lm$. Since the proper part of $P$ often has more interesting topology than $P$ itself (a bounded poset is contractible), we often consider the lexicographic shelling induced on $\Delta(\overline{P})$ by simply deleting the cone points $\zh$ and $\oneh$ from $\Delta(P)$. One might wonder if this lexicographic shelling of $\overline{P}$ gives a different maximal chain descent order than $\cord{P}{\lm}$. It does not. The facets of $\Delta(\overline{P})$ are simply the facets of $\Delta(P)$ with $\zh$ and $\oneh$ deleted while the shelling order, all codimension one intersections, and all face containments are preserved by deleting the cone points. Thus, the maximal chain descent orders are isomorphic via the map which deletes $\zh$ and $\oneh$ from maximal chains of $P$.
\end{remark}

A maximal chain descent order induced by an {\EL} is shown in \cref{ex:cord which is not lex order}. Using \cref{prop:nongradedmaxchainincreaselexincrease}, we can easily show that the relation of \cref{def:nongradedmaxchainorders} is antisymmetric, and so honestly a partial order. We also easily have the corollary that lexicographic order is at least as fine as the corresponding maximal chain descent order.

\begin{corollary}\label{cor:nongradedcordgreaterlexgreater}
Let $P$ be a finite, bounded poset with a {\CL} $\lm$. If $\cl{m}{m'}{\lm}$, then $\lex{\lm(m)}{\lm(m')}$.
\end{corollary}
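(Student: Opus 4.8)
The plan is to deduce Corollary \ref{cor:nongradedcordgreaterlexgreater} directly from Proposition \ref{prop:nongradedmaxchainincreaselexincrease} together with the fact that lexicographic order on maximal chains is itself a partial order (in particular, transitive) once ties are broken, or more carefully, that the lexicographic preorder is transitive. First I would recall that by Definition \ref{def:nongradedmaxchainorders}, the relation $\cl{m}{m'}{\lm}$ means there is a chain of polygon moves $m = m_0 \to m_1 \to \cdots \to m_k = m'$ with $k \geq 1$. Proposition \ref{prop:nongradedmaxchainincreaselexincrease} gives $\lex{\lm(m_{j-1})}{\lm(m_j)}$ for each $j$, i.e. $\lm(m_{j-1})$ strictly lexicographically precedes $\lm(m_j)$.

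The key step is then to observe that strict lexicographic order on label sequences is transitive: if $\lex{\lm(a)}{\lm(b)}$ and $\lex{\lm(b)}{\lm(c)}$, then $\lex{\lm(a)}{\lm(c)}$. This is a standard fact about lexicographic comparison of sequences over a poset $\Lambda$, but one should be slightly careful because label sequences of maximal chains in a non-graded poset may have different lengths. The definition of $\lex{m}{m'}$ given in the excerpt — "$i$ is the first index at which the label sequences disagree and $\lambda_i(m) < \lambda_i(m')$" — implicitly handles only the case where such an index exists; one where one sequence is a proper prefix of the other would need separate treatment. However, in the setting of a {\CL}, distinct maximal chains always have distinct label sequences (this is essentially why {\CL}s induce shellings), and moreover no label sequence is a prefix of another among maximal chains of a bounded poset, since all maximal chains run from $\hat 0$ to $\hat 1$; so the "first disagreement index" is always defined for $\lm(m_{j-1})$ versus $\lm(m_j)$, and transitivity of $<_{lex}$ applies to chain these inequalities together. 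Applying transitivity $k-1$ times to the inequalities $\lex{\lm(m_0)}{\lm(m_1)}, \ldots, \lex{\lm(m_{k-1})}{\lm(m_k)}$ yields $\lex{\lm(m)}{\lm(m')}$, which is the claim.

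I do not anticipate a genuine obstacle here; this is a one-line consequence of the preceding proposition plus transitivity of lexicographic order. The only point requiring the slightest care is the prefix issue just mentioned, and the author will most likely simply write "This follows from \cref{prop:nongradedmaxchainincreaselexincrease} and the transitivity of lexicographic order" without further comment. If one wanted to be fully rigorous one would note that for any two maximal chains of a bounded poset neither label sequence is a prefix of the other (they share endpoints $\hat 0,\hat 1$ but if one chain strictly refined an initial segment and then stopped it could not reach $\hat 1$), so every pair is lexicographically comparable and $<_{lex}$ is a strict partial order, hence transitive. That is the whole argument.

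\begin{proof}
By \cref{def:nongradedmaxchainorders}, $\cl{m}{m'}{\lm}$ means there is a sequence of maximal chains $m=m_0\to m_1\to\cdots\to m_k=m'$ with $k\geq 1$. By \cref{prop:nongradedmaxchainincreaselexincrease}, $\lex{\lm(m_{j-1})}{\lm(m_j)}$ for each $1\leq j\leq k$. Since lexicographic order on label sequences is transitive, chaining these strict inequalities gives $\lex{\lm(m)}{\lm(m')}$.
\end{proof}
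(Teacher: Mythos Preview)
Your proof is correct and matches the paper's approach, which simply notes that the corollary follows from \cref{prop:nongradedmaxchainincreaselexincrease}. One small correction to your discussion: it is \emph{not} true that distinct maximal chains in a CL-labeled poset always have distinct label sequences (the paper explicitly allows ties, breaking them arbitrarily), but this does not affect your argument since \cref{prop:nongradedmaxchainincreaselexincrease} already guarantees a strict lexicographic inequality at each polygon move, and strict lexicographic order is transitive regardless.
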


\begin{example}\label{ex:cord which is not lex order}
\cref{fig:notlexposet} shows a poset $P$ with an {\EL} $\lm$ along with its induced maximal chain descent order $\cord{P}{\lm}$ and the induced lexicographic order on maximal chains. This example shows that $\cord{P}{\lm}$ may be strictly coarser than the induced lexicographic order.

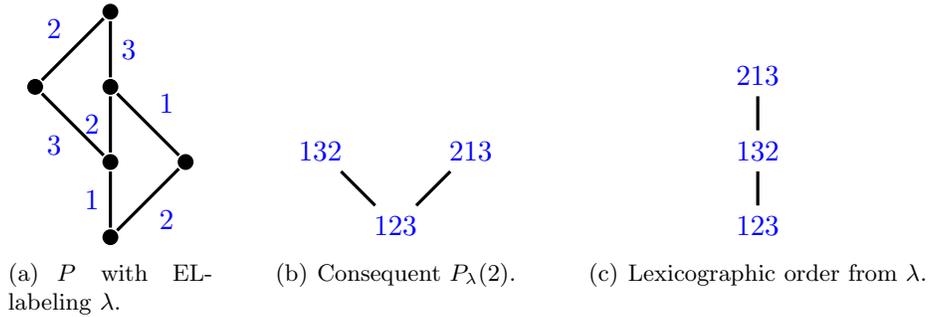
\begin{figure}[H]
    \centering
    \subfigure[$P$ with {\EL} $\lm$.]{
    \scalebox{1}{
    \begin{tikzpicture}[very thick]
    \node[fill,circle,inner sep=0pt,minimum size=6pt] (zh) at (0,0) {};
    \node[fill,circle,inner sep=0pt,minimum size=6pt] (a) at (0,1) {};
    \node[fill,circle,inner sep=0pt,minimum size=6pt] (b) at (0,2) {};
    \node[fill,circle,inner sep=0pt,minimum size=6pt] (c) at (1,1) {};
    \node[fill,circle,inner sep=0pt,minimum size=6pt] (oneh) at (0,3) {};
    \node[fill,circle,inner sep=0pt,minimum size=6pt] (d) at (-1,2) {};
    
    \draw (zh) --node[left,blue]{$1$} (a) --node[left,blue]{$2$} (b) --node[right,blue]{$3$} (oneh) --node[above left,blue]{$2$}  (d) --node[below left,blue]{$3$} (a);
    \draw (zh) --node[below right,blue]{$2$} (c) --node[above right,blue]{$1$} (b);
    \end{tikzpicture}
    }
    }
    \hspace{5mm}
    \subfigure[Consequent $\cord{P}{\lm}$.]{
    \scalebox{1}{
    \begin{tikzpicture}[very thick]
    \node[blue] (123) at (0,0) {$123$};
    \node[blue] (132) at (-1,1) {$132$};
    \node[blue] (213) at (1,1) {$213$};
    
    \draw (123) -- (132);
    \draw (123) -- (213);
    \end{tikzpicture}
    }
    }
    \hspace{5mm}
    \subfigure[Lexicographic order from $\lm$.]{
    \scalebox{1}{
    \begin{tikzpicture}[very thick]
    \node[blue] (123) at (0,0) {$123$};
    \node[blue] (132) at (0,1) {$132$};
    \node[blue] (213) at (0,2) {$213$};

    \node (space1) at (-2,0) {};
    \node (space2) at (2,0) {};
    
    \draw (123) -- (132) -- (213);
    \end{tikzpicture}
    }
    }
    \caption{An {\EL} with distinct lexicographic order and maximal chain descent order.}
    \label{fig:notlexposet}
\end{figure}
\end{example}

\end{subsection}

Now we turn to our motivating example of a maximal chain descent order before proving several fundamental properties.

\begin{subsection}{Motivating Example: Maximal Chain Descent Order for the Boolean Lattice is Weak Order on the Symmetric Group}\label{sec:motiv example boolean lat weak Sn}
Perhaps the most natural example of an EL-labeling is the labeling $\lm$ of the Boolean lattice $B_n$, the lattice of subsets of $[n]$ ordered by containment, in which $B\lessdot B'$ precisely when $B'=B\cup \set{i}$ for some $i\in [n]\setminus B$ and $\lm(B, B')=i$. We will prove that $\cord{{B_n}}{\lm}$ is isomorphic to the weak order on $S_n$ via the map assigning each maximal chain to its label sequence with respect to $\lm$.

\begin{theorem}\label{thm:boolean lat cord weak order symm grp}
Let $B_n$ be the Boolean lattice of subsets of $[n]$ with its standard {\EL} $\lm$. Then the map $m\mapsto \lm(m)$ is an isomorphism from the maximal chain descent order $\cord{{B_n}}{\lm}$ to the weak order on $S_n$ (the type A weak order).
\end{theorem}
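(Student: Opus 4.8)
The plan is to establish that $m \mapsto \lm(m)$ is a bijection from $\M(B_n)$ to $S_n$ and then show it carries the polygon-move relation onto the cover relation of weak order (in both directions), which suffices since $\cord{B_n}{\lm}$ is the transitive closure of polygon moves and weak order is the transitive closure of its cover relations. First I would recall the standard bijection: a maximal chain $m: \emptyset = B_0 \lessdot B_1 \lessdot \dots \lessdot B_n = [n]$ is determined by the sequence $(\lm_1(m), \dots, \lm_n(m))$ where $\lm_i(m)$ is the unique element of $B_i \setminus B_{i-1}$, and since each $\lm_i(m) \in [n]$ and they are all distinct, $\lm(m)$ is a permutation written in one-line notation. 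Conversely any permutation arises this way, so $m \mapsto \lm(m)$ is a bijection.

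Next I would analyze a single polygon move $m \to m'$ in $B_n$. Since every interval $[B, B']$ of rank $2$ in the Boolean lattice is a "diamond" (exactly two saturated chains, through $B \cup \{i\}$ and $B \cup \{j\}$ where $B' = B \cup \{i,j\}$), a polygon move must have $l = 2$ in the notation of \cref{def:differ by diamond}: that is, $m$ and $m'$ agree everywhere except at one rank, where $m$ passes through $B_{i-1} \cup \{a\}$ and $m'$ through $B_{i-1} \cup \{b\}$, with $B_{i-1}\cup\{a,b\}$ the common element two ranks up. The condition that $m$ is ascending on this rooted interval while $m'$ is descending translates, in terms of labels, to: $\lm(m)$ has $\dots, a, b, \dots$ with $a < b$ in consecutive positions $i, i+1$, and $\lm(m')$ has $\dots, b, a, \dots$ in those positions, with all other labels unchanged. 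This is exactly the statement that $\lm(m')$ is obtained from $\lm(m)$ by transposing an ascent in one-line notation, i.e. $\lm(m) \lessdot_{wk} \lm(m')$, using the description of weak-order covers recalled in \cref{sec:wkordsymmgroup}. Conversely, given $u \lessdot_{wk} w$, writing $u = \lm(m)$, the ascent of $u$ that gets transposed corresponds to a descent position of $w$; by \cref{prop:descents give unique incrs} there is a polygon move $m \to m'$ removing exactly that element, and one checks $\lm(m') = w$. Hence polygon moves correspond bijectively to weak-order covers under the map.

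Finally I would assemble the pieces: since $m \mapsto \lm(m)$ is a bijection sending the generating relation of $\cord{B_n}{\lm}$ exactly onto the generating relation of weak order and vice versa, taking transitive (and reflexive) closures on both sides gives a poset isomorphism. I would remark that antisymmetry of $\cord{B_n}{\lm}$ is automatic from \cref{cor:nongradedcordgreaterlexgreater} together with antisymmetry of weak order (or lexicographic order), so the transitive closure is genuinely a partial order and the bijection is an isomorphism of posets.

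The main obstacle, though it is mostly bookkeeping rather than deep, is the careful verification that a polygon move in $B_n$ necessarily has length parameter $l=2$ and therefore acts as a single adjacent transposition on label sequences — one must use that $B_n$ is graded and that its rank-$2$ intervals are diamonds to rule out longer polygons — and then matching the ascending/descending condition of \cref{def:increase and polygon} precisely with the "transpose an ascent" characterization of weak-order covers, keeping the left/right multiplication conventions straight.
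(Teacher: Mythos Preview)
Your proof is correct and follows the same core idea as the paper: establish that $m\mapsto\lm(m)$ is a bijection and that it carries polygon moves exactly onto weak-order covers. The one genuine difference is in how you finish. You observe that since $\cord{B_n}{\lm}$ is by definition the reflexive--transitive closure of $\to$ and weak order is the transitive closure of $\lessdot_{wk}$, matching the generating relations under a bijection immediately gives an isomorphism of the closures. The paper instead takes an extra step: from ``$m\to m'$ implies $\lm(m)\lessdot_{wk}\lm(m')$'' it first deduces that the map is order-preserving, then uses this (via a short contradiction argument) to prove that every polygon move is in fact a cover relation in $\cord{B_n}{\lm}$, and only then concludes that covers match on both sides. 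Your route is slightly more economical for the bare isomorphism statement; the paper's route buys the additional fact that $\lm$ is polygon complete, which is a central theme elsewhere in the paper but not needed for this theorem as stated.
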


\begin{proof}
The label sequences of the maximal chains $\M(B_n)$ are precisely the permutations of $[n]$. Moreover, each permutation $\pi$ occurs as the label sequence of exactly one maximal chain of $B_n$, namely, the maximal chain $m_{\pi}$ whose rank $i$ element is the union of the first $i$ entries of $\pi$ in one-line notation. For instance, $m_{3241}:\emptyset \lessdot \set{3} \lessdot \set{2,3} \lessdot \set{2,3,4} \lessdot \set{1,2,3,4}$ and $\lm(m_{3241})=3241$.

Every rank two interval in $B_n$ has the form shown in \cref{fig:rank two boolean int} where $A\subset [n]$ has $|A|\leq n-2$ and $i,j\in [n]\setminus A$ with $i<j$.

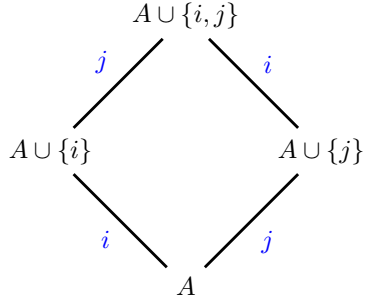
\begin{figure}[H]
    \centering
    \scalebox{0.9}{
    \begin{tikzpicture}[very thick]
    \node (A) at (0,0) {$A$};
    \node (Ai) at (-2,2) {$A\cup\set{i}$};
    \node (Aj) at (2,2) {$A\cup \set{j}$};
    \node (Aij) at (0,4) {$A\cup \set{i,j}$};
    
    \draw (A) --node[below left,blue]{$i$} (Ai) --node[above left,blue]{$j$} (Aij);
    \draw (A) --node[below right,blue]{$j$} (Aj) --node[above right,blue]{$i$} (Aij);
    \end{tikzpicture}
    }
    \caption{Typical rank two interval in the Boolean lattice.}
    \label{fig:rank two boolean int}
\end{figure}

Thus, if $m\to m'$ for maximal chains $m,m'\in \M(B_n)$, then $\lm(m')$ is obtained from $\lm(m)$ by transposing a unique pair of adjacent entries of $\lm(m)$ which are in ascending order in $\lm(m)$ and descending order in $\lm(m')$. For example, $m_{3214}:(\emptyset \lessdot \set{3} \lessdot \set{2,3} \lessdot \set{1,2,3} \lessdot \set{1,2,3,4}) \to m_{3241}:(\emptyset \lessdot \set{3} \lessdot \set{2,3} \lessdot \set{2,3,4} \lessdot \set{1,2,3,4})$ and $\lm(m_{3214})=3214$ while $\lm(m_{3241})=3241$. Therefore, if $m\to m'$, then $\lm(m)$ is covered by $\lm(m')$ in weak order on the symmetric group $S_n$. Moreover, this implies that if $\cl{m}{m'}{\lm}$, then $\lm(m)<_{wk} \lm(m')$ in weak order on $S_n$. Hence, if $m\to m'$, then $m$ is covered by $m'$ in $\cord{P}{\lm}$; this follows by contradiction because supposing that $m\to m'$ and $m\to \cl{m''}{m'}{\lm}$ for some other maximal chain $m''$ implies that $\lm(m)<_{wk} \lm(m'') <_{wk} \lm(m')$ which contradicts the fact that $\lm(m)\lessdot_{wk} \lm(m')$. Lastly, it is clear by construction that if $\pi \lessdot_{wk} \sigma$ for permutations $\pi,\sigma\in S_n$, then $m_{\pi}\to m_{\sigma}$. Thus, $\cordoti{m}{m'}{\lm}$ if and only if $\lm(m)\lessdot_{wk}\lm(m')$. Therefore, taking label sequences gives a poset isomorphism from $\cord{{B_n}}{\lm}$ to weak order on $S_n$.
\end{proof}

\end{subsection}

In \cref{sec:first examples}, we present more examples in depth, and for the most part, they do not require reading the remainder of this section.

\begin{subsection}{Fundamental Properties of Maximal Chain Descent Orders}\label{sec:fundamental properties}

Here we prove several fundamental structural properties of maximal chain descent orders. We note as above that while these statements are written for {\CL}s, they also apply to {\EL}s since {\EL}s are {\CL}s. The notations of \cref{def:chain notations} are used ubiquitously in this section.

\begin{proposition}\label{prop:non ranked cords have zero hat}
Let $P$ be a finite, bounded poset with a {\CL} $\lm$. Then the maximal chain descent order $\cord{P}{\lm}$ has a unique minimal element, denoted by $\zh$, given by the unique ascending maximal chain of $P$ with respect to $\lm$.
\end{proposition}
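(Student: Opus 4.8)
The plan is to show that the unique ascending maximal chain of $P$ with respect to $\lm$, call it $m_0$, satisfies $\cleqi{m_0}{m}{\lm}$ for every $m \in \M(P)$, and that $m_0$ is the only minimal element. The key structural fact to exploit is \cref{prop:descents give unique incrs}: if $m$ has a descent, then there is an $m'$ with $m' \to m$. Contrapositively, the only maximal chains with no incoming polygon move are those with no descents at all, i.e., the ascending ones; and since $\lm$ is a {\CL}, the whole poset $P = [\zh,\oneh]$ is a rooted interval (root the empty chain below $\zh$), so there is a unique ascending maximal chain $m_0$. This already shows $m_0$ is the unique minimal element of $\cord{P}{\lm}$, provided we also check that $m_0$ is comparable to (in fact, below) everything, rather than merely that it is \emph{a} minimal element.

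For the reachability part I would argue by induction on the lexicographic rank of $m$, or more concretely by induction on a quantity that strictly decreases under reverse polygon moves. Given any $m \neq m_0$, since $m$ is not ascending it has a descent, so by \cref{prop:descents give unique incrs} there is $m'$ with $m' \to m$; by \cref{prop:nongradedmaxchainincreaselexincrease} we have $\lex{\lm(m')}{\lm(m)}$. Since $\M(P)$ is finite and lexicographic order is a strict partial order refined by... well, more carefully: lexicographic order on label sequences need not be total if chains share label sequences, but it is a well-founded partial order on the finite set $\M(P)$, and each reverse polygon move strictly decreases the label sequence in lex order. Hence iterating reverse polygon moves must terminate, and it can only terminate at a chain with no descent, namely $m_0$. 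Reversing this chain of moves gives $m_0 \to \cdots \to m$, so $\cleqi{m_0}{m}{\lm}$ by definition of $\cord{P}{\lm}$ as the reflexive–transitive closure of $\to$.

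Putting the two pieces together: $m_0 \preceq_\lm m$ for all $m$, so $m_0$ is a minimum, hence the unique minimal element. I would also remark (or simply invoke \cref{def:ellabeling}/\cref{def:cllabeling}) that the existence and uniqueness of $m_0$ is exactly the defining property of a {\CL} applied to the rooted interval $[\zh,\oneh]$ with the empty root, so no separate argument is needed there.

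The main obstacle is the mild subtlety around lexicographic order not being total on $\M(P)$ when distinct maximal chains carry the same label sequence: I need the termination argument for iterated reverse polygon moves to go through anyway. This is handled by noting that a reverse polygon move $m' \to m$ strictly lowers the label sequence in the lexicographic \emph{strict} order (not just weakly), by \cref{prop:nongradedmaxchainincreaselexincrease}, so the sequence of label sequences encountered is strictly lex-decreasing and cannot be infinite; equivalently, one can define the rank function $m \mapsto (\text{position of } \lm(m) \text{ in a fixed linear extension of lex order})$ and observe it strictly decreases. Everything else is a direct bookkeeping consequence of \cref{prop:descents give unique incrs} and the definition of $\cord{P}{\lm}$.
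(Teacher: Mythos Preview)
Your proposal is correct and takes essentially the same approach as the paper: both argue that any non-ascending maximal chain $m$ has a descent, produce $m'\to m$ with $m'$ strictly lexicographically smaller, and then use finiteness to conclude that iterating this process (equivalently, inducting on position in a total order compatible with lex) terminates at the unique ascending chain $m_0$. The only cosmetic difference is that the paper fixes a compatible total order up front and inducts forward on the index, whereas you iterate reverse polygon moves and appeal to well-foundedness of strict lex order on a finite set; these are the same argument.
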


\begin{proof}
This proof uses the same central idea as the proof that a lexicographic order from a {\CL} induces a shelling order of $\Delta(P)$. Let $m_1,\dots,m_t$ be a total order of the maximal chains $\M(P)$ which is compatible with the lexicographic order induced by $\lambda$. By definition of a {\CL}, $m_1$ is the unique ascending maximal chain with respect to $\lambda$. We will proceed by induction on the index in this total order to show that $\cleq{m_1}{_\lambda m_i}$ for all $1\leq i\leq t$. This is trivial when $i=1$. Assume it holds for all $1\leq k\leq i$ for some $i\geq 1$. Since $i+1>1$, $m_{i+1}$ is not the unique ascending maximal chain of $P$, so $m_{i+1}$ has a descent at some position $j$. Thus, $m_{i+1}:x_0\lessdot x_1 \lessdot \dots \lessdot x_{j-1} \lessdot x_j \lessdot x_{j+1}\lessdot \dots \lessdot x_{n-1}\lessdot x_n$ with $\lambda( x_{j-1}, x_j)\not \leq \lambda(x_j, x_{j+1})$. Hence, $m_{i+1}$ restricted to the rooted interval $[x_{j-1},x_{j+1}]_{m_{i+1}^{x_{j-1}}}$ is not the unique ascending maximal chain of the rooted interval $[x_{j-1},x_{j+1}]_{m_{i+1}^{x_{j-1}}}$. Then since $\lambda$ is a {\CL}, there is a unique ascending saturated chain $c$ in the rooted interval $[x_{j-1},x_{j+1}]_{m_{i+1}^{x_{j-1}}}$ with respect to $\lm$ which lexicographically precedes $m_{i+1}$ restricted to the rooted interval $[x_{j-1},x_{j+1}]_{m_{i+1}^{x_{j-1}}}$. Then $m_{i+1}^{x_{j-1}} * c * {m_{i+1}}_{x_{j+1}} \to m_{i+1}$. Thus, $\cl{ m_{i+1}^{x_{j-1}} * c * {m_{i+1}}_{x_{j+1}} }{m_{i+1}}{\lm}$. And $m_{i+1}^{x_{j-1}} * c * {m_{i+1}}_{x_{j+1}}$ lexicographically precedes $m_{i+1}$, so $\cleqi{m_1}{m_{i+1}^{x_{j-1}} * c * {m_{i+1}}_{x_{j+1}}}{\lm}$ by the inductive hypothesis. Hence, $\cl{m_1}{m_{i+1}}{\lm}$, so $m_1$ is the $\zh$ of $\cord{P}{\lambda}$ by induction.
\end{proof}

On the other hand, descending label sequences give maximal elements of the maximal chain descent order. They do not give all maximal elements though as witnessed by \cref{ex:cord which is not lex order}.

\begin{proposition}\label{prop:decrchainmaxlelt}
Let $P$ be a finite, bounded poset which admits a {\CL} $\lm$. If maximal chain $m\in \M(P)$ has a descending label sequence with respect to $\lm$, then $m$ is a maximal element of $\cord{P}{\lm}$.
\end{proposition}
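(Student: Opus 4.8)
The plan is to prove directly that no polygon move can start at $m$. This suffices: since $\preceq_\lambda$ is the reflexive and transitive closure of the relation $\to$, if $m$ were not maximal there would be a maximal chain $m'$ with $\cl{m}{m'}{\lm}$, and then the first step of any witnessing sequence $m = n_0 \to n_1 \to \cdots \to n_k = m'$ (with $k \geq 1$) would already exhibit a polygon move $m \to n_1$.

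So suppose, toward a contradiction, that $m \to m'$ for some $m' \in \M(P)$, and write $m : \zh = x_0 \lessdot x_1 \lessdot \cdots \lessdot x_r = \oneh$. Following \cref{def:differ by diamond} and \cref{def:increase and polygon}, let $[x_{i-1}, x'_{i+1}]_{m^{x_{i-1}}}$ be the rooted interval of the polygon corresponding to $m \to m'$, where $x'_{i+1} = x_{i+l}$ for some $l \geq 1$ and some $1 \leq i \leq r-1$. Then $m$ restricted to this rooted interval is the saturated chain $x_{i-1} \lessdot x_i \lessdot \cdots \lessdot x_{i+l}$, and by \cref{def:increase and polygon} this restricted chain is the unique ascending maximal chain of $[x_{i-1}, x'_{i+1}]_{m^{x_{i-1}}}$ with respect to $\lm$. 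The observation to exploit is that the root $m^{x_{i-1}}$ is precisely the bottom part of $m$, so $m$ itself decomposes as $m = m^{x_{i-1}} * (x_{i-1} \lessdot x_i \lessdot \cdots \lessdot x_{i+l}) * m_{x_{i+l}}$; since $\lm$ is a chain edge labeling, the labels it assigns to the edges $x_{i-1} \lessdot x_i, \dots, x_{i+l-1} \lessdot x_{i+l}$ inside this rooted interval agree with the corresponding entries $\lambda_i(m), \dots, \lambda_{i+l}(m)$ of the label sequence of $m$. The ascending condition therefore yields $\lambda_i(m) \leq \lambda_{i+1}(m) \leq \cdots \leq \lambda_{i+l}(m)$ in $\Lambda$; since $l \geq 1$, in particular $\lambda_i(m) \leq \lambda_{i+1}(m)$, so the internal element $x_i$ is an ascent of $m$ with respect to $\lm$. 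But by hypothesis the label sequence of $m$ is descending, so $x_i$ must be a descent, i.e.\ $\lambda_i(m) \not\leq \lambda_{i+1}(m)$. This contradiction shows that no polygon move begins at $m$, hence $m$ is a maximal element of $\cord{P}{\lm}$.

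I do not expect a serious obstacle. The only point that requires a little care is identifying the labels $\lm$ assigns inside the rooted interval $[x_{i-1}, x'_{i+1}]_{m^{x_{i-1}}}$ with the corresponding entries of the global label sequence $\lambda(m)$; this is exactly where one uses that $\lm$ is a chain edge labeling, namely that the labels of the bottom edges of a maximal chain depend only on the saturated chain traversed to reach them. For an {\EL} this identification is immediate, since its labels do not depend on roots at all.
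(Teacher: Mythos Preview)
Your proof is correct and follows essentially the same approach as the paper's: a descending chain has no ascents, so by \cref{def:increase and polygon} no polygon move can begin at $m$, making $m$ maximal. The paper's argument is a terse two lines citing the definition directly, while you have unpacked the details—in particular, the point that the rooted-interval labels agree with the global label sequence because the root $m^{x_{i-1}}$ is an initial segment of $m$ itself—which is a worthwhile clarification for {\CL}s but not a different strategy.
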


\begin{proof}
Since $m$ is descending with respect to $\lm$, $m$ has no ascents. Thus, there are no chains $m'\in \M(P)$ such that $m\to m'$ by \cref{def:increase and polygon}. So, $m$ is a maximal element of $\cord{P}{\lm}$.
\end{proof}

Since a {\CL} restricted to a rooted closed interval is a {\CL} of that interval (see \cref{prop:restrictinglabelings}), we can consider the maximal chain descent order induced by $\lm$ on the maximal chains of the interval. The next lemma shows that order relations in the maximal chain descent order of a rooted closed interval in a sense lift to order relations in the maximal chain descent order on the entire poset. This is an expression of the recursive nature of {\CL}s. We use this lemma ubiquitously.

\begin{lemma}\label{lem:restrictedrelslift}
Let $P$ be a finite, bounded poset which admits a {\CL} $\lambda$. Let $m=x_0\lessdot x_1 \lessdot \dots \lessdot x \lessdot \dots \lessdot y \lessdot \dots \lessdot x_{n-1}\lessdot x_n$ be a maximal chain of $P$ and let $c$ and $c'$ be maximal chains of the rooted interval $[x,y]_{m^x}$. If $\cleq{c}{_{\lm} c'}$ in the maximal chain descent order of the rooted interval $[x,y]_{m^x}$ induced by the restriction of $\lm$, then $\cleqi{m^x * c * m_y}{ m^x * c' * m_y}{\lm}$ in $\cord{P}{\lm}$.
\end{lemma}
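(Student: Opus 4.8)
The plan is to reduce the general statement to the single-step case $c \to c'$ and then lift a polygon move inside the rooted interval $[x,y]_{m^x}$ to a polygon move in $P$. Since $\cleq{c}{_\lm c'}$ in the maximal chain descent order of $[x,y]_{m^x}$ is by definition the reflexive-transitive closure of the single-step relation $\to$ there, and since the map $c \mapsto m^x * c * m_y$ is clearly injective, it suffices by induction on the length of a chain $c = d_0 \to d_1 \to \dots \to d_k = c'$ of polygon moves in $[x,y]_{m^x}$ to prove: if $c \to c'$ in $[x,y]_{m^x}$, then $m^x * c * m_y \to m^x * c' * m_y$ in $P$, or at least $\cleqi{m^x * c * m_y}{m^x * c' * m_y}{\lm}$. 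In fact I expect to get the cover-style relation $m^x * c * m_y \to m^x * c' * m_y$ directly, which then gives the transitive-closure statement immediately.

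So assume $c \to c'$ in the maximal chain descent order of $[x,y]_{m^x}$. By \cref{def:increase and polygon} applied inside that rooted interval, $c$ and $c'$ differ by a polygon: there are elements $w \lessdot$ (a saturated subchain of $c$) $\lessdot z$ with $w, z \in c \cap c'$, $w \lessdot z'$ a single cover in $c'$ with $w \lessdot z' \lessdot z$, and $c$ restricted to the rooted interval $[w,z]_{c^w}$ is the unique ascending chain with respect to $\lm$ while $c'$ restricted to $[w,z]_{c^w}$ is a descent. The key observation is that the root of this inner rooted interval, computed inside $P$, is $m^x * c^w$ (the saturated chain from $\zh$ through $x$ and then up to $w$ along $c$), and by the definition of a chain edge labeling — labels depend only on the bottom portion of a maximal chain — the labels that $\lm$ assigns along $[w,z]$ relative to the root $m^x * c^w$ in $P$ coincide with the labels relative to the root $c^w$ in the rooted interval $[x,y]_{m^x}$. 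Hence $m^x * c * m_y$ restricted to $[w,z]_{(m^x * c * m_y)^w}$ is still the unique ascending chain and $m^x * c' * m_y$ restricted to the same rooted interval is still a descent. Moreover $m^x * c * m_y$ and $m^x * c' * m_y$ differ by a polygon in $P$, since they agree outside $(w,z)$ and $m^x * c' * m_y$ has length two there. Therefore $m^x * c * m_y \to m^x * c' * m_y$ in $P$ by \cref{def:increase and polygon}, and the general statement follows by transitivity.

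The main obstacle — really the only nontrivial point — is the compatibility of roots and labels: one must check carefully that the rooted interval $[w,z]_{c^w}$ appearing inside $[x,y]_{m^x}$, when reinterpreted as a rooted interval of $P$, has root $m^x * c^w$ and that the restriction of the chain edge labeling behaves as expected. This is exactly the content of \cref{prop:restrictinglabelings} together with the defining property of a chain edge labeling (agreement of labels on common bottom segments), so it is more a matter of bookkeeping with the notations of \cref{def:chain notations} than a genuine difficulty. Once that is in place, everything reduces to unwinding \cref{def:differ by diamond} and \cref{def:increase and polygon} in the two posets and matching them up.
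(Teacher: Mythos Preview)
Your proposal is correct and follows essentially the same approach as the paper: reduce to the single-step case $c \to c'$, observe that differing by a polygon in $[x,y]$ lifts to differing by a polygon in $P$, and use compatibility of the restricted labeling with $\lm$ on $P$ to conclude that ascents/descents are preserved, hence $m^x * c * m_y \to m^x * c' * m_y$. The paper's proof is terser about the root-compatibility bookkeeping you spell out, but the argument is the same.
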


\begin{proof}
First, we show that if $c\to c'$, then $m^x * c * m_y\to m^x * c' * m_y$. Then the result follows since $c\to c'$ are precisely the relations whose reflexive and transitive closure give $\cord{[x,y]}{\lm}$ and $m^x * c * m_y\to m^x * c' * m_y$ are among the relations whose reflexive and transitive closure give $\cord{P}{\lm}$. If $c$ and $c'$ differ by a polygon in $[x,y]$, then $m^x * c * m_y$ and  $m^x * c' * m_y$ also differ by a polygon in $P$. Also, the restriction of $\lm$ to the rooted interval $[x,y]_{m^x}$ is compatible with $\lm$ on $P$. Thus, an ascent or descent in $c$ or $c'$ gives a an ascent or descent in $m^x * c * m_y$ or $m^x * c' * m_y$, respectively. Thus, $c\to c'$ implies $m^x * c * m_y\to m^x * c' * m_y$. 
\end{proof}

As a corollary of \cref{lem:restrictedrelslift} and \cref{prop:non ranked cords have zero hat} we describe certain general order relations in a maximal chain descent order. In particular, if maximal chain $m$ is ascending on each interval where it differs from maximal chain $m'$, then $\cl{m}{m'}{\lm}$. This corollary also provides the key fact used in our later proofs about shelling orders.

\begin{corollary}\label{cor:order rel from ascending on differing intervals}
Let $P$ be a finite, bounded poset with a {\CL} $\lm$. Let $m,m'\in M(P)$ be distinct maximal chains of $P$ and let $y_0<y_1<y_2<\dots<y_k$ be all of the elements of $m$ satisfying $y_l\in m\cap m'$ for each $0\leq l\leq k$ while $m'\cap(y_l,y_{l+1})$ and $m\cap(y_l,y_{l+1})$ are non-empty and disjoint for each $0\leq l\leq k-1$. Suppose that for each $0\leq l\leq k-1$ $m$ is ascending with respect to $\lm$ when restricted to the rooted interval $[y_l,y_{l+1}]_{m^{y_l}}$. Then $\cl{m}{m'}{\lm}$ in $\cord{P}{\lm}$.
\end{corollary}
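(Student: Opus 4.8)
The plan is to pass from $m$ to $m'$ by altering one ``divergence block'' at a time, sweeping from the top of $P$ downward, and to dispatch each individual step using \cref{prop:non ranked cords have zero hat} and \cref{lem:restrictedrelslift}. First I would record the structure hidden in the hypotheses. Since $m$ and $m'$ are maximal chains of the bounded poset $P$, both contain $\zh$ and $\oneh$; as the $y_l$ exhaust $m\cap m'$ this forces $y_0=\zh$ and $y_k=\oneh$, and $m\neq m'$ forces $k\geq 1$. The closed intervals $[y_l,y_{l+1}]$ for $0\leq l\leq k-1$ cover $P$ and pairwise overlap only in the points $y_l$, and both $m$ and $m'$ are recovered by concatenating their restrictions $m^{y_{l+1}}_{y_l}$ and $(m')^{y_{l+1}}_{y_l}$ to these blocks; on each block $[y_l,y_{l+1}]$ these two restrictions are \emph{distinct} maximal chains of $[y_l,y_{l+1}]$, because $m\cap(y_l,y_{l+1})$ and $m'\cap(y_l,y_{l+1})$ are nonempty and disjoint.

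Next I would introduce interpolating chains. For $0\leq j\leq k$ set $n_j := m^{y_{k-j}} * (m')_{y_{k-j}}$, the maximal chain that follows $m$ on the blocks $[y_l,y_{l+1}]$ with $l\leq k-1-j$ and follows $m'$ on the blocks with $l\geq k-j$; thus $n_0=m$ and $n_k=m'$. Fixing $j$ with $0\leq j\leq k-1$ and writing $x:=y_{k-1-j}$, $y:=y_{k-j}$, a short check with the notation of \cref{def:chain notations} shows that $n_j$ and $n_{j+1}$ agree on $[\zh,x]$ (where both equal $m^x$) and on $[y,\oneh]$ (where both equal $(m')_y$), while $n_j$ restricted to $[x,y]$ is $m^{y}_x$ and $n_{j+1}$ restricted to $[x,y]$ is $(m')^{y}_x$; hence $n_j=m^x * m^{y}_x * (m')_y$ and $n_{j+1}=m^x * (m')^{y}_x * (m')_y$, which differ only strictly inside $(x,y)$, and the root of the block being changed is $m^x=m^{y_{k-1-j}}$ --- exactly the root occurring in the hypothesis. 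This is precisely why the sweep must go downward: below the block under modification one is still on $m$, so the assumption that $m$ is ascending on $[y_l,y_{l+1}]_{m^{y_l}}$ applies verbatim.

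For the step from $n_j$ to $n_{j+1}$: by \cref{prop:restrictinglabelings} the restriction of $\lm$ to $[x,y]$ is a {\CL}, so by \cref{prop:non ranked cords have zero hat} the poset $\cord{[x,y]_{m^x}}{\lm}$ has a unique minimal element, namely the ascending maximal chain of the rooted interval $[x,y]_{m^x}$. By hypothesis that ascending chain is $m^{y}_x$, which is distinct from $(m')^{y}_x$, so $\cleqi{m^{y}_x}{(m')^{y}_x}{\lm}$ in $\cord{[x,y]_{m^x}}{\lm}$. Feeding this into \cref{lem:restrictedrelslift} with the ambient maximal chain $n_j$ (whose restrictions to $[\zh,x]$ and to $[y,\oneh]$ are $m^x$ and $(m')_y$) yields $\cleqi{n_j}{n_{j+1}}{\lm}$ in $\cord{P}{\lm}$, and since $n_j\neq n_{j+1}$ this relation is strict: $\cl{n_j}{n_{j+1}}{\lm}$. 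Concatenating the strict relations $\cl{n_0}{n_1}{\lm}$, $\dots$, $\cl{n_{k-1}}{n_k}{\lm}$ and using $k\geq 1$ gives $\cl{m}{m'}{\lm}$, as desired. I expect the only delicate point to be the root-dependence of {\CL}s, which is what dictates interpolating from the top down; everything else is bookkeeping with $*$, $(\cdot)^x$, and $(\cdot)_x$.
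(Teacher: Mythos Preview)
Your proof is correct and follows essentially the same approach as the paper: both arguments replace one divergence block at a time, sweeping from the top down so that the root below each modified block remains $m^{y_l}$, and invoke \cref{prop:non ranked cords have zero hat} together with \cref{lem:restrictedrelslift} at each step. The only cosmetic difference is that the paper phrases this as an induction on $k$ (replacing the top block $[y_n,y_{n+1}]$ first and then appealing to the inductive hypothesis for the remaining $n$ blocks), whereas you unroll the induction into the explicit sequence $n_0,n_1,\dots,n_k$.
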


\begin{proof}
We have $k\geq 1$ since $m\neq m'$. We induct on $k$. When $k=1$, $\cl{m}{m'}{\lm}$ by \cref{prop:non ranked cords have zero hat} and \cref{lem:restrictedrelslift} directly.

Assume that $\cl{m}{m'}{\lm}$ when $k=n$ for some $n\geq 1$. Suppose $k=n+1$. By \cref{prop:non ranked cords have zero hat} and \cref{lem:restrictedrelslift} $\cl{m}{m^{y_n} * m'^{y_{n+1}}_{y_n} * m_{y_{n+1}} }{\lm}$. Now by construction $m^{y_k} * m'^{y_{n+1}}_{y_n} * m_{y_{n+1}}$ differs from $m'$ in $n$ intervals. By definition of a {\CL} the label sequences of $m$ and $m^{y_n} * m'^{y_{n+1}}_{y_n} * m_{y_{n+1}}$ agree up to $y_n$. Thus, $m^{y_n} * m'^{y_{n+1}}_{y_n} * m_{y_{n+1}}$ is ascending with respect to $\lm$ when restricted to the $n$ rooted intervals on which it differs from $m'$. Then the inductive hypothesis implies $\cl{m^{y_n} * m'^{y_{n+1}}_{y_n} * m_{y_{n+1}}}{m'}{\lm}$. Therefore, $\cl{m}{m'}{\lm}$, and the result holds by induction.
\end{proof}

\begin{remark}\label{rmk:not all order rels from increasing}
Not every order relation in a maximal chain descent order is of the form in \cref{cor:order rel from ascending on differing intervals}. We observe this in the example in \cref{fig:overlapping polygons case lower len two int}. The maximal chains labeled $1265$ and $3214$ are comparable, but neither is the ascending maximal chain of the entire poset with respect to the labeling. This can also be observed in the minimal labeling of the partition lattice in \cref{fig:Pi4 with min labeling} and \cref{fig:cord from Pi4 min labeling}.
\end{remark}

We now observe several examples which begin to reveal the subtlety of cover relations and rank in maximal chain descent orders.

\begin{remark}\label{rmk:polygon not always covers}
While every cover relation in a maximal chain descent order corresponds to an increase by a polygon move, it is perhaps surprising that not all increases by polygon moves result in cover relations. The example in \cref{fig:noncover diamond move example} and \cref{ex:CL non polygon complete} both exhibit this.
\end{remark}

\begin{example}\label{ex:CL non polygon complete}
\cref{fig:CL noncover diamond move example} shows that an increase by a polygon move with respect to a {\CL} which is not an {\EL} need not give a cover relation in the induced maximal chain descent order. In this example, the only label which depends on the root is the label of $x_1\lessdot \oneh$ which is the red $1$ if the maximal chain labeled in red is used and the blue $3$ otherwise. We observe that this is the dual of Bj{\"o}rner and Wachs' {\CL} of the Bruhat order of $S_3$ from \cite{bruhatshellbjornerwachs1982}. We have that $(\zh \lessdot z_2 \lessdot x_1 \lessdot \oneh) \to (\zh \lessdot z_1 \lessdot x_1 \lessdot \oneh) $ while $(\zh \lessdot z_2 \lessdot x_1 \lessdot \oneh)$ is not covered by $(\zh \lessdot z_1 \lessdot x_1 \lessdot \oneh)$ in $\cord{P}{\lm}$. 
\end{example}

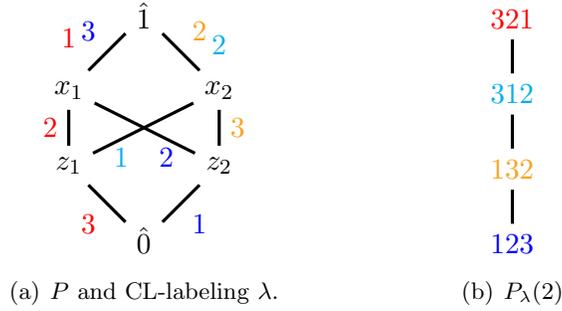
\begin{figure}[H]
    \centering
    \subfigure[$P$ and {\CL} $\lm$.]{
    \scalebox{1}{
    \begin{tikzpicture}[very thick]
    \node (zh) at (0,0) {$\zh$};
    \node (a1) at (-1,1) {$z_1$};
    \node (a2) at (1,1) {$z_2$};
    \node (b1) at (-1,2) {$x_1$};
    \node (b2) at (1,2) {$x_2$};
    \node (oneh) at (0,3) {$\oneh$};
    \node[red] (label1) at (-1,2.7) {$1$};

    \node (1) at (-0.3,1.1) {\textcolor{Cyan}{$1$}};
    \node (2) at (0.3,1.1) {\textcolor{blue}{$2$}};
    \node (c2) at (1,2.6) {\textcolor{Cyan}{$2$}};

    \node (spacer) at (2.5,0) {};
    \node (spacer) at (-2.5,0) {};
    
    \draw (zh) --node [red, below left]{$3$} (a1) --node [red, left]{$2$} (b1) --node [blue, above left]{$3$} (oneh);
    \draw (zh) --node [blue, below right]{$1$} (a2) --node [YellowOrange, right]{$3$} (b2) --node [YellowOrange, above right]{$2$} (oneh);
    \draw (a1) -- (b2);
    \draw (a2) -- (b1);
    \end{tikzpicture}
    }
    }
    \subfigure[$\cord{P}{\lm}$]{
    \scalebox{1}{
    \begin{tikzpicture}[very thick]
    \node[blue] (123) at (0,0) {$123$};
    \node[YellowOrange] (154) at (0,1) {$132$};
    \node[Cyan] (534) at (0,2) {$312$};
    \node[red] (543) at (0,3) {$321$};

    \node (spacer1) at (-1.5,0) {};
    \node (spacer2) at (1.5,0) {};
    
    \draw (123) -- (154) -- (534) -- (543);
    \end{tikzpicture}
    }
    }
    \caption{A {\CL} which is not polygon complete.}
    \label{fig:CL noncover diamond move example}
\end{figure}

\cref{fig:noncover diamond move example} and \cref{ex:CL non polygon complete} lead us to introduce the following definition.

\begin{definition}\label{def:polygon complete}
 Let $\lm$ be a {\CL} of a finite, bounded poset $P$. We say $\lm$ is \textbf{polygon complete} if $m\to m'$ implies $\cordoti{m}{m'}{\lm}$ in $\cord{P}{\lm}$. 
\end{definition}

In \cref{sec:polygon completeness}, we give a rather technical characterization of polygon complete {\EL}s. In that section and in \cref{sec:inversions in ELs}, we also give simpler conditions which are sufficient for polygon completeness or imply an {\EL} is not polygon complete. But there is still more we can say about cover relations in general, namely the number of elements which a maximal chain covers in a maximal chain descent order is at most the number of descents of the maximal chain with respect to the labeling.

\begin{lemma}\label{lem:number downward cord covers}
Let $P$ be a finite, bounded poset with a {\CL} $\lm$. For a maximal chain $m\in \M(P)$, the number of maximal chains $m'\in \M(P)$ such that $\cordoti{m'}{m}{\lm}$ is at most the number of descents of $m$ with respect to $\lm$. Moreover, if $\lm$ is polygon complete in the sense of \cref{def:polygon complete}, then the number of maximal chains $m'\in \M(P)$ such that $\cordoti{m'}{m}{\lm}$ is the number of descents of $m$ with respect to $\lm$.
\end{lemma}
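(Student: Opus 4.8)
The plan is to identify, for a fixed maximal chain $m\in\M(P)$, the set $\{m'\in\M(P):m'\to m\}$ of maximal chains from which $m$ is reached by a single polygon move with the set of descents of $m$ with respect to $\lm$, and then to compare this set with the set of maximal chains covered by $m$ in $\cord{P}{\lm}$.

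First I would prove that the assignment $x\mapsto m'_x$ is a bijection from the set of descents of $m$ onto $\{m'\in\M(P):m'\to m\}$, where $m'_x$ is the maximal chain supplied by \cref{prop:descents give unique incrs}: if $x$ is a descent of $m$, then $m'_x$ is the unique maximal chain with $m'_x\to m$ and $m\setminus m'_x=\set{x}$. This map is well defined by \cref{prop:descents give unique incrs}, and it is injective because $m\setminus m'_x=\set{x}$ recovers $x$ from $m'_x$. For surjectivity, suppose $m'\to m$; unwinding \cref{def:differ by diamond} and \cref{def:increase and polygon}, in the polygon move $m'\to m$ the chain $m$ plays the role of the shorter of the two chains (the primed one in \cref{def:differ by diamond}), so $m\setminus m'$ consists of a single element $x$, and the requirement that $m$ restricted to the rooted interval between the two neighbours of $x$ in $m$ ``is a descent'' says precisely that $x$ is a descent of $m$. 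The uniqueness clause of \cref{prop:descents give unique incrs} then gives $m'=m'_x$, so the map is onto. In particular, $\#\{m'\in\M(P):m'\to m\}$ equals the number of descents of $m$ with respect to $\lm$.

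Next I would recall that every cover relation of $\cord{P}{\lm}$ arises from a single polygon move: if $\cordoti{m'}{m}{\lm}$, choose a $\to$-path $m'=a_0\to a_1\to\dots\to a_k=m$ witnessing $\cl{m'}{m}{\lm}$, which exists since $\cord{P}{\lm}$ is the transitive closure of $\to$. By \cref{prop:nongradedmaxchainincreaselexincrease} the label sequences strictly increase along this path, so the $a_i$ are pairwise distinct; hence if $k\geq 2$ we would have $\cl{m'}{a_1}{\lm}$ and $\cl{a_1}{m}{\lm}$ with $a_1\notin\set{m',m}$, contradicting that $\cordoti{m'}{m}{\lm}$ is a cover. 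So $k=1$, i.e. $m'\to m$. It follows that $\{m'\in\M(P):\cordoti{m'}{m}{\lm}\}\subseteq\{m'\in\M(P):m'\to m\}$, and combining with the bijection above, the number of maximal chains covered by $m$ in $\cord{P}{\lm}$ is at most the number of descents of $m$ with respect to $\lm$, proving the first assertion.

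Finally, for the ``moreover'' clause, assume $\lm$ is polygon complete. By \cref{def:polygon complete}, $m'\to m$ implies $\cordoti{m'}{m}{\lm}$, so the reverse inclusion $\{m'\in\M(P):m'\to m\}\subseteq\{m'\in\M(P):\cordoti{m'}{m}{\lm}\}$ also holds; the two sets therefore coincide, and by the bijection their common size is the number of descents of $m$ with respect to $\lm$. The step I expect to require the most care is the surjectivity part of the bijection: one must chase the index conventions of \cref{def:differ by diamond} and \cref{def:increase and polygon} to confirm that a polygon move landing at $m$ removes exactly one element of $m$ and that this element is forced to be a descent of $m$. Everything else is a formal comparison of two set inclusions.
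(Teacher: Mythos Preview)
Your proposal is correct and follows essentially the same approach as the paper: identify $\{m':m'\to m\}$ with the descents of $m$ via \cref{prop:descents give unique incrs}, note that the set of elements covered by $m$ is contained in this set, and use polygon completeness for equality. You supply more detail than the paper does---in particular you explicitly verify that every cover relation arises from a single polygon move, which the paper takes for granted---but the underlying argument is the same.
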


\begin{proof}
By \cref{def:increase and polygon} and  \cref{prop:descents give unique incrs} the number of maximal chains $m'\in \M(P)$ such that $m'\to m$ is exactly the number of descents of $m$ with respect to $\lm$. Then since $m'\to m$ does not necessarily give a cover relation, the number of downward cover relations from $m$ is at most the number of descents of $m$ with respect to $\lm$. Further, if $\lm$ is polygon complete, then $m'\to m$ does give a cover relation. Thus, the number of downward cover relations from $m$ is the number of descents of $m$ with respect to $\lm$.
\end{proof}

\begin{example}\label{ex:overlapping polygons case lower len two int}
\cref{fig:overlapping polygons case lower len two int} shows a poset $P$ with an {\EL} $\lm$ and the resulting maximal chain descent order. The maximal chain descent order in this case is not ranked. Notice also that the number of downward cover relations in $\cord{P}{\lm}$ from each maximal chain of $P$ is at most the number of descents in its label sequence. In particular, the upper bound is reached by all maximal chains of $P$ except for the maximal chain labeled $2654$. The label seqence $2654$ has two descents, but $2134\to 2654$ while $\cordoti{2134\not}{2654}{\lm}$ in $\cord{P}{\lm}$.

\begin{figure}[H]
    \centering
    \subfigure[{$P$ with {\EL} $\lm$.}]{
    \scalebox{1}{
    \begin{tikzpicture}[very thick]
    \node[fill,circle,inner sep=0pt,minimum size=6pt] (zh) at (0,0) {};
    \node[fill,circle,inner sep=0pt,minimum size=6pt] (a2) at (0,1) {};
    \node[fill,circle,inner sep=0pt,minimum size=6pt] (a1) at (-1,1) {};
    \node[fill,circle,inner sep=0pt,minimum size=6pt] (a3) at (1,1) {};
    \node[fill,circle,inner sep=0pt,minimum size=6pt] (b2) at (0,2) {};
    \node[fill,circle,inner sep=0pt,minimum size=6pt] (c2) at (0,3) {};
    \node[fill,circle,inner sep=0pt,minimum size=6pt] (b1) at (-1,2) {};
    \node[fill,circle,inner sep=0pt,minimum size=6pt] (b3) at (1,2) {};
    \node[fill,circle,inner sep=0pt,minimum size=6pt] (c1) at (-1,3) {};
    \node[fill,circle,inner sep=0pt,minimum size=6pt] (oneh) at (0,4) {};

    \node (1) at (-0.35,1.1) {\textcolor{blue}{$1$}};
    \node (6) at (0.35,1.1) {\textcolor{blue}{$6$}};
    \node (2) at (0.65,1.1) {\textcolor{blue}{$2$}};
    \node (4) at (0.53,2.02) {\textcolor{blue}{$4$}};

    \node (spacer) at (1.5,0) {};
    \node (spacer) at (-1.5,0) {};
    
    \draw (zh) --node[below left,blue] {$1$} (a1) --node[ left,blue] {$2$} (b1) --node[left,blue] {$6$} (c1) --node[above left,blue] {$5$} (oneh);
    \draw (a1) --node[ left,blue] {$3$} (b2) --node[below left,blue] {$1$} (c2) --node[right,blue] {$4$} (oneh);
    \draw (zh) --node[ left,blue] {$2$} (a2) -- (b1) --node[left,blue] {$3$} (c2);
    \draw (zh) --node[below right,blue] {$3$} (a3) --node[right, blue] {$1$} (b3) --node[above right,blue] {$5$} (c2);
    \draw (a2) -- (b3);
    \draw (a3) -- (b2);
    \draw (b3) -- (c1);
   
    \end{tikzpicture}
    }}
    \hspace{5mm}
    \subfigure[Consequent $\cord{P}{\lm}$.]{
    \scalebox{1}{ 
    \begin{tikzpicture}[very thick]
    \node[blue] (1234) at (0,0) {$1234$};
    \node[blue] (1314) at (1.5,1) {$1314$};
    \node[blue] (2134) at (-0.5,1) {$2134$};
    \node[blue] (1265) at (-1.5,1) {$1265$};
    \node[blue] (2165) at (-1,2) {$2165$};
    \node[blue] (2645) at (-1,3) {$2645$};
    \node[blue] (2654) at (-1.5,4) {$2654$};
    \node[blue] (3145) at (-0.5,4) {$3145$};
    \node[blue] (3154) at (-1,5) {$3154$};
    \node[blue] (3214) at (0,6) {$3214$};
    
    \draw (1234) -- (1314) -- (3214);
    \draw (1234) -- (1265) -- (2165) -- (2645) -- (2654) -- (3154) -- (3214);
    \draw (1234) -- (2134) -- (2165);
    \draw (2645) -- (3145) -- (3154);

    \end{tikzpicture}
    }}
    \caption{A poset $P$ with {\EL} $\lm$ and its maximal chain descent order.}
    \label{fig:overlapping polygons case lower len two int}
\end{figure}
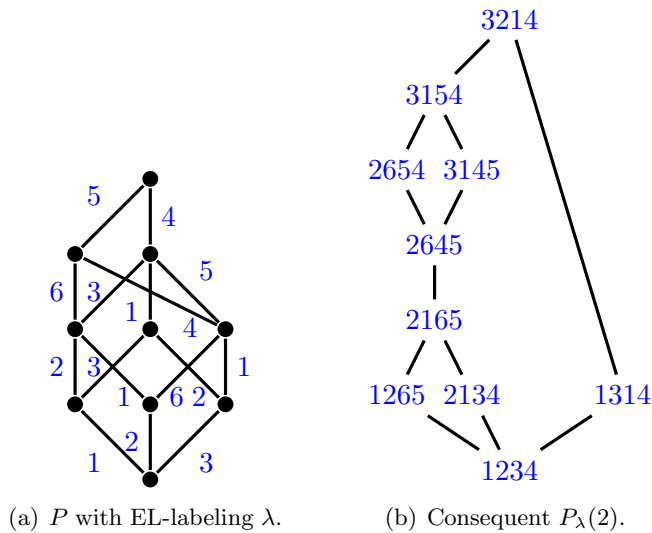

\end{example}

\end{subsection}

\begin{subsection}{Equivalence of Linear Extensions of \texorpdfstring{$\mathbf{\cord{P}{\boldsymbol{\lm}}}$ and Shellings Induced by $\boldsymbol{\lm}$}{Lg}}
Now we turn to how maximal chain descent orders encode the shellings induced by $\lm$ and prove \cref{thm:introduction shelling order with descent restriction iff linext of cord}. The following lemma says that any linear extension of a maximal chain descent order gives a shelling of the order complex of the original poset. Bj{\"o}rner and Wachs' original proofs that {\EL}s and {\CL}s induce shellings essentially go through with a modification using properties of maximal chain descent orders which we showed previously. We emphasize the modification in the proof below.

\begin{lemma}\label{lem:linextsgiveshellings}
Let $P$ be a finite, bounded poset which admits a CL-labeling $\lambda$. 
Then any linear extension of $\cord{P}{\lambda}$ gives a shelling order of the order complex $\Delta(P)$ and the restriction map of any such shelling is given by  $R(m)=\sett{x\in m}{\text{x is a descent of $m$ w.r.t. $\lm$}}$ for any maximal chain $m\in \M(P)$. The homology facets of the shelling of $\Delta(\overline{P})$ induced by any linear extension of $\cord{P}{\lm}$ are given by the maximal chains $m\in \M(P)$ with descending label sequence with respect to $\lm$.
\end{lemma}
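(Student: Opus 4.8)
The plan is to apply \cref{prop:restriction version of shelling} with the candidate restriction map $R(m)=\sett{x\in m}{x \text{ is a descent of } m \text{ w.r.t. } \lm}$, following Bj{\"o}rner and Wachs' argument for lexicographic shellings but substituting $\cord{P}{\lm}$ for lexicographic order in the one place it is needed. So, letting $\Omega$ be a linear extension of $\cord{P}{\lm}$, I must check two things: that $\Delta(P)$ is the disjoint union $\bigsqcup_{m\in\M(P)}[R(m),m]$, and that $R(m)\subseteq m'$ forces $m$ to come weakly before $m'$ in $\Omega$.

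For the disjoint union, which is purely a statement about {\CL}s and uses nothing about $\Omega$: given a chain $c$ of $P$, build a maximal chain $m(c)$ by inserting into each gap between consecutive elements of $c\cup\set{\zh,\oneh}$ the unique ascending saturated chain of that rooted interval, the root being the piece of $m(c)$ already built below. Since $m(c)$ is ascending on each gap interval, and $\zh,\oneh$ are never descents, every descent of $m(c)$ lies in $c$; thus $R(m(c))\subseteq c\subseteq m(c)$, so $c\in[R(m(c)),m(c)]$ and the union covers $\Delta(P)$. For disjointness, if $R(m)\subseteq c\subseteq m$ and $R(m')\subseteq c\subseteq m'$, then on each gap of $c$ the interior elements lie outside $c$, hence outside $R(m)$ and $R(m')$, hence are not descents of $m$ or $m'$; so each of $m,m'$ restricts to an ascending chain on every such interval. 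Working up the gaps from $\zh$ — where the relevant roots $m^{z}$ and $m'^{z}$ agree because the chains have already been forced to agree below — uniqueness of the ascending chain in a rooted interval forces $m=m'$.

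For the order condition, assume $m\neq m'$ with $R(m)\subseteq m'$, and let the open blocks $(y_0,y_1),\dots,(y_{k-1},y_k)$ between consecutive common elements on which $m$ and $m'$ genuinely diverge be as in \cref{cor:order rel from ascending on differing intervals}. Each interior element of such a block lies on $m$ but not on $m'$, hence not in $R(m)$, hence is not a descent of $m$; since the descent condition at an interior element involves only elements of that block, $m$ is ascending with respect to $\lm$ on each rooted interval $[y_l,y_{l+1}]_{m^{y_l}}$. Then \cref{cor:order rel from ascending on differing intervals} gives $\cl{m}{m'}{\lm}$ in $\cord{P}{\lm}$, so $\Omega$ places $m$ before $m'$. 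By \cref{prop:restriction version of shelling}, $\Omega$ is a shelling of $\Delta(P)$ with restriction map $R$. For the homology facets: since $R(m)$ never meets $\set{\zh,\oneh}$, the decomposition and the order condition persist after deleting the cone points (cf.\ \cref{rmk:proper part shelling gives same poset}), so $m\setminus\set{\zh,\oneh}$ is a homology facet of $\Delta(\overline P)$ exactly when $R(m)=m\setminus\set{\zh,\oneh}$, i.e.\ when every element of the proper part of $m$ is a descent, i.e.\ when $\lm(m)$ is descending.

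The main obstacle — and the only departure from the classical proof — is the order condition: one must recognize that the hypothesis $R(m)\subseteq m'$ is precisely the hypothesis "$m$ is ascending on the intervals where $m$ and $m'$ differ" needed to invoke \cref{cor:order rel from ascending on differing intervals}; everything else is the standard {\CL} bookkeeping of Bj{\"o}rner and Wachs.
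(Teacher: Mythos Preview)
Your proof is correct and takes essentially the same approach as the paper: both hinge on \cref{cor:order rel from ascending on differing intervals} to convert $R(m)\subseteq m'$ into $\cl{m}{m'}{\lm}$, and both organize the argument around \cref{prop:restriction version of shelling}. The only cosmetic difference is that the paper outsources the disjoint-union step to \cref{thm:clshell} (and separately sketches the direct shelling argument first), whereas you reprove the partition from scratch; your version is slightly more self-contained but otherwise the same.
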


\begin{proof}
Let $m_1,\dots, m_t$ be a linear extension of $\cord{P}{\lm}$. Consider $m_i\cap m_j$ for $i<j$. We will show that $m_j$ has a descent at some $x\in m_j$ such that $x\not \in m_i$. Then the fact that this is a shelling order follows simply from the proof of Theorem 5.8 in \cite{shllblnonpureIbjornerwachs1996}, for instance, and the definition of $\cord{P}{\lm}$. 

We consider the maximal intervals on which $m_i$ and $m_j$ differ. That is, let $y_0<y_1<y_2<\dots<y_k$ be all the elements of $m_j$ such that $y_l\in m_i\cap m_j$ for each $0\leq l\leq k$ while $m_j\cap(y_l,y_{l+1})$ and $m_i\cap(y_l,y_{l+1})$ are non-empty and disjoint for each $0\leq l\leq k-1$. If $m_j$ is ascending with respect to $\lm$ on each rooted interval $[y_l,y_{l+1}]_{m^{y_l}_j}$, then $\cl{m_j}{m_i}{\lm}$ by \cref{cor:order rel from ascending on differing intervals}. However, this contradicts the fact that $m_1,\dots, m_t$ is a linear extension of $\cord{P}{\lm}$ and $i<j$. Thus, $m_j$ has a descent at some $x\in m_j$ such that $x\not \in m_i$.

As for the restriction map, $R$ as defined above gives the necessary partition of $\Delta(P)$ by \cref{thm:clshell} and \cref{prop:restriction version of shelling}. Now we verify the restriction set containment condition of \cref{prop:restriction version of shelling}. Assume $R(m_i)\subseteq m_j$ for some $i\neq j$. Assume that the maximal intervals on which $m_i$ and $m_j$ differ are the same as above. Then since $R(m_i)$ is exactly the set of elements of $m_i$ at which $m_i$ has descents and $R(m_i)\subseteq m_j$, $m_i\cap[y_l,y_{l+1}]$ is ascending with respect to $\lm$ and the root $m_i^{y_l}$ for each $1\leq l\leq k-1$. Thus, $\cl{m_i}{m_j}{\lm}$ by \cref{cor:order rel from ascending on differing intervals}. This implies $i<j$ because $m_1,m_2,\dots, m_t$ is a linear extension of $\cord{P}{\lm}$. Therefore, $R$ is the restriction map of this shelling. 

Lastly, a maximal chain $m\in \M(P)$ has $m\setminus \set{\zh,\oneh}$ a homology facet of the shelling of $\Delta(\overline{P})$ induced by $m_1,m_2,\dots,m_t$ if and only if $R(m)=m\setminus \set{\zh,\oneh}$. Thus, $m\setminus \set{\zh,\oneh}$ is a homology facet if and only if $m$ has a descent with respect to $\lm$ at each element of $m\setminus \set{\zh,\oneh}$.
\end{proof}

\begin{remark}\label{rmk:lin ext shellings give more shellings than lex}
It is clear from \cref{cor:nongradedcordgreaterlexgreater} that the lexicographic shellings in the sense Bj{\"o}rner and Wachs (see \cref{sec:bkgrlexshell}) are among the linear extension shellings of \cref{lem:linextsgiveshellings}. On the other hand, one linear extension of the maximal chain descent order in \cref{ex:cord which is not lex order} is $123,213,132$. This gives a shelling order of $\Delta(P)$ by \cref{lem:linextsgiveshellings}. However, this total order is not compatible with the lexicographic order induced by the labeling. This shows that these linear extension shellings can give strictly more shelling orders of $\Delta(P)$ than the lexicographic ones. This can also be observed by applying \cref{lem:linextsgiveshellings} to the Boolean lattice with its standard {\EL} as in \cref{sec:motiv example boolean lat weak Sn} and any linear extension of the weak order on the symmetric group except for the lexicographic order on permutations.
\end{remark}

\begin{remark}\label{rmk:type A coxeter complex shellings}
We observe that \cref{lem:linextsgiveshellings} applied to the Boolean lattice and its {\EL} from \cref{sec:motiv example boolean lat weak Sn} recovers a special case of a result due to Bj{\"o}rner in \cite{coxetercomplexesbjorner1984}. Namely, any linear extension of weak order on a Coxeter group induces a shelling of the corresponding Coxeter complex. In the case of type A, the Coxeter complex is the order complex of the proper part of the Boolean lattice, so \cref{thm:boolean lat cord weak order symm grp} and \cref{lem:linextsgiveshellings} give the type A case of Bj{\"o}rner's result.
\end{remark}

Another consequence of \cref{lem:linextsgiveshellings} is that we can detect some homology facets (all the homology facets if the labeling is polygon complete) from the number of downward cover relations from a given element in a maximal chain descent order. 

\begin{corollary}\label{cor:homology facets from downward covers}
Let $P$ be a finite, bounded poset which admits a CL-labeling $\lambda$. Let $\Omega$ be any linear extension of $\cord{P}{\lm}$. Let $m\in \M(P)$ be a maximal chain of length $n$. If the number of maximal chains $m'\in \M(P)$ such that $\cordoti{m'}{m}{\lm}$ is $n-1$, then $m\setminus \set{\zh,\oneh}$ is a homology facet of the shelling of $\Delta(\overline{P})$ induced by $\Omega$. Moreover, if $\lm$ is polygon complete, then $m\setminus \set{\zh,\oneh}$ is a homology facet of the shelling of $\Delta(\overline{P})$ induced by $\Omega$ if and only if the number of downward cover relations from $m$ in $\cord{P}{\lm}$ is $n-1$.
\end{corollary}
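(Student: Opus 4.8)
The plan is to derive \cref{cor:homology facets from downward covers} directly from \cref{lem:linextsgiveshellings} together with \cref{lem:number downward cord covers}. Recall from \cref{lem:linextsgiveshellings} that for any linear extension $\Omega$ of $\cord{P}{\lm}$, the facet $m\setminus\set{\zh,\oneh}$ is a homology facet of the induced shelling of $\Delta(\overline{P})$ precisely when $m$ has a descent at every element of $m\setminus\set{\zh,\oneh}$, i.e. when the label sequence $\lm(m)$ is descending. So the entire corollary reduces to relating the number of downward cover relations from $m$ in $\cord{P}{\lm}$ to whether $\lm(m)$ is descending.

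For the first statement, suppose $m$ has length $n$, so that $m$ has exactly $n-1$ internal elements (the elements of $m\setminus\set{\zh,\oneh}$), hence at most $n-1$ descents. By \cref{lem:number downward cord covers}, the number of maximal chains $m'$ with $\cordoti{m'}{m}{\lm}$ is at most the number of descents of $m$, which is at most $n-1$. If this number equals $n-1$, then $m$ must have exactly $n-1$ descents, i.e. a descent at each internal element, so $\lm(m)$ is descending; by \cref{lem:linextsgiveshellings} this makes $m\setminus\set{\zh,\oneh}$ a homology facet of the shelling induced by $\Omega$. For the ``moreover'' statement, when $\lm$ is polygon complete, \cref{lem:number downward cord covers} gives that the number of downward cover relations from $m$ equals exactly the number of descents of $m$. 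Since $m$ has $n-1$ internal elements, this number is $n-1$ if and only if $m$ has a descent at each internal element, i.e. if and only if $\lm(m)$ is descending, which by \cref{lem:linextsgiveshellings} is exactly the condition that $m\setminus\set{\zh,\oneh}$ is a homology facet of the shelling induced by $\Omega$.

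There is no real obstacle here; the only point requiring a small remark is the bookkeeping that a maximal chain of length $n$ has exactly $n-1$ elements in its proper part and therefore at most $n-1$ descent positions, so that achieving $n-1$ downward relations forces the chain to be fully descending. I would state this explicitly and then simply cite \cref{lem:number downward cord covers} and \cref{lem:linextsgiveshellings} to close both directions. Since the homology facet characterization from \cref{lem:linextsgiveshellings} does not depend on the choice of linear extension $\Omega$, the conclusion automatically holds for every such $\Omega$, which is why the statement is phrased for an arbitrary linear extension.
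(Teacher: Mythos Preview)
Your proposal is correct and follows exactly the approach of the paper, which simply cites \cref{lem:number downward cord covers} and the homology facet statement of \cref{lem:linextsgiveshellings}. Your explicit bookkeeping that a length-$n$ chain has $n-1$ internal elements (hence at most $n-1$ descents) is the only additional detail, and it is the right one to spell out.
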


\begin{proof}
This follows from \cref{lem:number downward cord covers} and the homology facet statement of \cref{lem:linextsgiveshellings}.
\end{proof}

Conversely, we have the following lemma which implies that, in a precise sense, maximal chain descent orders encode all shellings ``derived from" a {\CL}. This lemma also reinforces the name ``maximal chain descent order." 

\begin{lemma}\label{lem:converse to lin ext shellings}
 Let $P$ be a finite, bounded poset with a {\CL} $\lm$. Suppose a total order $m_1,m_2,\dots,m_t$ on the maximal chains of $P$ induces a shelling order of the order complex $\Delta(P)$ with the property that for each $1\leq i\leq t$ the restriction face $R(m_i)$ of $m_i$ is precisely the face $R(m_i)=\sett{x\in m_i}{\text{x is a descent of $m_i$ w.r.t. $\lm$}}$. Then $m_1,m_2,\dots,m_t$ is a linear extension of $\cord{P}{\lm}$.
\end{lemma}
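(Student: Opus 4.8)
The plan is to show the contrapositive-flavored statement directly: assume $m_1, \dots, m_t$ is a shelling with restriction map given by descents, and prove that whenever $\cordoti{m}{m'}{\lm}$ (a single polygon move, hence in particular whenever $\cl{m}{m'}{\lm}$ by taking transitive closure), the chain $m$ appears before $m'$ in the order. Since the order relations of $\cord{P}{\lm}$ are generated by the polygon moves $m\to m'$, it suffices to handle the case $m\to m'$ and then close under transitivity automatically (the total order is transitive). So fix $m\to m'$ with the polygon having bottom element $x_{i-1}$ and top element $x'_{i+1}$, so that $m'\setminus m = \{x\}$ where $x$ is the descent of $m'$ created by the move, and $m$ is ascending on the rooted interval $[x_{i-1},x'_{i+1}]_{m^{x_{i-1}}}$.

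The key observation is that $x \in R(m')$: by hypothesis $R(m')$ is exactly the set of descents of $m'$ with respect to $\lm$, and by \cref{def:increase and polygon} the element $x$ is a descent of $m'$. Now I want to conclude from \cref{prop:restriction version of shelling} part (2) that the position of $m'$ in the order is constrained. More precisely, I would argue that $m$ is the unique facet with $m' \setminus \{x\} \subseteq m$ that could be responsible for $x \in R(m')$, or better: since $m' \setminus \{x\} = m \setminus (\text{the interior of the polygon on } m\text{'s side})$ is a codimension-(something) face... actually the cleanest route is this. The face $f := m' \setminus \{x\}$ is a face of $m'$, and $R(m') \not\subseteq m' \setminus \{x\}$ since $x \in R(m')$; hence by the disjoint-union description $\Delta(P) = \bigsqcup [R(m_j), m_j]$, the face $f$ does \emph{not} lie in $[R(m'), m']$, so it lies in $[R(m_k), m_k]$ for some $k$ with $m_k \neq m'$ and $R(m_k) \subseteq f \subseteq m_k$. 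I then want to show $m_k = m$, or at least that any such $m_k$ must precede $m'$; combined with $R(m_k) \subseteq m_k$ and the containment-implies-earlier condition, and tracking that $m$ itself satisfies $f \subseteq m$, I can pin down the ordering.

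Here is the cleaner packaging that I expect to actually use. Apply \cref{cor:order rel from ascending on differing intervals}: the only interval on which $m$ and $m'$ differ is $[x_{i-1}, x'_{i+1}]$, and on that (rooted) interval $m$ is ascending with respect to $\lm$ by the definition of a polygon move; hence $\cl{m}{m'}{\lm}$, which we already knew. What we need is the order position. So instead: $R(m') \ni x$ and $R(m') \cap ([x_{i-1},x'_{i+1}] \cap m')$ is nonempty, while $R(m) \cap [x_{i-1},x'_{i+1}]$ — the descents of $m$ inside the polygon — is \emph{empty} since $m$ is ascending there. So $m$ and $m'$ have the property that $R(m) \subseteq m'$? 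No — $R(m)$ might have descents elsewhere, but those elements lie in $m \cap m' $ outside the polygon, hence in $m'$. Indeed: every descent of $m$ lies outside the open polygon interval (since $m$ is ascending on the closed polygon interval, the only possible descent of $m$ in that range would be at the endpoints $x_{i-1}$ or $x'_{i+1}$, but those involve comparing with labels outside, which are shared with $m'$), and outside the polygon $m$ and $m'$ coincide. Therefore $R(m) \subseteq m \cap m' \subseteq m'$. By the restriction-containment condition (2) of \cref{prop:restriction version of shelling}, $R(m) \subseteq m'$ forces $m$ to occur no later than $m'$; since $m \neq m'$, $m$ occurs strictly before $m'$.

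Thus for each generating relation $m \to m'$, $m$ precedes $m'$ in the total order; taking transitive closure (which is automatic since a total order is transitive) gives that $\cl{m}{m'}{\lm}$ implies $m$ precedes $m'$, i.e. $m_1, \dots, m_t$ is a linear extension of $\cord{P}{\lm}$. The main obstacle I anticipate is the careful verification of the claim ``every descent of $m$ lies in $m \cap m'$'': one must check that $x_{i-1}$ and $x'_{i+1}$ are not descents of $m$, or rather that if $x_{i-1}$ is a descent of $m$ then it is also a descent of $m'$ (so still in $R(m') \subseteq m'$, harmless) — the subtlety being whether the label entering $x_{i-1}$ and the label leaving it agree between $m$ and $m'$; they do, since $m$ and $m'$ agree on the edge into $x_{i-1}$ and $m$ is ascending at the first step out of $x_{i-1}$ whereas we only need $R(m) \subseteq m'$ which holds because descents of $m$ at $x_{i-1}$ would be descents of $m'$ too, and $x'_{i+1}$ similarly (the edge out of $x'_{i+1}$ is shared, and if $x'_{i+1}$ is a descent of $m$ it is because the last label of the polygon on $m$'s side exceeds the shared outgoing label — but then we need it in $m'$, which it is, as $x'_{i+1} \in m \cap m'$). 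I would write this boundary analysis out carefully, possibly just invoking that $R(m) \subseteq (m \cap m') \cup \{x_{i-1}, x'_{i+1}\} \subseteq m'$ since $x_{i-1}, x'_{i+1} \in m'$ regardless.
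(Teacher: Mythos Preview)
Your proposal is correct and, in its ``cleaner packaging'' paragraph, takes essentially the same approach as the paper: for each polygon move $m\to m'$, observe that $m$ is ascending on the rooted polygon interval so all descents of $m$ lie at elements shared with $m'$, hence $R(m)\subseteq m'$, and then invoke \cref{prop:restriction version of shelling}(2) to conclude $m$ precedes $m'$ in the shelling order. Your boundary worry about $x_{i-1}$ and $x'_{i+1}$ is harmless exactly as you note in your final line---those elements belong to $m'$ regardless of whether they are descents of $m$---and the paper simply asserts $R(m_i)\subseteq m_j$ without spelling this out.
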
 

\begin{proof}
It suffices to show that if $m_i\to m_j$ with respect to $\lm$, then $i<j$. Let $y$ be the unique element of $m_j$ such that $m_j\setminus m_i=\set{y}$. Let $x$ and $z$ be the elements of $m_j$ satisfying $x\lessdot y\lessdot z$. By definition of an increase by a polygon move (\cref{def:increase and polygon}), $y$ is a descent of $m_j$ and $m_i\cap [x,z]$ is ascending with respect to $\lm$ and the root $m_i^x$. Thus, $R(m_i)\subseteq m_j$. This implies $i<j$ by \cref{prop:restriction version of shelling} since $R$ is the restriction map of the shelling $m_1,m_2,\dots,m_t$.
\end{proof}

Combined, the previous two lemmas prove \cref{thm:introduction shelling order with descent restriction iff linext of cord}.

\begin{theorem}\label{thm:shelling order with descent restriction iff linext of cord}
Let $P$ be a finite, bounded poset with a {\CL} $\lm$. For any total order $\Omega:m_1,m_2,\dots, m_t$ on the maximal chains of $P$, the following are equivalent:
\begin{itemize}
    \item [(1)] $\Omega$ is a linear extension of $\cord{P}{\lm}$.
    \item [(2)] $\Omega$ induces a shelling order of the order complex $\Delta(P)$ with the property that for each $1\leq i\leq t$ the restriction face $R(m_i)$ of $m_i$ is precisely the face \\ $R(m_i)=\sett{x\in m_i}{\text{x is a descent of $m_i$ w.r.t. $\lm$}}$.
\end{itemize}
\end{theorem}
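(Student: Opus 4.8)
The plan is to prove the two implications separately, recognizing that both have already been done as \cref{lem:linextsgiveshellings} and \cref{lem:converse to lin ext shellings}. So the main work is simply to assemble these two lemmas into a single biconditional statement. For the direction $(1)\Rightarrow(2)$, I would invoke \cref{lem:linextsgiveshellings}: if $\Omega$ is a linear extension of $\cord{P}{\lm}$, then that lemma says $\Omega$ is a shelling order of $\Delta(P)$ and that its restriction map is exactly $R(m)=\sett{x\in m}{\text{$x$ is a descent of $m$ w.r.t.\ }\lm}$, which is precisely statement (2). For the direction $(2)\Rightarrow(1)$, I would invoke \cref{lem:converse to lin ext shellings}: if $\Omega$ induces a shelling with the specified descent-restriction map, then $\Omega$ is a linear extension of $\cord{P}{\lm}$, which is statement (1).

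Since the two cited lemmas are exact converses phrased for this theorem, no genuinely new argument is needed. If I were proving the lemmas themselves from scratch, the substantive content lives in \cref{cor:order rel from ascending on differing intervals}: the key observation is that whenever two maximal chains $m$ and $m'$ differ on some collection of maximal subintervals and $m$ is ascending on each of those rooted subintervals, then $\cl{m}{m'}{\lm}$. For $(1)\Rightarrow(2)$ one uses this contrapositively — if $m_j$ were ascending on every interval where it differs from an earlier $m_i$, then $\cl{m_j}{m_i}{\lm}$, contradicting that $\Omega$ is a linear extension with $i<j$ — so $m_j$ has a descent at some $x\notin m_i$, which gives the codimension-one intersection property and identifies $R$; then one checks the restriction-containment condition of \cref{prop:restriction version of shelling} the same way. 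For $(2)\Rightarrow(1)$ one uses the same corollary directly: if $m_i\to m_j$ then $m_i$ restricted to the polygon interval $[x,z]_{m_i^x}$ is ascending, so $R(m_i)\subseteq m_j$, forcing $i<j$ by \cref{prop:restriction version of shelling}.

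The main obstacle, to the extent there is one, is purely bookkeeping: correctly tracking the maximal intervals on which two maximal chains differ and confirming that "ascending on each such rooted interval" is exactly the negation of "has a descent at some element not in the other chain." This is handled cleanly by \cref{cor:order rel from ascending on differing intervals} and \cref{prop:descents give unique incrs}. So the proof of the theorem as stated is a one-line citation of the two preceding lemmas.

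\begin{proof}
The implication $(1)\Rightarrow(2)$ is \cref{lem:linextsgiveshellings}, and the implication $(2)\Rightarrow(1)$ is \cref{lem:converse to lin ext shellings}.
\end{proof}
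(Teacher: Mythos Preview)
Your proposal is correct and matches the paper's own proof essentially verbatim: the paper also proves the theorem by citing \cref{lem:linextsgiveshellings} for $(1)\Rightarrow(2)$ and \cref{lem:converse to lin ext shellings} for $(2)\Rightarrow(1)$. Your additional discussion of how \cref{cor:order rel from ascending on differing intervals} and \cref{prop:restriction version of shelling} drive the underlying lemmas is accurate and aligns with the paper's arguments for those lemmas.
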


\begin{proof}
Statement (1) implies statement (2) by \cref{lem:linextsgiveshellings} and statement (2) implies statement (1) by \cref{lem:converse to lin ext shellings}.
\end{proof}

Since {\EL}s are instances of {\CL}s, the previous theorem holds for {\EL}s as well. We state this point as a corollary for emphasis.

\begin{corollary}\label{cor:EL restriction shelling iff lin ext cord}
Let $P$ be a finite, bounded poset with an {\EL} $\lm$. For any total order $\Omega:m_1,m_2,\dots, m_t$ on the maximal chains of $P$, the following are equivalent:
\begin{itemize}
    \item [(1)] $\Omega$ is a linear extension of $\cord{P}{\lm}$.
    \item [(2)] $\Omega$ induces a shelling order of the order complex $\Delta(P)$ with the property that for each $1\leq i\leq t$ the restriction face $R(m_i)$ of $m_i$ is precisely the face \\ $R(m_i)=\sett{x\in m_i}{\text{x is a descent of $m_i$ w.r.t $\lm$}}$.
\end{itemize}
\end{corollary}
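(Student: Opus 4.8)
The plan is to observe that this is not a genuinely new statement but a direct specialization of \cref{thm:shelling order with descent restriction iff linext of cord}. As recorded in the discussion following \cref{def:cllabeling}, an {\EL} is precisely a {\CL} whose labels do not depend on the choice of root; see also \cref{rmk:cord def works for EL}. Consequently, when $\lm$ is an {\EL} of $P$, the relation $m \to m'$ of \cref{def:increase and polygon} is the same whether one reads \cref{def:increase and polygon} in the {\EL} sense or in the {\CL} sense, and hence the maximal chain descent order $\cord{P}{\lm}$ of \cref{def:nongradedmaxchainorders} is the same poset in both interpretations. Likewise the descent set $\sett{x \in m}{\text{$x$ is a descent of $m$ w.r.t.\ $\lm$}}$ coincides in both interpretations, since the notion of descent for a root-independent labeling does not depend on the root.

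Given these identifications, the argument is immediate: a total order $\Omega$ on $\M(P)$ is a linear extension of $\cord{P}{\lm}$ if and only if, viewing the {\EL} $\lm$ as a {\CL}, $\Omega$ is a linear extension of $\cord{P}{\lm}$, which by \cref{thm:shelling order with descent restriction iff linext of cord} holds if and only if $\Omega$ induces a shelling order of $\Delta(P)$ whose restriction map is $R(m_i) = \sett{x \in m_i}{\text{$x$ is a descent of $m_i$ w.r.t.\ $\lm$}}$ for each $i$. Thus (1) and (2) are equivalent. There is no real obstacle here; the only point requiring (brief) care is the verification that the {\EL}-theoretic and {\CL}-theoretic versions of the polygon moves, the order $\cord{P}{\lm}$, and the descent sets genuinely agree, which is immediate from root-independence. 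One could alternatively note that \cref{lem:linextsgiveshellings} and \cref{lem:converse to lin ext shellings} are both stated for {\CL}s and therefore apply to {\EL}s directly, giving the two implications separately.
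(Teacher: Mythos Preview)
Your proposal is correct and matches the paper's approach exactly: the paper simply notes that since {\EL}s are instances of {\CL}s, \cref{thm:shelling order with descent restriction iff linext of cord} applies directly, and states this corollary for emphasis. Your additional remarks verifying that the {\EL} and {\CL} interpretations of polygon moves, $\cord{P}{\lm}$, and descent sets coincide are accurate but more detailed than the paper itself, which treats this as immediate.
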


We also immediately have that if two different {\CL}s of the same poset give the same descent set for each maximal chain, then they induce the same maximal chain descent order.

\begin{corollary}\label{cor:cl with same descent set isom EL}
Let $P$ be a finite, bounded poset with two possibly different {\CL}s $\lm$ and $\lm'$. Suppose that for each $m\in \M(P)$ and each $x\in m$, $m$ has descent at $x$ with respect to $\lm$ if and only if $m$ has descent at $x$ with respect to $\lm'$. Then $\cord{P}{\lm}=\cord{P}{\lm'}$.
\end{corollary}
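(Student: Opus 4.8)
The plan is to deduce this as an immediate consequence of \cref{thm:shelling order with descent restriction iff linext of cord}. The key observation is that the descent set data $\sett{x\in m}{x\text{ is a descent of }m\text{ w.r.t. }\lm}$ for each $m\in\M(P)$ is exactly the data that appears in condition (2) of that theorem, and by hypothesis this data agrees for $\lm$ and $\lm'$. First I would fix an arbitrary total order $\Omega$ on $\M(P)$ and unwind what condition (2) says for $\lm$ versus $\lm'$: in both cases it asserts that $\Omega$ induces a shelling order of $\Delta(P)$ whose restriction map sends $m_i$ to the set of its descents. Since $m$ has a descent at $x$ with respect to $\lm$ if and only if it does with respect to $\lm'$, the face $\sett{x\in m_i}{x\text{ is a descent w.r.t. }\lm}$ literally equals $\sett{x\in m_i}{x\text{ is a descent w.r.t. }\lm'}$ for every $i$. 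Hence condition (2) for $\lm$ holds for $\Omega$ if and only if condition (2) for $\lm'$ holds for $\Omega$.

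Applying \cref{thm:shelling order with descent restriction iff linext of cord} twice — once for $\lm$ and once for $\lm'$ — then gives: $\Omega$ is a linear extension of $\cord{P}{\lm}$ $\iff$ (2) holds for $\Omega$ with respect to $\lm$ $\iff$ (2) holds for $\Omega$ with respect to $\lm'$ $\iff$ $\Omega$ is a linear extension of $\cord{P}{\lm'}$. Thus $\cord{P}{\lm}$ and $\cord{P}{\lm'}$ have exactly the same set of linear extensions. The final step is to invoke the standard fact that a finite poset is determined by its set of linear extensions (two finite posets on the same ground set with the same linear extensions are equal, since $x<y$ in a poset iff $x$ precedes $y$ in every linear extension). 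Since both orders live on the common ground set $\M(P)$, having the same linear extensions forces $\cord{P}{\lm}=\cord{P}{\lm'}$ as partial orders.

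I do not anticipate a real obstacle here; the only point requiring mild care is making sure both orders are genuinely defined on the same underlying set $\M(P)$ (which they are, since the labelings $\lm,\lm'$ are labelings of the \emph{same} poset $P$), so that "same linear extensions implies equal" applies verbatim. One could alternatively give a direct argument avoiding the linear-extension characterization: show $m\to m'$ with respect to $\lm$ implies $\cl{m}{m'}{\lm'}$ by reasoning about a single polygon move, but this is more delicate because a single $\lm$-polygon move need not be a single $\lm'$-polygon move (the ascending chain in the rooted interval could differ), so one would have to chase through \cref{cor:order rel from ascending on differing intervals}. The route through \cref{thm:shelling order with descent restriction iff linext of cord} sidesteps all of this and is the cleanest path.
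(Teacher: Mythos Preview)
Your proposal is correct and follows essentially the same approach as the paper: apply \cref{thm:shelling order with descent restriction iff linext of cord} to conclude that $\cord{P}{\lm}$ and $\cord{P}{\lm'}$ have the same linear extensions, then use that a finite poset on a fixed ground set is determined by its linear extensions. Your write-up is simply a more detailed unpacking of the paper's two-line argument.
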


\begin{proof}
Both $\cord{P}{\lm}$ and $\cord{P}{\lm'}$ are posets on $\M(P)$. By \cref{thm:shelling order with descent restriction iff linext of cord} $\cord{P}{\lm}$ and $\cord{P}{\lm'}$ both have exactly the same set of linear extensions. A poset can be constructed simply from its set of linear extensions, so $\cord{P}{\lm}=\cord{P}{\lm'}$.
\end{proof}

Next we turn to the topic of cover relations in maximal chain descent orders. 

\end{subsection}

\section{Understanding Cover Relations via a Characterization of Polygon Complete {\EL}s}\label{sec:polygon completeness}

We characterize polygon completeness for {\EL}s (see \cref{def:polygon complete}) with two technical conditions below in \cref{thm:characterization of non-polygon completeness}. However, we begin with a simpler concrete sufficient condition for polygon completeness of {\EL}s in \cref{def:polygonstrongEL}. The proof of this sufficient condition provides us the opportunity to get a taste for some of the proof techniques we will use for \cref{thm:characterization of non-polygon completeness}, but in a more constrained context where we may be less delicate.

\begin{subsection}{Polygon Strong Implies Polygon Complete}\label{sec:polygon strong implies polygon complete}

We begin with several necessary lemmas some of which are possibly interesting in their own right. Then we present the definition of a polygon strong {\EL} before we show that it is sufficient for polygon completeness. This first lemma is quite straightforward and shows that any {\EL} of a poset of rank one or two is polygon complete. It provides the base cases for later induction arguments.

\begin{lemma}\label{lem:rank two polygon complete}
Let $P$ be a finite, bounded poset in which the length of the longest maximal chain is either one or two. Let $\lm$ be an {\CL} of $P$. Then $\lm$ is polygon complete.
\end{lemma}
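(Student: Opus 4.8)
The plan is to argue directly from the definitions that in a poset whose longest maximal chain has length at most two, any polygon move $m \to m'$ is automatically a cover relation in $\cord{P}{\lm}$. First I would dispose of the length-one case trivially: if every maximal chain has length one, then $P = \{\zh, \oneh\}$ (or more precisely $\zh \lessdot \oneh$ is the only maximal chain after restricting attention to a single maximal chain between $\zh$ and $\oneh$), there are no ascents or descents, hence no polygon moves, so $\lm$ is vacuously polygon complete. The substantive case is when the longest maximal chain has length two.

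In the length-two case, suppose $m \to m'$. By \cref{def:increase and polygon}, $m$ and $m'$ differ by a polygon, so there is a rooted interval $[x_{i-1}, x'_{i+1}]_{m^{x_{i-1}}}$ on which $m$ restricts to the unique ascending chain and $m'$ restricts to a descending (length-two) chain. But since the longest maximal chain of $P$ has length two, and $m$ must pass through this interval, I claim $m$ itself has length exactly two and the interval $[x_{i-1}, x'_{i+1}]$ is all of $[\zh,\oneh]$ with $x_{i-1} = \zh$ and $x'_{i+1} = \oneh$; indeed $m$ restricted to this interval already has length two (it is the ascending chain inside a rank-two interval, since $m'$ there has length two and $m'$ is also a chain of $P$, forcing the interval to have rank two), and $m$ has no room above or below. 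Consequently every polygon move in $P$ changes the unique middle element of a length-two maximal chain. The key observation is then that no maximal chain can lie strictly between $m$ and $m'$ in $\cord{P}{\lm}$: any $m''$ with $m \prec_\lm m'' \prec_\lm m'$ would have to arise from a chain of polygon moves, but by \cref{prop:nongradedmaxchainincreaselexincrease} (or \cref{cor:nongradedcordgreaterlexgreater}) we would get $\lex{\lm(m)}{\lm(m'')}{}$ and $\lex{\lm(m'')}{\lm(m')}{}$; since $m$, $m'$, and $m''$ all have length two and $\lm(m)$ is the ascending sequence while $\lm(m')$ is the descending rearrangement of the same multiset of labels, and there can be nothing lexicographically strictly between an ascending pair $a \leq b$ and its reversal $b, a$ among length-two sequences (if $a < b$ the only sequences are $(a,b)$ and $(b,a)$; if $a=b$ there is no descent at all, contradicting $m\to m'$), no such $m''$ exists.

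I would write this up by: (1) handling the trivial length-$\le 1$ case in one sentence; (2) reducing to analyzing a single interval $[x,y]$ with $x < y$ of rank two, observing that all maximal chains between $x$ and $y$ are length two, so by \cref{rmk:proper part shelling gives same poset}–style reasoning (or just directly) it suffices to work inside such an interval where the picture is a ``polygon'' of atoms; (3) noting that $m \to m'$ forces $\lm(m) = (a,b)$ with $a < b$ (it is the ascending chain) and $\lm(m') = (b', a')$ with $b' > a'$, and that the polygon move deletes the single middle vertex, so $m' \setminus m$ and $m \setminus m'$ are singletons; (4) invoking \cref{cor:nongradedcordgreaterlexgreater} to squeeze out any intermediate $m''$ via the lexicographic inequality, using that length-two label sequences between an ascending sequence and its reverse leave no room. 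The main obstacle — and it is mild — is being careful that ``longest maximal chain has length two'' does not mean every maximal chain has length two, so I must make sure the interval on which the polygon move happens genuinely has rank two (this is immediate because $m'$ restricted there already has length two, and $m'$ is a saturated chain of $P$, hence a length-two saturated chain inside $[x,y]$, forcing $[x,y]$ to contain a length-two maximal chain; combined with the global bound this interval is ``maximal'' and $m$, $m'$ have no elements outside it). Once that is pinned down, the rest is the elementary observation about length-two sequences.
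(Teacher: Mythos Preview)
Your lexicographic squeezing argument contains a genuine gap. You assert that $\lm(m')$ is ``the descending rearrangement of the same multiset of labels'' as $\lm(m)$, and then that ``there can be nothing lexicographically strictly between an ascending pair $a \leq b$ and its reversal $b,a$.'' But a {\CL} does not force different maximal chains of a rank-two interval to share the same label multiset. For instance, take a rank-two bounded poset with three maximal chains labeled $(1,5)$, $(3,2)$, $(4,1)$: this is an {\EL}, the ascending chain $m_0$ has labels $(1,5)$, and $m_0 \to m_2$ where $\lm(m_2)=(4,1)$. Your argument would need to rule out any $m''$ with $(1,5) <_{lex} \lm(m'') <_{lex} (4,1)$, but the chain labeled $(3,2)$ sits lexicographically strictly between them, so the squeeze fails to exclude it. (It is not actually between $m_0$ and $m_2$ in $\cord{P}{\lm}$, but your argument does not establish this.)

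The paper's proof avoids this entirely by a simpler observation you already have in hand: once you have shown that the polygon interval is all of $[\zh,\oneh]$, it follows that $m$ (being the ascending side of the polygon) must be \emph{the} unique ascending maximal chain $m_0$ of $P$. Hence every polygon move has source $m_0$, so every maximal chain other than $m_0$ is maximal in $\cord{P}{\lm}$, and $\cord{P}{\lm}$ has rank one. No lexicographic comparison is needed. Your write-up plan is fine through step (3); replace step (4) with this observation and the proof is complete.
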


\begin{proof}
If the length of the longest maximal chain of $P$ is one, then $P$ has exactly one maximal chain and $\lm$ is vacuously polygon complete because there are no polygons to check.

Suppose the longest maximal chain of $P$ has length two. Then every maximal chain of $P$ has length two. Thus, every maximal chain of $P$, except for the unique ascending chain, is a descent. Thus, the increases by polygon moves of $P$ with respect to $\lm$ are of the form $m_0\to m$ where $m_0$ is the unique ascending chain of $P$ and $m\neq m_0$ is any other maximal chain of $P$. Hence, every maximal chain of $P$, except for $m_0$, is a maximal element of $\cord{P}{\lm}$. Therefore, $\cord{P}{\lm}$ is ranked with rank one and $\lm$ is polygon complete.
\end{proof}

The following lemma is quite useful for working with maximal chain descent orders, particularly for proofs by induction on chain length. Intuitively, the lemma says that if two maximal chains $m$ and $m'$ agree along an initial segment and are comparable in a maximal chain descent order, then each maximal chain between them in the maximal chain descent order agrees with $m$ and $m'$ on that same initial segment. The example in \cref{fig:noncover diamond move example} shows, among other things, that the mirrored statement for final segments is not true.

\begin{lemma}\label{lem:nongradedonlyabovepolygon}
Let $P$ be a finite, bounded poset with a {\CL} $\lm$. Let $m$ and $m'$ be maximal chains $m:y_0=\zh \lessdot y_1 \lessdot \dots \lessdot y_{i-1} \lessdot y_i \lessdot y_{i+1} \lessdot \dots \lessdot y_{t-1} \lessdot y_t=\oneh$ and $m':y_0=\zh \lessdot y_1 \lessdot \dots \lessdot y_{i-1} \lessdot y'_i \lessdot y'_{i+1} \lessdot \dots \lessdot y'_{t'-1} \lessdot y_t'=\oneh$. Suppose $m=m_0 \to m_1\to m_2 \to \dots \to m_{k}\to m_{k+1} = m'$. Then $m_j^{y_{i-1}} = m^{y_{i-1}} =m'^{y_{i-1}}$ for each $1\leq j\leq k$.

\end{lemma}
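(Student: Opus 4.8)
The plan is to argue in two stages: first show that the label sequences $\lm(m_j)$ all agree with $\lm(m)$ on their first $i-1$ entries, and then bootstrap this up to the stronger statement that the chains themselves agree on $[\zh,y_{i-1}]$, by inducting along the sequence of polygon moves $m=m_0\to m_1\to\cdots\to m_{k+1}=m'$.

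For the first stage, iterating \cref{prop:nongradedmaxchainincreaselexincrease} and using transitivity of the lexicographic order gives $\lm(m_0)\leq_{lex}\lm(m_j)\leq_{lex}\lm(m_{k+1})$ for every $j$. Since $m$ and $m'$ agree on the bottom $i-1$ edges $(y_0,y_1),\dots,(y_{i-2},y_{i-1})$, the definition of a {\CL} forces $\lm_q(m_0)=\lm_q(m_{k+1})$ for $1\leq q\leq i-1$. A squeeze argument then shows $\lm_q(m_j)=\lm_q(m_0)$ for all $j$ and all $q\leq i-1$: if $q\leq i-1$ were the first position where $\lm(m_j)$ and $\lm(m_0)$ disagree, then $\lm_q(m_0)<\lm_q(m_j)$, but also $\lm_q(m_{k+1})=\lm_q(m_0)<\lm_q(m_j)$ while $\lm(m_{k+1})$ and $\lm(m_j)$ agree before position $q$, so $\lex{\lm(m_{k+1})}{\lm(m_j)}$, contradicting $\lm(m_j)\leq_{lex}\lm(m_{k+1})$.

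For the second stage, I would induct on $j$ to show $m_j^{y_{i-1}}=y_0\lessdot y_1\lessdot\cdots\lessdot y_{i-1}$, the base case $j=0$ being the hypothesis on $m$. For the step, let $b$ and $t$ be the bottom and top of the polygon corresponding to $m_j\to m_{j+1}$, so $m_j$ and $m_{j+1}$ coincide outside the open interval $(b,t)$ and $m_{j+1}\cap[b,t]=b\lessdot u\lessdot t$ for some $u\notin m_j$. If $y_{i-1}\leq b$, then $(b,t)$ is disjoint from $[\zh,y_{i-1}]$, so $m_{j+1}^{y_{i-1}}=m_j^{y_{i-1}}=y_0\lessdot\cdots\lessdot y_{i-1}$ and we are done by the inductive hypothesis. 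If $y_{i-1}>b$, then $b\in m_j$ with $b<y_{i-1}$, so by the inductive hypothesis $b=y_p$ for some $0\leq p\leq i-2$; I claim this situation cannot occur.

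Ruling out the case $b=y_p$ is the step I expect to be the main obstacle, since label agreement does not by itself give chain agreement for a general {\CL}. Here $m_j\cap[y_p,t]$ is the unique $\lm$-ascending maximal chain of the rooted interval $[y_p,t]_{m_j^{y_p}}$ with $m_j^{y_p}=y_0\lessdot\cdots\lessdot y_p$; since this chain has length at least two it contains $y_{p+1}$, so its first two labels $a_1=\lm(y_p,y_{p+1})$ and $a_2$ satisfy $a_1\leq a_2$. Because $p+1\leq i-1$, the first stage applied to $m_{j+1}$ gives $\lm(y_p,u)=\lm_{p+1}(m_{j+1})=\lm_{p+1}(m_0)=a_1$, so the chain $b\lessdot u\lessdot t$ has label sequence $(a_1,\lm(u,t))$. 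But $m_j\cap[y_p,t]$ is lexicographically least in $[y_p,t]_{m_j^{y_p}}$, hence precedes $b\lessdot u\lessdot t$; since they share the first label $a_1$, comparing the remaining labels forces $a_2<\lm(u,t)$ (if instead they agreed at position two, then either $\lm(m_j\cap[y_p,t])=\lm(b\lessdot u\lessdot t)$, impossible since one is ascending and the other a descent, or $b\lessdot u\lessdot t$ is a proper prefix of $m_j\cap[y_p,t]$ and hence lexicographically smaller). Then $a_1\leq a_2\leq\lm(u,t)$, so $b\lessdot u\lessdot t$ would be $\lm$-ascending, contradicting the requirement in \cref{def:increase and polygon} that it be a descent. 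This completes the induction, and since $m_0=m$ and $m_{k+1}=m'$ we obtain $m_j^{y_{i-1}}=m^{y_{i-1}}=m'^{y_{i-1}}$ for $1\leq j\leq k$.
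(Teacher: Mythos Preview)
Your proof is correct. Both you and the paper exploit the same lex-order squeeze between $\lm(m)$ and $\lm(m')$, but the organization differs. The paper argues in one stroke: take the first index $j'$ at which the polygon bottom drops strictly below $y_{i-1}$; since $m_{j'}^{y_{i-1}}=m^{y_{i-1}}=m'^{y_{i-1}}$, the label sequences of $m_{j'}$ and $m'$ agree through position $i-1$, while the polygon move forces $\lm(m_{j'})$ and $\lm(m_{j'+1})$ to first differ at a position at most $i-1$ with $\lm(m_{j'})$ smaller, so $\lex{\lm(m')}{\lm(m_{j'+1})}$, contradicting \cref{cor:nongradedcordgreaterlexgreater}. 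Your two-stage route first establishes label agreement on positions $1,\dots,i-1$ for every $m_j$ (your squeeze is exactly \cref{cor:nongradedcordgreaterlexgreater} unpacked) and then argues locally that a polygon with bottom $y_p$, $p\le i-2$, cannot exist.

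Your Stage~2 can be shortened: once Stage~1 gives $\lm(y_p,u)=a_1$, you already have a contradiction, since in any rooted interval the ascending chain and a length-two descent can never share a first label---if they did, your own comparison shows $a_1\le a_2<\lm(u,t)$, forcing the ``descent'' to be an ascent. This is also the fact the paper is implicitly using to place the first label disagreement at position $p+1\le i-1$. The paper's argument is shorter; yours makes the local obstruction more explicit. Your parenthetical about a ``proper prefix'' is unnecessary, since $a_2=\lm(u,t)$ is already ruled out by $a_1\le a_2$ together with $a_1\not\le\lm(u,t)$.
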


\begin{proof}
Suppose seeking a contradiction that there is some $m_j$ such that $m_j^{y_{i-1}} \neq m^{y_{i-1}}$. We observe that for any of the maximal chains in the sequence to disagree with $m$ at or below $y_{i-1}$, there must be some $m_l$ such that the polygon corresponding to $m_l\to m_{l+1}$ has bottom element strictly less than $y_{i-1}$. Let $m_{j'}\to m_{j'+1}$ be the first increase by a polygon move in the sequence $m=m_0 \to m_1\to m_2 \to \dots \to m_{k}\to m_{k+1} = m'$ with corresponding polygon whose bottom element is strictly less than $y_{i-1}$. We must have $0\leq j'\leq k$. Then $\lex{\lm(m_{j'})}{\lm(m_{j'+1})}$ by \cref{prop:nongradedmaxchainincreaselexincrease}. The first entry at which the label sequences of $m_{j'}$ and $m_{j'+1}$ differ comes before the $(i-1)$th position. However, $m^{y_{i-1}} =m'^{y_{i-1}} = m_{j'}^{y_{i-1}}$ since $m_{j'}\to m_{j'+1}$ is the first increase by a polygon move with corresponding polygon whose bottom element is below $y_{i-1}$. So, the label sequences of $m_{j'}$ and $m'$ agree in their first $i-1$ entries. Thus, $\lex{\lm(m')}{\lm(m_{j'+1})}$ which contradicts \cref{cor:nongradedcordgreaterlexgreater} since $\cleqi{m_{j'+1}}{m'}{\lm}$. Hence, $m_j^{y_{i-1}} = m^{y_{i-1}} =m'^{y_{i-1}}$ for each $1\leq j\leq k$.
\end{proof}

A consequence of \cref{lem:restrictedrelslift} and \cref{lem:nongradedonlyabovepolygon} is that in a maximal chain descent order induced by an EL-labeling, whether or not an increase by a polygon move gives a cover relation only depends on what happens in the poset above the bottom of the corresponding polygon. In other words, whether or not an increase by a polygon move with respect to an {\EL} gives a cover relation does not depend on the root used to get to the bottom of the polygon. Besides being useful for later arguments, it seems this fact may be of interest in its own right.

\begin{corollary}\label{cor:EL cord covers don't depend on root}
Let $P$ be a finite, bounded poset which admits an {\EL} $\lm$. Let $m,m'\in \M(P)$ be maximal chains of $P$ with $$m:x_0=\zh\lessdot x_1\lessdot \dots\lessdot x_{i-1} \lessdot \dots \lessdot x_{i+l} \lessdot x_{r-1} \lessdot x_r=\oneh$$ and $$m':x_0=\zh\lessdot x_1\lessdot \dots \lessdot x_{i-1} \lessdot x'_i \lessdot x_{i+l}\lessdot \dots \lessdot x_{r-1} \lessdot x_{r}=\oneh$$ such that $m\to m'$. Suppose $\cordot{m}{_{\lm} m'}$ in $\cord{P}{\lm}$. Then for any maximal chain $c$ of $[\zh,x_{i-1}]$, we have $\cordot{c * m_{x_{i-1}}}{_{\lm} c * m'_{x_{i-1}}}$ in $\cord{P}{\lm}$.
\end{corollary}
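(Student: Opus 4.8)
The plan is to argue by contradiction, combining \cref{lem:nongradedonlyabovepolygon} (which controls the initial segments of the chains appearing between two comparable maximal chains that agree along an initial segment) with \cref{lem:restrictedrelslift} (which lifts polygon moves, hence $\prec_\lm$-relations, from a rooted interval to the ambient poset). Throughout I write $w=x_{i-1}$ for the bottom element of the polygon of $m\to m'$, and I set $Q=[w,\oneh]$, which carries the restriction of $\lm$ as an {\EL} by \cref{prop:restrictinglabelings}. Since $\lm$ is an {\EL}, the labels on edges of $Q$ do not depend on any root; hence for any maximal chain $c$ of $[\zh,w]$ the chains $c*m_w$ and $c*m'_w$ again differ by the same polygon, with $c*m_w$ ascending on it and $c*m'_w$ a descent on it, so $c*m_w\to c*m'_w$ and in particular $\cl{c*m_w}{c*m'_w}{\lm}$ in $\cord{P}{\lm}$.

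Now suppose $c*m_w$ is not covered by $c*m'_w$. Then some maximal chain lies strictly between them; since $\prec_\lm$ is the transitive closure of $\to$ and every polygon move strictly increases the lexicographic order (\cref{prop:nongradedmaxchainincreaselexincrease}), there is a sequence $c*m_w=n_0\to n_1\to\cdots\to n_p=c*m'_w$ with $p\geq 2$. The chains $n_0,n_p$ agree exactly along $c$ and then diverge (their first new vertices, $x_i$ and $x'_i$, are distinct), so \cref{lem:nongradedonlyabovepolygon} gives $n_j^{w}=c$ for every $j$; write $n_j=c*(n_j)_w$ with $(n_j)_w\in\M(Q)$. The key step is to check that the polygon of each move $n_j\to n_{j+1}$ lies in $Q$, i.e.\ has bottom element $\geq w$. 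Indeed, $w$ is a common vertex of $n_j$ and $n_{j+1}$; the interior vertex of the short side of that polygon is, by \cref{def:differ by diamond}, not a vertex of the long side, so $w$ cannot be an interior vertex of either side, and hence $w$ does not lie strictly between the polygon's bottom and top. If $w$ were at or above the top, then the long side would contain a vertex $<w$ not lying on the short side, forcing $n_j^{w}\neq n_{j+1}^{w}$, contradicting $n_j^{w}=n_{j+1}^{w}=c$. So the polygon lies in $Q$, and (again by root-independence of the {\EL}, passing to the restricted labeling of $Q$) it witnesses $(n_j)_w\to(n_{j+1})_w$ in $\cord{Q}{\lm}$.

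Finally I would assemble the contradiction. Concatenating the restricted moves gives $m_w=(n_0)_w\to\cdots\to(n_p)_w=m'_w$ in $\cord{Q}{\lm}$, a $\to$-chain with $p\geq 2$ genuinely strict steps. Lifting along the common root $m^{w}=m'^{w}$ via \cref{lem:restrictedrelslift} (whose proof lifts a polygon move to a polygon move, and strictness survives because concatenation with a fixed initial segment is injective) yields $m=m^w*m_w\prec_\lm m^w*(n_1)_w\prec_\lm m^w*m'_w=m'$ in $\cord{P}{\lm}$, contradicting $\cl{m}{m'}{\lm}$ being a cover relation. Hence $\cordoti{c*m_w}{c*m'_w}{\lm}$, as claimed. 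The only delicate point is the geometric case analysis in the key step; everything else is a direct invocation of the cited lemmas together with root-independence of {\EL} labels above $w$. One should keep the ``width-one'' degeneracy $l=1$ of \cref{def:differ by diamond} in mind, but it causes no real trouble since the long side still carries an interior vertex absent from the short side.
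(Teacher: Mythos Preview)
Your proof is correct and follows essentially the same route as the paper's: argue by contradiction, apply \cref{lem:nongradedonlyabovepolygon} to see every intermediate chain agrees with $c$ below $w=x_{i-1}$, deduce that each polygon move restricts to $[w,\oneh]$, and then lift that restricted chain via \cref{lem:restrictedrelslift} along the original root $m^{w}$ to contradict the cover relation $\cordoti{m}{m'}{\lm}$. The only difference is that you spell out in detail why each polygon must lie entirely in $[w,\oneh]$ (your ``key step''), whereas the paper passes over this in a single clause; your case analysis there is sound.
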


\begin{proof}
Since $\lm$ is an {\EL}, the labels of $c * m_{x_{i-1}}$ and $c * m'_{x_{i-1}}$ agree with the labels of $m$ and $m'$, respectively, above $x_{i-1}$. Thus, $c * m_{x_{i-1}} \to c * m'_{x_{i-1}}$. Now suppose seeking a contradiction that $c * m_{x_{i-1}}$ is not covered by $c * m'_{x_{i-1}}$ in $\cord{P}{\lm}$. Then there are maximal chains $m_1,\dots, m_k\in \M(P)$ with $k\geq 1$ such that $c * m_{x_{i-1}} \to m_1\to \dots \to m_k\to c * m'_{x_{i-1}}$. Then by \cref{lem:nongradedonlyabovepolygon} $m_j^{x_{i-1}} = c$ for each $1\leq j\leq k$. Thus, $m_{x_{i-1}} \to {m_1}_{x_{i-1}} \to \dots \to {m_k}_{x_{i-1}} \to m'_{x_{i-1}}$ in $[{x_{i-1}},\oneh]$ with respect to $\lm$. Then by \cref{lem:restrictedrelslift} $\cl{m^{x_{i-1}} * m_{x_{i-1}}}{\cl{m^{x_{i-1}} * {m_1}_{x_{i-1}}}{m^{x_{i-1}} * m'_{x_{i-1}}}{\lm}}{\lm}$ in $\cord{P}{\lm}$. However, since $m^{x_{i-1}}=m'^{x_{i-1}}$, we then have $\cl{m}{\cl{m^{x_{i-1}} * {m_1}_{x_{i-1}}}{m'}{\lm}}{\lm}$ which contradicts that $\cordot{m}{_{\lm} m'}$ in $\cord{P}{\lm}$. Therefore, $\cordot{c * m_{x_{i-1}}}{_{\lm} c * m'_{x_{i-1}}}$ in $\cord{P}{\lm}$.
\end{proof}

Another consequence of \cref{lem:nongradedonlyabovepolygon} is that whenever $\oneh$ is the top element of the polygon corresponding to an increase by a polygon move, that polygon move gives a cover relation in the maximal chain descent order.  

\begin{corollary}\label{lem:top polygons give cord covers}
Let $P$ be a finite, bounded poset with a {\CL} $\lm$. Suppose $m\to m'$ such that $m'\setminus m = \set{x}$ with $x\lessdot \oneh$. Then $\cordoti{m}{m'}{\lm}$ in $\cord{P}{\lm}$.
\end{corollary}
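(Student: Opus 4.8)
The plan is to argue by contradiction using \cref{lem:nongradedonlyabovepolygon}, exactly as in the proof of \cref{cor:EL cord covers don't depend on root}, but now exploiting that the top of the polygon is $\oneh$ so there is nothing above it to ``go around the back'' through. Write $m:x_0=\zh\lessdot x_1\lessdot\dots\lessdot x_{i-1}\lessdot\dots\lessdot\oneh$ and $m':x_0=\zh\lessdot x_1\lessdot\dots\lessdot x_{i-1}\lessdot x\lessdot\oneh$, where $m$ restricted to the rooted interval $[x_{i-1},\oneh]_{m^{x_{i-1}}}$ is the unique ascending chain and $m'$ restricted to that rooted interval is a descent. Here $x_{i-1}$ is the bottom of the polygon corresponding to $m\to m'$ and $\oneh$ is its top.

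First I would suppose, seeking a contradiction, that $m$ is not covered by $m'$ in $\cord{P}{\lm}$. Since every cover relation of $\cord{P}{\lm}$ comes from a polygon move (as noted in the introduction, and immediate from \cref{def:nongradedmaxchainorders}), and since $\cl{m}{m'}{\lm}$, there must be maximal chains $m_1,\dots,m_k\in\M(P)$ with $k\geq 1$ such that $m=m_0\to m_1\to\dots\to m_k\to m_{k+1}=m'$. Now $m$ and $m'$ agree on the initial segment $m^{x_{i-1}}$, so by \cref{lem:nongradedonlyabovepolygon} we get $m_j^{x_{i-1}}=m^{x_{i-1}}=m'^{x_{i-1}}$ for every $1\leq j\leq k$. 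In particular each $m_j$ contains $x_{i-1}$ and agrees with $m$ below it.

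The key point is then that each polygon move $m_j\to m_{j+1}$ must take place in the rooted interval $[x_{i-1},\oneh]_{m^{x_{i-1}}}$: its corresponding polygon has bottom element some $z\geq x_{i-1}$ (it cannot be below $x_{i-1}$, since that would force $m_{j+1}^{x_{i-1}}\neq m^{x_{i-1}}$, contradicting the previous paragraph), and its top element is at most $\oneh$. Hence, restricting to $[x_{i-1},\oneh]_{m^{x_{i-1}}}$, we obtain $m_{x_{i-1}}\to {m_1}_{x_{i-1}}\to\dots\to {m_k}_{x_{i-1}}\to m'_{x_{i-1}}$ in the maximal chain descent order of that rooted interval induced by the restriction of $\lm$ (which is a {\CL} of the interval by \cref{prop:restrictinglabelings}). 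But $m_{x_{i-1}}$ is the unique ascending chain of $[x_{i-1},\oneh]_{m^{x_{i-1}}}$, hence the $\zh$ of that interval's maximal chain descent order by \cref{prop:non ranked cords have zero hat}, while $m'_{x_{i-1}}$ has a descent and differs from $m_{x_{i-1}}$ only in the single element $x$. Therefore $m_{x_{i-1}}$ is covered by $m'_{x_{i-1}}$ in $\cord{[x_{i-1},\oneh]}{\lm}$: any chain strictly between them would have to contain $x_{i-1}\lessdot x$ or $x\lessdot\oneh$ as part of a finer structure, but $[x_{i-1},\oneh]$ is only rank two along this polygon — more precisely, by \cref{lem:rank two polygon complete} applied inside the polygon, or simply because $m'_{x_{i-1}}$ covers $m_{x_{i-1}}$ as $m'_{x_{i-1}}\setminus m_{x_{i-1}}=\set{x}$ and the chains between them in the descent order would contradict \cref{prop:descents give unique incrs} combined with the fact that $m'_{x_{i-1}}$ has length two in its own polygon. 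This contradicts $k\geq 1$, so $\cordoti{m}{m'}{\lm}$.

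The main obstacle is making the ``$m'_{x_{i-1}}$ covers $m_{x_{i-1}}$ inside the rooted interval'' step airtight: one needs that a descent chain of length two which sits on top of an ascending chain of length $l+1$ admits no maximal chain strictly below it and strictly above the ascending chain in the interval's maximal chain descent order. I would handle this cleanly by invoking \cref{lem:nongradedonlyabovepolygon} once more inside $[x_{i-1},\oneh]_{m^{x_{i-1}}}$ together with \cref{prop:nongradedmaxchainincreaselexincrease}: any intermediate chain would have to agree with $m_{x_{i-1}}$ and $m'_{x_{i-1}}$ below $x_{i-1}$ (trivially) and its first differing label from $m'_{x_{i-1}}$ would force a lexicographic comparison that, since $m_{x_{i-1}}$ is lexicographically least, collides with \cref{cor:nongradedcordgreaterlexgreater} — exactly the argument pattern already used in \cref{lem:nongradedonlyabovepolygon} and \cref{cor:EL cord covers don't depend on root}. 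Once that is in place the contradiction is immediate.
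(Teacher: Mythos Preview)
Your approach uses the same two ingredients as the paper's proof---\cref{prop:descents give unique incrs} and \cref{lem:nongradedonlyabovepolygon}---and the core idea is correct, but you have overcomplicated the argument and introduced a misstep along the way. The invocation of \cref{lem:rank two polygon complete} is wrong: the interval $[x_{i-1},\oneh]$ is not rank two in general (the ascending side of the polygon can have arbitrary length), so that lemma simply does not apply. You do eventually land on the right justification (that $m'_{x_{i-1}}$ has length two, so by \cref{prop:descents give unique incrs} the only chain increasing to it by a polygon move is $m_{x_{i-1}}$), but it is buried among false starts, and the final paragraph's lexicographic detour is unnecessary.

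The paper's proof is immediate, with no restriction to a subinterval needed. Let $w\lessdot x$ in $m'$, so $m^{w}=m'^{w}$. If $m=m_0\to m_1\to\dots\to m_k\to m_{k+1}=m'$ with $k\geq 1$, then \cref{lem:nongradedonlyabovepolygon} gives $m_k^{w}=m'^{w}$, so $m_k$ agrees with $m'$ at and below $w$. The descent of $m'$ corresponding to $m_k\to m'$ must therefore lie at an element of $m'$ strictly above $w$, and the only such element is $x$. By \cref{prop:descents give unique incrs}, this forces $m_k=m$, contradicting $k\geq 1$. That is the entire argument---there is no need to pass to $\cord{[x_{i-1},\oneh]}{\lm}$ or to argue separately about covers inside it.
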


\begin{proof}
This follows immediately from \cref{prop:descents give unique incrs} and \cref{lem:nongradedonlyabovepolygon}.
\end{proof}

Now we turn to a concrete sufficient condition for polygon completeness of {\EL}s which we call polygon strong. Polygon strong is simpler and easier to verify than the conditions in \cref{thm:characterization of non-polygon completeness}. Polygon strong is a weakening of Bj{\"o}rner's notion of strongly lexicographically shellable from \cite{shellblposets} which arose from studying admissible lattices. The proof of this condition also allows us to encounter some proof techniques which we will use to prove \cref{thm:characterization of non-polygon completeness}.

\begin{definition}\label{def:polygonstrongEL}
Let $P$ be a finite, bounded poset with an EL-labeling $\lm$. We say $\lm$ is a \textbf{polygon strong EL-labeling} if for each descent $x\lessdot y \lessdot z$, $\lm(y \lessdot z)< \lm(y'\lessdot z)$ where $y'$ is the coatom of $[x,z]$ contained in the unique ascending maximal chain of $[x,z]$ with respect to $\lm$.
\end{definition}

The {\EL} pictured in \cref{fig:example of polygon strong EL} is polygon strong.

\begin{figure}[H]
    \centering
    \scalebox{1}{
    \begin{tikzpicture}[very thick]
    \node[fill,circle,inner sep=0pt,minimum size=6pt] (zh) at (0,0) {};
    \node[fill,circle,inner sep=0pt,minimum size=6pt] (a) at (-1,1) {};
    \node[fill,circle,inner sep=0pt,minimum size=6pt] (b) at (0,1) {};
    \node[fill,circle,inner sep=0pt,minimum size=6pt] (c) at (1,1) {};
    \node[fill,circle,inner sep=0pt,minimum size=6pt] (oneh) at (0,2) {};
    
    \draw (zh) --node[below left,blue]{$1$} (a) --node[above left,blue]{$2$} (oneh);
    \draw (zh) --node[left,blue]{$2$} (b) --node[left,blue]{$1$} (oneh);
    \draw (zh) --node[below right,blue]{$3$} (c) --node[above right,blue]{$1$} (oneh);
    \end{tikzpicture}
    }
    \caption{A polygon strong {\EL}.
    }
    \label{fig:example of polygon strong EL}
\end{figure}

\begin{theorem}\label{thm:polygonstrongELincreasesgivecovers}
Let $P$ be a finite, bounded poset with a polygon strong {\EL} $\lm$. Then $\lm$ is polygon complete.
\end{theorem}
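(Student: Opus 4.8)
The plan is to prove directly that every polygon move $m\to m'$ yields a cover relation in $\cord{P}{\lm}$, arguing by induction on the length of the longest maximal chain of $P$; the base case, where this length is at most two, is \cref{lem:rank two polygon complete}. Fix a polygon move $m\to m'$ with corresponding polygon on $[x,z]$, so that $m$ and $m'$ agree outside $[x,z]$, $m$ restricted to $[x,z]$ is ascending, and $m'$ restricted to $[x,z]$ is a length-two descent $x\lessdot w\lessdot z$. If $z=\oneh$, then \cref{lem:top polygons give cord covers} already supplies the cover relation, so assume $z<\oneh$; then $m$ and $m'$ share a coatom $d$, namely the element of their common tail above $z$ that is covered by $\oneh$. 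Assume toward a contradiction that there is a path $m=m_0\to m_1\to\cdots\to m_{k+1}=m'$ with $k\ge1$.

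The heart of the argument --- and the only place polygon strongness enters --- is to show that every $m_j$ has coatom $d$. To this end track the ``final label'' $\ell_j:=\lm(c_j\lessdot\oneh)$, where $c_j$ denotes the coatom of $m_j$, noting that $\ell_0=\ell_{k+1}=\lm(d\lessdot\oneh)$. If a move $m_j\to m_{j+1}$ has its polygon top $q$ below $\oneh$, then the move fixes everything of $m_j$ weakly above $q$; since the coatom $c_j$ always lies weakly above $q$, such a move fixes both $c_j$ and the edge $c_j\lessdot\oneh$, hence fixes $\ell_j$ and the coatom. If instead the polygon top is $\oneh$, say with polygon bottom $b$, then $m_j$ restricted to $[b,\oneh]$ is --- by \cref{def:increase and polygon}, and uniquely since the restriction of $\lm$ to $[b,\oneh]$ is an {\EL} by \cref{prop:restrictinglabelings} --- the ascending maximal chain of $[b,\oneh]$, whose coatom is $c_j$, while $m_{j+1}$ restricted to $[b,\oneh]$ is the length-two descent $b\lessdot c_{j+1}\lessdot\oneh$; applying the polygon strong condition (\cref{def:polygonstrongEL}) to this descent gives $\ell_{j+1}=\lm(c_{j+1}\lessdot\oneh)<\lm(c_j\lessdot\oneh)=\ell_j$. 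Hence $\ell_0\ge\ell_1\ge\cdots\ge\ell_{k+1}$ with strict inequality whenever a step has polygon top $\oneh$; since $\ell_0=\ell_{k+1}$, no step has polygon top $\oneh$, so every step fixes the coatom, and every $m_j$ has coatom $c_0=d$.

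It remains to finish by stripping off $\oneh$. Since every step has polygon top below $\oneh$, and therefore weakly below $c_j=d$, every polygon of the path lies in $[\zh,d]$, so deleting $\oneh$ from each chain of the path produces a path of genuine polygon moves $m_0\cap[\zh,d]\to\cdots\to m_{k+1}\cap[\zh,d]$ in $\cord{[\zh,d]}{\lm}$ of length $k+1\ge2$; applying \cref{prop:nongradedmaxchainincreaselexincrease} inside $[\zh,d]$, the chain $m_1\cap[\zh,d]$ then lies strictly between $m\cap[\zh,d]$ and $m'\cap[\zh,d]$ in $\cord{[\zh,d]}{\lm}$. But the restriction of $\lm$ to $[\zh,d]$ is again a polygon strong {\EL} --- polygon strongness refers only to descents and ascending chains of closed intervals, which are unaffected --- the move $m\to m'$ restricts to a polygon move on $[x,z]$ inside $[\zh,d]$ with $z\le d$, and this restricted move is a cover relation in $\cord{[\zh,d]}{\lm}$: by the induction hypothesis when $z<d$, since the longest maximal chain of $[\zh,d]$ is strictly shorter than that of $P$, and by \cref{lem:top polygons give cord covers} when $z=d$. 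This contradicts the strictly intermediate chain $m_1\cap[\zh,d]$, so no such path exists and $m$ is covered by $m'$; as $m\to m'$ was arbitrary, $\lm$ is polygon complete.

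The step I expect to require the most care is the dichotomy in the second paragraph: checking that a move whose polygon has top $\oneh$ genuinely makes $m_j$ the unique ascending chain of the relevant interval (so that \cref{def:polygonstrongEL} applies with $y'=c_j$) and strictly decreases the final label, and that a move whose polygon has top below $\oneh$ leaves both the final label and the coatom untouched. The remaining points --- that $[\zh,d]$ inherits the polygon strong {\EL} so that the inductive hypothesis applies to it, and that deleting $\oneh$ sends the given path to a bona fide path of polygon moves in $\cord{[\zh,d]}{\lm}$ --- should be routine.
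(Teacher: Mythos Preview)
Your proof is correct and follows essentially the same approach as the paper's: induct on the length of the longest maximal chain, reduce to the case where the polygon top $z$ is strictly below $\oneh$ via \cref{lem:top polygons give cord covers}, and use polygon strongness to track the final edge label along a hypothetical long path from $m$ to $m'$. The paper organizes the endgame as two separate cases (either every $m_j$ contains the shared coatom, or some $m_j$ does not), while you unify these by observing directly that the sequence $\ell_0\ge\ell_1\ge\cdots\ge\ell_{k+1}$ with $\ell_0=\ell_{k+1}$ forces every step to fix the coatom; this is a slightly cleaner packaging of the same idea. One minor point: your case split ``$z<d$'' versus ``$z=d$'' in the final paragraph is unnecessary, since the inductive hypothesis applies to $[\zh,d]$ in both cases (its longest maximal chain is strictly shorter than that of $P$ regardless of where $z$ sits), but this does not affect correctness.
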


\begin{proof}
We first observe that the restriction of a polygon strong {\EL} to any closed interval $[x,y]$ of $P$ is also a polygon strong {\EL}. This is straightforward because any interval of $[x,y]$ is also an interval of $P$. Next we observe that if $m\to m'$ for maximal chains $m$ and $m'$ with $x\lessdot \oneh$ contained in $m$ and $x'\lessdot \oneh$ contained in $m'$, then $\lm(x, \oneh)\geq \lm(x', \oneh)$ since $\lm$ is polygon strong.

We will proceed by induction on the length of the longest maximal chain of $P$. We have that $\lm$ is polygon complete if the length of the longest maximal chain of $P$ is one or two by \cref{lem:rank two polygon complete}. Assume $\lm$ is polygon complete whenever the length of the longest maximal chain of $P$ is any $k$ with $1\leq k\leq r$ for some $r\geq 2$. Let $P$ have longest maximal chain of length $r+1$. Assume $m\to m'$. Since $m$ and $m'$ differ by a polygon, $m:\zh=x_0\lessdot \dots \lessdot x_{i-1} \lessdot x_i \lessdot \dots \lessdot x_{i+l}\lessdot \dots \lessdot x_{r+1} =\oneh$ and $m':\zh=x_0\lessdot \dots \lessdot x_{i-1} \lessdot x'_i \lessdot x_{i+l}\lessdot \dots \lessdot x_{r+1} =\oneh$ for some $l\geq 1$. So, $x_{i-1} \lessdot x_i\lessdot \dots \lessdot x_{i+l}$ is the unique ascending maximal chain of $[x_{i-1}, x_{i+l}]$ with respect to $\lm$ while $x_{i-1} \lessdot x'_i \lessdot x_{i+l}$ is a descent with respect to $\lm$. 

Now, seeking a contradiction, suppose that $\cordoti{m\not}{m'}{\lm}$. By \cref{lem:top polygons give cord covers} $i+l<r+1$, so $m^{x_{r}}\to m'^{x_{r}}$. Since $\cordoti{m\not}{m'}{\lm}$, there are maximal chains $m_1,m_2,\dots,m_s \in \M(P)$ with $s\geq 1$ such that $m\to m_1\to m_2\to \dots \to m_s\to m'$. Then by \cref{lem:nongradedonlyabovepolygon}, $m_j^{x_{i-1}}=m^{x_{i-1}}=m^{x_{i-1}}$ for all $1\leq j\leq s$. There are two cases we must consider. Either $x_r\in m_j$ for all $1\leq j\leq s$ or there is some $1\leq j\leq s$ such that $x_r\not \in m_j$.

Suppose $x_r\in m_j$ for all $1\leq j\leq s$. Then $m^{x_{r}}\to m_1^{x_{r}}\to m_2^{x_{r}}\to \dots \to m_s^{x_{r}}\to m'^{x_{r}}$. Then since $s\geq 1$, $\cordoti{m^{x_{r}} \not}{m'^{x_{r}}}{\lm}$ in $\cord{[\zh,x_{r}]}{\lm}$ despite the fact that $m^{x_{r}}\to m'^{x_{r}}$. However, this contradicts the fact that $\lm$ restricted to $[\zh,x_{r}]$ is polygon complete by the inductive hypothesis because the length of the longest maximal chain in $[\zh,x_{r}]$ is at most $r$ and $\lm$ restricted to $[\zh,x_{r}]$ is polygon strong. 

Thus, there is a first $m_t$ for $0\leq t\leq s$ (say $m_0=m$) such that $x_{r}\in m_t$, but $x_{r}\not \in m_{t+1}$. Let $z_{t+1}\lessdot \oneh$ be contained in $m_{t+1}$. Then $\lm(z_{t+1},\oneh)<\lm(x_{r},\oneh)$ since $\lm$ is polygon strong and $x_r\lessdot \oneh$ is contained in $m_t$ while $x_r\not \in m_{t+1}$. Now let $z_{j}\lessdot \oneh$ be contained in $m_j$. Since $\lm$ is polygon strong, we have $$ \lm(x_{r},\oneh)= \lm(z_{1},\oneh)= \lm(z_{2},\oneh)= \dots = \lm(z_{t},\oneh)> \lm(z_{t+1},\oneh)\geq \dots \geq \lm(z_{k},\oneh)\geq \lm(x_{r},\oneh)$$ by our second observation in the first paragraph. But this implies the contradiction that $\lm(x_{r},\oneh) > \lm(x_{r},\oneh)$. Therefore, $\lm$ is polygon complete, so the theorem holds by induction.
\end{proof}

\end{subsection}

\begin{subsection}{Applications of \texorpdfstring{\cref{thm:polygonstrongELincreasesgivecovers}}{Lg}: Examples of Polygon Strong {\EL}s}\label{sec:examples of polygon strong ELs}

Here we prove that many well known {\EL}s are polygon strong, and so polygon complete. This section is not self contained, but we provide references and, when feasible, brief explanations. We expect that many more {\EL}s are polygon strong. We begin with the examples where \cref{thm:polygonstrongELincreasesgivecovers} applies most naturally which are the $M$-chain {\EL}s of finite supersolvable lattices due to Stanley.

\begin{theorem}\label{thm:Sn EL is polygon strong}
Stanley's $M$-chain {\EL}s of any finite supersolvable lattice from \cite{stanleysupersolvablelats1972} are polygon strong. Thus, these {\EL}s are polygon complete.
\end{theorem}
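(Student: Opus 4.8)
The plan is to reduce the statement to a two-line computation inside a single rank-two interval, using one standard structural property of Stanley's labeling. Recall that a finite supersolvable lattice $L$ is graded and carries a distinguished maximal chain $\zh = x_0 \lessdot x_1 \lessdot \dots \lessdot x_n = \oneh$ (an $M$-chain), and that the associated $M$-chain {\EL} is $\lm(a\lessdot b)=\min\{\,i : b\le a\vee x_i\,\}$, with label poset $\{1,2,\dots,n\}$ under the usual order. The one fact I would quote --- it appears (at least implicitly) in \cite{stanleysupersolvablelats1972} and is exactly the ``fixed label set on each interval'' part of this labeling being an $S_n$-{\EL} in the sense of McNamara \cite{snelsupersolvmcnamara2003} --- is: for every interval $[a,b]$ of $L$, the multiset of labels of the cover relations of a maximal chain of $[a,b]$ does not depend on the chosen maximal chain of $[a,b]$, and the unique ascending maximal chain of $[a,b]$ is the one whose label sequence lists these labels in weakly increasing order.

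Granting this, I would verify \cref{def:polygonstrongEL} directly. Since $L$ is graded, rank-two intervals and their coatoms are well defined. Let $x\lessdot y\lessdot z$ be a descent, so $\lm(x\lessdot y)>\lm(y\lessdot z)$, and let $x\lessdot y'\lessdot z$ be the unique ascending maximal chain of $[x,z]$. Applying the quoted fact to $[x,z]$, the two-element multisets $\{\lm(x\lessdot y),\lm(y\lessdot z)\}$ and $\{\lm(x\lessdot y'),\lm(y'\lessdot z)\}$ coincide; since the ascending chain lists its labels in weakly increasing order, $\lm(y'\lessdot z)$ is the larger of these two labels, i.e.\ $\lm(y'\lessdot z)=\max\{\lm(x\lessdot y),\lm(y\lessdot z)\}=\lm(x\lessdot y)$, the final equality because $x\lessdot y\lessdot z$ is a descent. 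Hence $\lm(y'\lessdot z)=\lm(x\lessdot y)>\lm(y\lessdot z)$, which is exactly the inequality required by \cref{def:polygonstrongEL}. Thus the $M$-chain {\EL} is polygon strong, and polygon completeness follows immediately from \cref{thm:polygonstrongELincreasesgivecovers}.

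The whole argument rests on the structural fact stated in the first paragraph, so that is the step I would treat most carefully in the write-up. There are two clean ways to justify it. The quickest is to invoke McNamara's result \cite{snelsupersolvmcnamara2003} that the supersolvable lattices are precisely those admitting an $S_n$-{\EL}, noting that the ``same label multiset on every interval'' property is part of that notion. A more self-contained route goes through supersolvability directly: for a maximal chain $c$ of a rank-two interval $[x,z]$, the sublattice of $L$ generated by $c$ together with the $M$-chain is distributive, on a finite distributive lattice Stanley's labeling is (after an order-preserving relabeling) a linear-extension {\EL}, for which the fixed-label-multiset property on intervals is transparent, and one then checks that the resulting label multiset on $[x,z]$ is independent of $c$. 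In the final version I would simply cite Stanley, as this is classical, and keep the emphasis on the short deduction above; I would also add a sentence recalling that finite supersolvable lattices are graded, so that ``the coatom $y'$ of $[x,z]$'' in \cref{def:polygonstrongEL} is meaningful.
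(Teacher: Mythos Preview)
Your proposal is correct and follows essentially the same approach as the paper: both arguments use the key structural fact that an $M$-chain {\EL} is an $S_n$-{\EL} (so every maximal chain of a rank-two interval $[x,z]$ has the same two-element label set $\{a,b\}$ with $a<b$, the ascending chain labeled $(a,b)$ and every descent labeled $(b,a)$), from which the polygon-strong inequality $\lm(y',z)=b>a=\lm(y,z)$ is immediate. The paper simply asserts this fixed-label-set property from Stanley's work, whereas you spell out its provenance more carefully; your write-up is otherwise the same argument.
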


\begin{proof}
First, we observe that the label sequences of an $M$-chain {\EL} $\lm$ are all permutations of $[n]$ where $n$ is the rank of the supersolvable lattice. Thus, in any rank two interval of the lattice, the label sequence of each maximal chain is either $a,b$ with $a<b$ if the chain is the unique ascending chain with respect to $\lm$ or $b,a$ otherwise. Thus, $\lm$ is polygon strong, and so polygon complete by \cref{thm:polygonstrongELincreasesgivecovers}.  
\end{proof}

Finite distributive lattices are supersolvable and their linear extension {\EL} labelings considered later in \cref{sec:findistlats} are exactly the $M$-chain {\EL}s. Thus, the linear extension {\EL}s are polygon strong as well.

\begin{corollary}\label{cor:dist lats lin ext els polygon strong}
The linear extension {\EL}s of finite distributive lattices considered in \cref{sec:findistlats} are polygon strong. Thus, these {\EL}s are polygon complete.
\end{corollary}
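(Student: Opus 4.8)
The plan is to reduce the statement to \cref{thm:Sn EL is polygon strong} via Birkhoff's representation theorem. Write a finite distributive lattice as $L \cong J(Q)$, the lattice of order ideals of a finite poset $Q$ on $n$ elements ordered by inclusion, so that each cover relation has the form $I \lessdot I\cup\{q\}$ for a unique $q\in Q$, and fix a linear extension $e\colon Q\to[n]$; the linear extension {\EL} of \cref{sec:findistlats} is then $\lm(I,I\cup\{q\}) = e(q)$. First I would recall that every finite distributive lattice is supersolvable, since every element of a distributive lattice is modular and hence every maximal chain is a chain of modular elements. Taking the maximal chain $\zh = I_0 \lessdot I_1 \lessdot \dots \lessdot I_n = \oneh$ with $I_j = e^{-1}(\{1,\dots,j\})$ (a chain of order ideals precisely because $e$ is a linear extension) as the distinguished modular maximal chain, one checks that Stanley's $M$-chain recipe --- which assigns to $x\lessdot y$ the unique index $i$ with $x\vee I_i \neq x\vee I_{i-1}$ --- returns exactly $e(q)$ when $y = x\cup\{q\}$. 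Hence the linear extension {\EL} coming from $e$ is literally an $M$-chain {\EL}, and \cref{thm:Sn EL is polygon strong} together with \cref{thm:polygonstrongELincreasesgivecovers} finishes the argument.

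Alternatively, and perhaps more transparently, I would give a short direct argument mirroring the proof of \cref{thm:Sn EL is polygon strong} that avoids invoking the precise form of $M$-chain labelings. Every maximal chain of $J(Q)$ is obtained by adjoining the elements of $Q$ one at a time in the order of some linear extension of $Q$, so its $\lm$-label sequence is a permutation of $[n]$; in particular, the two maximal chains of any diamond $[x,z]$ in $J(Q)$ have mutually reversed label sequences $a,b$ and $b,a$ with $a<b$. So for any descent $x\lessdot y\lessdot z$ the interval $[x,z]$ is such a diamond, its ascending chain is $x\lessdot y'\lessdot z$ with $\lm(y'\lessdot z)=b$, while $\lm(y\lessdot z)=a$; thus $\lm(y\lessdot z)<\lm(y'\lessdot z)$, which is exactly the condition in \cref{def:polygonstrongEL}. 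Therefore $\lm$ is polygon strong, and polygon completeness follows from \cref{thm:polygonstrongELincreasesgivecovers}.

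The argument is genuinely short, so there is no real obstacle of substance: the only point requiring care is lining up the conventions of \cref{sec:findistlats} with Birkhoff duality (and, in the first approach, with Stanley's construction), after which nothing new beyond \cref{thm:Sn EL is polygon strong} and \cref{thm:polygonstrongELincreasesgivecovers} is needed. I would expect the ``hard part'' to be purely bookkeeping rather than anything conceptual.
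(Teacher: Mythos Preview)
Your proposal is correct and matches the paper's approach exactly: the paper's proof simply observes that finite distributive lattices are supersolvable and that the linear extension {\EL}s of \cref{sec:findistlats} are precisely the $M$-chain {\EL}s, so \cref{thm:Sn EL is polygon strong} applies directly. Your first approach is this argument with the bookkeeping spelled out, and your second approach just reproduces the core observation from the proof of \cref{thm:Sn EL is polygon strong} (label sequences are permutations, hence rank-two intervals have reversed labels) in the distributive-lattice setting.
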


\begin{remark}\label{rmk:all Sn ELs polygon strong}
In fact, our proof also works in the more general context of McNamara's $S_n$ {\EL}s of finite posets from \cite{snelsupersolvmcnamara2003}. This is because our proof only relied on all label sequences being permutations of $n$. Thus, any $S_n$ {\EL} of a finite poset is polygon strong, and so polygon complete. As a note, McNamara showed that a finite lattice admitting an $S_n$ {\EL} is equivalent to that lattice being supersolvable. 
\end{remark}

We record this useful fact about the label sequences of rank two intervals from {\EL}s in which the label sequences of maximal chains are permutations because we will use it in other places.

\begin{proposition}\label{prop:Sn descent swaps}
Let $P$ be a finite, bounded poset with an $S_n$ {\EL} $\lm$. Suppose $m' \in \M(P)$ has a descent at $x\in m'$ with $\text{rk}(x)=i$. Then the unique $m\in \M(P)$ such that $m\to m'$ and $m'\setminus m=\set{x}$ guaranteed by \cref{prop:descents give unique incrs} has $\lm(m')=\lm(m)(i,i+1)$. 
\end{proposition}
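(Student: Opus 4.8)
The plan is to unwind the construction of $m$ from \cref{prop:descents give unique incrs} and then invoke the defining property of an $S_n$ {\EL}, that every maximal chain has label sequence a permutation of $[n]$ (as is verified for $M$-chain {\EL}s inside the proof of \cref{thm:Sn EL is polygon strong}). Write $w\lessdot x\lessdot z$ for the three consecutive elements of $m'$ around $x$, so $\text{rk}(w)=i-1$ and $\text{rk}(z)=i+1$. The interval $[w,z]$ has length two, so the unique ascending maximal chain $c$ of the rooted interval $[w,z]_{{m'}^{w}}$ has the form $w\lessdot x'\lessdot z$ for a single element $x'\neq x$, and $m={m'}^{w} * c * {m'}_{z}$. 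Hence $m$ and $m'$ coincide except that $m$ passes through $x'$ where $m'$ passes through $x$; in terms of label sequences, $\lm_j(m)=\lm_j(m')$ for every position $j\notin\{i,i+1\}$, while $\lm_i(m)=\lm(w\lessdot x')$, $\lm_{i+1}(m)=\lm(x'\lessdot z)$, $\lm_i(m')=\lm(w\lessdot x)$, and $\lm_{i+1}(m')=\lm(x\lessdot z)$. Since $\lm$ is an {\EL}, not merely a {\CL}, these edge labels do not depend on a choice of root, so the label sequences are well defined without reference to roots.

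Next I would compare the permutations $\lm(m)$ and $\lm(m')$. Each is a bijection from $\{1,\dots,n\}$ onto $[n]$, and they agree in every position outside $\{i,i+1\}$, so $\{\lm_i(m),\lm_{i+1}(m)\}=[n]\setminus\{\lm_j(m):j\neq i,\,i+1\}=\{\lm_i(m'),\lm_{i+1}(m')\}$ as two-element subsets of $[n]$. Now $m$ restricted to $[w,z]_{{m'}^{w}}$ is the ascending chain $c$, so $\lm_i(m)<\lm_{i+1}(m)$; and $x$ is a descent of $m'$, so, as the label poset of an $S_n$ {\EL} is the total order on $[n]$, $\lm_i(m')>\lm_{i+1}(m')$. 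Comparing, we get $\lm_i(m')=\lm_{i+1}(m)$ and $\lm_{i+1}(m')=\lm_i(m)$; that is, $\lm(m')$ is $\lm(m)$ with the entries in positions $i$ and $i+1$ interchanged. Right multiplication by the adjacent transposition $s_i=(i,i+1)$ acts on one-line notation exactly by transposing positions $i$ and $i+1$, so $\lm(m')=\lm(m)(i,i+1)$, as desired.

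There is no genuine difficulty here; the argument is bookkeeping resting on two small points. The first is that the unordered pair of labels on the two edges at positions $i$ and $i+1$ is the same in $m$ and in $m'$ — this is precisely where the $S_n$ hypothesis enters, since for an arbitrary {\EL} the two maximal chains of a rank-two interval need not even use the same pair of label values. The second is keeping the index conventions straight, namely that the edge from rank $i-1$ to rank $i$ is position $i$ of the label sequence, so that an element of rank $i$ being a descent is a statement about positions $i$ and $i+1$. Everything else follows directly from \cref{prop:descents give unique incrs} together with the fact that right multiplication by $s_i$ swaps positions $i$ and $i+1$ in one-line notation.
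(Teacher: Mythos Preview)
Your argument is correct and follows essentially the same approach as the paper's proof: both observe that $m$ and $m'$ agree outside positions $i$ and $i+1$, invoke the $S_n$ property to force the two-element sets of labels at those positions to coincide, and then use the ascent/descent condition to pin down which entry goes where. Your version is a bit more explicit about why the interval $[w,z]$ has length two and about the role of right multiplication by $s_i$, but the underlying idea is identical.
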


\begin{proof}
Let $\lm(m)=\lm_1\lm_2\dots \lm_i \lm_{i+1} \dots \lm_n$. Then since $m'\setminus m=\set{x}$, $\lm(m')= \lm_1\lm_2\dots \lm'_i \lm'_{i+1} \dots \lm_n$. Then since $\lm$ is an $S_n$ {\EL}, $\set{\lm_i, \lm_{i+1}}= \set{\lm'_i, \lm'_{i+1}}$. Lastly, since $m$ has a descent at $x$ and $m'\to m$, $\lm_i>\lm_{i+1}$ and $\lm'_i< \lm'_{i+1}$. Hence, $\lm(m')=\lm(m)(i,i+1)$.
\end{proof}

Another generalization of \cref{thm:Sn EL is polygon strong} is the following result for similar {\EL}s of upper-semimodular and lower-semimodular lattices.

\begin{theorem}\label{thm:upper and lowwer semimod latts polygon strong}
Let $L$ be a finite upper-semimodular or lower-semimodular lattice. Let $\lm$ be an {\EL} of $L$ induced by an admissible map on $L$ as in \cite{shellblposets}. Then $\lm$ is polygon strong. Thus, $\lm$ is polygon complete.
\end{theorem}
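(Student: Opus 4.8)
The plan is to reduce to \cref{thm:polygonstrongELincreasesgivecovers} by showing that the {\EL} $\lm$ induced by an admissible map on a finite upper-semimodular or lower-semimodular lattice $L$ is polygon strong in the sense of \cref{def:polygonstrongEL}. Recall that an admissible map (in Bj\"orner's sense from \cite{shellblposets}) assigns to each maximal chain a label sequence, and the key structural feature we want to exploit is how these labels behave on rank two intervals. So first I would recall precisely the construction of $\lm$ from an admissible map in both the upper-semimodular and lower-semimodular cases, isolating exactly the property of the rank two intervals that I need: namely that in any rank two interval $[x,z]$ with coatoms $y$ (on the unique ascending chain) and $y'$ (on a descending chain $x \lessdot y' \lessdot z$), one has $\lm(y \lessdot z) < \lm(y' \lessdot z)$.

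The main work is verifying that last inequality. For the upper-semimodular case, I would use that the admissible {\EL} of an upper-semimodular lattice has the feature that the ascending chain's top label in a rank two interval is the smallest possible label on that interval's top edges, or more precisely that the labels of the two top edges $y \lessdot z$ and $y' \lessdot z$ together with the labels of the bottom edges are forced by modularity into a pattern where the ascending chain carries the smaller top label. I expect this to follow from the defining properties of admissibility combined with the semimodular identity $\mathrm{rk}(y) + \mathrm{rk}(y') \geq \mathrm{rk}(y \vee y') + \mathrm{rk}(y \wedge y')$, which in a rank two interval pins down the comparison of join-irreducible-type labels. The lower-semimodular case should follow by a dual argument, possibly after passing to the order dual $L^{op}$, where an admissible {\EL} of a lower-semimodular lattice becomes an admissible {\EL} of an upper-semimodular lattice; one must check that polygon strongness is compatible with this dualization, i.e. that the definition of polygon strong transfers correctly (it concerns the top edge of a descent, which dualizes to the bottom edge, so some care is needed here and it may be cleaner to argue the lower-semimodular case directly rather than by duality).

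Once polygon strongness is established, the conclusion is immediate: \cref{thm:polygonstrongELincreasesgivecovers} gives that a polygon strong {\EL} is polygon complete, so $\lm$ is polygon complete. The hard part will be the rank two interval analysis: extracting from the somewhat involved definition of ``admissible map'' exactly the comparison of the two top labels $\lm(y \lessdot z)$ and $\lm(y' \lessdot z)$, and handling the lower-semimodular case without the comparison getting reversed under dualization. A secondary subtlety is that semimodular lattices need not be graded in the relevant way for all intervals, but since polygon strongness and \cref{def:polygonstrongEL} only reference rank two intervals $[x,z]$ — which in a semimodular lattice are genuinely rank two — this should not cause trouble; I would remark on this explicitly to reassure the reader. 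I would also note, as in \cref{rmk:all Sn ELs polygon strong}, that in the distributive (hence both upper- and lower-semimodular) case this recovers \cref{thm:Sn EL is polygon strong}, providing a consistency check.
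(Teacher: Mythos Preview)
Your overall strategy matches the paper's exactly: verify that $\lm$ is polygon strong and then invoke \cref{thm:polygonstrongELincreasesgivecovers}. The difference lies in how you establish polygon strongness. The paper does not re-derive the rank two inequality from admissibility and semimodularity; instead it simply cites Bj\"orner's Theorem~3.7 in \cite{shellblposets}, which already proves that these labelings are SL-labelings (strongly lexicographic), and then observes that the SL condition, when restricted to length two intervals, is literally the polygon strong condition of \cref{def:polygonstrongEL}. So the paper's proof is two sentences of citation plus one observation, with no case analysis and no duality argument needed.

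Your direct approach could in principle be made to work, but it amounts to reproving a piece of Bj\"orner's Theorem~3.7, and your sketch is vague at the key step (``I expect this to follow from the defining properties of admissibility combined with the semimodular identity'' is not yet an argument). The duality worry you raise for the lower-semimodular case is real if you go this route, whereas the paper sidesteps it entirely because Bj\"orner's SL result already covers both cases uniformly. One minor correction: your remark that ``semimodular lattices need not be graded'' is mistaken---finite upper- or lower-semimodular lattices are always graded, so that subtlety does not arise.
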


\begin{proof}
In Proposition 3.6 \cite{shellblposets}, it is shown that the labeling $\lm$ due to Stanley is an {\EL}. In Theorem 3.7 \cite{shellblposets}, $\lm$ is shown to be an SL-labeling (strongly lexicographic) in the sense of Definition 3.4 \cite{shellblposets}. When restricted to intervals of length two in $L$, the defining condition of an SL-labeling is precisely the defining condition of a polygon strong {\EL}. Then applying \cref{thm:polygonstrongELincreasesgivecovers} completes the proof.
\end{proof}

Next we turn to the case of finite geometric lattices. First, we briefly recall the definition of a minimal labeling of a finite geometric lattice. Minimal labelings are a class of {\EL}s introduced by B{\"o}rner in \cite{shellblposets} and were shown to characterize finite geometric lattices by Davidson and Hersh in \cite{geomlatsdavidsonhersh2014}. (We refer to those citations for the well known definition of a geometric lattice and more in depth discussion of minimal labelings.) For an element $x$ in a geometric lattice $L$, we denote the set of atoms of $L$ which are below $x$ by $A(x)$. Let $\Omega$ be any total ordering of the atoms of $L$. Then the \textbf{minimal labeling induced by $\boldsymbol{\Omega}$} is the edge labeling $\lm_{\Omega}$ of $L$ given as follows: if $x\lessdot y$, then $\lm_{\Omega}(x,y)=\min_{\Omega}(A(y)\setminus A(x))$. 

\begin{theorem}\label{thm:min labelings polygon strong}
Every minimal labeling $\lm$ of a finite geometric lattice is polygon strong. Thus, $\lm$ is polygon complete. 
\end{theorem}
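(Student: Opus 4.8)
The plan is to verify the defining inequality of \cref{def:polygonstrongEL} directly for a minimal labeling $\lm = \lm_\Omega$ of a finite geometric lattice $L$, and then invoke \cref{thm:polygonstrongELincreasesgivecovers}. So I fix a descent $x\lessdot y\lessdot z$ in $L$, let $x\lessdot y'\lessdot z$ be the coatoms occurring in the unique ascending maximal chain of the rank two interval $[x,z]$, and I must show $\lm(y,z) < \lm(y',z)$. Here $\lm(x,y) > \lm(y,z)$ (the chain through $y$ is a descent) while $\lm(x,y') < \lm(y',z)$ (the chain through $y'$ is ascending). In terms of atom sets, $\lm(x,y) = \min_\Omega(A(y)\setminus A(x))$, etc. The key structural fact I would use is that in a geometric lattice a rank two interval $[x,z]$ has $z = x \vee a$ for any atom $a$ with $a \le z$, $a\not\le x$, and that the coatoms of $[x,z]$ partition the atoms $A(z)\setminus A(x)$ into blocks: two such atoms $a,b$ lie under the same coatom iff $x\vee a = x\vee b$. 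Equivalently, $[x,z]$ with its atoms is (the truncation of) a rank two geometric lattice, i.e. a "line" whose points are these coatom-blocks.

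The heart of the argument is the following. Write $B = A(z)\setminus A(x)$, and for a coatom $w$ of $[x,z]$ let $B_w = A(w)\setminus A(x)$, so $\{B_w\}$ partitions $B$. By the minimal labeling rule, $\lm(x,w) = \min_\Omega B_w$ for every coatom $w$ of $[x,z]$, and $\lm(w,z) = \min_\Omega(B\setminus B_w)$. The ascending coatom $y'$ is characterized by: $\lm(x,y') < \lm(y',z)$, i.e. $\min_\Omega B_{y'} < \min_\Omega(B\setminus B_{y'})$; this forces $\min_\Omega B_{y'} = \min_\Omega B$, so $y'$ is exactly the coatom whose block contains the $\Omega$-least atom of $B$. (This is the standard reason minimal labelings are EL-labelings — there is a unique ascending chain in each rank two interval — and I would cite \cite{shellblposets} or \cite{geomlatsdavidsonhersh2014} for it rather than reprove it.) Now for the descent coatom $y$: since $\min_\Omega B \in B_{y'}$ and $y\ne y'$, we have $\min_\Omega B \notin B_y$, hence $\min_\Omega B \in B\setminus B_y$, so $\lm(y,z) = \min_\Omega(B\setminus B_y) = \min_\Omega B$. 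On the other hand $\lm(y',z) = \min_\Omega(B\setminus B_{y'})$, and since $B\setminus B_{y'}$ is a nonempty proper subset of $B$ not containing the $\Omega$-least element of $B$, we get $\lm(y',z) = \min_\Omega(B\setminus B_{y'}) > \min_\Omega B = \lm(y,z)$. This is exactly the polygon strong inequality, and the proof concludes by \cref{thm:polygonstrongELincreasesgivecovers}.

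The main obstacle, and the step to be careful about, is the claim that the coatoms of $[x,z]$ genuinely partition $B = A(z)\setminus A(x)$ via $w\mapsto B_w$ — in particular that every atom of $B$ lies below exactly one coatom of $[x,z]$, and that $B_w$ is nonempty and $B\setminus B_w$ is nonempty for each coatom $w$. Existence (each $a\in B$ lies under $x\vee a \in [x,z]$, which has rank one in $[x,z]$ since geometric lattices are semimodular, hence is a coatom) and nonemptiness of $B_w$ (a coatom $w$ covers $x$, so $A(w)\supsetneq A(x)$) are quick; uniqueness — that $a$ cannot lie under two distinct coatoms $w_1\ne w_2$ — follows because then $a \le w_1\wedge w_2$ and $x < w_1\wedge w_2 < w_i$ would be forced by $w_1 \ne w_2$, contradicting that the $w_i$ cover $x$; and $B\setminus B_w\ne\emptyset$ holds because $[x,z]$ has at least two coatoms (a descent chain through some $y\ne y'$ exists). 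A clean way to package all of this is to note that $[x,z]$ is itself a geometric lattice of rank two and quote the elementary structure of such lattices; I would present that packaging and relegate the atom-partition bookkeeping to a sentence, since it is entirely standard for geometric lattices. One should also double-check the degenerate case where $[x,z]$ has exactly two coatoms, but there the argument above goes through verbatim.
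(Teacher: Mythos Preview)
Your proof is correct and follows essentially the same approach as the paper's: both arguments show that for any non-ascending coatom $y$ of a rank two interval $[x,z]$, the label $\lm(y,z)$ equals $\min_\Omega(A(z)\setminus A(x))$, which is strictly less than $\lm(y',z)$ for the ascending coatom $y'$. The paper's proof is terser---it asserts the key equality $\lm(y,z)=\min_\Omega(A(z)\setminus A(x))$ without spelling out the atom-partition structure of $[x,z]$ that you carefully verify---but the underlying idea is identical.
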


\begin{proof}
Let $x\lessdot y\lessdot z$ be an ascending saturated chain in a geometric lattice $L$ with respect to a minimal labeling $\lm_{\Omega}$ induced by a total atom order $\Omega$. Then $\lm_{\Omega}(x,y)$ is the minimal atom with respect to $\Omega$ which is below $z$, but not below $x$. This implies that for any $y'\neq y$ satisfying $x\lessdot y'\lessdot z$, $\lm_{\Omega}(y',z)=\lm_{\Omega}(x,y)<\lm_{\Omega}(y,z)$. Thus, $\lm_{\Omega}$ is polygon strong, and so polygon complete by \cref{thm:polygonstrongELincreasesgivecovers}.
\end{proof}

\begin{example}\label{ex:Pi4 min lableing}
\cref{fig:Pi4 with min labeling} shows the partition lattice $\Pi_4$, which is a geometric lattice, with a minimal labeling $\lm_{\Omega}$. The total atom order $\Omega$ is the one induced by the labels of the covers below the atoms. We label covers by the index of the corresponding atom with respect to $\Omega$. \cref{fig:cord from Pi4 min labeling} exhibits the induced maximal chain descent order $\cord{{\Pi_4}}{\lm_{\Omega}}$. The maximal chains of $\Pi_4$ 

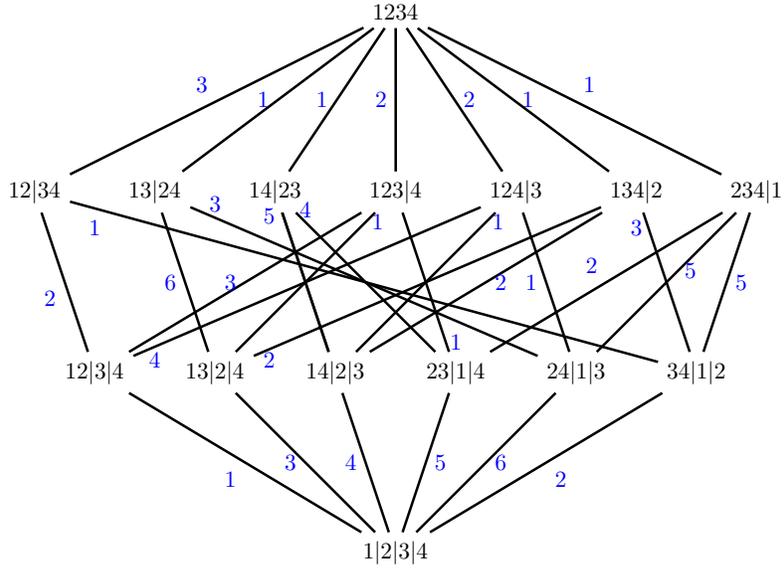
\begin{figure}[H]
    \centering
    \scalebox{0.8}{\begin{tikzpicture}[very thick]
  \node (zh) at (0,0) {\scalebox{1}{$1|2|3|4$}};
  \node (12) at (-5,3) {\scalebox{1}{$12|3|4$}};  
  \node (13) at (-3,3) {\scalebox{1}{$13|2|4$}};
  \node (14) at (-1,3) {\scalebox{1}{$14|2|3$}};
  \node (23) at (1,3) {\scalebox{1}{$23|1|4$}};
  \node (24) at (3,3) {\scalebox{1}{$24|1|3$}};
  \node (34) at (5,3) {\scalebox{1}{$34|1|2$}};
  \node (1234) at (-6,6) {\scalebox{1}{$12|34$}};
  \node (1324) at (-4,6) {\scalebox{1}{$13|24$}};
  \node (1423) at (-2,6) {\scalebox{1}{$14|23$}};
  \node (123) at (0,6) {\scalebox{1}{$123|4$}};
  \node (124) at (2,6) {\scalebox{1}{$124|3$}};
  \node (134) at (4,6) {\scalebox{1}{$134|2$}};
  \node (234) at (6,6) {\scalebox{1}{$234|1$}};
  \node (oneh) at (0,9) {\scalebox{1}{$1234$}};

  \node[blue] (23 1423) at (-1.5,5.7) {$4$};
  \node[blue] (14 1423) at (-2.1,5.6) {$5$};
  \node[blue] (24 1324) at (-3,5.8) {$3$};
  \node[blue] (34 1234) at (-5,5.4) {$1$};
  \node[blue] (12 124) at (-4,3.2) {$4$};
  \node[blue] (13 123) at (-0.3,5.5) {$1$};
  \node[blue] (13 134) at (-2.1,3.2) {$2$};
  \node[blue] (14 124) at (1.7,5.5) {$1$};
  \node[blue] (23 123) at (1,3.5) {$1$};
  \node[blue] (34 134) at (4,5.4) {$3$};
  \node[blue] (24 234) at (4.9,4.7) {$5$};
  
  \draw (zh) --node[below left,blue] {$1$} (12) --node[below left,blue] {$2$} (1234) --node[above left,blue] {$3$} (oneh) --node[above right,blue] {$1$} (234) --node[right,blue] {$5$} (34) --node[below right,blue] {$2$} (zh) --node[left,blue] {$3$} (13) --node[left,blue] {$6$} (1324) --node[left,blue] {$1$} (oneh) --node[right,blue] {$1$} (134) --node[right,blue] {$2$} (14) --node[left,blue] {$4$} (zh);
  \draw (zh) --node[right,blue] {$5$} (23) -- (1423) --node[left,blue] {$1$} (oneh);
  \draw (zh) --node[right,blue] {$6$} (24) --node[left,blue] {$1$} (124) --node[right,blue] {$2$} (oneh);
  \draw (24) -- (234);
  \draw (13) -- (134);
  \draw (13) -- (123);
  \draw (14) -- (1423);
  \draw (14) -- (1423);
  \draw (12) --node[left,blue] {$3$} (123)--node[left,blue] {$2$} (oneh);
  \draw (12) -- (124);
  \draw (14) -- (124);
  \draw (23) -- (123);
  \draw (23) --node[above left,blue] {$2$} (234);
  \draw (24) -- (1324);
  \draw (34) -- (1234);
  \draw (34) -- (134);
\end{tikzpicture}} 
\caption{The partition lattice $\Pi_4$ with minimal labeling $\lm_{\Omega}$.}
    \label{fig:Pi4 with min labeling}
\end{figure}

\begin{figure}[H]
    \centering
\scalebox{0.9}{\begin{tikzpicture}[very thick]
  \node (m1234) at (0,3) {$m_{1234}$}; 
  \node (m12u34) at (0,0) {$m_{12}^{34}$};
  \node (m1243) at (2,3) {$m_{1243}$};
  \node (m1324) at (-1,6) {$m_{1324}$};
  \node (m2314) at (1,6) {$m_{2314}$};
  \node (m1342) at (-5,9) {$m_{1342}$};
  \node (m13u24) at (2.5,9) {$m_{13}^{24}$};
  \node (m1423) at (5,6) {$m_{1423}$};
  \node (m2413) at (3,6) {$m_{2413}$};
  \node (m34u12) at (-2,3) {$m_{34}^{12}$};
  \node (m2341) at (1,9) {$m_{2341}$};
  \node (m23u14) at (5,12) {$m_{23}^{14}$};
  \node (m1432) at (-1,9) {$m_{1432}$};
  \node (m24u13) at (3,12) {$m_{24}^{13}$};
  \node (m3412) at (-5,6) {$m_{3412}$};
  \node (m2431) at (-3,9) {$m_{2431}$};
  \node (m3421) at (-3,6) {$m_{3421}$};
  \node (m14u23) at (5,9) {$m_{14}^{23}$};
  
  \draw (m12u34) -- (m34u12) -- (m3412) -- (m1342) -- (m1324) -- (m1234) -- (m12u34) -- (m1243) -- (m1423) -- (m14u23) -- (m23u14) -- (m2314) -- (m1234);
  \draw (m3412) -- (m1432) -- (m1423);
  \draw (m34u12) -- (m3421) -- (m2431) -- (m2413) -- (m1243);
  \draw (m3421) -- (m2341) -- (m2314);
  \draw (m1324) -- (m13u24) -- (m24u13) -- (m2413);
\end{tikzpicture} }
    \caption{$\cord{{\Pi_4}}{\lm_{\Omega}}$ induced by the minimal labeling $\lm_{\Omega}$ from \cref{fig:Pi4 with min labeling}.}
    \label{fig:cord from Pi4 min labeling}
\end{figure}
are denoted as follows: $m_{ijkl}$ denotes the chain $1|2|3|4 \lessdot ij|k|l \lessdot ijk|l \lessdot ijkl$ and $m_{ij}^{kl}$ denotes the chain $1|2|3|4 \lessdot ij|k|l \lessdot ij|kl \lessdot ijkl$. This example illustrates that $\cord{P}{\lm}$ may have multiple maximal elements and that $\cord{P}{\lm}$ need not be ranked despite $\lm$ being polygon complete.
\end{example}

We now consider certain {\EL}s due to Dyer in \cite{reflordeldyer1993} of closed intervals in Bruhat order of any Coxeter group. (See \cite{bjornerbrenitcxgp} for background on general Coxeter groups.) Let $u$ and $w$ be group elements of a Coxeter system $(W,S)$. If $u\lessdot w$ in Bruhat order on $W$, then $u^{-1}w=t$ for some reflection $t\in T$ where $T$ is the set of reflections of $(W,S)$. The cover relation $u\lessdot w$ is labeled by $\lm(u,w)=t$ and the reflections $T$ are totally ordered by any of the so called reflection orders introduced in Definition 2.1 \cite{reflordeldyer1993}. Then $\lm$ is an {\EL} of any closed interval in Bruhat order on $W$ by Section 4 of \cite{reflordeldyer1993}. We refer to these labelings as reflection order {\EL}s.

\begin{theorem}\label{thm:dyer refl order polygon strong}
Every reflection order {\EL} $\lm$ of a closed interval in Bruhat order of any Coxeter group is polygon strong. Thus, $\lm$ is polygon complete. 
\end{theorem}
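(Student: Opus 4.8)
The plan is to show that a reflection order {\EL} $\lm$ is a \emph{polygon strong} {\EL} in the sense of \cref{def:polygonstrongEL}, whence polygon completeness follows from \cref{thm:polygonstrongELincreasesgivecovers}. Polygon strength is a condition on length-two intervals, so fix a length-two interval $[x,z]$ of the given Bruhat interval, with coatoms $y$ and $y'$, and write $a=\lm(x,y)=x^{-1}y$, $b=\lm(y,z)=y^{-1}z$, $a'=\lm(x,y')=x^{-1}y'$ and $b'=\lm(y',z)=y'^{-1}z$; these are reflections and $ab=a'b'=x^{-1}z$. Since $\lm$ is an {\EL}, a length-two interval has a unique ascending chain, so if $x\lessdot y\lessdot z$ is a descent then $x\lessdot y'\lessdot z$ is the ascending chain and $y\neq y'$. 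Writing $\prec$ also for the chosen reflection order on the set $T$ of reflections, the polygon strong condition for the descent $x\lessdot y\lessdot z$ is exactly the single inequality $b\prec b'$.

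The next step is to recall from Dyer's analysis in \cite{reflordeldyer1993} (see also \cite{bjornerbrenitcxgp}) two structural facts. First, every length-two Bruhat interval is a diamond, and there is a dihedral reflection subgroup $W'$ of $W$ — one may take $W'=\langle a,b\rangle$ — whose reflection set $T'=T\cap W'$ contains $a,b,a',b'$, with $x^{-1}z$ a nontrivial product of two reflections of $W'$. Second, by the defining property of a reflection order, $\prec$ restricted to $T'$ is one of the two canonical reflection orders of $W'$. When $W'$ is finite of order $2m$ this means we may list $T'=\{t_1\prec t_2\prec\dots\prec t_m\}$ so that $t_pt_q=t_{p''}t_{q''}$ iff $p-q\equiv p''-q''\pmod m$, with each $t_pt_q$ ($p\neq q$) a nontrivial rotation. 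Now $x\lessdot y\lessdot z$ having a descent means $a\succ b$, so $a=t_j$, $b=t_i$ with $i<j$; $x\lessdot y'\lessdot z$ being ascending means $a'\prec b'$, so $a'=t_{i'}$, $b'=t_{j'}$ with $i'<j'$. From $ab=a'b'$ we get $j-i\equiv i'-j'\pmod m$, so $(j-i)+(j'-i')\equiv 0\pmod m$; since $1\le j-i\le m-1$ and $1\le j'-i'\le m-1$ this forces $(j-i)+(j'-i')=m$. Hence $j'-i=i'+m-j\ge 1+m-m=1>0$, so $i<j'$, i.e.\ $b=t_i\prec t_{j'}=b'$, as required.

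When $W'$ is infinite dihedral the same conclusion holds by an analogous argument: $T'$ carries a $\mathbb{Z}$-indexing for which products of reflections are translations, with $t_pt_q=t_{p''}t_{q''}$ exactly when $p-q=p''-q''$, while $\prec$ on $T'$ consists of an initial ray followed by a final segment, along each of which the $\mathbb{Z}$-index decreases; assuming the negation of $b\prec b'$ gives $a'\prec b'\prec b\prec a$, and a short case check on which part of $\prec$ contains each of $a,b,a',b'$, together with the translation identity $ab=a'b'$, produces a numerical contradiction. The main point needing care is not the final arithmetic but assembling these structural inputs — the picture of a diamond sitting inside a common dihedral reflection subgroup, with the reflection order realized as a monotone rotation (or translation) indexing — from Dyer's work. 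Once this is in place, $\lm$ is polygon strong, and \cref{thm:polygonstrongELincreasesgivecovers} shows $\lm$ is polygon complete.
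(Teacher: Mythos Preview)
Your proof is correct and follows essentially the same route as the paper: verify that $\lm$ is polygon strong on each length-two interval and then invoke \cref{thm:polygonstrongELincreasesgivecovers}. The paper's own proof simply cites Dyer's Lemma~4.1(i) from \cite{reflordeldyer1993} as a black box for the needed inequality $b\prec b'$, whereas you unpack that lemma by passing to the dihedral reflection subgroup $W'$ containing the four labels, using that the reflection order restricts to a canonical order on $T\cap W'$, and then carrying out the explicit index arithmetic. Your finite dihedral computation is clean and complete; the infinite dihedral case you only sketch, and while the outcome is correct, the phrase ``a short case check\dots produces a numerical contradiction'' is doing real work that you have not written down. Since Dyer's lemma already packages exactly this dihedral analysis (finite and infinite cases together), citing it directly, as the paper does, is both shorter and avoids leaving that case incomplete.
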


\begin{proof}
The fact that $\lm$ is polygon strong follows directly from the characterization of label sequences of rank two intervals in Lemma 4.1 (i) of \cite{reflordeldyer1993}. Then we apply \cref{thm:polygonstrongELincreasesgivecovers}.
\end{proof}

Next we turn to a generalization of $S_n$ {\EL}s. In \cite{posetedgelabelsmodularitymcnamarathomas2006}, McNamara and Thomas generalized McNamara's notion of an $S_n$ {\EL} of a finite poset to the non-graded case with the notion of an interpolating {\EL}. Interpolating {\EL}s were used to study modularity. These interpolating {\EL}s turn out to be polygon strong as well.

\begin{theorem}\label{thm:interpolating ELs polygon strong}
Any interpolating {\EL} $\lm$ of a finite, bounded poset in the sense of McNamara and Thomas in \cite{posetedgelabelsmodularitymcnamarathomas2006} is polygon strong. Thus, $\lm$ is polygon complete.
\end{theorem}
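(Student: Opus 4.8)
The plan is to recall the definition of an interpolating EL-labeling from McNamara and Thomas \cite{posetedgelabelsmodularitymcnamarathomas2006} and then check, exactly as in the proofs of \cref{thm:upper and lowwer semimod latts polygon strong} and \cref{thm:dyer refl order polygon strong}, that its defining conditions, when specialised to intervals of length two, force the polygon strong inequality of \cref{def:polygonstrongEL}. Once that is established, \cref{thm:polygonstrongELincreasesgivecovers} immediately yields polygon completeness, so the entire content of the theorem is the rank-two verification.

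Concretely, I would first fix a descent $x\lessdot y\lessdot z$ and let $x\lessdot y'\lessdot z$ be the unique ascending chain of $[x,z]$, with $a=\lm(x,y')$, $b=\lm(y',z)$ and necessarily $a\le b$; the claim to prove is $\lm(y,z)>b$. An interpolating EL-labeling assigns to each rank-two interval a pair of labels that behaves like a pair of ``interpolated'' values extending the $S_n$ picture: on the ascending chain the labels are increasing and on every other chain of that interval they are decreasing, and moreover the set of labels appearing on the two edges of any maximal chain of a rank-two interval is the \emph{same} two-element set regardless of which chain is taken (this is precisely the feature that makes \cref{prop:Sn descent swaps} work in the graded $S_n$ case and is retained in the non-graded interpolating setting). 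Given that, the descending chain $x\lessdot y\lessdot z$ must carry the labels $b,a$ in that order with $b>a$ --- in particular $\lm(y,z)=b \ge a$, and since the chain is a descent we actually get the label on the lower edge strictly exceeding the label on the upper edge. I then need to compare $\lm(y,z)$ against $b=\lm(y',z)$: because the two-element label set is the same on both chains and equals $\{a,b\}$, and because $\lm(y',z)=b$ is the \emph{larger} of the two on the ascending chain while on the descending chain the larger label $b$ sits on the \emph{lower} edge $x\lessdot y$, we get $\lm(y,z)=a<b=\lm(y',z)$ --- wait, that is the wrong direction, so the correct reading is that the polygon strong condition asks for $\lm(y\lessdot z)<\lm(y'\lessdot z)$, i.e. exactly $a<b$, which is what the interpolating axioms give. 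I would state the precise inequality in the direction matching \cref{def:polygonstrongEL} and cite the relevant numbered condition of \cite{posetedgelabelsmodularitymcnamarathomas2006} for the rank-two label behaviour.

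The main obstacle is purely bookkeeping: getting the orientation of the polygon strong inequality to line up correctly with the interpolating axioms, since ``descent'' and the labelling conventions on rank-two intervals can be stated with opposite orderings in different sources. I would resolve this by reducing everything to a single picture of a rank-two interval --- the ascending chain carries $(a,b)$ with $a\le b$, every other chain carries $(b',a')$ with $b'>a'$ and $\{a',b'\}$ determined by the interpolating structure --- and reading off that the coatom label on a descending chain is the smaller value $a'$ while the coatom label on the ascending chain is the larger value $b$, then invoking whichever monotonicity the interpolating definition provides to conclude $a'<b$. Since interpolating EL-labelings are designed precisely to preserve the combinatorics of rank-two intervals from the $S_n$ case, this step should go through verbatim once the definition is unwound, and no induction or global argument beyond \cref{thm:polygonstrongELincreasesgivecovers} is needed.

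\begin{proof}
We recall from \cite{posetedgelabelsmodularitymcnamarathomas2006} that an interpolating EL-labeling restricts on each rank-two interval $[x,z]$ to a labelling in which the unique ascending maximal chain $x\lessdot y'\lessdot z$ has strictly increasing labels $\lm(x,y')<\lm(y',z)$, every other maximal chain $x\lessdot y\lessdot z$ has strictly decreasing labels $\lm(x,y)>\lm(y,z)$, and the coatom edge of every non-ascending chain receives a label strictly smaller than $\lm(y',z)$. In particular, for a descent $x\lessdot y\lessdot z$ the coatom $y'$ of the ascending chain of $[x,z]$ satisfies $\lm(y\lessdot z)<\lm(y'\lessdot z)$, which is exactly the condition of \cref{def:polygonstrongEL}. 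Hence $\lm$ is polygon strong, and by \cref{thm:polygonstrongELincreasesgivecovers} it is polygon complete.
\end{proof}
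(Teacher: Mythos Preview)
Your approach is the same as the paper's: verify the polygon strong inequality directly from the defining axioms of an interpolating EL-labeling and then invoke \cref{thm:polygonstrongELincreasesgivecovers}. However, there is a genuine gap in how you carry this out.

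Throughout your discussion and in your formal proof you treat $[x,z]$ as a ``rank-two interval'' and write the ascending chain as $x\lessdot y'\lessdot z$. But interpolating EL-labelings were introduced precisely to extend $S_n$ EL-labelings to the \emph{non-graded} setting, so for a descent $x\lessdot y\lessdot z$ the unique ascending chain of $[x,z]$ may have length strictly greater than two; the coatom $y'$ of that chain need not cover $x$. Your argument, which compares two length-two chains with a common two-element label set, does not apply in that situation, and your recollection that ``the set of labels on the two edges of any maximal chain of a rank-two interval is the same two-element set'' is not what the interpolating definition says once the ascending chain is longer.

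The fix is short. Condition (ii) of Definition~1.2 in \cite{posetedgelabelsmodularitymcnamarathomas2006} (which is what the paper cites) asserts that for a descent $x\lessdot y\lessdot z$ with ascending chain $x=u_0\lessdot u_1\lessdot\cdots\lessdot u_l=z$, one has $\lm(u_{l-1},z)=\lm(x,y)$, i.e.\ the top label of the ascending chain equals the first label of the descent. Since $x\lessdot y\lessdot z$ is a descent, $\lm(x,y)>\lm(y,z)$, so $\lm(y',z)=\lm(u_{l-1},z)=\lm(x,y)>\lm(y,z)$, which is exactly the polygon strong inequality. This is the ``same reasoning as for $S_n$ EL-labelings'' the paper refers to: in the $S_n$ case $l=2$ and the labels simply swap, while condition (ii) guarantees the same relationship between the top ascending label and the descent labels even when $l>2$.
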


\begin{proof}
Condition (ii) of the definition of an interpolating {\EL} (Definition 1.2 in \cite{posetedgelabelsmodularitymcnamarathomas2006}) allows us to use essentially the same reasoning as we used for the case of $S_n$ {\EL}s. Thus, $\lm$ is polygon strong, and so polygon complete by \cref{thm:polygonstrongELincreasesgivecovers}.   
\end{proof}

Lastly, we show that an {\EL} of the Tamari lattice and an {\EL} of intervals in general Cambrian semilattices are polygon complete.

\begin{theorem}\label{thm:bjorner and wachs EL of tamari lattice}
Bj{\"o}rner and Wachs' {\EL} $\lm$ of the Tamari lattice defined in Section 9 of \cite{shllblnonpureIIbjornerwachs1997} is polygon strong. Hence, $\lm$ is polygon complete.
\end{theorem}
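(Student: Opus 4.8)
The plan is to verify directly that Bj\"orner and Wachs' {\EL} $\lm$ of the Tamari lattice satisfies the defining inequality of a polygon strong {\EL} (\cref{def:polygonstrongEL}) on every length-two interval, and then invoke \cref{thm:polygonstrongELincreasesgivecovers} to conclude polygon completeness. So the real content is a local computation: fix a descent $x \lessdot y \lessdot z$, let $y'$ be the coatom of $[x,z]$ lying on the unique ascending maximal chain of $[x,z]$, and show $\lm(y \lessdot z) < \lm(y' \lessdot z)$.

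First I would recall the precise form of the labeling from Section~9 of \cite{shllblnonpureIIbjornerwachs1997}: the Tamari lattice is realized on (say) binary trees or on the triangulations/bracketings of a polygon, its cover relations are right rotations, and each such rotation is labeled by a pair (or a single integer, depending on the convention), ordered lexicographically. The key structural fact to extract is the classification of length-two intervals of the Tamari lattice: each is either a ``chain of two rotations at independent/nested edges'' (a genuine square) or a pentagon coming from the associativity relation. In the square case the two maximal chains have label sequences $a,b$ and $b,a$ (the same two labels in the two orders), exactly as in the $S_n$ situation of \cref{thm:Sn EL is polygon strong}, so the ascending chain uses the smaller label first and polygon strength is immediate. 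In the pentagon case I would use Bj\"orner--Wachs' explicit description of the labels on the two sides of the pentagon to see that the label of the top edge of the ascending side is strictly larger than the label of the top edge of any other maximal chain through that interval; this is precisely the polygon strong inequality.

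The main obstacle I expect is purely bookkeeping: pinning down the exact labeling convention of \cite{shllblnonpureIIbjornerwachs1997} (which coordinate of the rotation is recorded, and which total order on the label poset $\Lambda$ is used) and then checking the pentagon case carefully, since that is the one interval shape where the two competing maximal chains do not simply carry a transposed pair of labels. Once the pentagon labels are written out explicitly, the required strict inequality should follow from the fact that the ascending chain is forced to perform the ``deeper'' rotation last, which carries the largest label among the relevant covers. After the length-two analysis is done, the rest is automatic: polygon strong {\EL}s are polygon complete by \cref{thm:polygonstrongELincreasesgivecovers}, so $\lm$ is polygon complete as claimed.
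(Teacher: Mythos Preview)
Your proposal is correct and follows essentially the same route as the paper: verify that $\lm$ is polygon strong by inspecting the label structure on intervals $[x,z]$ arising from a descent $x\lessdot y\lessdot z$, then invoke \cref{thm:polygonstrongELincreasesgivecovers}. The paper's proof is in fact just a one-line citation---it says the polygon strong inequality ``follows from the proof of Theorem~9.2 in \cite{shllblnonpureIIbjornerwachs1997}'' and then applies \cref{thm:polygonstrongELincreasesgivecovers}---so what you have sketched is exactly the unpacking of that citation.
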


\begin{proof}
The fact that $\lm$ is polygon strong follows from the proof of Theorem 9.2 in \cite{shllblnonpureIIbjornerwachs1997}. Then we apply \cref{thm:polygonstrongELincreasesgivecovers}.
\end{proof}

\begin{theorem}\label{thm:kallipolit muhle EL of cambrian lattice}
The {\EL} $\lm$ of a closed interval in any Cambrian semilattice given by Kallipoliti and M{\"u}hle in Section 3.1 of \cite{topcambriansemilat2013} is polygon strong. Hence, $\lm$ is polygon complete.
\end{theorem}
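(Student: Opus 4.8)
The plan is to proceed exactly as for the earlier labelings in this subsection: read off from Kallipoliti and M{\"u}hle's construction how $\lm$ behaves on length two saturated chains, check that this matches the defining inequality of a polygon strong {\EL} in \cref{def:polygonstrongEL}, and then invoke \cref{thm:polygonstrongELincreasesgivecovers}. By \cref{prop:restrictinglabelings} the restriction of $\lm$ to any closed interval of the given Cambrian semilattice interval is again an {\EL}, so it is enough to fix an interval $[x,z]$, a length two saturated chain $x \lessdot y \lessdot z$ that is a descent, and the coatom $y'$ of $[x,z]$ lying on the unique ascending maximal chain of $[x,z]$, and to verify that $\lm(y \lessdot z) < \lm(y' \lessdot z)$.

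First I would recall the explicit labeling from Section 3.1 of \cite{topcambriansemilat2013} together with its accompanying analysis of covers and of length two intervals, which is the Cambrian analogue of the rank two computations underlying Dyer's reflection order {\EL}s in \cite{reflordeldyer1993} and the Bj{\"o}rner--Wachs {\EL} of the Tamari lattice in \cite{shllblnonpureIIbjornerwachs1997}. The point to extract is the statement that within any interval $[x,z]$ the label of the last edge of the unique ascending maximal chain is, in the ambient label order, strictly larger than the label of the last edge of every other maximal chain of $[x,z]$. Applied to the descent $x \lessdot y \lessdot z$ and to the coatom $y'$ of $[x,z]$ on the ascending chain, this is precisely $\lm(y \lessdot z) < \lm(y' \lessdot z)$, so $\lm$ is polygon strong and \cref{thm:polygonstrongELincreasesgivecovers} completes the proof.

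The hard part will be the non-gradedness of Cambrian semilattices: an interval $[x,z]$ containing a length two descent chain may also contain strictly longer maximal chains (the ``pentagon'' intervals already visible in the Tamari lattice), so the ascending maximal chain of $[x,z]$ need not itself have length two. In that situation one has to pin down the single element $y' \lessdot z$ of $[x,z]$ on that ascending chain and verify the inequality directly against the label rule of \cite{topcambriansemilat2013}, rather than reducing to a transposition of two labels as in the $S_n$ {\EL} case. Once the statement that the last label of the ascending chain of $[x,z]$ strictly dominates the last label of every maximal chain of $[x,z]$ is secured from \cite{topcambriansemilat2013}, the remaining verification is identical to the Tamari and geometric lattice cases handled above.
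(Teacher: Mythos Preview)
Your proposal is correct and follows essentially the same route as the paper: read off from Kallipoliti and M{\"u}hle's analysis of length two intervals that the top label of the ascending chain strictly dominates the top label of any descent, conclude $\lm$ is polygon strong, and invoke \cref{thm:polygonstrongELincreasesgivecovers}. The paper's version is more terse, simply citing Lemma~3.4 of \cite{topcambriansemilat2013} for the polygon strong inequality, while you spell out the non-graded subtlety that the ascending chain of $[x,z]$ may have length greater than two; this elaboration is accurate but not strictly needed, since the definition of polygon strong already allows for that.
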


\begin{proof}
The fact that $\lm$ is polygon complete follows directly from Lemma 3.4 in \cite{topcambriansemilat2013}. Then we again apply \cref{thm:polygonstrongELincreasesgivecovers}.
\end{proof}

\end{subsection}

\begin{subsection}{Characterization of Polygon Complete {\EL}s}\label{sec:polygon complete  charact}

Here we characterize polygon complete {\EL}s in \cref{thm:characterization of non-polygon completeness}. Because the statements are slightly simpler, we actually characterize the {\EL}s which are not polygon complete which gives a characterization of polygon completeness for {\EL}s by negating the conditions in \cref{thm:characterization of non-polygon completeness}. The two technical conditions appearing in the following two lemmas provide the characterization. \cref{fig:non cover circled elts} is a schematic illustrating these conditions. It is clarifying to check these conditions in the example from \cref{fig:noncover diamond move example} where the elements are labeled to match the statements of \cref{lem:circledelementgivesnoncover} and \cref{lem:noncovergivescircledelements}.

\begin{figure}[H]
    \centering
    \scalebox{0.9}{
    \begin{tikzpicture}[very thick]
    \node (y) at (0,6) {$y$};
    \node (x1) at (0,5) {$x_1$};
    \node (x2) at (3,5) {$x_2$};
    \node (x3) at (6,5) {$x_3$};
    \node (x4) at (9,5) {$x_4$};
    \node (xn) at (-3,5) {$x_n$};
    \node (xn-1) at (-6,5) {$x_{n-1}$};
    \node (z1) at (1.5,4) {$z_1$};
    \node (z2) at (4.5,4) {$z_2$};
    \node (z3) at (7.5,4) {$z_3$};
    \node (zn) at (-1.5,4) {$z_n$};
    \node (zn-1) at (-4.5,4) {$z_{n-1}$};
    \draw [dotted] (9,3) -- (9.5,3);
    \draw [dotted] (-6.5,3) -- (-6,3);
    
    \node (yi-1) at (0,0) {$y_{i-1}$};
    \node (yi+s) at (0,2) {$y_{i+s}$};
    \node (y'i) at (0.5,1) {$y'_i$};
    \node (yi) at (-0.5,1) {$y_i$};
    \node (zh) at (0,-2) {$\zh$};

    \draw [dashed] (zh) -- (yi-1);
    \draw (yi-1) -- (yi);
    \draw [dashed] (yi) --node [left]{$m$} (yi+s);
    \draw (y'i) --node [right]{$m'$} (yi+s);
    \draw (yi-1) -- (y'i);
    
    \node (bend1) at (1.8,2) {$m_1$};
    \node (bend2) at (4,2) {$m_2$};
    \node (bend3) at (7,2) {$m_3$};
    \node (bendn) at (-1.8,2) {$m_n$};
    \node (bendn-1) at (-4,2) {$m_{n-1}$};
    
    \draw [dashed] (yi+s) -- (x1);
    \draw [dashed] (yi-1) -- (bend1) -- (z1);
    \draw [dashed] (yi-1) -- (bend2) -- (z2);
    \draw [dashed] (yi-1) -- (bend3) -- (z3);
    \draw [dashed] (yi-1) -- (bendn-1) -- (zn-1);
    \draw [dashed] (yi-1) -- (bendn) -- (zn);

    \draw (x1) -- (y);
    \draw (x2) -- (y);
    \draw (x3) -- (y);
    \draw (x4) -- (y);
    \draw (xn-1) -- (y);
    \draw (xn) -- (y);
    \draw [dashed] (z1) --node [below left]{$c_1$} (x1);
    \draw (z1) -- (x2);
    \draw [dashed] (z2) --node [below left]{$c_2$} (x2);
    \draw [dashed] (z3) --node [below left]{$c_3$} (x3);
    \draw [dashed] (zn) --node [below left]{$c_n$} (xn);
    \draw [dashed] (zn-1) --node [below left]{$c_{n-1}$} (xn-1);
    \draw (z2) -- (x3);
    \draw (z3) -- (x4);
    \draw (zn) -- (x1);
    \draw (zn-1) -- (xn);
    
    \end{tikzpicture}}
    \caption{Illustration of conditions (i) and (ii) in \cref{lem:circledelementgivesnoncover} and \cref{lem:noncovergivescircledelements}.
    } 
    \label{fig:non cover circled elts}
\end{figure}

\begin{lemma}\label{lem:circledelementgivesnoncover}
Let $P$ be a finite, bounded poset in which the length of some maximal chain is at least three. Let $\lm$ be an {\EL} of $P$. Suppose that for some $n\geq 2$ there are elements $y,x_1,x_2,\dots,x_n,z_1,z_2,\dots,z_n\in P$ which, under the convention $x_{n+1}=x_1$, satisfy: 
\begin{itemize}
    \item [(i)] For $1\leq i\leq n$, $z_i\lessdot x_{i+1} \lessdot y$ is a descent in $[z_i,y]$ and $x_i\lessdot y$ is contained in the unique ascending saturated chain $c_i$ of $[z_i, y]$ with respect to $\lm$. 
    
    \item [(ii)] There are the following saturated chains of length at least one: $m,m'$ from $\zh$ to $x_1$ such that $m\to m'$ and $m_i$ from $\zh$ to $z_i$ for $1\leq i\leq n$ which satisfy the relations $\cleqi{m}{m_1 * c_1^{x_1}}{\lm}$ in $\cord{[\zh,x_1]}{\lm}$, $\cl{m_i * z_i * x_{i+1}}{m_{i+1} * c_{i+1}^{x_{i+1}}}{\lm}$ in $\cord{[\zh,x_{i+1}]}{\lm}$ for each $1\leq i\leq n$, and $\cleqi{m_n * z_n * x_1}{m'}{\lm}$ in $\cord{[\zh,x_{1}]}{\lm}$. (It is possible that $m$ contains $m_1$ and $c_1^{x_{1}}$.)
\end{itemize}
Then $\lm$ is not polygon complete.
\end{lemma}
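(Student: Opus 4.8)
The plan is to produce a single explicit polygon move in $P$ which fails to be a cover relation in $\cord{P}{\lm}$, by using conditions (i) and (ii) to construct, ``around the back'', a strictly increasing chain in $\cord{P}{\lm}$ of length at least two joining the two ends of that move. Fix once and for all a saturated chain $t$ from $y$ to $\oneh$ (the empty chain if $y=\oneh$; note $x_1\lessdot y$ by condition (i) for $i=1$), and set $M:=m*(x_1\lessdot y)*t$ and $M':=m'*(x_1\lessdot y)*t$. Since $m\to m'$, the chains $m$ and $m'$ agree except on some sub-interval $[u,v]\subseteq[\zh,x_1]$ on which $m$ is the unique ascending chain and $m'$ is a descent (\cref{def:differ by diamond} and \cref{def:increase and polygon}); as $\lm$ is an {\EL}, being ascending and having a descent are root-independent, so $M$ and $M'$ differ by the same polygon with $M$ ascending and $M'$ a descent on $[u,v]$, i.e. $M\to M'$ in $\cord{P}{\lm}$ and in particular $M\ne M'$. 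It then remains only to exhibit an element of $\cord{P}{\lm}$ strictly between $M$ and $M'$.

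For each $1\le i\le n$ I would introduce two maximal chains of $P$: let $A_i$ follow $m_i$ from $\zh$ to $z_i$, then the ascending chain $c_i$ from $z_i$ to $y$, then $t$; and let $B_i$ follow $m_i$, then $z_i\lessdot x_{i+1}\lessdot y$, then $t$. The structural point is that $A_i\to B_i$ is a genuine polygon move. First, $x_i\ne x_{i+1}$: otherwise $z_i\lessdot x_i$ would be a cover relation, forcing $c_i$ (which contains $x_i\lessdot y$) to equal $z_i\lessdot x_i\lessdot y$, contradicting that $z_i\lessdot x_{i+1}\lessdot y$ is a descent while $c_i$ is ascending. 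Consequently $x_{i+1}$, being covered by $y$, does not lie on $c_i$ at all (the only element of $c_i$ covered by $y$ is its coatom $x_i$), so $A_i$ and $B_i$ agree below $z_i$ and above $y$ and differ by a polygon over $[z_i,y]$ in the sense of \cref{def:differ by diamond}. Restricting to the rooted interval $[z_i,y]_{m_i}$ and again using that $\lm$ is an {\EL}, $A_i$ restricts to the unique ascending chain $c_i$ while $B_i$ restricts to the descent $z_i\lessdot x_{i+1}\lessdot y$ of condition (i); hence $A_i\to B_i$ by \cref{def:increase and polygon}, so $A_i\prec_{\lm}B_i$.

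Next I would lift each relation of condition (ii) to $P$ via \cref{lem:restrictedrelslift}, appending in each case the appropriate step up through $y$ followed by $t$. Appending $(x_1\lessdot y)*t$ to $\cleqi{m}{m_1*c_1^{x_1}}{\lm}$ gives $M\preceq_{\lm}(m_1*c_1^{x_1})*(x_1\lessdot y)*t=A_1$; appending $(x_{i+1}\lessdot y)*t$ to the relation $\cl{m_i*z_i*x_{i+1}}{m_{i+1}*c_{i+1}^{x_{i+1}}}{\lm}$ (for $1\le i\le n-1$) gives $B_i\prec_{\lm}A_{i+1}$; and appending $(x_1\lessdot y)*t$ to $\cleqi{m_n*z_n*x_1}{m'}{\lm}$ gives $B_n\preceq_{\lm}M'$. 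Combining these with the moves $A_i\to B_i$ of the previous paragraph yields
\[
M\ \preceq_{\lm}\ A_1\ \prec_{\lm}\ B_1\ \prec_{\lm}\ A_2\ \prec_{\lm}\ B_2\ \prec_{\lm}\ \cdots\ \prec_{\lm}\ A_n\ \prec_{\lm}\ B_n\ \preceq_{\lm}\ M'
\]
in $\cord{P}{\lm}$. Since $n\ge 2$, the element $B_1$ satisfies $\cl{M}{B_1}{\lm}$ (from $M\preceq_{\lm}A_1\prec_{\lm}B_1$) and $\cl{B_1}{M'}{\lm}$ (from $B_1\prec_{\lm}A_2\prec_{\lm}\cdots\prec_{\lm}B_n\preceq_{\lm}M'$), so it lies strictly between $M$ and $M'$. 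Therefore $M$ is not covered by $M'$ although $M\to M'$, and so $\lm$ is not polygon complete; the edge case in (ii) where $m$ already contains $m_1$ and $c_1^{x_1}$, so that $M=A_1$, causes no trouble since $B_1$ still lies strictly between.

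I expect the main obstacle to be the bookkeeping around the auxiliary moves $A_i\to B_i$: verifying that $A_i$ and $B_i$ genuinely ``differ by a polygon'' in the sense of \cref{def:differ by diamond} — this is exactly where one needs $x_i\ne x_{i+1}$ and $x_{i+1}\notin c_i$ — together with tracking the concatenation notation carefully enough to see that the constructed chain from $M$ to $M'$ really has length at least two (hence produces an intermediate element distinct from both $M$ and $M'$, rather than collapsing to the single move $M\to M'$), which is the precise place where the hypothesis $n\ge 2$ is used.
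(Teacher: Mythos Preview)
Your proof is correct and follows essentially the same approach as the paper: build the chain $M\preceq_\lm A_1\prec_\lm B_1\prec_\lm\cdots\prec_\lm A_n\prec_\lm B_n\preceq_\lm M'$ by lifting the relations of condition (ii) via \cref{lem:restrictedrelslift} and inserting the polygon moves $A_i\to B_i$ coming from condition (i), then invoke $n\ge 2$ to extract a strict intermediate element. The only cosmetic difference is that the paper first carries out the argument in $\cord{[\zh,y]}{\lm}$ and lifts to $P$ at the very end, whereas you append $t$ from the outset; you are also a bit more explicit than the paper in checking $x_i\ne x_{i+1}$ so that $A_i$ and $B_i$ genuinely differ by a polygon.
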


\begin{proof}
It is helpful to refer to \cref{fig:non cover circled elts} to visually follow the proof. We assume there exist $y,x_1,x_2,\dots,x_n,z_1,z_2,\dots,z_n \in P$ such that conditions (i) and (ii) of \cref{lem:circledelementgivesnoncover} are met. We know $z_1\neq z_n$ since the chain $c_1$ is ascending and the chain $z_n\lessdot x_1 \lessdot y$ is a descent. By assumption in condition (ii), $m\to m'$. Thus, $m * y \to m' * y$. We will show that $\cordot{m * y \not}{_{\lm} m' * y}$ in $\cord{[\zh,y]}{\lm}$. By the assumptions of condition (i), $c_i \to z_i\lessdot x_{i+1} \lessdot y$ for each $1\leq i \leq n$. Thus, \begin{align*} m * y & \prec_{\lm} m_1 * c_1 \to m_1 * z_1 * x_2 * y \prec_{\lm} m_2 * c_2 \to m_2 * z_2 * x_3 * y \\
& \prec_{\lm} m_3 * c_3 \to m_3 * z_3 * x_4 * y \prec_{\lm} \dots \\
& \prec_{\lm} m_{n-1} * c_{n-1} \to m_{n-1} * z_{n-1} * x_n * y \prec_{\lm} m_{n} * c_n \to m_n * z_n * x_1 * y \\ 
& \prec_{\lm} m' * y.
\end{align*}
Hence, $\cordoti{m * y \not}{m' * y}{\lm}$ in $\cord{[\zh,y]}{\lm}$ since $n\geq 2$. Moreover, letting $c$ be any saturated chain from $y$ to $\oneh$ we have that $m * y * c \to m' * y * c$, but $\cordoti{m * y * c \not}{ m' * y * c}{\lm}$ in $\cord{P}{\lm}$ by \cref{lem:restrictedrelslift}. Therefore, $\lm$ is not polygon complete.
\end{proof}

In the next lemma, we show that conditions (i) and (ii) are also necessary for an {\EL} to be not polygon complete. Despite the appearance that the conditions of \cref{lem:circledelementgivesnoncover} are very technical and that they might be saying no more than that there exists an increase by a polygon move which does not give a cover relation, they are actually useful for verifying some {\EL}s are polygon complete. Condition (i) is particularly useful if we have control over the top labels of saturated chains in the polygons corresponding to increases by polygon moves as we do with polygon strong {\EL}s.

\begin{lemma}\label{lem:noncovergivescircledelements}
Let $P$ be a finite, bounded poset with an EL-labeling $\lm$. Suppose that for maximal chains $m,m'\in \M(P)$, $m\to m'$ while $\cordot{m \not}{_{\lm} m'}$ in $\cord{P}{\lm}$. Then there are elements $y,x_1,x_2,\dots,x_n,z_1,z_2,\dots,z_n\in P$ for $n\geq 2$ which, under the convention $x_{n+1}=x_1$, satisfy: 
\begin{itemize}
    \item [(i)] For $1\leq i\leq n$, $z_i\lessdot x_{i+1} \lessdot y$ is a descent in $[z_i,y]$ and $x_i\lessdot y$ is contained in the unique ascending saturated chain $c_i$ of $[z_i, y]$ with respect to $\lm$.
    
    \item [(ii)] There are the following saturated chains of length at least one: $m,m'$ from $\zh$ to $x_1$ such that $m\to m'$ and $m_i$ from $\zh$ to $z_i$ for $1\leq i\leq n$ which satisfy the relations $\cleqi{m}{m_1 * c_1^{x_1}}{\lm}$ in $\cord{[\zh,x_1]}{\lm}$, $\cl{m_i * z_i * x_{i+1}}{m_{i+1} * c_{i+1}^{x_{i+1}}}{\lm}$ in $\cord{[\zh,x_{i+1}]}{\lm}$ for each $1\leq i\leq n$, and $\cleqi{m_n * x_1}{m'}{\lm}$ in $\cord{[\zh,x_{1}]}{\lm}$. (It is possible that $m$ contains $m_1$ and $c_1^{x_{1}}$.)
\end{itemize}
\end{lemma}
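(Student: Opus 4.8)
The plan is to run the argument of \cref{lem:circledelementgivesnoncover} in reverse, extracting the cyclic configuration from a witnessing sequence of polygon moves.

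First I would pin down $y$ and $x_1$. Let $p$ and $q$ be the bottom and top elements of the polygon corresponding to $m\to m'$; then $m$ and $m'$ differ only on $(p,q)$, with $m^q$ ascending and $m'^q$ a descent on $[p,q]$, and $m'^q\setminus m^q=\set{x'}$ for a single element $x'\lessdot q$, so \cref{lem:top polygons give cord covers} applied in $[\zh,q]$ gives $\cordoti{m^q}{m'^q}{\lm}$ in $\cord{[\zh,q]}{\lm}$, while by hypothesis $\cordoti{m\not}{m'}{\lm}$ in $\cord{P}{\lm}$. Using \cref{lem:restrictedrelslift} (append to an intermediate chain the common segment of $m$, equivalently of $m'$, above a given element) one checks that the set of $y\in m$ with $q\le y$ and $\cordoti{m^y\not}{m'^y}{\lm}$ in $\cord{[\zh,y]}{\lm}$ is nonempty (it contains $\oneh$) and upward closed in $m$. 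Let $y$ be its least element; then $y\ne q$, so $q<y$, and letting $x_1\in m$ be the element with $x_1\lessdot y$ we get $q\le x_1<y$, the polygon $[p,q]$ lies in $[\zh,x_1]$ so $m^{x_1}\to m'^{x_1}$, and by minimality $\cordoti{m^{x_1}}{m'^{x_1}}{\lm}$ in $\cord{[\zh,x_1]}{\lm}$ (using \cref{prop:restrictinglabelings} throughout to restrict $\lm$ to subintervals).

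Next I would work in $[\zh,y]$ with a sequence $m^y=N_0\to N_1\to\dots\to N_{k+1}=m'^y$ of polygon moves, with $k\ge 1$, which exists since $m^y\to m'^y$ but $\cordoti{m^y\not}{m'^y}{\lm}$. Write $\xi_j\lessdot y$ for the coatom of $N_j$, so $\xi_0=\xi_{k+1}=x_1$. Each move $N_j\to N_{j+1}$ is \emph{real} — its polygon has top $y$, and then $\xi_{j+1}$ is the middle element of its descent side while $\xi_j$ is the coatom on its ascending side (a genuine change, $\xi_j\ne\xi_{j+1}$) — or \emph{trivial} — its polygon has top strictly below $y$, so $\xi_{j+1}=\xi_j$ and, after deleting $y$, it restricts to a polygon move of $\cord{[\zh,\xi_j]}{\lm}$. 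If every move were trivial, restricting them all to $[\zh,x_1]$ would give $m^{x_1}\to\dots\to m'^{x_1}$ with at least two moves, contradicting $\cordoti{m^{x_1}}{m'^{x_1}}{\lm}$; so there is a real move. Let the real moves be at positions $j_1<\dots<j_n$, put $\eta_0=x_1$ and $\eta_s=\xi_{j_s+1}$, so the coatom is constant $\eta_{s-1}$ from $N_{j_{s-1}+1}$ to $N_{j_s}$ (and constant $x_1$ from $N_0$ to $N_{j_1}$, and constant $\eta_n=\xi_{k+1}=x_1$ from $N_{j_n+1}$ to $N_{k+1}$). Since $\eta_1\ne\eta_0=x_1$ but $\eta_n=x_1$, we get $n\ge 2$. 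Now set $x_i=\eta_{i-1}$ (so $x_{n+1}=x_1$), let $z_i$ be the bottom of the $i$-th real polygon, let $c_i$ be the unique ascending chain of $[z_i,y]$, and let $m_i=N_{j_i}^{z_i}$; by construction $z_i\lessdot x_{i+1}\lessdot y$ is a descent and $x_i\lessdot y$ lies in $c_i$, which is condition (i).

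For condition (ii): the trivial moves on $[N_0,N_{j_1}]$, on $[N_{j_i+1},N_{j_{i+1}}]$, and on $[N_{j_n+1},N_{k+1}]$ restrict (delete $y$) to polygon moves of $\cord{[\zh,x_1]}{\lm}$, $\cord{[\zh,x_{i+1}]}{\lm}$, $\cord{[\zh,x_1]}{\lm}$ respectively, and since $N_{j_i}=m_i*c_i$ and $N_{j_i+1}=m_i*z_i*x_{i+1}*y$, they yield $\cleqi{m^{x_1}}{m_1*c_1^{x_1}}{\lm}$, $\cleqi{m_i*z_i*x_{i+1}}{m_{i+1}*c_{i+1}^{x_{i+1}}}{\lm}$, and $\cleqi{m_n*z_n*x_1}{m'^{x_1}}{\lm}$, with $m^{x_1},m'^{x_1}$ playing the roles of $m,m'$. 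The middle relations are strict: in $N_{j_i+1}$ the element below $x_{i+1}$ is $z_i$ with $z_i\lessdot x_{i+1}\lessdot y$ a descent, whereas in $N_{j_{i+1}}=m_{i+1}*c_{i+1}$ the element below $x_{i+1}$ lies on the ascending chain $c_{i+1}$, so that edge pair at $x_{i+1}$ is ascending; hence the two elements below $x_{i+1}$ differ, $N_{j_i+1}^{x_{i+1}}\ne N_{j_{i+1}}^{x_{i+1}}$ (in particular there is always a trivial move between consecutive real moves), and $\cl{m_i*z_i*x_{i+1}}{m_{i+1}*c_{i+1}^{x_{i+1}}}{\lm}$.

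The main obstacle, as I see it, is the bookkeeping rather than any deep new idea: correctly choosing $y$ via the upward-closure observation so that $\cordoti{m^{x_1}}{m'^{x_1}}{\lm}$ holds, and tracking the coatom sequence $\xi_0,\dots,\xi_{k+1}$ along the witnessing chain of moves carefully enough to pull out an honest cycle $x_1,\dots,x_n,x_1$ with $n\ge 2$ instead of a mere path. The strictness of the middle relations in (ii), forced by the descent-versus-ascent comparison of labels into $x_{i+1}$, is the subtlest individual point; everything else is supplied by \cref{prop:restrictinglabelings} and \cref{lem:restrictedrelslift} together with the fact that {\EL}s ignore roots.
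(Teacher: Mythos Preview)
Your proof is correct and follows essentially the same approach as the paper: both track how the coatom of $[\zh,y]$ changes along a witnessing sequence of polygon moves and read off the cyclic configuration $x_1,\dots,x_n,z_1,\dots,z_n$ from the ``real'' moves (those whose polygon tops out at $y$), with the strictness of the middle relations in (ii) coming from the descent-versus-ascent comparison at $x_{i+1}$ exactly as you argue. The one structural difference is that you locate $y$ directly as the least element of $m$ above $q$ at which the cover relation fails (via your upward-closure observation together with \cref{lem:top polygons give cord covers}), whereas the paper arrives at $y=\oneh$ after an induction on the length of the longest maximal chain that absorbs the ``all moves trivial'' case into the inductive hypothesis; your minimality argument is a clean way to unroll that induction, and it additionally gives you $\cordoti{m^{x_1}}{m'^{x_1}}{\lm}$, which you use to force the existence of a real move.
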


\begin{proof}
Again \cref{fig:non cover circled elts} guides and illuminates this proof. Let $m:y_0=\zh \lessdot y_1 \lessdot \dots y_{i-1} \lessdot y_i \lessdot y_{i+1} \lessdot \dots \lessdot y_{i+s} \lessdot \dots \lessdot y_{t-1} \lessdot y_t=\oneh$ for some $s\geq 1$ with $y_i \lessdot y_{i+1} \lessdot \dots \lessdot y_{i+s}$ an ascending saturated chain with respect to $\lm$. Let $m':y_0=\zh \lessdot y_1 \lessdot \dots y_{i-1} \lessdot y'_i \lessdot y_{i+s} \lessdot \dots \lessdot y_{t-1} \lessdot y_t=\oneh$ with $y_{i-1} \lessdot y'_i \lessdot y_{i+s}$ a descent with respect to $\lm$. Assume $\cordot{m \not}{_{\lm} m'}$.

We will proceed by induction on the length of the longest maximal chain of $P$ to show the elements and chains of conditions (i) and (ii) exist. If the longest maximal chain is length one or two, then the statement is vacuously true since $\lm$ is polygon complete by \cref{lem:rank two polygon complete}. We assume the statement holds for posets with longest maximal chain of any length at most $l-1$ for some $l\geq 3$.

Assume $P$ has longest maximal chain of length $l\geq 3$. We first observe that if $y_{i+s}=\oneh$, then $\cordoti{m}{ m'}{\lm}$ by \cref{lem:nongradedonlyabovepolygon}. Thus, we may assume $y_{i+s}<\oneh$, so $i+s\leq t-1$. This implies $m^{y_{t-1}} \to m'^{y_{t-1}}$ which we will take advantage of repeatedly. Now since $\cordot{m \not}{_{\lm} m'}$, there must be maximal chains $d_0,d_1,d_2,\dots,d_k, d_{k+1}\in \M(P)$ such that $m=d_0\to d_1\to d_2\to \dots \to d_k\to d_{k+1}=m'$ with $k\geq 1$. There are two cases we must consider. Either the top elements of the polygons corresponding to each of the increases by a polygon move $d_j\to d_{j+1}$ for $0\leq j\leq k$ are strictly less than $\oneh$ in $P$ or the top element of the polygon corresponding to some increase by a polygon move $d_j\to d_{j+1}$ is $\oneh$.

Suppose the top element of each polygon corresponding to the increases by a polygon move $d_j\to d_{j+1}$ for $0\leq j\leq k$ is strictly less than $\oneh$ in $P$. Then $m^{y_{t-1}} \to d_1^{y_{t-1}} \to d_2^{y_{t-1}} \to \dots \to d_k^{y_{t-1}} \to m'^{y_{t-1}}$. Thus, $\cordoti{m^{y_{t-1}} \not}{m'^{y_{t-1}}}{\lm}$ in $\cord{[\zh,y_{t-1}]}{\lm}$ since $k\geq 1$. We previously observed that $m^{y_{t-1}} \to m'^{y_{t-1}}$. Now the length of the longest maximal chain in $[\zh,y_{t-1}]$ is some $l'\leq l-1$ since the longest maximal chain in $P$ has length $l$. If $l'\leq 2$, then $m^{y_{t-1}} \to m'^{y_{t-1}}$ with $\cordoti{m^{y_{t-1}} \not}{m'^{y_{t-1}}}{\lm}$ in $\cord{[\zh,y_{t-1}]}{\lm}$ contradicts \cref{lem:rank two polygon complete}. Thus, we may assume $l'\geq 3$. Then by the inductive hypothesis there exist elements and chains of $[\zh,y_{t-1}]$ satisfying conditions (i) and (ii). The same elements and chains satisfy conditions (i) and (ii) in $P$.

Next we consider the case that the top element of the polygon corresponding to some increase by a polygon move $d_j\to d_{j+1}$ is $\oneh$. We will construct the elements satisfying condition (i) and the chains of condition (ii) by considering the elements and chains involved each time the final edge of a chain changes in the sequence $m=d_0\to d_1\to d_2\to \dots \to d_k\to d_{k+1}=m'$. Let $y=\oneh$ and $x_1=y_{t-1}$. Let $d_{r_1}$ be the first maximal chain in the sequence $m=d_0\to d_1\to d_2\to \dots \to d_k\to d_{k+1}=m'$ such that $y_{t-1}\in d_{r_1}$, but $y_{t-1}\not \in d_{r_1 +1}$. Let $z_1\lessdot x_2\lessdot \oneh$ be the final three elements of $d_{r_1+1}$ which are uniquely determined by $d_{r_1+1}$ and must exist since $y_{t-1}\not \in d_{r_1+1}$. Then $d_{r_1}$ contains the ascending saturated chain $c_1$ from $z_1$ to $\oneh$ and $z_1\lessdot x_2\lessdot \oneh$ is a descent since $d_{r_1}\to d_{r_1+1}$ and $y_{t-1}\not \in d_{r_1+1}$. The saturated chains $c_1$ and $z_1\lessdot x_2\lessdot \oneh$ form the polygon corresponding to the polygon move $d_{r_1}\to d_{r_1+1}$. Now since $y_{t-1}\in d_{j'}$ for each $1\leq j'\leq d_{r_1}$, we have $m^{y_{t-1}}\to d_1^{y_{t-1}}\to \dots \to d_{r_1}^{y_{t-1}}$. Thus, $\cleqi{m^{y_{t-1}}}{d_{r_1}^{z_1} *  c_1^{y_{t-1}}}{\lm}$ in $\cord{[\zh,y_{t-1}]}{\lm}$. Let $m_1=d_{r_1}^{z_1}$.

Now since $d_{r_1+1}\to d_{r_1+2}\to \dots \to d_k \to m'$ and $y_{t-1}\lessdot \oneh$ is contained in $m'$, there is some $d_{r_n}$ which is the last maximal chain in the sequence $m=d_0\to d_1\to d_2\to \dots \to d_k\to d_{k+1}=m'$ such that $y_{t-1}\not \in d_{r_n}$, but $y_{t-1}\in d_{j'}$ for all $j'> r_n$. We do not yet know the value of $n$, but it is cleaner to introduce $d_{r_n}$ at this point. We will see that the following process results in the value of $n$ without depending on $n$. We note that $r_n<k+1$ since $y_{t-1}\in m'$. Let $x_n\in d_{r_n}$ be the unique element of the chain such that $x_n\lessdot \oneh$. Let $z_n$ be the unique element of $d_{r_n+1}$ such that $z_n\lessdot y_{t-1}$. Then to accomplish the increase by a polygon move $d_{r_n}\to d_{r_n+1}$ which removes $x_n$ and replaces it with $y_{t-1}$, we must have that $z_n<x_n$, $d_{r_n}$ contains the unique ascending chain $c_n$ from $z_n$ to $\oneh$ with $x_n\in c_n$, and $z_n\lessdot y_{t-1} \lessdot \oneh$ is a descent. Thus, $c_n$ and $z_n\lessdot y_{t-1} \lessdot \oneh$ form the polygon corresponding to $d_{r_n}\to d_{r_n+1}$. Then since $y_{t-1}\in d_{j'}$ for all $j'>r_n$, we have $d_{r_n+1}^{y_{t-1}}\to d_{r_n+2}^{y_{t-1}}\to \dots \to d_k^{y_{t-1}}\to d_{k+1}^{y_{t-1}}=m'^{y_{t-1}}$. Hence, we have $\cleqi{d_{r_n+1}^{z_n} *  z_n * y_{t-1}}{m'^{y_{t-1}}}{\lm}$ in $\cord{[\zh,y_{t-1}]}{\lm}$. Lastly, $d_{r_n+1}^{z_n}=d_{r_n}^{z_n}$, so $\cleqi{d_{r_n}^{z_n} * z_n * y_{t-1}}{m'^{y_{t-1}}}{\lm}$ in $\cord{[\zh,y_{t-1}]}{\lm}$. Let $m_n=d_{r_n}^{z_n}$, and recall $x_{n+1}=x_1$ by convention, so $x_{n+1}=y_{t-1}$.

Now we move from $d_{r_1}$ to $d_{r_n}$ by the same process to produce the remaining elements of condition (i) and chains of condition (ii).

Let $d_{r_2}$ be the first maximal chain after $d_{r_1+1}$ in the sequence $m=d_0\to d_1\to d_2\to \dots \to d_k\to d_{k+1}=m'$ such that $x_2\in d_{r_2}$, but $x_2\not\in d_{r_2+1}$. We note that $r_2>r_1+1$ since $z_1\lessdot x_2\lessdot \oneh$ is contained in $d_{r_1+1}$ and is a descent. Let $x_3$ be the unique element of $d_{r_2+1}$ such that $x_3\lessdot \oneh$ and let $z_2\in d_{r_2+1}$ be the unique element such that $z_2\lessdot x_3$. Thus, $z_2\lessdot x_3\lessdot \oneh$ is a descent and $d_{r_2}$ contains the unique ascending chain $c_2$ from $z_2$ to $\oneh$ with $x_2\in c_2$. The chains $c_2$ and $z_2\lessdot x_3\lessdot \oneh$ form the polygon corresponding to the polygon move $d_{r_2}\to d_{r_2+1}$. Since $x_2\in d_{j'}$ for all $r_1+1\leq j' \leq r_2$, we have $d_{r_1+1}^{x_2}\to \dots \to d_{r_2}^{x_2}$. Thus, $\cleqi{d_{r_1+1}^{z_1} * z_1 * x_2}{d_{r_2}^{z_2} * c_2^{x_2}}{\lm} $ in $\cord{[\zh,x_2]}{\lm}$. Lastly, $d_{r_1+1}^{z_1} = d_{r_1}^{z_1}$, so $\cleqi{d_{r_1}^{z_1} * z_1 * x_2}{d_{r_2}^{z_2} * c_2^{x_2}}{\lm} $ in $\cord{[\zh,x_2]}{\lm}$. Let $m_2=d_{r_2}^{z_2}$.

We continue this process until we reach $d_{r_n}$. Suppose we have constructed $d_{r_i}$, $x_{i+1}$, $z_i$, and $c_i$ by the above process and set $m_i=d_{r_i}^{z_i}$. Then $d_{r_{i+1}}$ is the first maximal chain after $d_{r_i+1}$ in the sequence $m=d_0\to d_1\to d_2\to \dots \to d_k\to d_{k+1}=m'$ such that $x_{i+1}\in d_{r_{i+1}}$, but $x_{i+1}\not \in d_{r_{i+1}+1}$. We note that $r_{i+1}>r_{i}+1$ since $z_i\lessdot x_{i+1}\lessdot \oneh$ is contained in $d_{r_{i}+1}$ and is a descent by construction. Let $x_{i+2}$ be the unique element of $d_{r_{i+1}+1}$ covered by $\oneh$ and let $z_{i+1}\in d_{r_{i+1}+1}$ be the unique element covered by $x_{i+2}$. Thus, $z_{i+1}\lessdot x_{i+2}\lessdot \oneh$ is a descent and $d_{r_{i+1}}$ contains the unique ascending saturated chain $c_{i+1}$ from $z_{i+1}$ to $\oneh$ with $x_{i+1}\in c_{i+1}$. The chains $c_{i+1}$ and $z_{i+1}\lessdot x_{i+2}\lessdot \oneh$ form the polygon corresponding to the polygon move $d_{r_{i+1}}\to d_{r_{i+1}+1}$. Then since $x_{i+1}\in d_{j'}$ for all $r_i+1\leq j'\leq r_{i+1}$, we have $d_{r_i+1}^{x_{i+1}}\to \dots \to d_{r_{i+1}}^{x_{i+1}}$. Thus, $\cleqi{d_{r_i+1}^{z_{i}} * z_{i} * x_{i+1}}{d_{r_{i+1}}^{z_{i+1}} * c_{i+1}^{x_{i+1}}}{\lm}$ in $\cord{[\zh,x_{i+1}]}{\lm}$. Lastly, $d_{r_i+1}^{z_{i}}=d_{r_i}^{z_{i}}$, so $\cleqi{d_{r_i}^{z_{i}} * z_{i} * x_{i+1}}{d_{r_{i+1}}^{z_{i+1}} * c_{i+1}}{\lm}$ in $\cord{[\zh,x_{i+1}]}{\lm}$. Let $m_{i+1} = d_{r_{i+1}}^{z_{i+1}}$. 

We are guaranteed to reach $d_{r_n}$ because the above process accounts for each change of the top edge in a chain in the sequence $m=d_0\to d_1\to d_2\to \dots \to d_k\to d_{k+1}=m'$. We know $n\geq 2$ because we are in the case where the top edge in some chain in the sequence $m=d_0\to d_1\to d_2\to \dots \to d_k\to d_{k+1}=m'$ changes from $y_{t-1}\lessdot \oneh$, and then must return to $y_{t-1}\lessdot \oneh$ because $y_{t-1}\lessdot \oneh$ is the top edge in both $m$ and $m'$. 

We have thus produced $y$, $z_1,\dots,z_n$, and $x_1,\dots, x_n$ satisfying condition (i) of the lemma. Letting $m_i=d_{r_i}^{z_{i}}$ for $1\leq i\leq n$ and letting $c_i$ be as constructed above for $1\leq i\leq n$, as well as, $m'= m'^{x_1}$ and $m= m^{x_1}$ (slightly abusing notation) we have produced the saturated chains in condition (ii) of the lemma. The chains $m= m^{x_1}$, $m'= m'^{x_1}$, and $m_i$ all have length at least one since $m$ and $m'$ have length at least three because $y_{i+s}<\oneh$.  This concludes the proof.
\end{proof}

\begin{theorem}\label{thm:characterization of non-polygon completeness}
Let $P$ be a finite, bounded poset with an EL-labeling $\lm$. Then $\lm$ fails to be polygon complete if and only if $P$ has some maximal chain of at least length three and there are elements $y,x_1,x_2,\dots,x_n,z_1,z_2,\dots,z_n\in P$ for $n\geq 2$ which, under the convention $x_{n+1}=x_1$, satisfy: 
\begin{itemize}
    \item [(i)] For $1\leq i\leq n$, $z_i\lessdot x_{i+1} \lessdot y$ is a descent in $[z_i,y]$ and $x_i\lessdot y$ is contained in the unique ascending saturated chain $c_i$ of $[z_i, y]$ with respect to $\lm$.
    
    \item [(ii)] There are the following saturated chains of length at least one: $m,m'$ from $\zh$ to $x_1$ such that $m\to m'$ and $m_i$ from $\zh$ to $z_i$ for $1\leq i\leq n$ which satisfy the relations $\cleqi{m}{m_1 * c_1^{x_1}}{\lm}$ in $\cord{[\zh,x_1]}{\lm}$, $\cl{m_i * z_i * x_{i+1}}{m_{i+1} * c_{i+1}^{x_{i+1}}}{\lm}$ in $\cord{[\zh,x_{i+1}]}{\lm}$ for each $1\leq i\leq n$, and $\cleqi{m_n * x_1}{m'}{\lm}$ in $\cord{[\zh,x_{1}]}{\lm}$. (It is possible that $m$ contains $m_1$ and $c_1^{x_{1}}$.)
\end{itemize}
\end{theorem}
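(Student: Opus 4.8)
The plan is to read \cref{thm:characterization of non-polygon completeness} as nothing more than the conjunction of the two lemmas just proved, \cref{lem:circledelementgivesnoncover} and \cref{lem:noncovergivescircledelements}, glued together with one elementary observation about short posets. So I would split into the two implications and cite.

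For the ``if'' direction, I would assume $P$ has a maximal chain of length at least three and that elements $y, x_1, \dots, x_n, z_1, \dots, z_n$ together with the saturated chains of condition (ii) exist. This is precisely the hypothesis of \cref{lem:circledelementgivesnoncover}; the only thing worth flagging is that the relation $\cleqi{m_n * x_1}{m'}{\lm}$ appearing in the theorem's condition (ii) is literally the relation $\cleqi{m_n * z_n * x_1}{m'}{\lm}$ appearing in \cref{lem:circledelementgivesnoncover}, since $z_n$ is the top element of $m_n$ and $z_n \lessdot x_{n+1} = x_1$ forces $m_n * x_1 = m_n * z_n * x_1$ as saturated chains. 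Hence \cref{lem:circledelementgivesnoncover} directly yields that $\lm$ is not polygon complete, and nothing further is needed.

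For the ``only if'' direction, I would assume $\lm$ is not polygon complete. By \cref{def:polygon complete} this means there are maximal chains $m, m' \in \M(P)$ with $m \to m'$ but $\cordoti{m\not}{m'}{\lm}$ in $\cord{P}{\lm}$, which is exactly the hypothesis of \cref{lem:noncovergivescircledelements}; that lemma then outputs elements $y, x_1, \dots, x_n, z_1, \dots, z_n$ (with $n \ge 2$) and saturated chains satisfying conditions (i) and (ii) exactly as stated in the theorem. The remaining point is that $P$ must contain a maximal chain of length at least three: if every maximal chain had length at most two, then \cref{lem:rank two polygon complete} would force $\lm$ to be polygon complete, contradicting our assumption. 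This gives the reverse implication.

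I do not expect a genuine mathematical obstacle here; the only care required is bookkeeping — checking that the list of cover and comparability relations in condition (ii) of the theorem matches term-by-term the relations asserted in the two lemmas (it does, via the $m_n * x_1 = m_n * z_n * x_1$ identity above), and confirming that ``$\lm$ is not polygon complete'' genuinely unpacks to the existence of a single failing polygon move $m \to m'$, which is what \cref{lem:noncovergivescircledelements} requires as input. Once that matching is verified, the proof is a two-line appeal to \cref{lem:circledelementgivesnoncover} and \cref{lem:noncovergivescircledelements} together with the contrapositive of \cref{lem:rank two polygon complete}.
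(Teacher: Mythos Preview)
Your proposal is correct and takes essentially the same approach as the paper, which simply reads ``The forward direction is \cref{lem:noncovergivescircledelements} and the backward direction is \cref{lem:circledelementgivesnoncover}.'' Your version is in fact more careful than the paper's: you explicitly extract the length-at-least-three clause from the contrapositive of \cref{lem:rank two polygon complete} and you reconcile the cosmetic discrepancy $m_n * x_1 = m_n * z_n * x_1$ between the theorem and \cref{lem:circledelementgivesnoncover}, both of which the paper leaves implicit.
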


\begin{proof}
The forward direction is \cref{lem:noncovergivescircledelements} and the backward direction is \cref{lem:circledelementgivesnoncover}.
\end{proof}

We now present a simple concrete condition on labelings of certain induced subposets which guarantees a {\CL} is not polygon complete. \cref{fig:induced subposet giving not polygon complete} is a schematic which illustrates this condition. We may also observe this condition in the examples from \cref{fig:noncover diamond move example} and \cref{fig:CL noncover diamond move example}.

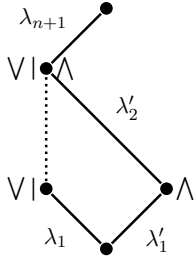
\begin{figure}[H]
    \centering
    \scalebox{0.8}{
    \begin{tikzpicture}[very thick]
    \node[fill,circle,inner sep=0pt,minimum size=6pt] (zh) at (0,0) {};
    \node[fill,circle,inner sep=0pt,minimum size=6pt] (a) at (-1,1) {};
    \node[fill,circle,inner sep=0pt,minimum size=6pt] (d) at (-1,3) {};
    \node[fill,circle,inner sep=0pt,minimum size=6pt] (e) at (0,4) {};
    \node[fill,circle,inner sep=0pt,minimum size=6pt] (b) at (1,1) {};
    
    \draw (zh) --node[below left]{$\lm_1$} (a)node[left]{$\bigvee \boldsymbol|$};
    \draw[dotted] (a) -- (d)node[left]{$\bigvee \boldsymbol|$};
    \draw (d) --node[above left]{$\lm_{n+1}$} (e);
    \draw (zh) --node[below right]{$\lm'_1$} (b)node[right]{$\bigwedge$} --node[above right]{$\lm'_2$} (d)node[right]{$\bigwedge$};
    \end{tikzpicture}
    }
    \caption{Illustration of the induced subposet condition in \cref{lem:easyincrnocovercondition}.}
    \label{fig:induced subposet giving not polygon complete}
\end{figure}

\begin{lemma}\label{lem:easyincrnocovercondition}
Let $P$ be a finite, bounded poset with a {\CL} $\lm$. Suppose there are saturated chains in $P$ of the form $c:x_1\lessdot x_2\lessdot \dots \lessdot x_k\lessdot x_{k+1}$ with $k\geq 3$ and $c':x_1\lessdot x_2' \lessdot x_k$ such that $c$ is ascending with respect to $\lm$, $c'$ is a descent with respect to $\lm$, and $\lm(x_k, x_{k+1})<\lm(x_2', x_k)$ all with respect to a root $r$ from $\zh$ to $x_1$. Then any maximal chain $m$ containing $c$ and $r$ increases by a polygon move to the maximal chain $m'$ obtained by replacing $c$ in $m$ with $c'$, but $m\not\precdot_{\lm} m'$.
\end{lemma}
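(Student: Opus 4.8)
The plan is to exhibit explicitly a collection of elements $y, x_1, \dots, x_n, z_1, \dots, z_n$ with $n = 2$ that witnesses conditions (i) and (ii) of \cref{thm:characterization of non-polygon completeness}, or rather of \cref{lem:circledelementgivesnoncover}, which is stated for {\CL}s as well once we read it with rooted intervals in mind. Actually, since the hypotheses here involve a root $r$ and we want the conclusion to hold for {\CL}s (not just {\EL}s), I would argue directly rather than via the characterization theorem. First I would set up notation: let $m$ be any maximal chain containing $r$ and $c$, so $m = r * c * m_{x_{k+1}}$ for some saturated chain $m_{x_{k+1}}$ from $x_{k+1}$ to $\oneh$, and let $m'$ be $m$ with $c$ replaced by $c'$. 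Since $c$ is ascending with respect to $\lm$ and the root, and $c'$ is a descent with respect to $\lm$ and the root, and $c, c'$ differ by a polygon (they agree except on the length-two-versus-length-$(k-1)$ stretch from $x_1$ to $x_k$), \cref{def:increase and polygon} gives $m \to m'$ immediately. So the only real content is showing $m \not\precdot_{\lm} m'$.

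Next I would show $m \not\precdot_{\lm} m'$ by interposing a chain strictly between them in $\cord{P}{\lm}$. The key is the hypothesis $\lm(x_k, x_{k+1}) < \lm(x_2', x_k)$ with respect to the root $r$. Consider the saturated chain from $x_1$ to $x_{k+1}$ given by $d: x_1 \lessdot x_2' \lessdot x_k \lessdot x_{k+1}$ (rooted at $r$); let $m'' = r * d * m_{x_{k+1}}$. I claim $m \precsim_{\lm} m'' \precsim_{\lm} m'$ with both relations strict, which contradicts $m \precdot_\lm m'$. For $m'' \to m'$ (or at least $m'' \precsim_\lm m'$): $m$ and $m'$ differ precisely by the polygon $c$ versus $c'$; the chain $m''$ sits "between" them in the sense that $m''$ agrees with $m'$ from $x_1$ up through $x_k$ (both pass through $x_2'$ then $x_k$), and the only place $m''$ and $m'$ can differ is the edge from $x_k$ to... wait, they actually agree there too. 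Let me reconsider: $m'$ goes $x_1 \lessdot x_2' \lessdot x_k$, then continues along $m_{x_k}$; but $m$ went $x_1 \lessdot x_2 \lessdot \dots \lessdot x_k \lessdot x_{k+1} \lessdot \cdots$, so $m'$ after $x_k$ must be $x_k \lessdot x_{k+1} \lessdot \cdots$ as well (since outside the polygon stretch $m$ and $m'$ coincide, and $x_k$ is the top of the polygon). Hence $m' = r * c' * m_{x_k}$ where $m_{x_k} = x_k \lessdot x_{k+1} \lessdot \cdots$, i.e. $m'$ already passes through $x_{k+1}$. So in fact $m' = m''$ when $c'$ has length $2$, which is exactly the case here since $c' : x_1 \lessdot x_2' \lessdot x_k$ — so $m'' = m'$ and this decomposition is trivial.

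So the correct interpolation goes the other way. I would instead use the descent $x_2' \lessdot x_k \lessdot x_{k+1}$: by \cref{prop:descents give unique incrs} applied in the rooted interval $[x_2', x_{k+1}]$ with root $r * (x_1 \lessdot x_2')$, the fact that $\lm(x_k, x_{k+1}) < \lm(x_2', x_k)$ means $x_2' \lessdot x_k \lessdot x_{k+1}$ is \emph{not} ascending, hence there is a unique ascending chain $e$ from $x_2'$ to $x_{k+1}$ and a polygon move from the maximal chain $r * (x_1 \lessdot x_2') * e * m_{x_{k+1}}$ up to $m' = r * c' * m_{x_{k+1}}$; call that lower chain $\hat{m}$. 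So $\hat{m} \to m'$, giving $\hat{m} \prec_\lm m'$. It remains to show $m \precsim_\lm \hat{m}$. For this I would invoke \cref{cor:order rel from ascending on differing intervals} (in its rooted form): $m$ and $\hat m$ agree outside the interval $[x_1, x_{k+1}]$, and I must check $m$ restricted to $[x_1, x_{k+1}]_r$ is ascending — which is precisely the hypothesis that $c$ is ascending, since $c$ is exactly $m$ restricted to $[x_1, x_{k+1}]_r$. (Here I use that $x_1 \lessdot \cdots \lessdot x_{k+1}$ being ascending means it is \emph{the} ascending chain of that rooted interval, using the {\CL} property, and this chain lexicographically precedes $\hat m$ restricted to the same rooted interval.) Hence $m \precsim_\lm \hat m \prec_\lm m'$, so $m \ne \hat m$ would force $m \not\precdot_\lm m'$; and $m \ne \hat m$ because $\hat m$ contains the descent-creating element $x_k$ in the configuration coming from $e$ — more simply, $\hat m$ restricted to $[x_2', x_{k+1}]_r$ is ascending and contains $x_k$ only if $e$ does, whereas $m$ restricted there contains $x_3, \dots, x_{k-1}$... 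I would just note $\hat m \ne m'$ and $\hat m \succsim_\lm m$, and that if $\hat m = m$ then $m \to m'$ directly contradicts nothing — so I need $\hat m \neq m$. This holds because $\hat m$ passes through $x_2'$ (being $r * (x_1 \lessdot x_2') * e * \cdots$) while $m$ passes through $x_2 \neq x_2'$ (as $c$ and $c'$ differ, $x_2 \neq x_2'$).

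The main obstacle I anticipate is bookkeeping the rooted intervals correctly: \cref{cor:order rel from ascending on differing intervals} and \cref{prop:descents give unique incrs} are stated for posets/{\CL}s with the global root $\zh$, and I must be careful that "ascending with respect to $\lm$ and the root $r$" translates correctly into the hypotheses of those results when applied to the subchains, and that the chain $\hat m$ genuinely lies strictly between $m$ and $m'$ (in particular that $\hat m \neq m$, which is where I use $x_2' \neq x_2$, forced by $c \neq c'$ and $k \geq 3$). Once those identifications are made carefully, the argument is a short chain of citations: $m \precsim_\lm \hat m$ from \cref{cor:order rel from ascending on differing intervals} (lifted via \cref{lem:restrictedrelslift} to account for $m_{x_{k+1}}$), $\hat m \to m'$ from \cref{prop:descents give unique incrs} and \cref{def:increase and polygon} using $\lm(x_k,x_{k+1}) < \lm(x_2',x_k)$, and $\hat m \neq m$, which together give $m \not\precdot_\lm m'$ while $m \to m'$.
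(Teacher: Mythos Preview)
Your proposal is correct and takes essentially the same approach as the paper. After your false start with $m''$, the chain you call $\hat{m} = r * (x_1 \lessdot x_2') * e * m_{x_{k+1}}$ is exactly the interpolating chain the paper builds (the paper writes $e$ as $c_0$, the unique ascending chain of $[x_2',x_{k+1}]_{r*x_2'}$), and both arguments place it strictly between $m$ and $m'$ via \cref{prop:non ranked cords have zero hat}/\cref{cor:order rel from ascending on differing intervals} together with \cref{lem:restrictedrelslift}.
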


\begin{proof}
First, for $m$ and $m'$ as defined above, $m\to m'$. Second, the chain $c$ is the unique ascending maximal chain of the rooted interval $[x_1,x_{k+1}]_{r}$ with respect to $\lm$. Also, $x_2'\lessdot x_{k}\lessdot x_{k+1}$ is not the unique ascending maximal chain of the rooted interval $[x_2',x_{k+1}]_{r*x'_2}$ since $\lm(x_k \lessdot x_{k+1})<\lm(x_2'\lessdot x_k)$. Thus, there is some saturated chain $c_0$ which is the unique ascending maximal chain of $[x_2',x_{k+1}]_{r*x_2}$ with respect to $\lm$. Now in the maximal chain descent order on $[x_1,x_{k+1}]_{r}$ induced by $\lm$, we have that $c$ is strictly less than $x_1 * c_0$ which is strictly less than $c'$. This follows from \cref{prop:non ranked cords have zero hat} and \cref{lem:restrictedrelslift}. Then \cref{lem:restrictedrelslift} extends this to the entire poset $P$ by producing a maximal chain which lies strictly between $m$ and $m'$ in the maximal chain order $\cord{P}{\lm}$. Thus, $m\not\precdot_{\lm} m'$.
\end{proof}

\end{subsection}

\section{A Sufficient Condition for Polygon Completeness via Inversions in Label Sequences}\label{sec:inversions in ELs}

In this section, we introduce a generalization of the notion of inversions of permutations. We speak more generally of inversions of maximal chains with respect to a {\CL}. 
The usual notion of inversions of permutations arises from the standard {\EL} of a Boolean lattice discussed in \cref{sec:motiv example boolean lat weak Sn}. We then formulate a condition on inversions of maximal chains of poset $P$ with respect to a {\CL} $\lm$ which implies that $\lm$ is polygon complete and that $\cord{P}{\lm}$ possesses some other strong properties.

\begin{definition}\label{def:inversions of maxl chains}
Let $P$ be a finite, bounded poset with a {\CL} $\lm$. Let $m:\zh=x_0\lessdot x_1\lessdot \dots \lessdot x_{r-1}\lessdot x_r=\oneh$ be a maximal chain of $P$. We say that the pair $(\lm(x_{i-1},x_i),\lm(x_{j-1},x_j))$ is an \textbf{inversion of $\mathbf{m}$ with respect to $\boldsymbol\lm$} if $1\leq i<j\leq n$ and $\lm(x_{i-1},x_i)\not \leq \lm(x_{j-1},x_j)$ in $\Lambda$. We denote the set of inversions of $m$ with respect to $\lm$ by $\textbf{inv}_{\boldsymbol\lm}(\mathbf{m})$.
\end{definition}

\begin{remark}\label{rmk:no notation problem in inv def}
There is a slight abuse of notation in \cref{def:inversions of maxl chains}. Technically, the label of a cover relation $x\lessdot y$ contained in maximal chain $m$ from a {\CL} $\lm$ should be written $\lm(m,x,y)$. However, inversions are only considered with reference to a particular maximal chain, so we use the notation $\lm(x,y)$ for both {\EL}s and {\CL}s in \cref{def:inversions of maxl chains} to avoid unnecessary clutter.
\end{remark}

\begin{figure}[H]
    \centering
    \scalebox{1}{
    \begin{tikzpicture}[very thick]
    \node[fill,circle,inner sep=0pt,minimum size=6pt] (zh) at (0,0) {};
    \node[fill,circle,inner sep=0pt,minimum size=6pt] (a) at (0,1) {};
    \node[fill,circle,inner sep=0pt,minimum size=6pt] (b) at (-1,2) {};
    \node[fill,circle,inner sep=0pt,minimum size=6pt] (c) at (1,2) {};
    \node[fill,circle,inner sep=0pt,minimum size=6pt] (d) at (0,3) {};
    \node[fill,circle,inner sep=0pt,minimum size=6pt] (oneh) at (0,4) {};
    
    \node (m) at (-1.5,2) {$m$};
    \node (m') at (1.5,2) {$m'$};
    
    \draw (zh) --node[left,blue] {$1$} (a) --node[below left,blue] {$2$} (b) --node[above left,blue] {$3$} (d) --node[left,blue] {$4$} (oneh);
    \draw (a) --node[below right,blue] {$5$} (c) --node[above right,blue] {$1$} (d) -- (oneh);
    
    \node[blue] (1) at (3,0) {$1$};
    \node[blue] (2) at (3,1) {$2$};
    \node[blue] (3) at (3,2) {$3$};
    \node[blue] (4) at (3,3) {$4$};
    \node[blue] (5) at (4,2) {$5$};
    
    \node (Lambda) at (2.5,1.5) {$\Lambda =$};
    
    \draw (1) -- (2) -- (3) -- (4);
    \draw (2) -- (5);
    \end{tikzpicture}
    }
    \scalebox{1}{ 
    \begin{tikzpicture}
    \node (filler) at (0,0) {};
    \node (inv1) at (0,2) {$\inv{m}{\lm} = \emptyset$};
    \node (inv2) at (0,1) {$\inv{m'}{\lm} = \set{(5,1),(5,4)}$};
    \end{tikzpicture}
    }
    \caption{Inversions with respect to an {\EL} with labels from poset $\Lambda$.}
    \label{fig:example of inversions wrt EL}
\end{figure}
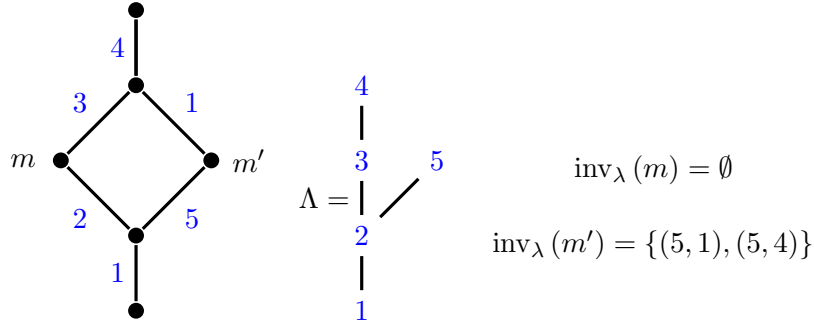

Next we define a natural condition on inversions which guarantees polygon completeness.
    
\begin{definition}\label{def:inversion ranked}
Let $P$ be a finite, ranked, bounded poset which admits an {\EL} or a {\CL} $\lm$. We say that $\lm$ is \textbf{inversion ranked} if $m\to m'$ implies $|\inv{m'}{\lm}|=|\inv{m}{\lm}|+1$.
\end{definition}

\begin{remark}\label{rmk:inversion example not inv ranked}
The {\EL} in the example from \cref{fig:example of inversions wrt EL} is not inversion ranked since $m\to m'$ while $|\inv{m}{\lm}|=0$ and $|\inv{m'}{\lm}|=2$.
\end{remark}

\begin{example}\label{ex:Sn EL is inversion ranked}
For any $S_n$ {\EL} $\lm$, the inversions with respect to $\lm$ are the usual inversions of the label sequences as permutations. In this case, $\lm$ is always inversion ranked by \cref{prop:Sn descent swaps}.
\end{example}

\begin{remark}\label{rmk:Sn CL not inversion ranked}
In contrast to \cref{ex:Sn EL is inversion ranked}, a {\CL} in which the label sequence of every maximal chain is some permutation of $[n]$ need not be inversion ranked. We observe this in \cref{ex:CL non polygon complete}. In that example, the chain labeled $123$, which has no inversions, increases by a polygon move to the chain labeled $321$, which has three inversions.
\end{remark}

\begin{example}\label{ex:inversion ranked EL}  \cref{fig:example inversion ranked EL} shows two maximal chains which could occur in an inversion ranked {\EL} $\lm$. In this example, the label set is $[4]$ with its standard total order. For brevity's sake, we set $\lm_i=\lm(x_{i-1},x_i)$ for $1\leq i\leq 5$ and $\lm'_3=\lm(x_2,x'_3)$ and $\lm'_4=\lm(x'_3,x_4)$. We have $$\inv{m}{\lm}=\set{(\lm_1,\lm_2),(\lm_1,\lm_3),(\lm_1,\lm_4),(\lm_1,\lm_5),(\lm_4,\lm_5)}$$ and $$\inv{m'}{\lm}=\set{(\lm_1,\lm_2),(\lm_1,\lm_4),(\lm_1,\lm_5),(\lm'_3,\lm'_4),(\lm'_3,\lm_5),(\lm'_4,\lm_5)},$$ so $\lm$ could be inversion ranked.

\begin{figure}[H]
    \centering
    \scalebox{1}{\begin{tikzpicture}[very thick]
    \node (x0) at (0,-1) {$x_0$};
    \node (zh) at (0,0) {$x_1$};
    \node (a) at (0,1) {$x_2$};
    \node (b) at (-1,2) {$x_3$};
    \node (c) at (1,2) {$x'_3$};
    \node (d) at (0,3) {$x_4$};
    \node (oneh) at (0,4) {$x_5$};
    
    \node (m) at (-2,2) {$m$};
    \node (m') at (2,2) {$m'$};
    
    \draw (x0) --node[left,blue]{$4$} (zh) --node[left,blue] {$1$} (a) --node[below left,blue] {$1$} (b) --node[above left,blue] {$2$} (d) --node[left,blue] {$1$} (oneh);
    \draw (a) --node[below right,blue] {$4$} (c) --node[above right,blue] {$3$} (d) -- (oneh);
    \end{tikzpicture}
    }
    \caption{Labeled chains which could occur in an inversion ranked {\EL}.}
    \label{fig:example inversion ranked EL}
\end{figure}
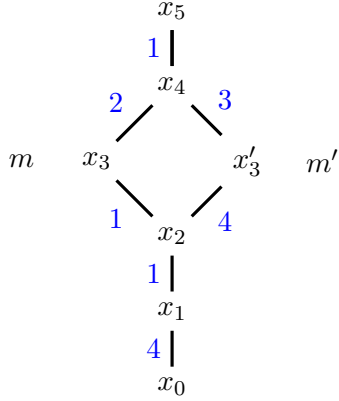

\end{example}

Next we observe that the notions of polygon strong and inversion ranked are generally distinct.

\begin{proposition}\label{prop:inv ranked distinct from polygon strong}
The notions of an inversion ranked {\EL} and a polygon strong {\EL} are distinct, that is, neither notion implies the other. 
\end{proposition}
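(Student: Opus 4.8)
The plan is to establish the two non-implications separately, each by exhibiting one small {\EL} and checking the relevant definitions directly.

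\textbf{An {\EL} that is polygon strong but not inversion ranked.} I would take the {\EL} $\lm$ of \cref{fig:example of inversions wrt EL}, whose underlying poset $P$ is $\zh \lessdot a \lessdot \set{b,c} \lessdot d \lessdot \oneh$ with labels valued in the non-total poset $\Lambda$ in which $1<2<3<4$ and $2<5$. First I would record that $P$ is ranked, so that inversion rankedness is even defined for it, and verify that $\lm$ is an {\EL}: in each interval with more than one maximal chain, namely $[a,d]$, $[\zh,d]$, $[a,\oneh]$, and $[\zh,\oneh]$, the maximal chain through $b$ is the unique ascending one and is lexicographically first (one must make these comparisons in the partial order $\Lambda$, e.g.\ $2<5$). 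Next I would check polygon strongness: the only length-two descent in $P$ is $a \lessdot c \lessdot d$, and the coatom of the ascending chain of $[a,d]$ is $b$, so the inequality of \cref{def:polygonstrongEL} reads $\lm(c,d)=1<3=\lm(b,d)$, which holds in $\Lambda$. Finally, $\lm$ is not inversion ranked: as recorded in \cref{rmk:inversion example not inv ranked}, the unique increase by a polygon move $m\to m'$ has $|\inv{m}{\lm}|=0$ while $|\inv{m'}{\lm}|=2\neq 0+1$.

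\textbf{An {\EL} that is inversion ranked but not polygon strong.} Here I would take the diamond poset $B_2$, with cover relations $\zh \lessdot a$, $\zh \lessdot b$, $a \lessdot \oneh$, $b \lessdot \oneh$, equipped with the {\EL} $\lm$ given by $\lm(\zh,a)=1$, $\lm(a,\oneh)=2$, $\lm(\zh,b)=3$, $\lm(b,\oneh)=2$, with labels in $[3]$ under its usual order; write $m:\zh \lessdot a \lessdot \oneh$ and $m':\zh \lessdot b \lessdot \oneh$. I would check: (a) $\lm$ is an {\EL}, since $m$ (label sequence $1,2$) is the unique ascending maximal chain of the one nontrivial interval $[\zh,\oneh]$ and precedes $m'$ (label sequence $3,2$) lexicographically; (b) $\lm$ is not polygon strong, because the unique length-two descent $\zh \lessdot b \lessdot \oneh$ has $\lm(b,\oneh)=2$ while the coatom $a$ of the ascending chain of $[\zh,\oneh]$ has $\lm(a,\oneh)=2$, and $2\not< 2$ violates \cref{def:polygonstrongEL}; (c) $\lm$ is inversion ranked, because $B_2$ is ranked and the only increase by a polygon move is $m\to m'$, for which $|\inv{m'}{\lm}|=|\set{(3,2)}|=1=0+1=|\inv{m}{\lm}|+1$.

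Combining the two examples shows neither notion implies the other. The argument is entirely a matter of verification; the only point needing genuine care is the first example, where comparisons must be carried out in the partial order $\Lambda$ rather than in a linear order, both when confirming $\lm$ is an {\EL} and when computing the inversion sets. I do not anticipate any substantive obstacle beyond this bookkeeping.
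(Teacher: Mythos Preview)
Your proof is correct. Both directions are verified by valid explicit examples, and your checks of the {\EL} axioms, polygon strongness, and inversion rankedness are accurate in each case.

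Your route differs from the paper's in one of the two directions. For ``inversion ranked but not polygon strong,'' the paper also uses a single diamond, with labels $1,2$ on the ascending side and $4,3$ on the descending side; your choice of $1,2$ and $3,2$ is essentially the same idea with different numerics. For ``polygon strong but not inversion ranked,'' however, the paper invokes the minimal labelings of the partition lattice $\Pi_4$: polygon strongness comes from \cref{thm:min labelings polygon strong}, and the failure of inversion rankedness is observed in the specific labeling of \cref{fig:Pi4 with min labeling}. Your use of the five-element poset from \cref{fig:example of inversions wrt EL} is more self-contained and elementary, since you verify polygon strongness by hand on the single descent rather than appealing to a general theorem about geometric lattices. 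The paper's example has the minor advantage of using a totally ordered label set, whereas your example requires working in the partial order $\Lambda$; but as you note, this is only bookkeeping.
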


\begin{proof}
Let $P$ be the poset with elements $\set{a,b,c,d}$ with maximal chains $a\lessdot b\lessdot d$ and $a\lessdot c\lessdot d$. Let $\lm$ be the {\EL} of $P$ given by $\lm(a,b)=1$, $\lm(b,d)=2$, $\lm(a,c)=4$, and $\lm(c,d)=3$. Then $\lm$ is inversion ranked, but not polygon strong. On the other hand, minimal labelings of geometric lattices are polygon strong by \cref{thm:min labelings polygon strong} while none of the minimal labelings of the partition lattice $\Pi_4$ are inversion ranked, in particular the minimal labeling shown in \cref{fig:Pi4 with min labeling} is not inversion ranked.
\end{proof}

In the ensuing theorem, we show that inversion ranked implies the following: polygon complete, the maximal chain descent order is ranked by number of inversions, and homology facets of the order complex are determined by their rank in the maximal chain descent order. This notion of inversion ranked is given in \cref{def:inversion ranked}.

\begin{theorem}\label{thm:inversion condition implies ranked cord}
Let $P$ be a finite, ranked, bounded poset of rank $n$. Suppose $P$ admits a {\CL} $\lm$ which is inversion ranked. Then $\lm$ is polygon complete. Moreover, $\cord{P}{\lm}$ is ranked with rank function $|\inv{\cdot}{\lm}|$ and $m\in \M(\overline{P})$ is a homology facet of the shellings of $\Delta(\overline{P})$ induced by any linear extension of $\cord{P}{\lm}$ if and only if $\zh*m*\oneh$ has rank $\binom{n}{2}$ in $\cord{P}{\lm}$.
\end{theorem}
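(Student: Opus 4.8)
The plan is to establish the three assertions in sequence, using the inversion-ranked hypothesis as the engine. First I would prove that $\lm$ is polygon complete. Suppose for contradiction that $m\to m'$ but $\cordot{m\not}{_\lm m'}$. Then there is a chain $m=d_0\to d_1\to\dots\to d_k\to d_{k+1}=m'$ with $k\geq 1$. Since each $d_j\to d_{j+1}$ is an increase by a polygon move, the inversion-ranked hypothesis gives $|\inv{d_{j+1}}{\lm}|=|\inv{d_j}{\lm}|+1$ for each $0\leq j\leq k$, so telescoping yields $|\inv{m'}{\lm}|=|\inv{m}{\lm}|+(k+1)\geq |\inv{m}{\lm}|+2$. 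But $m\to m'$ directly gives $|\inv{m'}{\lm}|=|\inv{m}{\lm}|+1$, a contradiction. Hence $\cordot{m}{_\lm m'}$, so $\lm$ is polygon complete.

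Next I would show $\cord{P}{\lm}$ is ranked with rank function $|\inv{\cdot}{\lm}|$. By \cref{prop:non ranked cords have zero hat}, $\cord{P}{\lm}$ has a unique minimal element $\zh$, the unique ascending maximal chain, which has no inversions, so $|\inv{\zh}{\lm}|=0$. Every cover relation in $\cord{P}{\lm}$ is of the form $m\lessdot_\lm m'$ with $m\to m'$ (this is observed in the excerpt and also follows from \cref{def:nongradedmaxchainorders}), and by the inversion-ranked hypothesis such a cover satisfies $|\inv{m'}{\lm}|=|\inv{m}{\lm}|+1$. Thus the function $rk:=|\inv{\cdot}{\lm}|$ satisfies $rk(\zh)=0$ and increases by exactly one along each cover relation, which is precisely the definition of $\cord{P}{\lm}$ being ranked with this rank function.

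For the homology-facet statement I would combine \cref{lem:linextsgiveshellings} with an inversion count. By \cref{lem:linextsgiveshellings}, $m\in\M(\overline{P})$ is a homology facet of the shelling of $\Delta(\overline{P})$ induced by any linear extension of $\cord{P}{\lm}$ if and only if $\zh*m*\oneh$ has a descent with respect to $\lm$ at every one of its $n-1$ non-extreme elements, i.e. if and only if $\zh*m*\oneh$ has a descending label sequence. Now the key point is that a maximal chain of $P$ has rank exactly $n$ (since $P$ is ranked of rank $n$), so its label sequence has length $n$; the maximum possible number of inversions of a length-$n$ sequence is $\binom{n}{2}$, and this maximum is attained precisely when the label sequence is strictly descending. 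Using the rank-function identification from the previous paragraph, $\zh*m*\oneh$ has rank $\binom{n}{2}$ in $\cord{P}{\lm}$ if and only if $|\inv{\zh*m*\oneh}{\lm}|=\binom{n}{2}$ if and only if the label sequence is descending, which by the first sentence of this paragraph is equivalent to $m$ being a homology facet.

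The main obstacle I anticipate is the last step, specifically justifying that a descending label sequence of length $n$ has exactly $\binom{n}{2}$ inversions and is the unique such extreme case. When the label poset $\Lambda$ is totally ordered and the labels along a maximal chain are distinct, this is immediate; but \cref{def:inversions of maxl chains} allows $\Lambda$ to be an arbitrary poset and does not a priori force the labels along a maximal chain to be distinct or pairwise comparable. One must check that ``$\zh*m*\oneh$ is a descent at every internal element'' (the homology-facet condition from \cref{lem:linextsgiveshellings}) is equivalent to ``every pair $i<j$ is an inversion,'' which is exactly the condition $|\inv{\zh*m*\oneh}{\lm}|=\binom{n}{2}$; here one uses that $\lm_i\not\leq\lm_{i+1}$ for all $i$ forces $\lm_i\not\leq\lm_j$ for all $i<j$ — this transitivity of non-comparability along a chain needs a brief argument, but it follows because in an inversion-ranked {\CL} the rank identification already tells us $|\inv{\cdot}{\lm}|$ takes its maximum value $\binom{n}{2}$ exactly on the top-rank elements, and \cref{prop:decrchainmaxlelt} together with the rank function pins these down as the descending chains. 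This closes the equivalence without needing $\Lambda$ to be a chain.
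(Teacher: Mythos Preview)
Your argument is essentially identical to the paper's: the same contradiction for polygon completeness, the same appeal to \cref{prop:non ranked cords have zero hat} for the rank function, and the same reduction of the homology-facet statement to \cref{lem:linextsgiveshellings} plus the equivalence ``descending label sequence $\iff$ $\binom{n}{2}$ inversions.''

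On the last point you are right to flag a subtlety, but your proposed resolution is circular: you invoke the rank identification and \cref{prop:decrchainmaxlelt} to argue that descending chains sit at rank $\binom{n}{2}$, yet that is precisely what needs proving, and maximal elements of a ranked poset need not all have the same rank. The paper simply asserts this equivalence as ``clear'' without further comment. The direction ``$\binom{n}{2}$ inversions $\Rightarrow$ descending'' is genuinely immediate (every consecutive pair is among the $\binom{n}{2}$ inversions), so ``rank $\binom{n}{2}$ $\Rightarrow$ homology facet'' is fine. The reverse direction, ``descending $\Rightarrow$ every pair $i<j$ satisfies $\lm_i\not\leq\lm_j$,'' is obvious when $\Lambda$ is totally ordered (then $\not\leq$ is $>$ and transitivity applies), which covers all the examples in the paper; for a general label poset $\Lambda$ neither your argument nor the paper's supplies a justification.
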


\begin{proof}
Seeking a contradiction, suppose $m\to m'$ and $\cordoti{m \not}{m'}{\lm}$ in $\cord{P}{\lm}$ for maximal chains $m,m'\in \M(P)$. This implies that there are maximal chains $m_1,\dots, m_k\in \M(P)$ with $k\geq 1$ such that $m\to m_1\to \dots \to m_k\to m'$. Since $\lm $ is inversion ranked and since $k\geq 1$,  $|\inv{m'}{\lm}|=|\inv{m}{\lm}|+k+1\geq |\inv{m}{\lm}|+ 2$. However, this contradicts the fact that $|\inv{m'}{\lm}|=|\inv{m}{\lm}|+1$ which holds because $\lm $ is inversion ranked and $m\to m'$. Therefore, $\lm$ is polygon complete.

It follows directly from \cref{prop:non ranked cords have zero hat} that $\cord{P}{\lm}$ is ranked with rank function $|\inv{\cdot}{\lm}|$ since the unique ascending maximal chain $m_0$ of $P$ with respect to $\lm$ has $\inv{m_0}{\lm}=\emptyset$. By \cref{lem:linextsgiveshellings} any linear extension of $\cord{P}{\lm}$ induces a shelling of $\Delta(\overline{P})$ and a maximal chain $m\in \M(\overline{P})$ is a homology facet with respect to such a shelling if and only if $\zh*m*\oneh$ is descending with respect to $\lm$. Clearly a maximal chain $m\in \M(\overline{P})$ has $\zh*m*\oneh$ descending with respect to $\lm$ if and only if $|\inv{\zh*m*\oneh}{\lm}|= \binom{n}{2}$. Thus, $m\in \M(\overline{P})$ is a homology facet of $\Delta(\overline{P})$ if and only if $\zh*m*\oneh$ has rank $\binom{n}{2}$ in $\cord{P}{\lm}$.
\end{proof}

The previous theorem applies to $S_n$ {\EL}s by \cref{ex:Sn EL is inversion ranked}. In the next section, we will prove an even stronger result for $S_n$ {\EL}s.

\begin{corollary}\label{cor:Sn inv ranked so cord satisfies inv ranked thm}
If $P$ is a finite poset with an $S_n$ {\EL} $\lm$, then $\lm$ is polygon complete, $\cord{P}{\lm}$ is ranked with rank function $|\inv{\cdot}{\lm}|$, and $m\in \M(\overline{P})$ is a homology facet of 
the shellings of $\Delta(\overline{P})$ given by any linear extension of $\cord{P}{\lm}$ if and only if $\zh*m*\oneh$ has rank $\binom{n}{2}$ in $\cord{P}{\lm}$.
\end{corollary}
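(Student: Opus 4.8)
The plan is to obtain this statement as an immediate consequence of \cref{thm:inversion condition implies ranked cord}, so the work consists entirely of checking that an $S_n$ {\EL} meets that theorem's hypotheses. First I would record that if $P$ admits an $S_n$ {\EL} $\lm$, then every maximal chain of $P$ has label sequence a permutation of $[n]$, hence has length exactly $n$; so $P$ is graded, and since any poset carrying an {\EL} is by definition bounded (see \cref{def:ellabeling}), $P$ is ranked of rank $n$. Thus $P$ satisfies the standing hypotheses ``finite, ranked, bounded of rank $n$'' demanded by \cref{thm:inversion condition implies ranked cord}.

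The second step is to verify that $\lm$ is inversion ranked in the sense of \cref{def:inversion ranked}. By \cref{def:inversions of maxl chains}, and since the label poset here is $[n]$ with its total order, the inversions of a maximal chain $m$ with respect to $\lm$ are precisely the inversions of the permutation $\lm(m)$. Suppose $m\to m'$ with $m'\setminus m=\set{x}$ and $\text{rk}(x)=i$. By \cref{prop:Sn descent swaps}, $\lm(m')=\lm(m)(i,i+1)$, i.e. $\lm(m')$ is obtained from $\lm(m)$ by transposing the entries in positions $i$ and $i+1$. Because position $i$ of $m$ is an ascent (it lies on the unique ascending saturated chain of the rooted interval of the corresponding polygon) while position $i$ of $m'$ is a descent, this single adjacent transposition changes the inversion number by exactly one, so $\abs{\inv{m'}{\lm}}=\abs{\inv{m}{\lm}}+1$. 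Hence $\lm$ is inversion ranked; this is exactly the observation already recorded in \cref{ex:Sn EL is inversion ranked}.

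Finally I would invoke \cref{thm:inversion condition implies ranked cord} verbatim: it gives that $\lm$ is polygon complete, that $\cord{P}{\lm}$ is ranked with rank function $\abs{\inv{\cdot}{\lm}}$, and that $m\in\M(\overline{P})$ is a homology facet of the shelling of $\Delta(\overline{P})$ induced by any linear extension of $\cord{P}{\lm}$ if and only if $\zh*m*\oneh$ has rank $\binom{n}{2}$ in $\cord{P}{\lm}$ — equivalently, if and only if $\lm(\zh*m*\oneh)$ is the decreasing permutation $n(n-1)\cdots 1$, the unique permutation of $[n]$ attaining the maximal inversion count $\binom{n}{2}$. I do not expect a genuine obstacle here; the only points needing a line of care are the grading of $P$ (so that ``rank'' makes sense and equals $n$) and the elementary fact that an adjacent transposition converting an ascent into a descent alters the inversion number by $\pm 1$.
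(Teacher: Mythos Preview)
Your proposal is correct and follows exactly the approach the paper takes: the paper simply notes that \cref{thm:inversion condition implies ranked cord} applies to $S_n$ {\EL}s by \cref{ex:Sn EL is inversion ranked}, and you have spelled out the details of that example (via \cref{prop:Sn descent swaps}) together with the easy verification that $P$ is graded of rank $n$.
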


\section{More Examples of Maximal Chain Descent Orders}\label{sec:first examples}

Our first example sets us up to discuss some related examples later which are endowed with especially rich structure.

\begin{subsection}{\texorpdfstring{$\mathbf{S_n}$}{Lg} {\EL}s of Finite Supersolvable Lattices}

Here we characterize the intervals in maximal chain descent orders induced by Stanley's $M$-chain {\EL}s of any finite supersolvable lattice from \cite{stanleysupersolvablelats1972}. Specifically, we show that any interval in such a maximal chain descent order is isomorphic to some interval in weak order on $S_n$ where $n$ is the rank of the supersolvable lattice. Proving injectivity of the isomorphism in the theorem is the aspect of the proof which requires care. We also apply previous results about polygon completeness and rank in the proof.

\begin{theorem}\label{thm:Sn cords weak lower intervals}
Let $P$ be a finite supersolvable lattice of rank $n$ with an $M$-chain {\EL} $\lm$ as in \cite{stanleysupersolvablelats1972}. Let $\cord{P}{\lm}$ be the maximal chain descent order induced by $\lm$. Then any interval in $\cord{P}{\lm}$ is isomorphic to some interval of weak order on $S_n$ via the map assigning to each maximal chain its label sequence.
\end{theorem}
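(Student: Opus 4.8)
The plan is to fix an interval $[m_0, m_1]$ in $\cord{P}{\lm}$ and show that the label-sequence map $\phi \colon m \mapsto \lm(m)$ restricts to a poset isomorphism onto the weak order interval $[\lm(m_0), \lm(m_1)]$ (or some interval isomorphic to it). Since $\lm$ is an $M$-chain EL-labeling, every label sequence is a permutation of $[n]$ by \cref{thm:Sn EL is polygon strong} (or rather the observation in its proof), and by \cref{prop:Sn descent swaps} an increase by a polygon move $m \to m'$ at an element of rank $i$ satisfies $\lm(m') = \lm(m)(i,i+1)$ where $\lm(m)$ has an ascent at position $i$ — in other words $\lm(m) \lessdot_{wk} \lm(m')$. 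Conversely, one needs that whenever $\lm(m)$ has an ascent at position $i$, there actually is a maximal chain $m'$ with $m \to m'$; this follows from \cref{prop:descents give unique incrs} applied to the chain obtained by... actually more directly: an ascent of $\lm(m)$ at position $i$ means the rank-$i$ element $x_i$ of $m$ is an ascent, so there is a descent polygon above $x_{i-1}$ through a different rank-$i$ element, giving the required $m'$; I will spell this out using the structure of rank-two intervals together with the EL property. Combining these two facts with \cref{thm:Sn EL is polygon strong} (so that $\lm$ is polygon complete, hence $m \lessdot m'$ in $\cord{P}{\lm}$ exactly when $m \to m'$) shows that $\phi$ carries cover relations to cover relations in both directions, and \cref{thm:inversion condition implies ranked cord}/\cref{cor:Sn inv ranked so cord satisfies inv ranked thm} shows $\cord{P}{\lm}$ is ranked by $|\inv{\cdot}{\lm}|$, which matches Coxeter length under $\phi$.

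The first real step is to establish that $\phi$ is order-preserving on the whole of $\cord{P}{\lm}$: if $\cleqi{m}{m'}{\lm}$ then $\lewk{\lm(m)}{\lm(m')}$. This is immediate from the cover-relation analysis above, since $\cord{P}{\lm}$ is generated by polygon moves and each polygon move maps to a weak-order cover. The second step is to show $\phi$ maps the interval $[m_0,m_1]$ \emph{onto} the weak-order interval $[\lm(m_0),\lm(m_1)]$: given any permutation $w$ with $\lm(m_0) \le_{wk} w \le_{wk} \lm(m_1)$, one climbs a saturated chain from $\lm(m_0)$ to $w$ in weak order, lifting each cover relation $u \lessdot_{wk} us_i$ to a polygon move using the converse fact from the previous paragraph; one must check the lift stays below $m_1$, which I expect to handle by noting that at each stage the lifted chain $m$ has $\lm(m) \le_{wk} w \le_{wk} \lm(m_1)$ and then invoking \cref{cor:order rel from ascending on differing intervals} or a direct comparison of inversion sets together with \cref{prop:wkordinvsetcontainment}, reversing the roles to get $\cleqi{m}{m_1}{\lm}$. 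The third step is injectivity of $\phi$ on $[m_0,m_1]$.

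The main obstacle is exactly this injectivity: two distinct maximal chains of a supersolvable lattice can in principle share a label sequence, so I must argue that within a single interval of $\cord{P}{\lm}$ this does not happen, or more precisely that the map is injective on the interval even if not globally. The idea is to use the lifting/rank machinery: suppose $m \ne m'$ lie in $[m_0,m_1]$ with $\lm(m) = \lm(m')$. Since $\phi$ is order-preserving and rank-preserving (rank $= |\inv{\cdot}{\lm}| = l(\lm(\cdot))$ by \cref{cor:Sn inv ranked so cord satisfies inv ranked thm}), $m$ and $m'$ have the same rank in $\cord{P}{\lm}$; I then want to descend from $m$ and from $m'$ to $m_0$ along saturated chains in $\cord{P}{\lm}$ and use \cref{lem:nongradedonlyabovepolygon} together with the fact that the descent polygon removing a given descent of a chain is unique (\cref{prop:descents give unique incrs}) to conclude that $m$ and $m'$ are determined by $\lm(m) = \lm(m')$ and by $m_0$ — that is, the chain of polygon moves from $m_0$ up to a chain with a prescribed label sequence is forced stepwise, because at each level the permutation tells us which ascent to transpose and \cref{prop:descents give unique incrs} (read downward) tells us the corresponding poset element is unique. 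Making this induction precise — in particular checking that the common agreement-along-initial-segments behavior of \cref{lem:nongradedonlyabovepolygon} combined with the supersolvable rank-two interval structure pins down each $x_i$ — is the delicate part, and is presumably where the authors say "proving injectivity is the aspect of the proof which requires care."
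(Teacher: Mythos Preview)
Your overall strategy matches the paper's: use \cref{prop:Sn descent swaps} to see that polygon moves map to weak-order covers, use polygon completeness (\cref{thm:Sn EL is polygon strong} or \cref{cor:Sn inv ranked so cord satisfies inv ranked thm}) to identify covers in $\cord{P}{\lm}$ with polygon moves, and then argue the label-sequence map is a bijection onto a weak-order interval. The paper streamlines matters by reducing at once to \emph{lower} intervals $[m_0,m]_{\lm}$ with $m_0$ the global ascending chain; your choice to work with an arbitrary interval $[m_0,m_1]$ forces you to check that lifts stay below $m_1$, which you flag but do not resolve.

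The real problem is your injectivity argument. You claim that ``the chain of polygon moves from $m_0$ up to a chain with a prescribed label sequence is forced stepwise,'' invoking \cref{prop:descents give unique incrs}. But that proposition gives uniqueness only in the \emph{downward} direction: given a descent of $m'$, the ascending replacement is unique. In the upward direction it fails. In a supersolvable lattice a rank-two interval $[x,z]$ can have several atoms, and since every non-ascending maximal chain of $[x,z]$ carries the same label pair $b,a$ (because all label sequences are permutations), a single ascent of $m$ can be swapped for any of several distinct descent chains, all producing the same new label sequence. Concretely, in $\Pi_4$ the interval $[1|2|3|4,\,123|4]$ has three atoms, and the ascending chain (labels $2,3$) increases by a polygon move to two different chains both labeled $3,2$. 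So ``lift a reduced word for $\lm(m)$ starting at $m_0$'' does not determine $m$, and \cref{lem:nongradedonlyabovepolygon} does not rescue this: it controls initial segments along a given sequence of moves, not the branching of upward moves.

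The paper handles injectivity by a different, genuinely structural induction on the rank of the top element $m$. Given $c,c'\in[m_0,m]_{\lm}$ with $\lm(c)=\lm(c')$, it chooses coatoms $c_1,c_2\lessdot_{\lm} m$ with $c\preceq_{\lm} c_1$ and $c'\preceq_{\lm} c_2$, and then \emph{constructs} a common lower bound $c_3\prec_{\lm} c_1,c_2$ whose label sequence is the weak-order meet $\lm(c_1)\wedge\lm(c_2)$: when the two descents of $m$ are disjoint one transposes both, and when they overlap one replaces the length-three descending segment by its ascending chain. Surjectivity (proved first, by downward induction) then places a copy of $c'$ inside $[m_0,c_3]_{\lm}\subseteq[m_0,c_1]_{\lm}$, and the inductive hypothesis on $[m_0,c_1]_{\lm}$ forces $c=c'$. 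This meet-construction is the missing idea in your sketch; without it, injectivity does not go through.
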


\begin{proof}
We show that for any maximal chain $m\in \M(P)$, the lower interval of $\cord{P}{\lm}$ generated by $m$ is isomorphic to the lower interval of weak order on $S_n$ generated by $\lm(m)$ via taking label sequences. Then the statement for all closed intervals and open intervals follows immediately. Again the central fact is that the label sequences of maximal chains are permutations of $[n]$.

By \cref{cor:Sn inv ranked so cord satisfies inv ranked thm}, $\lm$ is inversion ranked, and so $\lm$ is polygon complete and $\cord{P}{\lm}$ is ranked by $|\inv{\cdot}{\lm}|$. (Alternatively, we could apply \cref{thm:Sn EL is polygon strong} to see that $\lm$ is polygon complete, but we use the conclusion that $P$ is ranked here as well.) Let $m_0$ be the unique ascending chain of $P$ with respec to $\lm$, so $m_0$ is the $\zh$ of $\cord{P}{\lm}$ and $\lm(m_0)$ is the identity permutation. We first show that the set of label sequences of the elements in the interval $[m_0,m]_{\lm}$ is the set of permutations in the lower interval of weak order $[\lm(m_0),\lm(m)]_{wk}$. Then we show that the map $c\mapsto \lm(c)$ is an isomorphism from $[m_0,m]_{\lm}$ to $[\lm(m_0),\lm(m)]_{wk}$. For both, we induct on the rank of $m$ in $\cord{P}{\lm}$.

For each descent of $m$, the unique chain $m'$ with $m'\to m$ from \cref{prop:descents give unique incrs} has label sequence $\lm(m')$ given by transposing the corresponding descent of $\lm(m)$ by \cref{prop:Sn descent swaps}. Thus, by induction on the rank of $m$, the set of label sequences of the elements in $[m_0,m]_{\lm}$ is the set of permutations in $[\lm(m_0),\lm(m)]_{wk}$. Thus, $c\mapsto \lm(c)$ is surjective from $[m_0,m]_{\lm}$ to $[\lm(m_0),\lm(m)]_{wk}$. The fact that $m'\to m$ implies the label sequence $\lm(m')$ is obtained from $\lm(m)$ by transposing a unique descent of $\lm(m)$ also means that the map $c\mapsto \lm(c)$ from $[m_0,m]_{\lm}$ to $[\lm(m_0),\lm(m)]_{wk}$ is order preserving. 

Next we show that $c\mapsto \lm(c)$ from $[m_0,m]_{\lm}$ to $[\lm(m_0),\lm(m)]_{wk}$ is injective. We again proceed by induction on the rank of $m$. If the rank of $m$ is zero, then $m=m_0$ by \cref{prop:non ranked cords have zero hat}. Since $m_0$ is the unique chain of $P$ whose label sequence is the identity permutation, this gives the base case. Now assume the rank of $m$ is greater than zero. Suppose $c,c'\in [m_0,m]_{\lm}$ with $\lm(c)=\lm(c')$. Observe that $m$ is the only element of $[m_0,m]_{\lm}$ with label sequence $\lm(m)$ by \cref{cor:nongradedcordgreaterlexgreater}. Thus, we may assume the rank of $c$ and $c'$ is strictly less than the rank of $m$. 

Let $c_1$ and $c_2$ be elements of $[m_0,m']_{\lm}$ such that $\cordot{\cleq{c}{_{\lm} c_1}}{_{\lm} m}$ and $\cordot{\cleq{c'}{_{\lm} c_2}}{_{\lm} m}$. We have $c_1\to m$ and $c_2\to m$. By \cref{prop:descents give unique incrs}, $c_1$ and $c_2$ are uniquely determined by the descents of $m$ to which they correspond. Also, the rank of $c_1$ and $c_2$ is one less than the rank of $m$. If $c_1=c_2$, then $c=c'$ by induction. If $c_1\neq c_2$, then by induction $c$ is the only element of $[m_0,c_1]_{\lm}$ with label sequence $\lm(c)$ and $c'$ is the only element of $[m_0,c_2]_{\lm}$ with label sequence $\lm(c')$. Thus, it suffices to show that $\cleq{c'}{_{\lm} c_1}$. 

Let $m:\zh=x_0\lessdot x_1\lessdot \dots \lessdot x_n=\oneh$ and $c_1:\zh=x_0\lessdot x_1\lessdot \dots x_{i-1}\lessdot x_i' \lessdot x_{i+1}\lessdot \dots \lessdot x_n=\oneh$ and $c_2:\zh=x_0\lessdot x_1\lessdot \dots x_{j-1}\lessdot x_j' \lessdot x_{j+1}\lessdot \dots \lessdot x_n=\oneh$. We must have $i\neq j$ since $c_1\neq c_2$. We have the following two cases: (i) $|i-j|\geq 2$ and (ii) $|i-j|=1$. 

(i) If $|i-j|\geq 2$, then the descents of $m$ corresponding to $c_1$ and $c_2$ share no common elements. We may assume without loss of generality that $i<j$. Let $c_3$ be the maximal chain of $P$ given by $c_3:\zh=x_0\lessdot x_1\lessdot \dots x_{i-1}\lessdot x_i' \lessdot x_{i+1}\lessdot \dots \lessdot x_{j-1}\lessdot x'_j \lessdot x_{j+1}\lessdot \dots \lessdot x_n=\oneh$.
We have $\cl{c_3}{c_1,c_2}{\lm}$ since we could have transposed the descents of $m$ at $x_i$ and $x_j$ in either order to reach $c_3$ (here we are using \cref{prop:descents give unique incrs}). The label sequence $\lm(c_3)$ is the meet of the label sequences $\lm(c_1)$ and $\lm(c_2)$ in weak order. Thus, $\lm(c')$ is less than $\lm(c_3)$ in weak order. We previously showed the the label sequences of elements in $[m_0,c_3]_{\lm}$ are the permutations in the weak order interval $[\lm(m_0),\lm(c_3)]_{\lm}$. Hence, there is some element $c''\in [m_0,c_3]_{\lm}$ with label sequence $\lm(c')$. Since $c''$ is also in $[m_0,c_2]_{\lm}$ and $c'$ is the unique such element with label sequence $\lm(c')$, $c''=c'$. Therefore, $\cl{\cleq{c'}{_{\lm} c_3}}{c_1}{\lm}$.

(ii) If $|i-j|=1$, the descents of $m$ corresponding to $c_1$ and $c_2$ share a common element. Without loss of generality, we may assume $j=i+1$. Thus, the saturated subchain of $m$ given by $x_{i-1}\lessdot x_i\lessdot x_{i+1}\lessdot x_{i+2}$ is a descending chain with respect to $\lm$. Let $d$ be the unique ascending saturated chain from $x_{i-1}$ to $x_{i+2}$ and let $c_3=m^{x_{i-1}} * d * m_{x_{i+2}}$. Then $\cl{c_3}{c_1,c_2}{\lm}$ by \cref{prop:non ranked cords have zero hat} and \cref{lem:restrictedrelslift}. Further, the label sequence $\lm(c_3)$ is the meet of $\lm(c_1)$ and $\lm(c_2)$ in weak order. Then, by the same argument as in case (i), we have $\cleq{c'}{_{\lm} c_3}$. Hence, $\cl{\cleq{c'}{_{\lm} c_3}}{c_1}{\lm}$. This completes the proof of injectivity.

Lastly, \cref{prop:descents give unique incrs} implies that if $\lm(c)\lessdot _{wk}\lm(c')$ in weak order on $S_n$ for $c,c'\in [m_0,m]_{\lm}$, then $\cordoti{c}{c'}{\lm}$. To see this we suppose $c,c'\in [m_0,m]_{\lm}$ and $\lm(c)\lessdot_{wk}\lm(c')$. By \cref{prop:descents give unique incrs} there is a unique maximal chain $c''$ such that $c''\to c'$ corresponding to the descent of $c'$ giving rise to $\lm(c)\lessdot_{wk}\lm(c')$. Thus, $c''\in [m_0,m]_{\lm}$ and $\lm(c'')=\lm(c)$. Then since the map to label sequences is injective on $[m_0,m]_{\lm}$, $c''=c$. Hence, by \cref{cor:Sn inv ranked so cord satisfies inv ranked thm} (or by \cref{thm:Sn EL is polygon strong}) $\cordoti{c}{c'}{\lm}$ if and only if $\lm(c)\lessdot_{wk} \lm(c')$. Therefore, $c\mapsto \lm(c)$ is an isomorphism from $[m_0,m]_{\lm}$ to $[\lm(m_0),\lm(m)]_{wk}$.
\end{proof}

In the proof of \cref{thm:Sn cords weak lower intervals}, we used the fact that weak order on $S_n$ is a lattice, but we never used the fact that $P$ was a lattice. The proof only relied on label sequences of maximal chains being permutations of $[n]$. Thus, the proof applies to McNamara's more general $S_n$ {\EL}s.

\begin{corollary}\label{cor:all Sn ELs cord ints weak ints}
Let $P$ be a finite, bounded poset with an $S_n$ {\EL} $\lm$ in the sense of McNamara \cite{snelsupersolvmcnamara2003}. Let $\cord{P}{\lm}$ be the maximal chain descent order induced by $\lm$. Then any interval in $\cord{P}{\lm}$ is isomorphic to some interval of weak order on $S_n$ via the map assigning to each maximal chain its label sequence.
\end{corollary}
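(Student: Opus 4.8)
The plan is to observe that the proof of \cref{thm:Sn cords weak lower intervals} carries over essentially verbatim once one replaces ``supersolvable lattice of rank $n$ with an $M$-chain {\EL}'' by ``finite, bounded poset with an $S_n$ {\EL}.'' The only facts about $P$ that the earlier argument actually exploited are: the label sequences of the maximal chains of $P$ are permutations of $[n]$; by \cref{cor:Sn inv ranked so cord satisfies inv ranked thm}, $\lm$ is inversion ranked and polygon complete and $\cord{P}{\lm}$ is ranked by $|\inv{\cdot}{\lm}|$; by \cref{prop:descents give unique incrs} each descent of a maximal chain $m$ determines a unique $m'$ with $m'\to m$; and by \cref{prop:Sn descent swaps} that unique $m'$ has $\lm(m')=\lm(m)(i,i+1)$. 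All four of these hold by definition for an arbitrary $S_n$ {\EL}. The lattice structure used in the earlier proof was the lattice structure of weak order on $S_n$, never that of $P$.

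Concretely, I would first reduce, exactly as before, to showing that for any $m\in\M(P)$ the lower interval $[m_0,m]_{\lm}$ of $\cord{P}{\lm}$ (where $m_0$ is the unique ascending chain, which is the $\zh$ of $\cord{P}{\lm}$ by \cref{prop:non ranked cords have zero hat}) is isomorphic to $[\lm(m_0),\lm(m)]_{wk}$ via $c\mapsto\lm(c)$; the statement for all closed and open intervals then follows. Surjectivity onto the permutations $\leq_{wk}\lm(m)$, and order-preservation of the map, both follow by induction on rank in $\cord{P}{\lm}$ using \cref{prop:Sn descent swaps}. Injectivity is again proved by induction on the rank of $m$: given $c,c'\in[m_0,m]_{\lm}$ with $\lm(c)=\lm(c')$, one passes to covers $c_1,c_2$ below $m$ through which $c,c'$ respectively factor, and reduces (using uniqueness of $c_1,c_2$ and the inductive hypothesis) to showing $\cleq{c'}{_{\lm} c_1}$. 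This splits into the two cases $|i-j|\geq 2$ and $|i-j|=1$ of the original proof: in each case one constructs a maximal chain $c_3$ of $P$ with $\cl{c_3}{c_1,c_2}{\lm}$ whose label sequence is the meet $\lm(c_1)\wedge\lm(c_2)$ in weak order, invoking \cref{prop:descents give unique incrs}, \cref{prop:non ranked cords have zero hat}, and \cref{lem:restrictedrelslift}, and then concludes via the already-established surjectivity on $[m_0,c_3]_{\lm}$ together with uniqueness of the preimage of $\lm(c')$. Finally \cref{prop:descents give unique incrs} and polygon completeness (\cref{cor:Sn inv ranked so cord satisfies inv ranked thm}) give that $\lm(c)\lessdot_{wk}\lm(c')$ forces $\cordoti{c}{c'}{\lm}$, so the bijection is an isomorphism of posets.

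The only real point to check — and it is the entire content of the corollary — is that none of the steps above secretly used joins or meets in $P$. The meet $\lm(c_1)\wedge\lm(c_2)$ is taken in $S_n$; the chain $c_3$ is produced concretely inside $P$, either as $m^{x_{i-1}} * d * m_{x_{i+2}}$ (in the $|i-j|=1$ case, with $d$ the unique ascending chain of that rooted length-three interval) or by transposing two non-adjacent descents of $m$ in either order (in the $|i-j|\geq 2$ case, legitimate by \cref{prop:descents give unique incrs}); and the inductive surjectivity/injectivity claims are statements about $\cord{P}{\lm}$ and weak order only. Hence the ``hard part'' is purely bookkeeping: confirming that every result invoked in the proof of \cref{thm:Sn cords weak lower intervals} has hypotheses satisfied by a general $S_n$ {\EL}, and that the lattice hypothesis on $P$ was genuinely unused. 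I would therefore present this corollary's proof as a short argument stating that the proof of \cref{thm:Sn cords weak lower intervals} applies mutatis mutandis, with a sentence recalling which earlier results supply the needed properties, rather than reproducing the argument in full.
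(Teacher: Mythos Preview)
Your proposal is correct and matches the paper's own argument exactly: the paper simply remarks (in the paragraph immediately preceding the corollary) that the proof of \cref{thm:Sn cords weak lower intervals} used only that label sequences are permutations of $[n]$ and the lattice structure of weak order on $S_n$, never the lattice structure of $P$, so it applies verbatim to any $S_n$ {\EL}. Your more detailed audit of which ingredients are actually invoked is a faithful expansion of that one-paragraph justification.
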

 
As mentioned just before \cref{thm:interpolating ELs polygon strong}, McNamara and Thomas generalized the notion of $S_n$ {\EL}s to non-graded posets with the notion of interpolating {\EL}s in \cite{posetedgelabelsmodularitymcnamarathomas2006}. It may be interesting to see if interpolating {\EL}s have a clean structure theorem for intervals in their maximal chain descent orders. 

On the other hand, we might consider general {\CL}s in which the label sequence of every maximal chain of a rank $n$ poset is a permutation of $[n]$. We might call such {\CL}s $S_n$ {\CL}s. Bj{\"o}rner and Wachs' {\CL} of closed intervals (when restricted to lower intervals) in the Bruhat order of any Coxeter group is such a {\CL}. $S_n$ {\CL}s were also remarked upon in \cite{broadclassshelllatsschweigwoodroofe2017} where Schweig and Woodroofe show that comodernistic lattices (which they introduce to unify several well know classes of lattices among other purposes) admit $S_n$ {\CL}s. The most obvious potential characterization of intervals in the maximal chain descent order of an $S_n$ {\CL}, namely that each interval is isomorphic to some interval in weak order on $S_n$ via the map taking maximal chains to their label sequences, is not true as exhibited by the {\CL} in \cref{fig:CL noncover diamond move example}. As seen in that example, the fact that edge labels in a general {\CL} can depend on the root below the edge, means we can leave out permutations that are required as label sequences in an $S_n$ {\EL}. We do not know if there is a nice description of intervals from such $S_n$ {\CL}s.

\end{subsection}

\begin{subsection}{Finite Distributive Lattices}\label{sec:findistlats}
Finite distributive lattices are examples of finite supersolvable lattices. They are convenient examples to work with because the $M$-chain {\EL}s are especially easy to describe and are well controlled. We prove that for any of the $M$-chain {\EL}s of a finite distributive lattice described in \cref{rmk:dist lat Sn EL labels}, the corresponding maximal chain descent order is isomorphic to some order ideal in weak order on $S_n$ via the map assigning to each maximal chain its label sequence. We begin by recalling Birkhoff's well known Fundamental Theorem of Finite Distributive Lattices from \cite{birkhoffftfdl1937}.

\begin{theorem}\label{thm:ftfdl}
A poset $L$ is a finite distributive lattice if and only if $L=J(P)$ for some finite poset $P$ where $J(P)$ is the set of order ideals of $P$ ordered by inclusion.
\end{theorem}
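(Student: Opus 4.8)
The plan is to prove the two implications separately, the direction ``$J(P)$ is a finite distributive lattice'' being routine and the converse being the substantive part. For the forward direction I would note that the order ideals of a finite poset $P$ are closed under both union and intersection, so in $J(P)$ the join of $I$ and $I'$ is $I\cup I'$ and the meet is $I\cap I'$; since $J(P)\subseteq 2^P$ and $P$ is finite, $J(P)$ is finite, and distributivity of $J(P)$ is inherited from the distributivity of set intersection over set union (and conversely) in the Boolean lattice $2^P$.

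For the converse, suppose $L$ is a finite distributive lattice. I would take $P$ to be the induced subposet of \emph{join-irreducible} elements of $L$ --- those $j\neq\zh$ for which $j=a\vee b$ forces $j\in\{a,b\}$ --- and define $\phi\colon L\to J(P)$ by $\phi(x)=\{\,j\in P: j\leq x\,\}$, which is manifestly an order ideal of $P$. The key structural lemma, proved by induction on the size of the principal order ideal of $x$ in $L$ (here finiteness is used), is that every $x\in L$ satisfies $x=\bigvee\{\,j\in P: j\leq x\,\}$: if $x$ is $\zh$ or join-irreducible this is immediate, and otherwise $x=a\vee b$ with $a,b<x$, so the inductive hypothesis expresses each of $a,b$ as a join of join-irreducibles below it, all of which lie below $x$. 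From this lemma, $\phi(x)\subseteq\phi(y)$ implies $x=\bigvee\phi(x)\leq\bigvee\phi(y)=y$, so $\phi$ is an order embedding, hence injective.

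The remaining point, surjectivity of $\phi$, is where distributivity of $L$ enters essentially. Given an order ideal $I$ of $P$, set $x=\bigvee I$ (join taken in $L$); clearly $I\subseteq\phi(x)$, and for the reverse inclusion take a join-irreducible $j\leq x$ and compute $j=j\wedge x=j\wedge\bigvee_{i\in I}i=\bigvee_{i\in I}(j\wedge i)$ using the distributive law, so join-irreducibility of $j$ forces $j=j\wedge i$, i.e.\ $j\leq i$, for some $i\in I$, whence $j\in I$ since $I$ is an order ideal. Thus $\phi(x)=I$ and $\phi$ is a bijection. Since $P\subseteq L$ is finite and both $\phi$ and $\phi^{-1}$ are order-preserving, $\phi$ is an isomorphism of posets, hence of lattices, giving $L\cong J(P)$.

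The main obstacle is the interplay between the two finiteness/distributivity hypotheses: finiteness is what makes the ``join of the join-irreducibles below $x$'' decomposition available, while distributivity is precisely what is needed to recover an order ideal $I$ from $\bigvee I$. Neither half is deep, but those two steps --- the join-irreducible decomposition and the distributive computation $j\wedge\bigvee_{i\in I} i=\bigvee_{i\in I}(j\wedge i)$ --- are the load-bearing ones, and I would be careful to phrase the induction in the first so that it genuinely uses finiteness (e.g.\ via the size of a principal ideal) rather than an unjustified appeal to well-foundedness.
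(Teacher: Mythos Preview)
Your proof is correct and is essentially the standard argument for Birkhoff's theorem. Note, however, that the paper does not actually prove this statement: it is merely recalled as the well-known Fundamental Theorem of Finite Distributive Lattices with a citation to Birkhoff, and is used as background for the discussion of linear extension {\EL}s of $J(P)$. So there is no proof in the paper to compare against; your argument via join-irreducibles, the decomposition $x=\bigvee\{j\in P:j\leq x\}$, and the distributive computation $j\wedge\bigvee_{i\in I}i=\bigvee_{i\in I}(j\wedge i)$ is exactly the classical one and would serve perfectly well.
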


Distributive lattices are instances of supersolvable lattices which have natural $M$-chain {\EL}s due to Stanley in \cite{stanleysupersolvablelats1972}. In the case of a distributive lattice $J(P)$, these $M$-chain {\EL}s have extra the structure of being given by the linear extensions of the poset $P$. 

\begin{remark}\label{rmk:dist lat Sn EL labels}
Linear extensions of $P$ give $S_n$ EL-labelings of $J(P)$ where $n=|P|$. Fix $e$, a linear extension of $P$.  For any cover relation $I\lessdot I'$ in $J(P)$ ($I$ and $I'$ are order ideals of $P$), $I'=I\cup \set{x}$ for some $x\in P$. We define an edge labeling $\lm_e$ of $J(P)$ by $\lm_e(I\lessdot I')=e(x)$. This edge labeling gives an $S_n$ EL-labeling since every element of $P$ must be added exactly once at some point along each maximal chain in $J(P)$. Let $\mathbf{\boldsymbol\Lin(P,e)}$ be the set of permutations appearing as label sequences of maximal chains in $J(P)$ when labeled by $\lm_e$.
\end{remark}

In \cite{permstatslinextsbjornerwachs1991}, Bj{\"o}rner and Wachs study the sets of label sequences of distributive lattices defined by the {\EL}s of \cref{rmk:dist lat Sn EL labels}. We use a special case of Proposition 4.1 from \cite{permstatslinextsbjornerwachs1991}. Their proposition and the special case stated below can both be proven straightforwardly using \cref{prop:Sn descent swaps}.

\begin{proposition}\label{prop:Sn labels of dist lat weak ideal}
Let $P$ be a finite poset with $|P|=n$. Let $e$ be a linear extension of $P$. Then $\Lin(P,e)$, the set of permutations appearing as label sequences of maximal chains in $J(P)$ when labeled by $\lm_e$, is an order ideal of the weak order on $S_n$.

The notation $\Lin(P,e)$ is defined in \cref{rmk:dist lat Sn EL labels}.
\end{proposition}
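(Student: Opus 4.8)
The plan is to combine the cover-relation description of the weak order on $S_n$ with \cref{prop:descents give unique incrs} and \cref{prop:Sn descent swaps}, applied to the $S_n$ {\EL} $\lm_e$ of $J(P)$ from \cref{rmk:dist lat Sn EL labels}. First I would record the concrete description of $\Lin(P,e)$: a maximal chain $m: \emptyset = I_0 \lessdot I_1 \lessdot \dots \lessdot I_n = P$ of $J(P)$ is the same data as the sequence $x_1, x_2, \dots, x_n$ of elements of $P$ with $\set{x_k} = I_k \setminus I_{k-1}$, and this sequence is exactly a linear extension of $P$; by definition of $\lm_e$ the label sequence is $\lm_e(m) = e(x_1)\, e(x_2) \cdots e(x_n)$. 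Thus $\Lin(P,e)$ is precisely the set of these permutations of $[n]$, and a maximal chain $m$ has a descent at its rank-$i$ element with respect to $\lm_e$ exactly when $\lm_e(m)$ has a descent at position $i$ in the usual sense (since the label poset is $[n]$ with its total order).

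Next I would reduce to cover relations. Since any $\sigma \leq_{wk} \pi$ is joined to $\pi$ by a saturated chain in the weak order, and covers in the weak order decrease the number of inversions by one (see \cref{sec:wkordsymmgroup}), it suffices to prove: if $\pi \in \Lin(P,e)$ and $\pi' \lessdot_{wk} \pi$, then $\pi' \in \Lin(P,e)$. The conclusion then follows by induction on $|\inv{\pi}{}| - |\inv{\sigma}{}|$, using \cref{prop:wkordinvsetcontainment} to see that $\sigma \leq_{wk} \pi$ is controlled by inversion set containment.

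For the key step, write $\pi' \lessdot_{wk} \pi$. Recalling the form of weak-order cover relations from \cref{sec:wkordsymmgroup}, $\pi$ is obtained from $\pi'$ by transposing two adjacent entries that are in ascending order, so $\pi' = \pi \cdot (i,i+1)$ for the position $i$ with $\pi(i) > \pi(i+1)$. Let $m \in \M(P)$ (more precisely, $m \in \M(J(P))$) be the maximal chain with $\lm_e(m) = \pi$. Then the rank-$i$ element $x$ of $m$ is a descent of $m$ with respect to $\lm_e$, so by \cref{prop:descents give unique incrs} there is a unique maximal chain $m'$ of $J(P)$ with $m' \to m$ and $m \setminus m' = \set{x}$, and by \cref{prop:Sn descent swaps} we have $\lm_e(m) = \lm_e(m') \cdot (i, i+1)$. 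Hence $\lm_e(m') = \pi \cdot (i,i+1) = \pi'$, so $\pi' \in \Lin(P,e)$, completing the induction and the proof.

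I do not expect a serious obstacle here; the only care needed is the bookkeeping that identifies a downward cover in the weak order (transposing an adjacent descent) with the move in $J(P)$ realized by \cref{prop:descents give unique incrs} and \cref{prop:Sn descent swaps}, together with the observation of \cref{rmk:dist lat Sn EL labels} that $\lm_e$ genuinely is an $S_n$ {\EL} of $J(P)$ because each element of $P$ is added exactly once along every maximal chain. One could alternatively give a self-contained argument — $\Lin(P,e)$ is the set of linear extensions of the partial order on $[n]$ defined by $a \prec b \iff e^{-1}(a) <_P e^{-1}(b)$, which is refined by the natural order on $[n]$, so transposing an adjacent descent of such a linear extension again yields one, making $\Lin(P,e)$ a weak-order order ideal — but the argument above keeps everything inside the framework already developed above, which is why it is the one I would present.
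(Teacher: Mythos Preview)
Your proposal is correct and matches the approach the paper indicates: the paper does not give its own detailed proof but cites Bj{\"o}rner and Wachs and remarks that the result ``can both be proven straightforwardly using \cref{prop:Sn descent swaps},'' which is exactly the argument you carry out (together with \cref{prop:descents give unique incrs}) by reducing to downward weak-order covers and swapping the relevant descent. Your bookkeeping is correct, including the identification of a weak-order cover $\pi'\lessdot_{wk}\pi$ with a descent of $\pi$ and the application of \cref{prop:Sn descent swaps} with the roles of $m$ and $m'$ appropriately matched.
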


We already know from \cref{thm:Sn EL is polygon strong} that $\lm_{e}$ is polygon complete, and from \cref{cor:Sn inv ranked so cord satisfies inv ranked thm} we know that $\cord{J(P)}{\lm_e}$ is ranked by the number of inversions in addition to $\lm$ being polygon complete. We also know from \cref{thm:Sn cords weak lower intervals} that intervals in $\cord{J(P)}{\lm_e}$ are isomorphic to intervals in the weak order on $S_{|P|}$. Next we prove the stronger statement that $\cord{J(P)}{\lm_e}$ is isomorphic to the order ideal $\Lin(P,e)$ of the weak order on $S_{|P|}$.

\begin{theorem}\label{thm:dist lat Sn cord isom weak ideal}
Let $P$ be a finite poset with $|P|=n$. Let $e$ be a linear extension of $P$. Then the maximal chain descent order $\cord{J(P)}{\lm_e}$ is isomorphic to the order ideal $\Lin(P,e)$ of 
the weak order on $S_n$ via the map assigning 
to each maximal chain its label sequence with respect to $\lm_e$.

The notation $\Lin(P,e)$ is defined in \cref{rmk:dist lat Sn EL labels}.
\end{theorem}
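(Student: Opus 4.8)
The plan is to combine the three ingredients already assembled in the paper: that the $M$-chain {\EL} $\lm_e$ of $J(P)$ is an $S_n$ {\EL}, that $\Lin(P,e)$ is an order ideal of weak order (\cref{prop:Sn labels of dist lat weak ideal}), and that the map $c \mapsto \lm_e(c)$ behaves well on intervals (\cref{thm:Sn cords weak lower intervals} / \cref{cor:all Sn ELs cord ints weak ints}). The key structural fact, exactly as in the proof of \cref{thm:boolean lat cord weak order symm grp}, is that each permutation in $\Lin(P,e)$ occurs as the label sequence of exactly one maximal chain of $J(P)$: a maximal chain of $J(P)$ is determined by the order in which the $n$ elements of $P$ are adjoined, so the label sequence (a permutation of $[n]$ obtained by applying $e$ to that order) recovers the chain. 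Thus the map $\Phi: m \mapsto \lm_e(m)$ is a bijection from $\M(J(P))$ onto $\Lin(P,e)$.

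First I would establish that $\Phi$ is order preserving: by \cref{cor:nongradedcordgreaterlexgreater}, $\cl{m}{m'}{\lm_e}$ implies $\lex{\lm_e(m)}{\lm_e(m')}$, but more precisely, using \cref{prop:Sn descent swaps} (or the argument in the proof of \cref{thm:boolean lat cord weak order symm grp}), an increase by a polygon move $m \to m'$ in $J(P)$ transposes an ascent of $\lm_e(m)$ into a descent, so $\lm_e(m) \lessdot_{wk} \lm_e(m')$; hence $\Phi$ takes cover relations to cover relations and is order preserving. Next I would show $\Phi^{-1}$ is order preserving, i.e. that whenever $\sigma \lessdot_{wk} \tau$ with both $\sigma, \tau \in \Lin(P,e)$, the corresponding chains satisfy $m_\sigma \to m_\tau$ (hence $\cordot{m_\sigma}{_{\lm_e} m_\tau}$ by $\lm_e$ being polygon complete, \cref{thm:Sn EL is polygon strong}). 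Here $\tau = \sigma (i,i+1)$ with $\sigma(i) < \sigma(i+1)$, i.e. $\tau$ has a descent at position $i$. Since $\tau \in \Lin(P,e)$, the chain $m_\tau$ exists and has a descent at its rank-$i$ element; by \cref{prop:descents give unique incrs} there is a unique $m$ with $m \to m_\tau$ removing that element, and by \cref{prop:Sn descent swaps} $\lm_e(m) = \lm_e(m_\tau)(i,i+1) = \sigma$, so $m = m_\sigma$ by injectivity of $\Phi$. Thus $m_\sigma \to m_\tau$, as desired.

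Since $\Phi$ is an order-preserving bijection onto the induced subposet $\Lin(P,e)$ of weak order whose inverse is also order preserving on cover relations, and since both $\cord{J(P)}{\lm_e}$ and $\Lin(P,e)$ are graded posets with the same rank function (number of inversions of the label sequence, by \cref{cor:Sn inv ranked so cord satisfies inv ranked thm} together with \cref{prop:wkordinvsetcontainment}), $\Phi$ is a poset isomorphism. Concretely: $\cleqi{m}{m'}{\lm_e}$ iff there is a saturated chain of polygon moves from $m$ to $m'$ iff (by the cover-relation correspondence) there is a saturated chain in weak order from $\lm_e(m)$ to $\lm_e(m')$ iff $\lewk{\lm_e(m)}{\lm_e(m')}$; the last equivalence uses that $\Lin(P,e)$ is an order ideal of weak order (\cref{prop:Sn labels of dist lat weak ideal}), so any weak-order saturated chain between two of its elements stays inside $\Lin(P,e)$ and thus lifts back to a chain of polygon moves in $J(P)$.

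I expect the main obstacle to be the bookkeeping in the second step — verifying that a weak-order cover $\sigma \lessdot_{wk} \tau$ between elements of $\Lin(P,e)$ genuinely lifts to a polygon move in $J(P)$ — and, relatedly, being careful that the order-ideal property of $\Lin(P,e)$ is what rules out "missing" permutations along a weak-order chain (this is exactly the subtlety flagged in the paper's discussion of $S_n$ {\CL}s after \cref{cor:all Sn ELs cord ints weak ints}, where the analogous statement fails precisely because the label set need not be an order ideal). Everything else is a routine repackaging of \cref{thm:boolean lat cord weak order symm grp}'s argument with $\Lin(P,e)$ in place of all of $S_n$.
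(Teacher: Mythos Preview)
Your proposal is correct and follows essentially the same approach as the paper: both establish that $m\mapsto\lm_e(m)$ is a bijection (via uniqueness of label sequences in $J(P)$), show polygon moves correspond to weak-order covers via \cref{prop:Sn descent swaps}, and then verify the inverse is order preserving by lifting weak-order covers back to polygon moves using \cref{prop:descents give unique incrs}. The only cosmetic difference is that the paper phrases the reverse direction as an induction on inversions (starting from an arbitrary relation $\lm_e(m)<_{wk}\lm_e(m')$ and peeling off one descent at a time), whereas you work directly with covers and invoke the order-ideal property of $\Lin(P,e)$ to ensure saturated weak-order chains stay inside; these are the same argument unrolled differently.
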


\begin{proof}
We observe that the label sequences of maximal chains in $J(P)$ when labeled by $\lm_e$ are all distinct since they record the order in which the elements of $P$ were added to form the ideals in the chain. Thus, the map $m\mapsto \lm_e(m)$ is a bijection from $\cord{J(P)}{\lm_e}$ to $\Lin(P,e)$. Also, $\Lin(P,e)$ is an order ideal of the weak order on $S_n$ by \cref{prop:Sn labels of dist lat weak ideal}. By \cref{thm:Sn cords weak lower intervals}, $m\mapsto \lm_e(m)$ is a poset isomorphism from each interval of $\cord{J(P)}{\lm_e}$ to its image in weak order on $S_n$, so the map is order preserving. It only remains to show that the inverse map is also order preserving, i.e. that the lower intervals in $\cord{J(P)}{\lm_e}$ are not ``glued together" in an unruly way. 

For $m\in \M(P)$, let $\inv{\lm(m)}{}$ be the set of inversions of $\lm(m)$ as a permutation of $n$ (this is identical to the set of inversions $\inv{m}{\lm}$ from \cref{def:inversions of maxl chains}). Let $m,m'\in \M(P)$ be maximal chains such that $\lm_e(m')=\lm'_1 \lm'_2\dots \lm'_n$ and $\lm_e(m)<_{wk}\lm_e(m')$ in weak order on $S_n$. Thus, there is a descent $\lm'_i>\lm'_{i+1}$ of $\lm_e(m')$ such that $(\lm'_{i+1},\lm'_i)\not\in \inv{\lm_e(m)}{}$. Then by \cref{prop:Sn descent swaps}, there is a unique maximal chain $m''$ such that $m''\to m'$ and $\lm_e(m'')=\lm_e(m')(i,i+1)$. Thus, $\cl{m''}{m'}{\lm_e}$ and $\inv{\lm_e(m'')}{}= \inv{\lm_e(m')}{}\setminus \set{(\lm'_{i+1},\lm'_i)}$. Hence, $\inv{\lm_e(m)}{} \subseteq \inv{\lm_e(m'')}{}$, so $\lm_e(m)\leq_{wk} \lm_e(m'')$ by \cref{prop:wkordinvsetcontainment}. Then by induction $\cleq{m}{_{\lm_e} m''}$. Therefore, $\cl{m}{m'}{\lm_e}$ which completes the proof.
\end{proof}

\end{subsection}

A particularly nice class of distributive lattices are intervals in Young's Lattice. Since this example possesses elegant additional structure, we make it a separate subsection.

\begin{subsection}{Intervals in Young's Lattice} \label{subsec:bkgrndyoungslat}
We show here that maximal chain descent orders of intervals in Young's lattice can be realized as posets on standard Young tableaux. We briefly recall the definitions of Young  diagrams, Young tableau, and Young's lattice. For a thorough introduction to this topic see \cite{fultontableaux1997} or \cite{ec2stanley1999} Chapter 7, for instance. Then, using a result of Bj{\"o}rner and Wachs from \cite{genquotcxtrgrpsbjornerwachs1988}, we prove that certain cases of maximal chain descent orders are isomorphic to generalized quotients of the symmetric group which were introduced in \cite{genquotcxtrgrpsbjornerwachs1988}.
 
A \textbf{Young diagram} $\alpha$ is a collection of rows of left justified boxes in which the $i$th row from the top has at most as many boxes as the $(i-1)$th row. Whenever convenient, we consider a Young diagram to include an arbitrary number of extra rows with zero boxes. We may refer to a Young diagram as the non-increasing tuple of the lengths of its rows, i.e. an integer partion. \cref{fig:youngdiag321} shows the Young diagram $\alpha=(3,2,1)$. We index the boxes in a Young diagram by matrix coordinates, so the coordinates $(i,j)$ refer to the box in row $i$ and column $j$. 

\begin{figure}[H]
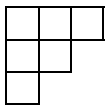

    \centering
    \yng(3,2,1)
    \caption{The Young diagram for $\alpha=(3,2,1)$.}
    \label{fig:youngdiag321}
\end{figure}

We say that Young diagram $\alpha$ contains Young diagram $\mu$ if the Young diagram of $\alpha$ contains the Young diagram of $\mu$. This is the same as each row of $\alpha$ being at least as large as the corresponding row of $\mu$ where we add as many $0$s as necessary so that the diagrams have the same number of rows. If $\alpha$ contains $\mu$, then we define a \textbf{skew diagram} $\alpha/\mu$ as the diagram of the boxes contained in $\alpha$ but not in $\mu$. 
 
\textbf{Young's Lattice}, denoted $\Y$, is the partial order on all Young diagrams by diagram containment. We note that $\Y$ is infinite, has a unique minimal element given by the empty partition $\emptyset$, and $\Y$ is graded by the number of boxes in the diagram. Further, $\Y$ is a distributive lattice. For a fixed Young diagram $\alpha$, we denote the principal order ideal of $\Y$ generated by $\alpha$ as $\Y(\alpha)$. For $\mu$ contained in $\alpha$, we denote the closed interval $[\mu,\alpha]$ by $\Y(\mu,\alpha)$. Since $\Y$ is a distributive lattice, $\Y(\alpha)$ and $\Y(\mu,\alpha)$ are finite distributive lattices. Thus, by \cref{thm:ftfdl}, $\Y(\alpha)$ and $\Y(\mu,\alpha)$ are the posets of order ideals of some finite posets. We let $P_{\alpha}$ be the partial order on the boxes in the Young diagram of $\alpha$ defined by the product order on the coordinates of the boxes, that is, box $(i,j)$ is less than or equal to box $(i',j')$ if and only if $i\leq i'$ and $j\leq j'$. Similarly, we let $P_{\alpha/\mu}$ be the partial order on the boxes of the skew diagram $\alpha/\mu$ defined by the product order on the coordinates of the boxes. A box in $P_{\alpha}$ or $P_{\alpha/\mu}$ is exactly covered by the adjacent box to the east and the adjacent box to the south, if such boxes exist in the relevant diagram. Then $\Y(\alpha)\cong J(P_{\alpha})$ and $\Y(\mu,\alpha)\cong J(P_{\alpha/\mu})$.

A \textbf{standard Young tableau} is an assignment of a positive integer from $[n]$ to each box of a Young diagram $\alpha$ with $n$ boxes such that:

\begin{itemize}
    \item [(1)] The box fillings strictly increase across each row from left to right.
    \item [(2)] The box fillings strictly increase down each column from top to bottom.
\end{itemize}

The Young diagram $\alpha$ is referred to as the \textbf{shape} of the Young tableau. Standard skew tableau are defined analogously as integer fillings of skew diagrams with the same row and column requirements. \cref{fig:youngtabexampls} shows an example of a standard Young tableau of shape $(3,2,1)$. In what follows, the arguments for Young diagrams and skew diagrams are the same, so we let $\alpha$ denote a Young diagram or a skew diagram. Thus, $\Y(\alpha)$ can denote any closed interval in $\Y$.

We will denote the collection of standard tableau of shape $\alpha$ by $\mathbf{ST_{\alpha}}$. For $T$, a standard tableau with $n$ boxes, and a box $b$ in $T$ (possibly given by its coordinates or some other description), we denote the filling of box $b$ in $T$ by $T(b)$. For $i\in [n]$, denote the box of $T$ whose filling is $i$ by $T^i$.

\begin{figure}[H]
    \centering
    \young(123,45,6)
    \caption{A standard Young tableau of shape $\alpha=(3,2,1)$.}
    \label{fig:youngtabexampls}
\end{figure}

The \textbf{row word} of a tableau $T$ is the word whose letters are the entries of $T$ obtained by reading the rows of $T$ from left to right where we read the rows from top to bottom. The row word of $T$ is denoted $\mathbf{w(T)}$. For instance, the row word of the tableau in \cref{fig:youngtabexampls} is $123456$. We may choose other reading orders of the boxes of a tableau to obtain other words. Reading the columns from top to bottom and the columns from left to right gives the \textbf{column word}.

Since $\Y(\alpha)\cong J(P_{\alpha})$, the maximal chains of $\Y(\alpha)$ are in bijection with the linear extensions of $P_{\alpha}$ by taking the order in which the boxes are added in the maximal chain to form the diagram for $\alpha$. By construction, the linear extensions of $P_{\alpha}$ precisely give the standard tableau of shape $\alpha$ by filling each box with its value under the linear extension. Thus, $\M(\Y(\alpha))$ is in bijection with $ST_{\alpha}$. We denote the standard tableau corresponding to maximal chain $m\in \M(\Y(\alpha))$ by $T_m$. We denote the maximal chain in $\M(\Y(\alpha))$ corresponding to standard tableau $T$ by $m_T$. Thus, $m_{T_m}=m$ and $T_{m_T}=T$.

Each linear extension of $P_{\alpha}$ gives an {\EL} of $\Y(\alpha)\cong J(P_{\alpha})$ as described in \cref{rmk:dist lat Sn EL labels}. Fixing a linear extension of $P_\alpha$ is the same as fixing a standard tableau $T\in ST_{\alpha}$. We will denote the {\EL} of $\Y(\alpha)$ induced by $T$ as $\lm_T$. In the following proposition, we observe that for tableau $Q\in ST_{\alpha}$, the label sequence $\lm_T(m_Q)$ can be read from the tableaux alone.

\begin{proposition} \label{prop:labelseqfromtableaux}
Let $T,Q\in ST_{\alpha}$ be standard tableaux of shape $\alpha$. Then the label sequence $\lm_T(m_Q)$ of the maximal chain $m_Q$ of $\Y(\alpha)$ is $\lm_T(m_Q) =(T(Q^1),T(Q^2),\dots, T(Q^n))$. Moreover, each label sequence occurs for exactly one maximal chain in $\Y(\alpha)$.
\end{proposition}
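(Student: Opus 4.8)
The plan is to unwind the bijections relating maximal chains of $\Y(\alpha)$, linear extensions of $P_\alpha$, and standard tableaux of shape $\alpha$, and then read the label sequence directly off the construction. Recall from the discussion preceding the proposition that a maximal chain $m \in \M(\Y(\alpha))$ is
\[
m:\ \mu = I_0 \lessdot I_1 \lessdot \dots \lessdot I_n = \alpha,
\]
where each step adds exactly one box $b_k$ of $\alpha$ (identifying $I_k \setminus I_{k-1} = \{b_k\}$). The tableau $T_m$ is obtained by placing the value $k$ into box $b_k$, so that $T_m^k = b_k$; equivalently $m = m_Q$ exactly when $b_k = Q^k$ for all $k$. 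By \cref{rmk:dist lat Sn EL labels}, the label of the cover $I_{k-1} \lessdot I_k$ under $\lm_T$ is $\lm_T(I_{k-1} \lessdot I_k) = e(b_k)$, where $e$ is the linear extension of $P_\alpha$ corresponding to $T$, i.e. $e(b) = T(b)$ for each box $b$.

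First I would combine these identities: for the chain $m_Q$ we have $b_k = Q^k$, hence the $k$-th label is
\[
\lm_{T,k}(m_Q) = T(b_k) = T(Q^k),
\]
which gives $\lm_T(m_Q) = (T(Q^1), T(Q^2), \dots, T(Q^n))$, as claimed. The only thing to check carefully is that this really is a well-defined edge labeling computation — that is, that $b_k = Q^k$ is forced by $m = m_Q$ — but this is precisely the definition of the bijection $Q \mapsto m_Q$ recalled just before the proposition ($m_{T_m} = m$ and $T_{m_Q} = Q$), so no new work is needed.

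For the "moreover" clause, I would argue that the map $m \mapsto \lm_T(m)$ is injective on $\M(\Y(\alpha))$. Given the label sequence $\lm_T(m) = (\ell_1, \dots, \ell_n)$, each $\ell_k$ is the $T$-value of the unique box added at step $k$; since $T$ is a standard tableau, $b \mapsto T(b)$ is a bijection from the boxes of $\alpha$ to $[n]$, so $\ell_k$ determines $b_k$ uniquely, hence determines the chain $m$. Thus distinct maximal chains have distinct label sequences. (Alternatively, one can simply invoke the fact already recorded in the proof of \cref{thm:dist lat Sn cord isom weak ideal} that the label sequences of maximal chains of $J(P)$ under $\lm_e$ are all distinct because they record the order in which elements of $P$ are adjoined; here $P = P_\alpha$ and $e$ corresponds to $T$.)

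I do not anticipate a genuine obstacle here: the statement is essentially a bookkeeping translation between three equivalent encodings, and the main care needed is notational — keeping straight that $T$ supplies the linear extension (hence the labels) while $Q$ supplies the chain (hence which box is added at each step), so that the label sequence pairs "position in the chain" ($Q^k$) with "label assigned to that box" ($T(\cdot)$). The one-line summary is: reading $m_Q$ records the boxes in the order $Q^1, Q^2, \dots, Q^n$, and $\lm_T$ labels each such box $b$ by $T(b)$, so the label sequence is the word $(T(Q^1), \dots, T(Q^n))$, and this word determines $Q$ because $T$ is a bijective filling.
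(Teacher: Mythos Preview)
Your proposal is correct and follows essentially the same approach as the paper: identify $Q^k$ as the box added at step $k$ of $m_Q$, read off its label as $T(Q^k)$, and deduce injectivity from the fact that $T$ is a bijective filling (the paper phrases this last point as the general fact about linear extension {\EL}s of finite distributive lattices, which you also mention as an alternative). The only difference is that you spell out the bookkeeping in more detail than the paper does.
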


\begin{proof}
The box $Q^i$ is the box added to obtain the rank $i$ element of $m_Q$ from the rank $i-1$ element of $m_Q$. Now $T(Q^i)$ is the value of the box $Q^i$ under the linear extension defined by $T$, and so the label of the $i$th cover relation in $m_Q$. The uniqueness of label sequences is the same as for the linear extension {\EL}s of any finite distributive lattice.
\end{proof}

\begin{remark}\label{rmk:youngs lat lin ext cords tableau}
Each choice of standard tableau $T\in ST_{\alpha}$ defines a maximal chain descent order $\cord{\Y(\alpha)}{\lm_T}$. We may realize these maximal chain descent orders as partial orders on $ST_{\alpha}$. By \cref{thm:dist lat Sn cord isom weak ideal} each increase by a polygon move gives a cover relation in the corresponding maximal chain order induced by $\lm_T$. \cref{prop:labelseqfromtableaux} then implies that the cover relations in the maximal chain orders can be described by an operation on the tableaux themselves. 
\end{remark}

\begin{definition}\label{def:tabswap}
Let $T\in ST_{\alpha}$ be a standard tableau of shape $\alpha$ with $n$ boxes. For $1\leq i<j\leq n$, let $(i,j)T$ be the filling of $\alpha$ that is the same as $T$ except the entries $i$ and $j$ are switched. If $(i,j)T$ is also a standard tableau, then we call $(i,j)T$ the \textbf{ij tableau swap of $\mathbf{T}$}. Further, if $j=i+1$ above, we call $(i,i+1)T$ the \textbf{ith tableau swap of $\mathbf{T}$}.
\end{definition}

\begin{lemma}\label{lem:tabswapscordcovers}
Suppose $Q,R,T\in ST_{\alpha}$ are standard tableaux of shape $\alpha$ with $n$ boxes. Then $m_Q\precdot_{\lm_T} m_R$ in $\cord{\Y(\alpha)}{\lm_T}$ if and only if $R$ is the $i$th tableau swap of $Q$ and $T(Q^i)<T(Q^{i+1})$ for some $1\leq i \leq n-1$.
\end{lemma}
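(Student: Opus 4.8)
The statement is really a translation of the cover-relation description from $\cord{J(P_\alpha)}{\lm_e}$ (via \cref{thm:dist lat Sn cord isom weak ideal} and \cref{thm:Sn cords weak lower intervals}) into the language of tableaux, using the dictionary supplied by \cref{prop:labelseqfromtableaux}. So the plan is to reduce to facts already proven and then bookkeep with row/column words. First I would recall that $\lm_T$ is an $S_n$ {\EL} of $\Y(\alpha)\cong J(P_\alpha)$ (\cref{rmk:dist lat Sn EL labels}), hence polygon complete by \cref{thm:Sn EL is polygon strong}; therefore every cover relation of $\cord{\Y(\alpha)}{\lm_T}$ comes from an increase by a polygon move, and conversely every such increase gives a cover. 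So it suffices to characterize when $m_Q \to m_R$ with respect to $\lm_T$ in terms of tableaux.

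Next I would unpack $m_Q \to m_R$. Since $J(P_\alpha)$ is a distributive lattice with an $S_n$ {\EL}, every rank-two interval has the Boolean-type shape from \cref{fig:rank two boolean int}, so by \cref{prop:Sn descent swaps} an increase by a polygon move $m_Q\to m_R$ transposes exactly one descent of $\lm_T(m_R)$: there is an $i$ with $\lm_T(m_R) = \lm_T(m_Q)(i,i+1)$, where position $i$ is an ascent of $m_Q$ (so $\lm_T(m_Q)_i < \lm_T(m_Q)_{i+1}$) and a descent of $m_R$. Now invoke \cref{prop:labelseqfromtableaux}: $\lm_T(m_Q) = (T(Q^1),\dots,T(Q^n))$, so $\lm_T(m_Q)_i < \lm_T(m_Q)_{i+1}$ says exactly $T(Q^i) < T(Q^{i+1})$. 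It remains to see that the unique $m_R$ with $\lm_T(m_R) = \lm_T(m_Q)(i,i+1)$ is precisely $m_{(i,i+1)Q}$, i.e.\ that swapping the values $i$ and $i+1$ in $Q$ still yields a standard tableau exactly when positions $i,i+1$ form an ascent of $m_Q$ — but swapping $i$ and $i+1$ in $Q$ fails to be standard precisely when the box $Q^{i+1}$ sits immediately east or immediately south of $Q^i$ in $\alpha$, which is exactly the condition that $Q^i \lessdot Q^{i+1}$ in $P_\alpha$, i.e.\ that box $Q^{i+1}$ is forced to be added after $Q^i$ in every linear extension — equivalently that the rank-$(i-1)$, rank-$i$, rank-$(i+1)$ elements of $m_Q$ do not lie in a rank-two interval admitting another maximal chain. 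When they do lie in such a rank-two interval, the other coatom of that interval is exactly the ideal $I \cup \{Q^{i+1}\}$ where $I$ is the rank-$(i-1)$ element, so the resulting chain $m_R$ has $T_{m_R} = (i,i+1)Q$. Checking this box-geometry claim (that $(i,i+1)Q$ is standard iff $Q^{i+1}$ is not directly E or S of $Q^i$, and that in the non-forced case $(i,i+1)Q$ really is the tableau of the sibling chain) is the one genuinely combinatorial step; everything else is citation.

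Finally I would assemble the iff. For the forward direction, $m_Q \precdot_{\lm_T} m_R$ implies $m_Q \to m_R$ (cover relations come from polygon moves), and the above shows $R = (i,i+1)Q$ for some $i$ with $T(Q^i) < T(Q^{i+1})$, and that $(i,i+1)Q$ is a genuine standard tableau. For the backward direction, if $R = (i,i+1)Q$ is standard and $T(Q^i) < T(Q^{i+1})$, then $\lm_T(m_Q)$ has an ascent at position $i$, the rank-two interval of $J(P_\alpha)$ between the rank-$(i-1)$ and rank-$(i+1)$ elements of $m_Q$ is a genuine ``diamond'' (non-degenerate because $(i,i+1)Q$ is standard, so $Q^{i+1}$ is not forced above $Q^i$), and the $M$-chain/$S_n$ {\EL} property makes $m_Q$ the ascending chain of that interval; hence $m_Q \to m_R$, and polygon completeness (\cref{thm:Sn EL is polygon strong}, or \cref{cor:Sn inv ranked so cord satisfies inv ranked thm}) upgrades this to $m_Q \precdot_{\lm_T} m_R$. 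The main obstacle, such as it is, is purely the geometric lemma identifying the sibling coatom of a rank-two interval of $J(P_\alpha)$ with the $i$th tableau swap; once that is in hand the proof is a short chain of previously established results.
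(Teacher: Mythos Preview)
Your argument is correct and follows essentially the same route as the paper: reduce covers in $\cord{\Y(\alpha)}{\lm_T}$ to adjacent-transposition covers in weak order on the label sequences, then translate via \cref{prop:labelseqfromtableaux}. The paper streamlines the first step by invoking \cref{thm:dist lat Sn cord isom weak ideal} directly (covers $\leftrightarrow$ weak-order covers $\leftrightarrow$ transposing an ascent of $\lm_T(m_Q)$), rather than going through polygon completeness and \cref{prop:Sn descent swaps} separately as you do; and it skips your ``box-geometry'' verification entirely, since $R\in ST_\alpha$ is already part of the hypothesis, so once $\lm_T(m_R)=\lm_T(m_Q)(i,i+1)$ is established, uniqueness of label sequences (\cref{prop:labelseqfromtableaux}) forces $R=(i,i+1)Q$ with no further check needed. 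Your extra care is not wrong, just redundant.
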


\begin{proof}
By\cref{thm:dist lat Sn cord isom weak ideal} (whose proof used \cref{thm:Sn cords weak lower intervals}), $m_Q\precdot_{\lm_T} m_R$ if and only if $\lm_T(m_R)$ is obtained from $\lm_T(m_Q)$ by transposing an ascent of $\lm_T(m_Q)$. Thus, by \cref{prop:labelseqfromtableaux}, $m_Q\precdot_{\lm_T} m_R$ if and only if \begin{align*}\lm_T(m_Q)=(T(Q^1),T(Q^2),\dots,& T(Q^i),T(Q^{i+1}),\dots, T(Q^n)) \\
\lm_T(m_R)=(T(Q^1),T(Q^2),\dots, & T(Q^{i+1}),T(Q^{i}),\dots, T(Q^n)) \end{align*}
with $ T(Q^i)<T(Q^{i+1})$. Hence, $m_Q\precdot_{\lm_T} m_R$ if and only if $R^j=Q^j$ for $j\neq i,i+1$, $R^i=Q^{i+1}$, and $R^{i+1}=Q^i$ which says that $R$ is the $i$th tableau swap of $Q$ since $R$ is a standard tableau.
\end{proof}

Thus, we have that $\cord{\Y(\alpha)}{\lm_{T}}$ is isomorphic to the poset on Young tableaux defined as the transitive closure of certain $i$th tableau swaps.

\begin{theorem}\label{thm:youngs lat cords tableau swaps}
For a standard tableau $T\in ST(\alpha)$ of shape $\alpha$, $\leq_T$ be the partial order on $ST(\alpha)$ defined as the reflexive and transitive closure of $Q\to (i,i+1)Q$ for any $Q\in ST(\alpha)$ such that the $i$th tableau swap $(i,i+1)Q$ from \cref{def:tabswap} is in $ST(\alpha)$ and $T(Q^i)<T(Q^{i+1})$. Then the map defined by $m\mapsto T_m$ is a poset isomorphism from $\cord{\Y(\alpha)}{\lm_{T}}$ to $(ST(\alpha),\leq_T)$. 
\end{theorem}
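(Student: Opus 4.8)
The plan is to assemble this theorem directly from the pieces already proved, since most of the hard work is done. The map $m \mapsto T_m$ is a bijection from $\M(\Y(\alpha))$ to $ST(\alpha)$ by the discussion preceding \cref{prop:labelseqfromtableaux} (linear extensions of $P_\alpha$ correspond to standard tableaux of shape $\alpha$, and maximal chains of $\Y(\alpha) \cong J(P_\alpha)$ correspond to linear extensions of $P_\alpha$). So it suffices to show that this bijection and its inverse are order preserving between $\cord{\Y(\alpha)}{\lm_T}$ and $(ST(\alpha), \leq_T)$. First I would observe that $\cord{\Y(\alpha)}{\lm_T}$ is generated as a reflexive-transitive closure of its cover relations: this holds for any finite poset, and $\cord{\Y(\alpha)}{\lm_T}$ is finite. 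Then by \cref{lem:tabswapscordcovers}, the cover relations $m_Q \precdot_{\lm_T} m_R$ are exactly the relations where $R$ is the $i$th tableau swap of $Q$ with $T(Q^i) < T(Q^{i+1})$ — which are precisely the generating relations $Q \to (i,i+1)Q$ defining $\leq_T$.

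The argument then runs as follows. Since $\cord{\Y(\alpha)}{\lm_T}$ is the reflexive-transitive closure of its cover relations, and the bijection $m \mapsto T_m$ carries the cover relations of $\cord{\Y(\alpha)}{\lm_T}$ exactly onto the generating relations of $(ST(\alpha), \leq_T)$ (by \cref{lem:tabswapscordcovers}), the induced map on the reflexive-transitive closures is an isomorphism. Concretely: if $\cleqi{m}{m'}{\lm_T}$ then there is a chain $m = m_0 \precdot_{\lm_T} m_1 \precdot_{\lm_T} \cdots \precdot_{\lm_T} m_k = m'$ (using that every relation is a composite of covers in a finite poset, together with the fact — which we should note — that maximal chain descent orders relations decompose into covers, or more simply that any finite poset relation is a zig-free up-chain of covers), so each step $T_{m_{j-1}} \to T_{m_j}$ is an $i$th tableau swap of the required form by \cref{lem:tabswapscordcovers}, giving $T_m \leq_T T_{m'}$. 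Conversely, if $T_m \leq_T T_{m'}$, the defining generating relations chain up to a sequence of $i$th tableau swaps, each of which is a cover relation in $\cord{\Y(\alpha)}{\lm_T}$ by \cref{lem:tabswapscordcovers}, so $\cleqi{m}{m'}{\lm_T}$.

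One small subtlety worth addressing: $\cord{\Y(\alpha)}{\lm_T}$ is defined as the reflexive-transitive closure of the polygon-move relations $m \to m'$, not of its cover relations, so to run the above cleanly I would invoke \cref{thm:dist lat Sn cord isom weak ideal} (equivalently \cref{cor:dist lats lin ext els polygon strong}, that $\lm_T$ is polygon complete) to conclude that every polygon move $m \to m'$ in $\cord{\Y(\alpha)}{\lm_T}$ is in fact a cover relation. Then the polygon moves and the cover relations coincide, and \cref{lem:tabswapscordcovers} identifies them with the $i$th tableau swaps that generate $\leq_T$. From there the two orders are generated by the same set of relations under the bijection $m \mapsto T_m$, so they are isomorphic. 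There is no real obstacle here — the theorem is essentially a repackaging of \cref{lem:tabswapscordcovers} and \cref{thm:dist lat Sn cord isom weak ideal} — and the only thing requiring care is making the "generated by the same relations" step precise, namely that an isomorphism of the generating relation digraphs induces an isomorphism of the resulting posets.

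\begin{proof}
By the discussion preceding \cref{prop:labelseqfromtableaux}, the maximal chains of $\Y(\alpha)$ are in bijection with the linear extensions of $P_\alpha$, which are in turn in bijection with the standard tableaux of shape $\alpha$, and this composite bijection is precisely $m \mapsto T_m$ (with inverse $T \mapsto m_T$). It remains to show that $m \mapsto T_m$ and its inverse are order preserving.

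Since $\lm_T$ is a linear extension {\EL} of the finite distributive lattice $\Y(\alpha) \cong J(P_\alpha)$, it is polygon complete by \cref{cor:dist lats lin ext els polygon strong}. Hence in $\cord{\Y(\alpha)}{\lm_T}$ the polygon moves $m \to m'$ are exactly the cover relations. Therefore $\cord{\Y(\alpha)}{\lm_T}$, being a finite poset, is the reflexive and transitive closure of the relations $m \precdot_{\lm_T} m'$. By \cref{lem:tabswapscordcovers}, $m_Q \precdot_{\lm_T} m_R$ holds if and only if $R$ is the $i$th tableau swap of $Q$ with $T(Q^i) < T(Q^{i+1})$ for some $1 \leq i \leq n-1$; under the bijection $m \mapsto T_m$, these are exactly the generating relations $Q \to (i,i+1)Q$ defining $\leq_T$.

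Thus the bijection $m \mapsto T_m$ carries the set of generating relations of $\cord{\Y(\alpha)}{\lm_T}$ bijectively onto the set of generating relations of $(ST(\alpha), \leq_T)$. Consequently, for maximal chains $m, m'$ of $\Y(\alpha)$ we have $\cleqi{m}{m'}{\lm_T}$ in $\cord{\Y(\alpha)}{\lm_T}$ if and only if there is a sequence $m = m_0, m_1, \dots, m_k = m'$ with each $m_{j-1} \precdot_{\lm_T} m_j$, if and only if (applying \cref{lem:tabswapscordcovers} to each step) there is a sequence $T_m = T_{m_0}, T_{m_1}, \dots, T_{m_k} = T_{m'}$ with each $T_{m_j}$ the $i$th tableau swap of $T_{m_{j-1}}$ for some $i$ with $T(T_{m_{j-1}}^i) < T(T_{m_{j-1}}^{i+1})$, if and only if $T_m \leq_T T_{m'}$. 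Hence $m \mapsto T_m$ is a poset isomorphism from $\cord{\Y(\alpha)}{\lm_T}$ to $(ST(\alpha), \leq_T)$.
\end{proof}
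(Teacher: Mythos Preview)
Your proof is correct and follows essentially the same approach as the paper: establish the bijection $m\mapsto T_m$, invoke polygon completeness of $\lm_T$ so that polygon moves coincide with cover relations, and then use \cref{lem:tabswapscordcovers} to identify the generating relations of the two posets under the bijection. The paper's proof is terser (it cites \cref{thm:Sn EL is polygon strong} rather than \cref{cor:dist lats lin ext els polygon strong} for polygon completeness, which amounts to the same thing here) and leaves implicit the step you spell out explicitly, namely that a bijection matching generating relations induces an isomorphism of the resulting reflexive-transitive closures.
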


\begin{proof}
We observed that $m\mapsto T_m$ is a bijection from $\cord{\Y(\alpha)}{\lm_{T}}$ to $(ST(\alpha),\leq_T)$. Then since $\lm_T$ is polygon complete by \cref{thm:Sn EL is polygon strong} (or by \cref{cor:Sn inv ranked so cord satisfies inv ranked thm}), \cref{lem:tabswapscordcovers} implies that $m\to m'$ if and only if $T_m\to T_m'$. This proves the theorem.
\end{proof}

\begin{example}
\cref{fig:example of youngs lat cord} (a) shows the {\EL} of an interval in Young's lattice induced by the standard Young tableau $((1,4,6),(2,5),(3))$. \cref{fig:example of youngs lat cord} (b) contains the corresponding maximal chain descent order. \cref{fig:example of youngs lat cord} (c) shows the maximal chain descent order corresponding induced by the {\EL} from the standard tableau $((1,2,4),(3))$. The maximal chain descent orders are show as the partial orders on standard Young tableaux from \cref{thm:youngs lat cords tableau swaps}.

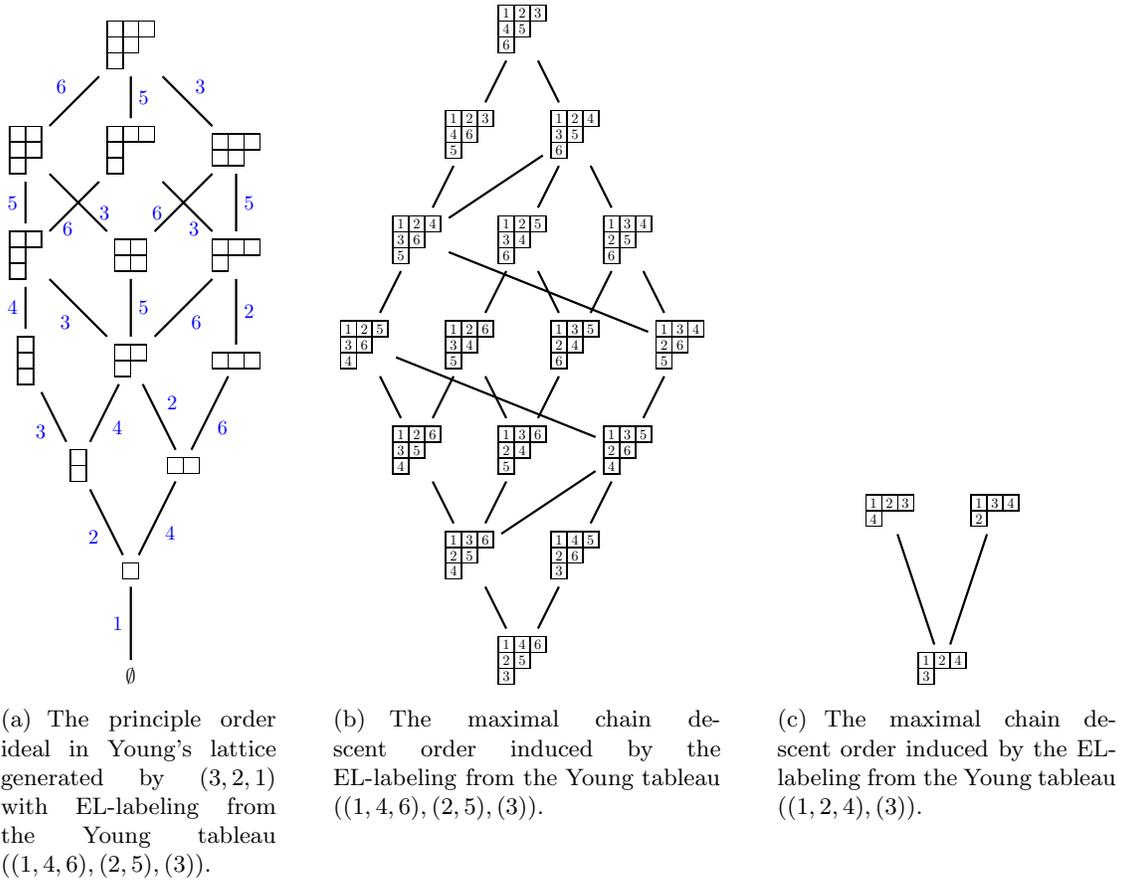
\begin{figure}[H]
    \centering
    \subfigure[The principle order ideal in Young's lattice generated by $(3,2,1)$ with {\EL} from the Young tableau $((1,4,6),(2,5),(3))$.]{\scalebox{0.7}{\begin{tikzpicture}[very thick]
  \node (0) at (0,0) {\scalebox{1}{$\emptyset$}};
  \node (1) at (0,2) {\scalebox{0.7}{\yng(1)}};  
  \node (11) at (-1,4) {\scalebox{0.7}{\yng(1,1)}};
  \node (2) at (1,4) {\scalebox{0.7}{\yng(2)}};
  \node (111) at (-2,6) {\scalebox{0.7}{\yng(1,1,1)}};
  \node (21) at (0,6) {\scalebox{0.7}{\yng(2,1)}};
  \node (3) at (2,6) {\scalebox{0.7}{\yng(3)}};
  \node (211) at (-2,8) {\scalebox{0.7}{\yng(2,1,1)}};
  \node (22) at (0,8) {\scalebox{0.7}{\yng(2,2)}};
  \node (31) at (2,8) {\scalebox{0.7}{\yng(3,1)}};
  \node (221) at (-2,10) {\scalebox{0.7}{\yng(2,2,1)}};
  \node (311) at (0,10) {\scalebox{0.7}{\yng(3,1,1)}};
  \node (32) at (2,10) {\scalebox{0.7}{\yng(3,2)}};
  \node (321) at (0,12) {\scalebox{0.7}{\yng(3,2,1)}};

  \node (l3) at (-0.5,8.8) {\textcolor{blue}{$3$}};
  \node (1l6) at (0.5,8.8) {\textcolor{blue}{$6$}};
  \node (2l6) at (-1.2,8.5) {\textcolor{blue}{$6$}};
  \node (l3) at (1.2,8.5) {\textcolor{blue}{$3$}};
 
  \draw (1) --node[below left,blue] {$2$} (11) --node[below left,blue] {$3$} (111) --node[left,blue] {$4$} (211) --node[left,blue] {$5$} (221) --node[above left,blue] {$6$} (321) --node[above right,blue] {$3$} (32) --node[right,blue] {$5$} (31) --node[right,blue] {$2$} (3) --node[below right,blue] {$6$} (2) --node[below right,blue] {$4$} (1);
  \draw (11) --node[below right,blue] {$4$} (21) --node[below left,blue] {$3$} (211) -- (311) -- (31) --node[below right,blue] {$6$} (21) --node[above right,blue] {$2$} (2);
  \draw (311) --node[right,blue] {$5$} (321);
  \draw (22) -- (221);
  \draw (22) -- (32);
  \draw (0) --node[left,blue] {$1$} (1);
  \draw (21) --node[right,blue] {$5$} (22);
\end{tikzpicture}} }
\hspace{5mm}
    \subfigure[The maximal chain descent order induced by the {\EL} from the Young tableau $((1,4,6),(2,5),(3))$.]{\scalebox{0.7}{\begin{tikzpicture}[very thick]
  \node (146253) at (0,0) {\scalebox{0.7}{\young(146,25,3)}};  
  \node (136254) at (-1,2) {\scalebox{0.7}{\young(136,25,4)}};
  \node (145263) at (1,2) {\scalebox{0.7}{\young(145,26,3)}};
  \node (126354) at (-2,4) {\scalebox{0.7}{\young(126,35,4)}};
  \node (136245) at (0,4) {\scalebox{0.7}{\young(136,24,5)}};
  \node (135264) at (2,4) {\scalebox{0.7}{\young(135,26,4)}};
  \node (125364) at (-3,6) {\scalebox{0.7}{\young(125,36,4)}};
  \node (126345) at (-1,6) {\scalebox{0.7}{\young(126,34,5)}};
  \node (135246) at (1,6) {\scalebox{0.7}{\young(135,24,6)}};
  \node (134265) at (3,6) {\scalebox{0.7}{\young(134,26,5)}};
  \node (124365) at (-2,8) {\scalebox{0.7}{\young(124,36,5)}};
  \node (125346) at (0,8) {\scalebox{0.7}{\young(125,34,6)}};
  \node (134256) at (2,8) {\scalebox{0.7}{\young(134,25,6)}};
  \node (123465) at (-1,10) {\scalebox{0.7}{\young(123,46,5)}};
  \node (124356) at (1,10) {\scalebox{0.7}{\young(124,35,6)}};
  \node (123456) at (0,12) {\scalebox{0.7}{\young(123,45,6)}};
  
  \draw (146253) -- (136254) -- (126354) -- (125364) -- (124365) -- (123465) -- (123456) -- (124356) -- (134256) -- (134265) -- (135264) -- (145263) -- (146253);
  \draw (136254) -- (136245) -- (126345) -- (125346) -- (124356);
  \draw (136245) -- (135246) -- (125346);
  \draw (124365) -- (124356);
  \draw (135246) -- (134256);
  \draw (135264) -- (125364);
  \draw (136254) -- (135264);
  \draw (134265) -- (124365);
  \draw (126354) -- (126345);
\end{tikzpicture}} }
\hspace{5mm}
\subfigure[The maximal chain descent order induced by the {\EL} from the Young tableau $((1,2,4),(3))$.]{\scalebox{0.7}{\begin{tikzpicture}[very thick]
  \node (1243) at (0,0) {\scalebox{0.7}{\young(124,3)}};  
  \node (1234) at (-1,3) {\scalebox{0.7}{\young(123,4)}};
  \node (1342) at (1,3) {\scalebox{0.7}{\young(134,2)}};

  \node (spacer1) at (-3,0) {};
  \node (spacer2) at (3,0) {};
  
  \draw (1243) -- (1234);
  \draw (1243) -- (1342);
\end{tikzpicture}} }
    \caption{Maximal chain descent orders induced by {\EL}s from two Young tableaux.}
    \label{fig:example of youngs lat cord}
\end{figure}
\end{example}

In the next proposition, we recall a special tableau called the row tableau. \cref{fig:youngtabexampls} shows the row tableau of shape $(3,2,1)$. We mention this tableau because it provides a connection between maximal chain descent orders and the generalized quotients of the symmetric group introduced by Bj{\"o}rner and Wachs in \cite{genquotcxtrgrpsbjornerwachs1988}. 

\begin{proposition}\label{prop:rowword} Let $\alpha$ be a Young diagram with $n$ boxes and let $\alpha_i$ be the length of the $i$th row of $\alpha$. Let $R_{\alpha}$ be the standard tableau of shape $\alpha$ obtained by labeling the first row of $\alpha$ by $1,2,\dots, \alpha_1$ increasing from left to right, labeling the second row by $\alpha_1+1,\alpha_1+2,\dots, \alpha_1+\alpha_2$, and so on. Then for any tableau of shape $T$, the row word of $T$ is $T(R_{\alpha}^1),T(R_{\alpha}^2),\dots, T(R_{\alpha}^n)$. 

The tableau $R_{\alpha}$ is called \textbf{the row tableau} of shape $\alpha$.
\end{proposition}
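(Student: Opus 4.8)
The plan is to unwind the two definitions involved---the row tableau $R_{\alpha}$ and the row word $w(T)$---and to observe that $R_{\alpha}$ is constructed precisely so that its cells, listed in increasing order of their entries, traverse the diagram in row-reading order. The proposition then follows by pure bookkeeping.

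First I would check that $R_{\alpha}$ is genuinely a standard Young tableau, so that the notation $R_{\alpha}^i$ makes sense. By construction the $i$th row of $R_{\alpha}$ consists of the consecutive integers $\alpha_1+\alpha_2+\dots+\alpha_{i-1}+1,\dots,\alpha_1+\dots+\alpha_i$ written left to right, so rows strictly increase; and the entry of the cell $(i,j)$ is $\alpha_1+\dots+\alpha_{i-1}+j$, while the entry of $(i+1,j)$, when that cell exists, is $\alpha_1+\dots+\alpha_i+j$, which is strictly larger because $\alpha_i\ge 1$. Hence columns strictly increase and $R_{\alpha}\in ST_{\alpha}$.

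Next I would identify the cell $R_{\alpha}^i$ explicitly for each $i\in[n]$. Writing $i$ uniquely in the form $i=\alpha_1+\dots+\alpha_{k-1}+\ell$ with $1\le \ell\le \alpha_k$, the definition of $R_{\alpha}$ gives $R_{\alpha}^i=(k,\ell)$. Consequently the sequence of cells $R_{\alpha}^1,R_{\alpha}^2,\dots,R_{\alpha}^n$ is $(1,1),(1,2),\dots,(1,\alpha_1),(2,1),\dots,(2,\alpha_2),(3,1),\dots$, which is exactly the order in which the cells of $\alpha$ are visited when one reads the rows from top to bottom and, within each row, from left to right.

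Finally, for any standard tableau $T$ of shape $\alpha$, the row word $w(T)$ is by definition the word obtained by recording the entries $T(b)$ as $b$ ranges over the cells of $\alpha$ in precisely that reading order. By the previous step this is the same as $T(R_{\alpha}^1),T(R_{\alpha}^2),\dots,T(R_{\alpha}^n)$, which is the assertion of the proposition. I do not expect any genuine obstacle here; the only point needing a moment's attention is the indexing that expresses $R_{\alpha}^i$ via the partial sums $\alpha_1+\dots+\alpha_{k-1}$ of the row lengths, and that is entirely routine.
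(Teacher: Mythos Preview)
Your proof is correct and is essentially the same as the paper's, only spelled out in more detail: both rest on the observation that $R_\alpha^1,\dots,R_\alpha^n$ enumerates the boxes of $\alpha$ in row-reading order. The paper's proof is a single line invoking \cref{prop:labelseqfromtableaux}, which amounts to the same bookkeeping you carry out explicitly.
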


\begin{proof}
This is clear from \cref{prop:labelseqfromtableaux}.
\end{proof}

\end{subsection} \label{sec:bkgrndyoungslat}

In \cite{genquotcxtrgrpsbjornerwachs1988}, Bj{\"o}rner and Wachs introduce generalized quotients of Coxeter groups. In particular, they study the partial orders induced on these quotients by weak order and Bruhat order on the original Coxeter group. These quotients generalize the notion of quotients of Coxeter groups by parabolic subgroups which are particular choices of coset representatives of a parabolic subgroup. We follow Bj{\"o}rner and Wachs' notation and definitions which agree with the notation in \cref{sec:wkordsymmgroup} in type A. See \cite{bjornerbrenitcxgp} for general Coxeter groups.

Let $(W,S)$ be a Coxeter system. Let $l$ be the Coxeter length function for $(W,S)$. Subgroups of $W$ generated by a subset $J\subseteq S$, denoted $W_J$, are called parabolic subgroups. For $J\subseteq S$, \textbf{ordinary quotients} are the sets $W^J=\sett{w\in W}{l(ws)=l(w)+1 \quad \forall s\in J}$. The ordinary quotient $W^J$ intersects the left cosets of $W_J$ in their minimum length element. This is generalized in \cite{genquotcxtrgrpsbjornerwachs1988} as follows: 

\begin{definition}[Section 1, \cite{genquotcxtrgrpsbjornerwachs1988}]
For any subset $V\subseteq W$, let $$W/V=\sett{w\in W}{l(wv)=l(w)+l(v) \quad \forall v\in V}.$$ The set $W/V$ is called a \textbf{generalized quotient}.
\end{definition}

Restricting the (left) weak order on $W$ to the generalized quotient $W/V$ gives a partial order on $W/V$ which will be referred to as (left) \textbf{weak order}.

In \cite{genquotcxtrgrpsbjornerwachs1988} Section 7, they introduce a partial order on $ST_{\alpha}$ called \textbf{Left order} (the name coming from left weak order on the symmetric group). Left order is defined as the reflexive, transitive closure of the relation $Q<T$ if $T$ is the $i$th tableau swap of $Q$ and $i$ appears in a row above $i+1$ in $Q$. They show that that Left order is isomorphic to a generalized quotient of the symmetric group. 

\begin{theorem}[\cite{genquotcxtrgrpsbjornerwachs1988} Theorem 7.2]\label{thm:left weak tab order gen quotient} 
Let $\alpha$ have $n$ boxes and let $w(ST_{\alpha})$ be the set of row words of all standard tableau of shape $\alpha$. Then $w(ST_{\alpha})$ is a generalized quotient of $S_n$. Moreover, the map $T\mapsto w(T)$ is a poset isomorphism from Left order on $ST_{\alpha}$ to weak order on the generalized quotient $w(ST_{\alpha})$. 
\end{theorem}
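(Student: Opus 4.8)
The plan is to derive the statement from the machinery of \cref{subsec:bkgrndyoungslat} by specializing to the row tableau $R_{\alpha}$. First I would observe, combining \cref{prop:labelseqfromtableaux} and \cref{prop:rowword}, that for every $T\in ST_{\alpha}$ the row word $w(T)$ is exactly the label sequence $\lm_{R_{\alpha}}(m_T)$ of the maximal chain $m_T$ of $\Y(\alpha)$ with respect to the {\EL} induced by $R_{\alpha}$; hence $w(ST_{\alpha})=\Lin(P_{\alpha},R_{\alpha})$ in the notation of \cref{rmk:dist lat Sn EL labels}, where $\Y(\alpha)\cong J(P_{\alpha})$. In particular $w(ST_{\alpha})$ is already an order ideal of weak order on $S_n$ by \cref{prop:Sn labels of dist lat weak ideal}.

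Next I would identify Bj{\"o}rner and Wachs' Left order on $ST_{\alpha}$ with the partial order $\leq_{R_{\alpha}}$ appearing in \cref{thm:youngs lat cords tableau swaps}. By \cref{lem:tabswapscordcovers} the covering pairs of $\leq_{R_{\alpha}}$ are the $i$th tableau swaps $Q\to (i,i+1)Q$ with $(i,i+1)Q\in ST_{\alpha}$ and $R_{\alpha}(Q^i)<R_{\alpha}(Q^{i+1})$; since $R_{\alpha}(Q^b)$ records the position of the box $Q^b$ in the row-reading order, this inequality says the box of $Q$ holding $i$ precedes the box holding $i+1$ in the row word. When $(i,i+1)Q$ is again standard, $i$ and $i+1$ cannot lie in the same row of $Q$ (swapping two horizontally adjacent entries would break row-monotonicity), so the inequality holds exactly when $i$ lies in a row strictly above $i+1$ in $Q$, which is the defining condition for a cover in Left order. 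Thus the generating relations whose transitive closures define Left order and $\leq_{R_{\alpha}}$ coincide, so $(ST_{\alpha},\leq_{R_{\alpha}})$ is Left order. Now I would chain the isomorphisms already supplied: $T\mapsto m_T$ is a poset isomorphism $(ST_{\alpha},\leq_{R_{\alpha}})\to\cord{\Y(\alpha)}{\lm_{R_{\alpha}}}$ by \cref{thm:youngs lat cords tableau swaps}, and $m\mapsto \lm_{R_{\alpha}}(m)$ is a poset isomorphism $\cord{\Y(\alpha)}{\lm_{R_{\alpha}}}\to w(ST_{\alpha})$ onto its image with the weak order, by \cref{thm:dist lat Sn cord isom weak ideal}. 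The composite sends $T$ to $\lm_{R_{\alpha}}(m_T)=w(T)$, yielding the claimed poset isomorphism from Left order on $ST_{\alpha}$ to weak order on $w(ST_{\alpha})$.

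It remains to show $w(ST_{\alpha})$ is a generalized quotient $S_n/V$, and this is the part not handed to us by the earlier results, since \cref{prop:Sn labels of dist lat weak ideal} only gives an order ideal. For this I would produce $V$ explicitly: relative to the reference linear extension $R_{\alpha}$, let $\bar{P}_{\alpha}$ be the complementary partial order on the boxes of $\alpha$ in which two boxes are comparable exactly when they are incomparable in $P_{\alpha}$ (oriented so that $(i,j)<(i',j')$ when $i<i'$ and $j>j'$), and let $V=\Lin(\bar{P}_{\alpha},R_{\alpha})$ be its set of linear extensions read as permutations of $[n]$. One then checks $w(ST_{\alpha})=\Lin(P_{\alpha},R_{\alpha})=S_n/V$ directly from the definition: for $w\in\Lin(P_{\alpha},R_{\alpha})$ and $v\in V$ the identity $l(wv)=l(w)+l(v)$ unwinds, via \cref{prop:wkordinvsetcontainment} and an inversion count, into the assertion that no pair of positions forced to be an inversion of $wv$ by $v$ can already fail to be an inversion of $w$, which is precisely the ballot-type constraint built into $P_{\alpha}$ together with its complement $\bar{P}_{\alpha}$. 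I expect this last step to be the main obstacle: the reduction to the row tableau and the identification of Left order with $\leq_{R_{\alpha}}$ are formal given \cref{thm:youngs lat cords tableau swaps} and \cref{thm:dist lat Sn cord isom weak ideal}, whereas pinning down the correct witness $V$ and carrying out the inversion bookkeeping is where one genuinely reruns the argument of \cite{genquotcxtrgrpsbjornerwachs1988}.
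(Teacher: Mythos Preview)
The paper does not prove this theorem; it is quoted from \cite{genquotcxtrgrpsbjornerwachs1988} and used as a black box to deduce \cref{thm:rowtabcordleftsame}. So there is no ``paper's own proof'' to compare against.

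Your attempt to derive the isomorphism portion from the paper's machinery has a genuine error in its very first step. You claim that $w(T)=\lm_{R_{\alpha}}(m_T)$, but \cref{prop:rowword} says $w(T)=(T(R_{\alpha}^1),\dots,T(R_{\alpha}^n))$, which by \cref{prop:labelseqfromtableaux} equals $\lm_{T}(m_{R_{\alpha}})$, not $\lm_{R_{\alpha}}(m_T)=(R_{\alpha}(T^1),\dots,R_{\alpha}(T^n))$. These two permutations are inverses of one another, not equal; for instance with $\alpha=(3,1)$ and $T=\young(134,2)$ one has $w(T)=1342$ while $\lm_{R_{\alpha}}(m_T)=1423$. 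The paper itself flags this in \cref{rmk:generalize generalized quotient result}: ``the label sequences with respect to $\lm_{R_{\alpha}}$ \dots\ are not the row words of the tableaux corresponding to the maximal chains.'' Consequently $w(ST_{\alpha})$ is not $\Lin(P_{\alpha},R_{\alpha})$ but rather its image under inversion, and the isomorphism you build via \cref{thm:dist lat Sn cord isom weak ideal} lands in the wrong copy of weak order (right weak order on $\Lin(P_{\alpha},R_{\alpha})$ rather than left weak order on $w(ST_{\alpha})$). This can be repaired by composing with inversion, which exchanges left and right weak order, but as written the argument does not establish the stated map $T\mapsto w(T)$.

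Your proposed verification that $w(ST_{\alpha})$ is a generalized quotient via the complementary poset $\bar{P}_{\alpha}$ is a reasonable sketch of the original Bj{\"o}rner--Wachs argument, but it sits outside what the present paper supplies and would need the full inversion bookkeeping you allude to.
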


Choosing the row tableau of a given shape, the induced maximal chain descent order is isomorphic to the Left order on $ST_{\alpha}$, and thus isomorphic to a generalized quotient of the symmetric group.

\begin{theorem}\label{thm:rowtabcordleftsame}
For $\alpha$ with $n$ boxes, the maximal chain order $\cord{\Y(\alpha)}{\lm_{R_{\alpha}}}$ with $R_{\alpha}$ the row tableau of shape $\alpha$ is isomorphic to Left order on $ST_{\alpha}$. Hence, $\cord{\Y(\alpha)}{\lm_{R_{\alpha}}}$ is also isomorphic to weak order on the generalized quotient $w(ST_{\alpha})$ of the symmetric group $S_n$.
\end{theorem}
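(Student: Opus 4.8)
The plan is to obtain this isomorphism by composing two isomorphisms that are already available. By \cref{thm:youngs lat cords tableau swaps} applied with $T=R_{\alpha}$, the map $m\mapsto T_m$ is a poset isomorphism from $\cord{\Y(\alpha)}{\lm_{R_{\alpha}}}$ onto the poset $(ST(\alpha),\leq_{R_{\alpha}})$, where $\leq_{R_{\alpha}}$ is the reflexive, transitive closure of the relation $Q\to (i,i+1)Q$ taken over all $Q\in ST(\alpha)$ and $1\le i\le n-1$ such that $(i,i+1)Q\in ST(\alpha)$ and $R_{\alpha}(Q^i)<R_{\alpha}(Q^{i+1})$. On the other side, by \cref{thm:left weak tab order gen quotient} (Bj\"orner--Wachs), Left order on $ST_{\alpha}$ is isomorphic to weak order on the generalized quotient $w(ST_{\alpha})$ of $S_n$, via $T\mapsto w(T)$. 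Hence it suffices to prove that, as partial orders on $ST_{\alpha}=ST(\alpha)$, the order $\leq_{R_{\alpha}}$ coincides with Left order; the theorem then follows by composition, and the composite isomorphism sends a maximal chain $m$ to the row word $w(T_m)$.

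Since both $\leq_{R_{\alpha}}$ and Left order are defined as the reflexive, transitive closure of a generating relation of the form ``$Q$ is covered by its $i$th tableau swap $(i,i+1)Q$'' (when that swap is again a standard tableau), subject to a side condition, it is enough to show the two side conditions agree. For $\leq_{R_{\alpha}}$ the condition is $R_{\alpha}(Q^i)<R_{\alpha}(Q^{i+1})$; for Left order it is that $i$ lies in a row strictly above $i+1$ in $Q$, i.e. $\mathrm{row}(Q^i)<\mathrm{row}(Q^{i+1})$. First I would record the elementary fact that whenever the $i$th tableau swap of a standard tableau $Q$ is again standard, the boxes $Q^i$ and $Q^{i+1}$ must lie in distinct rows: if they shared a row, then --- since entries strictly increase along rows and $i,i+1$ are consecutive --- $Q^{i+1}$ would be the box immediately east of $Q^i$, and interchanging their entries would violate the row condition. (The same argument forces distinct columns, but only distinct rows is needed.) By the very definition of the row tableau (see \cref{prop:rowword}), $R_{\alpha}$ ranks the boxes of $\alpha$ lexicographically, first by row index and then by column index. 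Combining the two observations: because $Q^i$ and $Q^{i+1}$ lie in different rows, $R_{\alpha}(Q^i)<R_{\alpha}(Q^{i+1})$ holds if and only if $\mathrm{row}(Q^i)<\mathrm{row}(Q^{i+1})$, which is exactly the Left order side condition. Thus the generating relations coincide, and hence so do their reflexive transitive closures.

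Putting this together gives $\cord{\Y(\alpha)}{\lm_{R_{\alpha}}}\cong (ST(\alpha),\leq_{R_{\alpha}})=(ST_{\alpha},\,\text{Left order})\cong w(ST_{\alpha})$ with its induced weak order, which is the assertion. The argument is short, and I expect the only genuine content --- hence the main obstacle, modest as it is --- to be the observation that a legal $i$th tableau swap forces $Q^i$ and $Q^{i+1}$ into distinct rows; this is what collapses the ``row-reading order on boxes'' comparison built into $\leq_{R_{\alpha}}$ down to the coarser ``which of the two rows is higher'' comparison that Bj\"orner and Wachs use to define Left order. A secondary point to keep straight is that the two definitions orient the generating cover the same way (in both, $Q$ sits below $(i,i+1)Q$ precisely when the stated condition holds), but this is immediate once the definitions are unwound.
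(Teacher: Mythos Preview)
Your proof is correct and follows essentially the same approach as the paper: both arguments identify the generating relations of $(ST_\alpha,\leq_{R_\alpha})$ and Left order and show they coincide, then invoke Bj\"orner--Wachs for the generalized quotient statement. You spell out more explicitly the key point that a legal $i$th tableau swap forces $Q^i$ and $Q^{i+1}$ into distinct rows (so that the row-reading comparison $R_\alpha(Q^i)<R_\alpha(Q^{i+1})$ collapses to the row-index comparison), whereas the paper leaves this observation implicit.
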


\begin{proof}
By \cref{prop:labelseqfromtableaux} and \cref{lem:tabswapscordcovers}, the cover relations of $\cord{\Y(\alpha)}{\lm_{R_{\alpha}}}$ and Left order both correspond to $i$th tableau swaps $(i,i+1)T$ where $i$ appears in a row above $i+1$ in $T$. Alternatively, by \cref{prop:labelseqfromtableaux} and the definition of $\cord{\Y(\alpha)}{\lm_{R_{\alpha}}}$, $\cord{\Y(\alpha)}{\lm_{R_{\alpha}}}$ and Left order are both the reflexive, transitive closure of the $i$th tableau swaps on $ST_{\alpha}$. Then by \cref{thm:left weak tab order gen quotient}, $\cord{\Y(\alpha)}{\lm_{R_{\alpha}}}$ is isomorphic to weak order on the generalized quotient $w(ST_{\alpha})$.
\end{proof}

\begin{remark}\label{rmk:generalize generalized quotient result}
We make two observations. First, we note that the label sequences with respect to $\lm_{R_{\alpha}}$ in \cref{thm:rowtabcordleftsame} are not the row words of the tableaux corresponding to the maximal chains. Thus, the isomorphism to the generalized quotient in \cref{thm:rowtabcordleftsame} which is induced by \cref{thm:left weak tab order gen quotient} is not the same isomorphism given by \cref{thm:dist lat Sn cord isom weak ideal}. Second, we observe that Theorem 7.6 in \cite{genquotcxtrgrpsbjornerwachs1988} can easily be used to extend \cref{thm:rowtabcordleftsame} and show which linear extensions of a finite poset $P$ induce EL-labelings of $J(P)$ that give maximal chain orders isomorphic to left order on generalised quotients of the symmetric group.
\end{remark}

\begin{subsection}{The Partition Lattice}\label{sec:the partition lattice}
Another well known $M$-chain {\EL} of a supersolvable lattice is the following {\EL} of the partition lattice $\Pi_{n+1}$. Let $\Pi_{n+1}$ be the collection of set partitions of $[n+1]$ ordered by refinement. The {\EL} $\lm$ due to Gessel and appearing in \cite{shellblposets} is given as follows: if $x\lessdot y$ in $\Pi_{n+1}$, then $y$ is obtained from $x$ by merging exactly two blocks $B_1$ and $B_2$ of $x$ and $\lm(x,y)=\max(\set{\min B_1, \min B_2})$. We call $\lm$ the \textbf{max-min {\EL}}.

We prove that a class of labeled binary trees given in \cref{def:unordered increasing full binary trees} are in bijection with the maximal chains of the partition lattice. We then prove that the maximal chain descent order $\cord{{\Pi_{n+1}}}{\lm}$ is isomorphic to a naturally described poset on these trees. We also note that they are distinct from the trees used by Wachs in \cite{oncohomofpartitionlatwachs1998} to study the (co)homology of the partition lattice.

\begin{definition}\label{def:unordered increasing full binary trees}
For a positive integer $n$, let $\mathbf{PT(n)}$ denote the set of rooted, unordered, decreasing, full binary trees with $2n$ edges, vertices labeled by $\set{1,2,\dots,2n,2n+1}$, and leaf set $\set{1,2,\dots,n,n+1}$. Rooted means there is a distinguished vertex. Full binary means that each non-leaf vertex has exactly two children. Decreasing means that all of the descendants of a vertex have smaller labels than their ancestor. Unordered means we do not distinguish between the two possible orders of the children of an internal vertex, that is, we may assume that when drawn in the plane, the smaller of the two children of an internal vertex is drawn on the left and the larger is drawn on the right.

For each integer $0\leq k\leq n$, let $\mathbf{FPT(n,k)}$ denote the set of forests of rooted, unordered, decreasing, full binary trees with vertices labeled by $\set{1,\dots, n,n+1,\dots,n+1+k}$, leaf set \\$\set{1,2,\dots,n,n+1}$, $n+1-k$ components, and $2n-2(n-k)=2k$ total edges.

For a labeled tree $T$, let $\mathbf{L(T)}$ be its leaf set, i.e. the set of labels of leaves of $T$. Denote the full subtree of $T$ rooted at the vertex labeled $i$ by $\mathbf{T^i}$. If $i$ is the label of an internal vertex of $T$, let $\mathbf{T^i_1}$ and $\mathbf{T^i_2}$ denote the full subtrees of $T$ rooted at the two children of $i$.
\end{definition}

\begin{remark}\label{rmk:partition lattice forest with one tree is tree}
Notice that when $k=n$, $FPT(n,n)=PT(n)$.
\end{remark}

\begin{remark}\label{rmk:diff than wachs cohom trees}
The trees of \cref{def:unordered increasing full binary trees} are distinct from the trees used in \cite{oncohomofpartitionlatwachs1998} to compute the (co)homology of the partition lattice. The trees in \cite{oncohomofpartitionlatwachs1998} are leaf labeled, full binary trees with leaves labeled by $[n+1]$ while internal vertices are not labeled. The same underlying set of leaf labeled, full binary trees appears in \cref{def:unordered increasing full binary trees}, but we also label the internal vertices. In \cite{oncohomofpartitionlatwachs1998},the internal vertices are traversed in post order which describes the maximal chains of $\Pi_{n+1}$ up to cohomology relations, and so bijects the leaf labeled trees with a cohomology basis. In our trees, the internal vertices are traversed based on the vertex labels which sometimes disagrees with post order. The extra traversals of internal vertices allow trees of $PT(n)$ to biject with maximal chains instead of a cohomology basis.
\end{remark}

\begin{definition}\label{def:partition lattice chain from a tree}
Given a forest $F\in FPT(n,k)$, define a saturated chain $\mathbf{c(F)}:\zh=x_0\lessdot x_1\lessdot \dots \lessdot x_k$ in the partition lattice $\Pi_{n+1}$ beginning at the unique minimal element $\zh =1|2|\dots |n+1$ as follows: the blocks of $x_i$ are precisely the leaf sets of the components of $F$ restricted to vertices labeled at most $n+1+i$.
\end{definition}

\begin{remark}\label{rmk:partition lattice max chain from a tree}
For $F\in FPT(n,n)=PT(n)$, $c(F)$ is a maximal chain in $\Pi_{n+1}$. For any $k$ and any $F\in FTP(n,k)$, the top element of the saturated chain $c(F)$ has blocks which are precisely the leaf sets of the trees in $F$.
\end{remark}

\begin{example}\label{ex:partion lattice tree example}
\cref{fig:rooted unordered decreasing binary tree} contains an example of a rooted, unordered, decreasing, full binary tree in $PT(3)$, as well as examples of forests in $FPT(3,k)$ for $0\leq k\leq 3$. Let $F_0$ be the forest in \cref{fig:rooted unordered decreasing binary tree} (b), let $F_1$ be the forest in \cref{fig:rooted unordered decreasing binary tree} (c), let $F_2$ be the forest in \cref{fig:rooted unordered decreasing binary tree} (d), and let $T$ be the tree in \cref{fig:rooted unordered decreasing binary tree} (a). Then applying \cref{def:partition lattice chain from a tree} to each of these forests we have $c(F_0)=1|2|3|4$, $c(F_1)=1|2|3|4\lessdot 14|2|3$, $c(F_2)=1|2|3|4\lessdot 14|2|3\lessdot 124|3$, and $c(T)=1|2|3|4\lessdot 14|2|3\lessdot 124|3\lessdot 1234$.

\begin{figure}[H]
\centering
\subfigure[{$T\in PT(3)=FPT(3,3)$}]{
\scalebox{0.6}{\begin{tikzpicture}[very thick]
\node [style={draw,circle}]{7}
    child{ node[style={draw,circle},xshift=0,yshift=0] {3}
    }
    child{ node[style={draw,circle},yshift=0,xshift=0] {6}
        child{ node[style={draw,circle},xshift=0,yshift=0] {2}
        }
        child{ node[style={draw,circle},xshift=0,yshift=0] {5}
            child{ node[style={draw,circle},xshift=0,yshift=0] {1}
            } 
            child{ node[style={draw,circle},xshift=0,yshift=0] {4}
            }
        }
    };
\end{tikzpicture}}
}
\hspace{5mm}
\subfigure[{The unique $F_0\in FPT(3,0)$}]{
\scalebox{0.6}{\begin{tikzpicture}[very thick]
\node [style={draw,circle}] (a) at (0,0) {1};
\node [style={draw,circle}] (b) at (1,0) {2};
\node [style={draw,circle}] (c) at (2,0) {3};
\node [style={draw,circle}] (d) at (3,0) {4};
\end{tikzpicture}}
}
\hspace{5mm}
\subfigure[{$F_1\in FPT(3,1)$}]{
\scalebox{0.6}{\begin{tikzpicture}[very thick]
\node [style={draw,circle}] (a) at (0,0) {2};
\node [style={draw,circle}] (b) at (1,0) {3};
\node [style={draw,circle}] (c) at (3,1.5) {5}
    child{ node[style={draw,circle},xshift=0,yshift=0] {1}
    } 
    child{ node[style={draw,circle},xshift=0,yshift=0] {4}
    };
\end{tikzpicture}}
}
\hspace{5mm}
\subfigure[{$F_2\in FPT(3,2)$}]{
\scalebox{0.6}{\begin{tikzpicture}[very thick]
\node [style={draw,circle}] (a) at (0,0) {3};
\node [style={draw,circle}] (b) at (2,3) {6}
    child{ node[style={draw,circle},xshift=0,yshift=0] {2}
    }
    child{ node[style={draw,circle},xshift=0,yshift=0] {5}
        child{ node[style={draw,circle},xshift=0,yshift=0] {1}
        } 
        child{ node[style={draw,circle},xshift=0,yshift=0] {4}
        }
    };
\end{tikzpicture}}
}
    \caption{A rooted, unordered, decreasing, full binary tree in $PT(3)$ and forests in $FPT(3,k)$ for $0\leq k\leq 3$.}
    \label{fig:rooted unordered decreasing binary tree}
\end{figure}
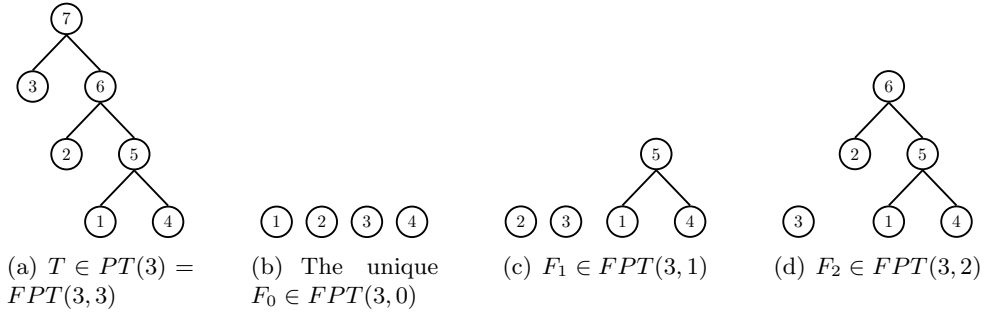

\end{example}

\begin{theorem}\label{thm:initial segs partition lattice chains in bijection with rooted unordered decreasing forests}
The map $c$ of \cref{def:partition lattice chain from a tree} is a bijection from $FPT(n,k)$ to the saturated chains of $\Pi_{n+1}$ which begin with the unique minimal element $1|2|\dots|n+1$ and have length $k$. In particular, $c:PT(n)\to \M(\Pi_{n+1})$ is a bijection.
\end{theorem}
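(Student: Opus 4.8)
The plan is to construct an explicit inverse $d$ to the map $c$. Given a saturated chain $\zh = x_0 \lessdot x_1 \lessdot \dots \lessdot x_k$ of length $k$ in $\Pi_{n+1}$, recall that each cover $x_{i-1}\lessdot x_i$ merges exactly two blocks of $x_{i-1}$ into a single block of $x_i$, leaving all other blocks fixed. I would build $d(x_0\lessdot\dots\lessdot x_k)$ step by step: begin with the $n+1$ isolated leaves $1,\dots,n+1$ (the unique element of $FPT(n,0)$, which is $d$ applied to the trivial chain $x_0$); then, having produced the forest $F_{i-1}$ corresponding to $x_0\lessdot\dots\lessdot x_{i-1}$, introduce a new vertex labeled $n+1+i$ whose two children are the roots of the two components of $F_{i-1}$ whose leaf sets are the blocks merged at step $i$. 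One checks directly that $d(x_0\lessdot\dots\lessdot x_k)\in FPT(n,k)$: the construction introduces each of $n+2,\dots,n+1+k$ exactly once, always as a fresh root with two children, so the output is a forest of full binary trees on vertex set $\{1,\dots,n+1+k\}$ with leaf set $\{1,\dots,n+1\}$, $2k$ edges, and $n+1-k$ components (each of the $k$ steps reduces the component count by one); it is decreasing because when $n+1+i$ is introduced its two children are roots built at earlier steps (with labels in $\{1,\dots,n+i\}$) or leaves, so all its descendants carry strictly smaller labels, a property preserved as later vertices are stacked on top.

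Next I would verify $c\circ d=\mathrm{id}$ and $d\circ c=\mathrm{id}$. For $c(d(x_0\lessdot\dots\lessdot x_k))$: by construction the forest $F_i$ obtained after $i$ steps of $d$ is exactly $d(x_0\lessdot\dots\lessdot x_k)$ restricted to the vertices with label at most $n+1+i$, and the leaf sets of its components are the blocks of $x_i$; since this restricted forest's component leaf sets are precisely what $c$ reads off by \cref{def:partition lattice chain from a tree}, we recover the original chain. For $d(c(F))$ with $F\in FPT(n,k)$: here I would first set up the dictionary that, since $F$ is decreasing and full binary, for each $i$ the vertex labeled $n+1+i$ is a root of its component in the restriction of $F$ to labels $\le n+1+i$, its two children are roots in the restriction to labels $\le n+i$, and the leaf sets of components restrict injectively so that the blocks of $x_{i-1}$ biject with the components of $F$ restricted to labels $\le n+i$. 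From this, the one difference between $x_{i-1}$ and $x_i$ in $c(F)$ is the merge of the leaf sets of the two subtrees rooted at the children of $n+1+i$, and hence step $i$ of $d(c(F))$ reattaches exactly those two subtrees under a vertex labeled $n+1+i$, reproducing $F$. Finally, specializing to $k=n$ gives $FPT(n,n)=PT(n)$ (\cref{rmk:partition lattice forest with one tree is tree}), and the length-$n$ saturated chains of $\Pi_{n+1}$ starting at $\zh$ are precisely the maximal chains, so $c\colon PT(n)\to\M(\Pi_{n+1})$ is a bijection.

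I expect the only genuinely delicate part, and the step I would write out most carefully, is the bookkeeping in $d\circ c=\mathrm{id}$: pinning down the chain of correspondences "blocks of $x_i$ $\leftrightarrow$ components of $F$ restricted to labels $\le n+1+i$ $\leftrightarrow$ roots of those components," and confirming that adjoining the internal vertex $n+1+i$ to the restricted forest corresponds, on the level of leaf sets, exactly to the single block-merge $x_{i-1}\lessdot x_i$. This is where the decreasing hypothesis is used essentially, since it is what guarantees that restricting a forest of $FPT(n,k)$ to the vertices of small label always yields an honest sub-forest (no vertex is ever separated from its children). An alternative and essentially equivalent route is induction on $k$: a forest in $FPT(n,k)$ is obtained from one in $FPT(n,k-1)$ by placing a new root of maximal label over two of its components, which under the inductive bijection matches appending a single cover relation to a length-$(k-1)$ saturated chain from $\zh$; I would use whichever of the two presentations is cleaner to typeset.
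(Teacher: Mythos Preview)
Your proposal is correct and essentially matches the paper's argument: the paper proceeds by induction on $k$, proving injectivity and surjectivity of $c$ separately, but its surjectivity step is exactly your construction of $d$ (adjoin a new root labeled $n+1+l+1$ over the two components whose leaf sets are the blocks merged at the top cover), and its injectivity step is the observation that deleting the maximal internal vertex recovers the length-$(k-1)$ data uniquely. You have simply repackaged the same induction as ``build an explicit inverse and check both compositions,'' and you even note the inductive phrasing as an equivalent alternative at the end.
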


\begin{proof}
We will prove \cref{thm:initial segs partition lattice chains in bijection with rooted unordered decreasing forests} by induction on $k$. As a base case we have $k=0$. The forest $F_0$ with $n+1$ disconnected vertices labeled $\set{1,\dots,n+1}$ is the unique forest in $FPT(n,0)$. We have $c(F_0)=1|2|\dots |n+1$ which is the unique saturated chain of $\Pi_{n+1}$ which begins with $1|2|\dots |n+1$ and has length $0$. Thus, $c$ is a bijection when $k=0$. 

Assume $c$ is a bijection when $k=l$ for some $l\geq 0$. Assume $k=l+1$. Observe that for each forest $F\in FTP(n,l+1)$, we obtain a forest $F'\in FTP(n,l)$ by deleting the internal vertex labeled $n+1+l+1$. Further, observe that the saturated chain $c(F')$ is obtained by restricting the saturated chain $c(F)$ to ranks $\set{0,1,\dots,l}$, i.e. by deleting the element of $c(F)$ at rank $l+1$. We first show that $c$ is injective. Suppose $c(F_1)=c(F_2)$ for forests $F_1,F_2\in FTP(n,l+1)$. Let $x_l\lessdot x_{l+1}$ be the elements of $c(F_1)=c(F_2)$ of rank $l$ and $l+1$, respectively. So, $c(F_1)^{x_{l}}=c(F_2)^{x_{l}}$. Thus, $c(F_1')=c(F_2')$ by our previous observation. Now by the induction hypothesis $c$ is injective on $FTP(n,l)$, so $F_1'=F_2'$. Exactly two blocks of $x_l$ are merged to form $x_{l+1}$. By \cref{rmk:partition lattice max chain from a tree} these two blocks are the leaf sets of two of the trees, call them $T_1$ and $T_2$, in $F_1'=F_2'$. Then the only way to form a forest $\tilde{F}\in FTP(n,l+1)$ from $F_1'=F_2'$ with $x_l$ as the top element of $c(\tilde{F})$ is to add a vertex labeled $n+1+l+1$ and make its two children the roots of $T_1$ and $T_2$. Thus, $F_1=F_2=\tilde{F}$, so $c$ is injective on $FTP(n,l+1)$. 

Next we show that $c$ is surjective from $FPT(n,l+1)$ to the saturated chains of $\Pi_{n+1}$ which begin with the unique minimal element $1|2|\dots|n+1$ and have length $l+1$. Let $c_1:1|2|\dots |n+1=y_0\lessdot y_1 \lessdot \dots \lessdot y_l\lessdot y_{l+1}$ be a saturated chain in $\Pi_{n+1}$. By the induction hypothesis $c$ is surjective from $FTP(n,l)$ to the saturated chains of $\Pi_{n+1}$ which begin with the unique minimal element $1|2|\dots|n+1$ and have length $l$. Thus, there is a forest $F\in FTP(n,l)$ such that $c(F)=c_1^{y_{l}}$. By \cref{rmk:partition lattice max chain from a tree} there are two trees $T_1$ and $T_2$ in $F$ whose leaf sets are the two blocks of $y_l$ which are merged to form $y_{l+1}$. Let $F_1$ be the forest formed from $F$ by adding a vertex labeled $n+1+l+1$ and make its two children the roots of $T_1$ and $T_2$. It is clear that $F_1\in FTP(n,l+1)$ since $F\in FTP(n,l)$ and we added the internal vertex $n+1+l+1$ and reduced the number of connected components by exactly one while leaving all other labels the same. By construction the top element of $c(F_1)$ is $y_{l+1}$ while $c(F_1)\setminus\set{y_{l+1}}=c_1^{y_l}$, so $c(F_1)=c_1$. Hence, $c$ is surjective from $FPT(n,l+1)$ to the saturated chains of $\Pi_{n+1}$ which begin with the unique minimal element $1|2|\dots|n+1$ and have length $l+1$. Therefore, the theorem holds by induction.
\end{proof}

\begin{proposition}\label{prop:partition lattice label seqs from trees}
Let $T$ be a tree with $T\in TP(n)$. Let $\lm$ be the max-min {\EL} of the partition lattice $\Pi_{n+1}$. Then for each $1\leq i \leq n$, the $i$th entry in the label sequence $\lm(c(T))$ is the maximum of the minima of the leaf sets of $T^{n+1+i}_1$ and $T^{n+1+i}_2$, that is $\lm(c(T))_i=\max\set{\min(L(T^{n+1+i}_1)),\min(L(T^{n+1+i}_2))}$.
\end{proposition}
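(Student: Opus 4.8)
The plan is to unwind the definitions of $c(T)$, of the max-min {\EL} $\lm$, and of the subtree notation, and check that the claimed formula records exactly what happens at the $i$th cover relation of $c(T)$. The key observation is that $c(T)$ is built by a process in which, at step $i$, the element $x_{i-1}$ has blocks given by the leaf sets of the components of $T$ restricted to vertices labeled at most $n+i$, and $x_i$ is obtained from $x_{i-1}$ by adjoining the next internal vertex, which is the one labeled $n+1+i$. So first I would record this structural fact (which is essentially the content of \cref{def:partition lattice chain from a tree} together with \cref{thm:initial segs partition lattice chains in bijection with rooted unordered decreasing forests}): the two children of the vertex labeled $n+1+i$ in $T$ are the roots of two subtrees $T^{n+1+i}_1$ and $T^{n+1+i}_2$, these are precisely the two components of $T$ restricted to vertices $\le n+i$ that get merged at step $i$ (the vertex $n+1+i$ being decreasing has all its descendants among $1,\dots,n+i$), and their leaf sets are exactly the two blocks $B_1, B_2$ of $x_{i-1}$ that merge to form the corresponding block of $x_i$.

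Next I would simply apply the definition of the max-min {\EL}: for the cover relation $x_{i-1} \lessdot x_i$, where $x_i$ is obtained by merging blocks $B_1$ and $B_2$, we have $\lm(x_{i-1},x_i) = \max\{\min B_1, \min B_2\}$. Substituting $B_1 = L(T^{n+1+i}_1)$ and $B_2 = L(T^{n+1+i}_2)$ from the previous paragraph gives $\lm(c(T))_i = \max\{\min(L(T^{n+1+i}_1)), \min(L(T^{n+1+i}_2))\}$, which is exactly the assertion. The argument is essentially a bookkeeping check, so I would present it concisely: fix $i$, identify the merged blocks via the tree structure, and read off the label.

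The one point that requires a little care — and the main (mild) obstacle — is justifying that the two components merged at step $i$ are precisely $T^{n+1+i}_1$ and $T^{n+1+i}_2$, i.e. that introducing the vertex labeled $n+1+i$ does exactly one merge and that the two subtrees being joined are already full components at the previous stage. This follows because $T$ is decreasing and full binary: every descendant of $n+1+i$ has a strictly smaller label, so both subtrees hanging below $n+1+i$ consist entirely of vertices with labels $\le n+i$ and hence are complete components of the restriction of $T$ to vertices $\le n+i$; and since $n+1+i$ has exactly two children, adjoining it merges exactly those two components and no others. I would phrase this as a short paragraph invoking \cref{def:unordered increasing full binary trees} and \cref{def:partition lattice chain from a tree}, possibly citing \cref{rmk:partition lattice max chain from a tree} for the statement that the blocks of the rank-$i$ element of $c(F)$ are the leaf sets of the trees in the appropriate restricted forest, and then the computation of $\lm(c(T))_i$ is immediate.
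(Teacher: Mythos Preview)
Your proposal is correct and follows essentially the same approach as the paper's own proof: identify that the two blocks merged at the $i$th step of $c(T)$ are precisely $L(T^{n+1+i}_1)$ and $L(T^{n+1+i}_2)$, then apply the definition of the max-min {\EL}. The paper's proof is a terse two-sentence version of exactly this argument, whereas you have spelled out in more detail the justification (via the decreasing and full binary properties) for why the two subtrees below $n+1+i$ are already complete components of the restriction to labels $\le n+i$.
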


\begin{proof}
By definition of $c(T)$ the two blocks merged to form the rank $i$ element of $c(T)$ from the rank $(i-1)$ element of $c(T)$ are exactly $L(T^{n+1+i}_1)$ and $L(T^{n+1+i}_2)$. Then the proposition follows by definition of $\lm$.
\end{proof}

Next we see that two maximal chains in the partition lattice differing by a polygon can be described by two simple operations on the corresponding trees, one for each of the two types of rank two intervals in the partition lattice.

\begin{lemma}\label{lem:partition lattice cord covers from trees}
Let $T,S\in TP(n)$ be trees. Then $c(T)$ and $c(S)$ differ by a polygon at rank $i$ in the partition lattice $\Pi_{n+1}$ if and only if exactly one of the two following conditions holds: \begin{itemize}
    \item [(i)] $S$ is obtained from $T$ by swapping the labels $n+1+i$ and $n+1+i+1$, or

    \item [(ii)] $n+1+i$ is a child of $n+1+i+1$ in $T$ and $S$ is obtained from $T$ by swapping the full subtree of $T$ whose root is the child of $n+1+i+1$ which is not $n+1+i$ and a full subtree of $T$ whose root is a child of $n+1+i$.
\end{itemize}
\end{lemma}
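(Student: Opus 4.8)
The plan is to analyze the two types of rank two intervals in $\Pi_{n+1}$ and see how the tree encoding $c$ from \cref{def:partition lattice chain from a tree} behaves when a maximal chain is altered on such an interval. Recall that a rank two interval $[x, z]$ in $\Pi_{n+1}$ is determined by $x$ together with the two blocks of $z$ that are each the union of blocks of $x$; either $z$ merges three blocks of $x$ into one (a "$\Pi_3$-type" interval, with three coatoms), or $z$ merges two disjoint pairs of blocks of $x$ (a "$2\times 2$-type" interval, with two coatoms). First I would set $x = x_{i-1}$ (the rank $i-1$ element of both $c(T)$ and $c(S)$, forced to agree by \cref{lem:nongradedonlyabovepolygon}-type reasoning, or rather by the definition of differing by a polygon) and $z = x_{i+1}$, and record which blocks of $x$ get merged. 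By \cref{thm:initial segs partition lattice chains in bijection with rooted unordered decreasing forests}, restricting $c(T)$ to ranks $0,\dots,i-1$ determines the subforest $F'$ of $T$ on vertices labeled $\le n+i$, and $c(T)$, $c(S)$ agree on this part, so $T$ and $S$ have the same induced forest $F'$; they can only differ in how vertices $n+1+i$ and $n+1+i+1$ are attached. Similarly restricting to ranks $0,\dots,i+1$ gives the same subforest for $T$ and $S$ (namely $F'$ with two more internal vertices added, producing the two blocks of $z$), but possibly with the two internal-vertex labels $n+1+i, n+1+i+1$ assigned differently to the two new internal vertices.

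Next I would do the case analysis on the interval type. In the $2\times 2$-type case, the two new internal vertices of $T$ (equivalently $S$) have disjoint descendant sets — neither is a descendant of the other — so the only freedom is which label, $n+1+i$ or $n+1+i+1$, is attached to which of the two new internal vertices; swapping these two labels is exactly condition (i), and conversely a label swap of $n+1+i \leftrightarrow n+1+i+1$ when these vertices are incomparable in $T$ produces a valid decreasing tree $S$ with $c(S)$ differing from $c(T)$ by a polygon at rank $i$. (One must check the decreasing property is preserved: since the two subtrees are on disjoint vertex sets all of whose labels are $< n+1+i$, swapping the two largest labels keeps both root labels larger than all their descendants.) In the $\Pi_3$-type case, the rank $i-1$ element $x$ has three relevant blocks $B_1, B_2, B_3$ that are merged by rank $i+1$; in $T$ one of $n+1+i, n+1+i+1$ is the child of the other, i.e. one of the two new internal vertices is a descendant of the other. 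Here condition (i) (label swap, when $n+1+i$ is a child of $n+1+i+1$) corresponds to first merging a different pair of the three blocks — this changes which pair is merged at rank $i$ while keeping $n+1+i+1$ on top — and condition (ii) corresponds to keeping the same pair merged at rank $i$ but re-associating, i.e. the "other" child subtree of $n+1+i+1$ gets exchanged with one of the two child subtrees hanging off $n+1+i$. I would verify that (i) and (ii) together enumerate exactly the three coatoms of the $\Pi_3$-type interval paired against the distinguished one coming from $c(T)$, and that each such move yields a legal decreasing tree (again a straightforward check that all swapped labels remain smaller than the relevant root labels since all descendant labels are $< n+1+i$), and that (i) and (ii) are mutually exclusive because (ii) strictly requires the parent-child relation and a nontrivial subtree exchange that (i) never performs.

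Finally I would assemble: "$c(T)$ and $c(S)$ differ by a polygon at rank $i$" means precisely that $c(T), c(S)$ agree at all ranks except rank $i$ and the rank $i-1$-to-rank $i+1$ interval on which they differ is one of the two types above — so by \cref{thm:initial segs partition lattice chains in bijection with rooted unordered decreasing forests} and the case analysis, this holds iff exactly one of (i), (ii) holds. The main obstacle I expect is bookkeeping in the $\Pi_3$-type case: correctly matching up the three choices of "which pair of $B_1,B_2,B_3$ to merge first, and in which associativity" with the tree operations (i) and (ii), making sure the count is right and that the "$n+1+i$ is a child of $n+1+i+1$" hypothesis is automatically forced (rather than assumed) in exactly the configurations where (ii) applies — and conversely that in the $2\times 2$-type case $n+1+i$ and $n+1+i+1$ are always incomparable so only (i) can occur. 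The decreasing-property verifications are routine and I would not belabor them.
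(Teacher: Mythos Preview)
Your overall architecture matches the paper's: split into the two types of rank two intervals in $\Pi_{n+1}$, use \cref{thm:initial segs partition lattice chains in bijection with rooted unordered decreasing forests} to pin down the common subforest, and then track how the two new internal vertices $n+1+i$ and $n+1+i+1$ can be attached. The $2\times 2$-type case is handled correctly.

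There is, however, a genuine error in your $\Pi_3$-type analysis. You write that in this case condition (i) (the label swap) ``corresponds to first merging a different pair of the three blocks \dots\ while keeping $n+1+i+1$ on top,'' and then propose that (i) and (ii) together enumerate the other coatoms. This is wrong: in the $\Pi_3$-type case the two new internal vertices are necessarily in a parent--child relation, and since the tree is decreasing, $n+1+i$ must be the child of $n+1+i+1$. Swapping these two labels then produces a tree in which $n+1+i+1$ is a child of $n+1+i$, violating the decreasing condition, so the result is not in $PT(n)$. (This is exactly why the paper notes that (i) and (ii) are mutually exclusive.) Consequently condition (i) contributes \emph{nothing} in the $\Pi_3$-type case; all of the other two coatoms are accounted for by condition (ii), which has two choices---swap the non-$n+1+i$ child of $n+1+i+1$ with either of the two children of $n+1+i$. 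Your count (one from (i), two from (ii), totaling three) overcounts by one; the correct picture is that case (a) (the $2\times 2$ interval) is handled entirely by (i), and case (b) (the $\Pi_3$ interval) is handled entirely by (ii). Once you fix this bookkeeping, the rest of your outline goes through.
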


\begin{proof}
First, we observe that conditions (i) and (ii) are mutually exclusive because if $n+1+i$ is a child of $n+1+i+1$, then swapping the labels $n+1+i$ and $n+1+i+1$ results in a tree which is not decreasing. We now prove the forward direction. We have $c(T):1|2|\dots |n+1=x_0\lessdot x_1\lessdot \dots \lessdot x_{i-1}\lessdot x_i \lessdot x_{i+1}\lessdot \dots \lessdot x_n=12\dots n+1$ and $c(S):1|2|\dots |n+1=x_0\lessdot x_1\lessdot \dots \lessdot x_{i-1}\lessdot x'_i \lessdot x_{i+1}\lessdot \dots \lessdot x_n=12\dots n+1$ for some $x'_i\neq x_i$. There are two cases: either (a) $x_{i-1}$ contains blocks $B_1,B_2,B_3$, and $B_4$; $x_i$ contains blocks $B_1\cup B_2$, $B_3$, and $B_4$; and $x_{i+1}$ contains blocks $B_1\cup B_2$ and $B_3\cup B_4$, or (b) $x_{i-1}$ contains blocks $B_1,B_2$, and $B_3$; $x_i$ contains blocks $B_1\cup B_2$ and $B_3$; and $x_{i+1}$ contains blocks $B_1\cup B_2\cup B_3$.

We show that these two cases precisely give rise to conditions (i) and (ii) of the theorem, respectively. 

In case (a), we have the that $x'_i$ is obtained from $x_{i-1}$ by merging blocks $B_3$ and $B_4$
and $x_{i+1}$ is formed from $x'_i$ by merging blocks $B_1$ and $B_1$. Thus, swapping the labels $n+1+i$ and $n+1+i+1$ in $T$ gives a tree $\tilde{S}\in TP(n)$ with $c(\tilde{S})=c(S)$. Then by \cref{thm:initial segs partition lattice chains in bijection with rooted unordered decreasing forests}, $\tilde{S}=S$. Thus, condition (i) holds. 

Assume we are in case (b). By definition of $c$, $T$ restricted to labels at most $n+1+i-1$ contains connected components $T_1$, $T_2$, and $T_3$ with leaf sets $L(T_1)=B_1$, $L(T_2)=B_2$, and $L(T_3)=B_3$. Further, $T_1$, $T_2$, and $T_3$ are full subtrees of $T$. Also, by definition of $c$, $n+1+i$ is a child of $n+1+i+1$ and the root of $T_3$ is a child of $n+1+i+1$ since $B_1$ and $B_2$ are merged to form $x_{i}$ from $x_{i-1}$ and $B_3$ is merged with $B_1\cup B_2$ to form $x_{i+1}$ from $x_i$. Now $x'_i$ is formed from $x_{i-1}$ either by merging $B_1$ and $B_3$ or by merging $B_2$ and $B_3$, then $x_{i+1}$ is formed from $x'_i$ by merging $B_2$ with $B_1\cup B_3$ or by merging $B_1$ with $B_2\cup B_3$, respectively. Thus, in the first case, swapping the subtrees $T_2$ and $T_3$ results in a tree $\tilde{S}\in TP(n)$ with $c(\tilde{S})=c(S)$. In the second case, swapping the subtrees $T_1$ and $T_3$ results in a tree $\tilde{S}\in TP(n)$ with $c(\tilde{S})=c(S)$. Either way, \cref{thm:initial segs partition lattice chains in bijection with rooted unordered decreasing forests} implies $\tilde{S}=S$. Hence, condition (ii) holds which completes the proof of the forward direction.

For the backward direction, if condition (i) is satisified, then $c(T)$ and $c(S)$ differ by a polygon at rank $i$ and the interval $[x_{i-1},x_{i+1}]$ is of type (a) above. If condition (ii) is satisified, then $c(T)$ and $c(S)$ differ by a polygon at rank $i$ and the interval $[x_{i-1},x_{i+1}]$ is of type (b). This completes the proof.
\end{proof}

The tree operations of \cref{lem:partition lattice cord covers from trees} give rise to the following partial order on $TP(n)$ which we then show is isomorphic to $\cord{\Pi_{n+1}}{\lm}$.

\begin{definition}\label{def:poset on partition lattice chain trees}
Define a partial order on the trees $PT(n)$ as follows: let $\preceq$ be the reflexive, transitive closure of $T\rightharpoonup S$ if $S$ is obtained from $T$ by either condition (i) or (ii) in \cref{lem:partition lattice cord covers from trees} and $\max\set{\min(L(T^{n+1+i}_1)),\min(L(T^{n+1+i}_2))}<\max\set{\min(L(T^{n+1+i+1}_1)),\min(L(T^{n+1+i+1}_2))}$.
\end{definition}

\begin{theorem}\label{thm:partition cord and tree poset isom}
Let $\lm$ be the max-min {\EL} of $\Pi_{n+1}$. Then the map $c: (PT(n),\preceq) \to \cord{{\Pi_{n+1}}}{\lm}$ is a poset isomorphism. Moreover, the cover relations of $(PT(n),\preceq)$ are precisely given by $T\rightharpoonup S$ for $S,T\in PT(n)$ as given in \cref{def:poset on partition lattice chain trees}.
\end{theorem}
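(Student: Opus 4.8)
The plan is to leverage the groundwork already laid: \cref{thm:initial segs partition lattice chains in bijection with rooted unordered decreasing forests} tells us $c\colon PT(n)\to \M(\Pi_{n+1})$ is a bijection, \cref{prop:partition lattice label seqs from trees} computes the label sequence $\lm(c(T))$ from the tree, \cref{lem:partition lattice cord covers from trees} identifies exactly which pairs of trees give maximal chains differing by a polygon, and \cref{thm:Sn EL is polygon strong} (equivalently \cref{cor:Sn inv ranked so cord satisfies inv ranked thm}) guarantees the max-min {\EL} is polygon complete since it is an $M$-chain {\EL} of the supersolvable lattice $\Pi_{n+1}$. So the strategy is to show that the generating relations $T\rightharpoonup S$ of $(PT(n),\preceq)$ correspond under $c$ precisely to the increases by polygon moves $m\to m'$ in $\M(\Pi_{n+1})$, and then invoke polygon completeness to conclude that these generating relations are exactly the cover relations on both sides.

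First I would fix trees $T,S\in PT(n)$ and unwind what $c(T)\to c(S)$ means. By \cref{def:increase and polygon}, $c(T)\to c(S)$ requires that $c(T)$ and $c(S)$ differ by a polygon — at some rank $i$, say — and that $c(T)$ is ascending (lexicographically least) on the rooted interval where they differ while $c(S)$ is a descent there. Since every rank-two interval of $\Pi_{n+1}$ has label sequence a pair of distinct integers, being ascending on that interval is equivalent to $\lm(c(T))_i < \lm(c(T))_{i+1}$. By \cref{lem:partition lattice cord covers from trees}, ``$c(T)$ and $c(S)$ differ by a polygon at rank $i$'' is equivalent to ``exactly one of conditions (i), (ii) of that lemma holds,'' and by \cref{prop:partition lattice label seqs from trees} the inequality $\lm(c(T))_i < \lm(c(T))_{i+1}$ is exactly $\max\{\min(L(T^{n+1+i}_1)),\min(L(T^{n+1+i}_2))\} < \max\{\min(L(T^{n+1+i+1}_1)),\min(L(T^{n+1+i+1}_2))\}$. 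That last pair of conditions is precisely the definition of $T\rightharpoonup S$ in \cref{def:poset on partition lattice chain trees}. Hence $T\rightharpoonup S$ if and only if $c(T)\to c(S)$.

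Next I would conclude the isomorphism. The maximal chain descent order $\cord{\Pi_{n+1}}{\lm}$ is, by \cref{def:nongradedmaxchainorders}, the reflexive transitive closure of the relations $m\to m'$; the poset $(PT(n),\preceq)$ is the reflexive transitive closure of the relations $T\rightharpoonup S$. Since $c$ is a bijection carrying the generating relation $\rightharpoonup$ onto the generating relation $\to$ and vice versa, it carries the reflexive transitive closure of one onto that of the other, so $c$ is a poset isomorphism. For the ``moreover'' clause: $\lm$ is polygon complete by \cref{thm:Sn EL is polygon strong}, meaning every $m\to m'$ is a cover relation $\cordoti{m}{m'}{\lm}$ in $\cord{\Pi_{n+1}}{\lm}$; conversely every cover relation of a maximal chain descent order arises from a polygon move (noted in \cref{sec:cord def and properties}, and used throughout, e.g. in \cref{lem:number downward cord covers}). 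Transporting this through the isomorphism $c$, the cover relations of $(PT(n),\preceq)$ are exactly the pairs $T\rightharpoonup S$.

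The main obstacle — really the only nontrivial point — is making the equivalence ``$c(T)\to c(S)$ iff $T\rightharpoonup S$'' airtight, namely matching up the two halves of \cref{def:increase and polygon} (differ-by-a-polygon plus the ascending/descending condition) with the two clauses of \cref{def:poset on partition lattice chain trees} (condition (i) or (ii) plus the max-min inequality). Care is needed that the rank $i$ at which the polygon occurs is the same index appearing in the inequality, and that ``$c(T)$ ascending on the rooted interval'' genuinely reduces to the single inequality $\lm(c(T))_i<\lm(c(T))_{i+1}$ — which it does because the interval has length two, so its only two maximal chains have label sequences that are reverses of each other. Once that bookkeeping is done the rest is formal.
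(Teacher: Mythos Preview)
Your proof is correct and follows essentially the same approach as the paper: establish that $c$ bijects the generating relations $T\rightharpoonup S$ with the polygon moves $c(T)\to c(S)$ (via \cref{lem:partition lattice cord covers from trees}, \cref{prop:partition lattice label seqs from trees}, and \cref{def:poset on partition lattice chain trees}), then pass to reflexive transitive closures, and finally invoke polygon completeness from \cref{thm:Sn EL is polygon strong} for the cover-relation statement. One small slip: rank-two intervals in $\Pi_{n+1}$ need \emph{not} have only two maximal chains (e.g.\ $[1|2|3,\,123]$ has three), so your parenthetical ``its only two maximal chains have label sequences that are reverses of each other'' is false as stated; the correct justification is simply that in a length-two interval any chain that is not the unique ascending one must have a descent at its single interior element, which is all you need.
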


\begin{proof}
The map $c:(PT(n),\preceq) \to \cord{{\Pi_{n+1}}}{\lm}$ is a bijection by \cref{thm:initial segs partition lattice chains in bijection with rooted unordered decreasing forests}. By \cref{def:poset on partition lattice chain trees} and \cref{lem:partition lattice cord covers from trees}, $m\to m'$ if and only if $c^{-1}(m)\rightharpoonup c^{-1}(m')$. Thus, $c: (PT(n),\preceq)\to \cord{{\Pi_{n+1}}}{\lm}$ is a poset isomorphism. Lastly, by \cref{thm:Sn EL is polygon strong} (alternatively, by \cref{cor:Sn inv ranked so cord satisfies inv ranked thm}) the cover relations of $(PT(n),\preceq)$ are precisely given by $T\rightharpoonup S$ for $S,T\in PT(n)$ since $\lm$ is an $S_n$ {\EL}.
\end{proof}

As a corollary, \cref{thm:Sn cords weak lower intervals} and \cref{thm:partition cord and tree poset isom} imply that intervals in $(PT(n),\preceq)$ are isomorphic to intervals in the weak order on $S_n$.

\begin{corollary}\label{cor:ints in tree poset isom weak ints}
Every interval in the maximal chain descent order $\cord{{\Pi_{n+1}}}{\lm}$ and 
every interval in the poset $(PT(n),\preceq)$ from \cref{def:poset on partition lattice chain trees} is isomorphic to some interval in weak order on the symmetric group $S_n$.
\end{corollary}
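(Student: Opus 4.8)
The plan is to chain together the two structural results already in hand. First I would recall the setup from the start of \cref{sec:the partition lattice}: the partition lattice $\Pi_{n+1}$ is a finite supersolvable lattice of rank $n$, and the max-min {\EL} $\lm$ is one of Stanley's $M$-chain {\EL}s. Therefore \cref{thm:Sn cords weak lower intervals} applies verbatim with $P=\Pi_{n+1}$, giving that every interval of $\cord{{\Pi_{n+1}}}{\lm}$ is isomorphic to some interval of weak order on $S_n$, with the isomorphism realized by $m\mapsto \lm(m)$. This settles the first half of the statement immediately.

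For the second half I would invoke \cref{thm:partition cord and tree poset isom}, which provides a poset isomorphism $c:(PT(n),\preceq)\to \cord{{\Pi_{n+1}}}{\lm}$. Since a poset isomorphism restricts on any closed interval $[T,S]$ (respectively open interval) to an order isomorphism onto $[c(T),c(S)]$, every interval of $(PT(n),\preceq)$ is isomorphic to an interval of $\cord{{\Pi_{n+1}}}{\lm}$, hence by the previous paragraph to an interval of weak order on $S_n$. Composing the two isomorphisms gives the explicit map $T\mapsto \lm(c(T))$, which by \cref{prop:partition lattice label seqs from trees} sends $T$ to the permutation whose $i$th entry is $\max\set{\min(L(T^{n+1+i}_1)),\min(L(T^{n+1+i}_2))}$; I may include this remark to make the isomorphism fully concrete, though it is not needed for the statement.

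I do not anticipate a genuine obstacle: the only things to verify are bookkeeping points, namely that $c$ carries closed (and open) intervals bijectively and order-isomorphically onto intervals — immediate from its being an order isomorphism of the entire poset — and that "some interval of weak order" in the conclusion of \cref{thm:Sn cords weak lower intervals} is precisely the image of the relevant interval under the composite label map. Both are routine, so the corollary follows in a couple of lines once the two earlier theorems are cited.
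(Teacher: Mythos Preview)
Your proposal is correct and matches the paper's approach exactly: the paper simply states that the corollary follows from \cref{thm:Sn cords weak lower intervals} and \cref{thm:partition cord and tree poset isom}, and you have spelled out those two citations in the same order with the same logic. The additional detail you provide about the explicit label map via \cref{prop:partition lattice label seqs from trees} is a nice touch but, as you note, not required.
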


\begin{example}\label{ex:Pi4 Sn tree poset}
The partition lattice $\Pi_4$ with the max-min {\EL} is shown in \cref{fig:Pi4 with min max EL}. The induced maximal chain descent order is pictured in \cref{fig:Pi4 min max cord} and illustrates \cref{thm:partition cord and tree poset isom} and \cref{cor:ints in tree poset isom weak ints}.

\newsavebox{\motthf}
\sbox{\motthf}{
   \scalebox{0.4}{\begin{tikzpicture}[very thick]
\node [style={draw,circle}]{7}
    child{ node[style={draw,circle},xshift=0,yshift=0] {4}
    }
    child{ node[style={draw,circle},yshift=0,xshift=0] {6}
        child{ node[style={draw,circle},xshift=0,yshift=0] {3}
        }
        child{ node[style={draw,circle},xshift=0,yshift=0] {5}
            child{ node[style={draw,circle},xshift=0,yshift=0] {1}
            } 
            child{ node[style={draw,circle},xshift=0,yshift=0] {2}
            }
        }
    };
\end{tikzpicture}}
 }

\newsavebox{\mothtf}
\sbox{\mothtf}{
   \scalebox{0.4}{\begin{tikzpicture}[very thick]
\node [style={draw,circle}]{7}
    child{ node[style={draw,circle},xshift=0,yshift=0] {4}
    }
    child{ node[style={draw,circle},yshift=0,xshift=0] {6}
        child{ node[style={draw,circle},xshift=0,yshift=0] {2}
        }
        child{ node[style={draw,circle},xshift=0,yshift=0] {5}
            child{ node[style={draw,circle},xshift=0,yshift=0] {1}
            } 
            child{ node[style={draw,circle},xshift=0,yshift=0] {3}
            }
        }
    };
\end{tikzpicture}}
 }

\newsavebox{\motfth}
\sbox{\motfth}{
   \scalebox{0.4}{\begin{tikzpicture}[very thick]
\node [style={draw,circle}]{7}
    child{ node[style={draw,circle},xshift=0,yshift=0] {3}
    }
    child{ node[style={draw,circle},yshift=0,xshift=0] {6}
        child{ node[style={draw,circle},xshift=0,yshift=0] {4}
        }
        child{ node[style={draw,circle},xshift=0,yshift=0] {5}
            child{ node[style={draw,circle},xshift=0,yshift=0] {1}
            } 
            child{ node[style={draw,circle},xshift=0,yshift=0] {2}
            }
        }
    };
\end{tikzpicture}}
 }

\newsavebox{\motuthf}
\sbox{\motuthf}{
   \scalebox{0.4}{\begin{tikzpicture}[very thick]
\node [style={draw,circle}]{7}
    child{ node[style={draw,circle},xshift=-15,yshift=0] {5}
            child{ node[style={draw,circle},xshift=0,yshift=0] {1}
            } 
            child{ node[style={draw,circle},xshift=0,yshift=0] {2}
            }
        }
    child{ node[style={draw,circle},xshift=15,yshift=0] {6}
        child{ node[style={draw,circle},xshift=0,yshift=0] {3}
        }
        child{ node[style={draw,circle},xshift=0,yshift=0] {4}
    }
    };
\end{tikzpicture}}
 }

\newsavebox{\mtthof}
\sbox{\mtthof}{
   \scalebox{0.4}{\begin{tikzpicture}[very thick]
\node [style={draw,circle}]{7}
    child{ node[style={draw,circle},xshift=0,yshift=0] {4}
    }
    child{ node[style={draw,circle},yshift=0,xshift=0] {6}
        child{ node[style={draw,circle},xshift=0,yshift=0] {1}
        }
        child{ node[style={draw,circle},xshift=0,yshift=0] {5}
            child{ node[style={draw,circle},xshift=0,yshift=0] {2}
            } 
            child{ node[style={draw,circle},xshift=0,yshift=0] {3}
            }
        }
    };
\end{tikzpicture}}
 }

\newsavebox{\mothft}
\sbox{\mothft}{
   \scalebox{0.4}{\begin{tikzpicture}[very thick]
\node [style={draw,circle}]{7}
    child{ node[style={draw,circle},xshift=0,yshift=0] {2}
    }
    child{ node[style={draw,circle},yshift=0,xshift=0] {6}
        child{ node[style={draw,circle},xshift=0,yshift=0] {4}
        }
        child{ node[style={draw,circle},xshift=0,yshift=0] {5}
            child{ node[style={draw,circle},xshift=0,yshift=0] {1}
            } 
            child{ node[style={draw,circle},xshift=0,yshift=0] {3}
            }
        }
    };
\end{tikzpicture}}
 }
 
\newsavebox{\mothutf}
\sbox{\mothutf}{
   \scalebox{0.4}{\begin{tikzpicture}[very thick]
\node [style={draw,circle}]{7}
    child{ node[style={draw,circle},xshift=-15,yshift=0] {5}
            child{ node[style={draw,circle},xshift=0,yshift=0] {2}
            } 
            child{ node[style={draw,circle},xshift=0,yshift=0] {4}
            }
        }
    child{ node[style={draw,circle},xshift=15,yshift=0] {6}
        child{ node[style={draw,circle},xshift=0,yshift=0] {1}
        }
        child{ node[style={draw,circle},xshift=0,yshift=0] {3}
    }
    };
\end{tikzpicture}}
 }

\newsavebox{\moftth}
\sbox{\moftth}{
   \scalebox{0.4}{\begin{tikzpicture}[very thick]
\node [style={draw,circle}]{7}
    child{ node[style={draw,circle},xshift=0,yshift=0] {3}
    }
    child{ node[style={draw,circle},yshift=0,xshift=0] {6}
        child{ node[style={draw,circle},xshift=0,yshift=0] {2}
        }
        child{ node[style={draw,circle},xshift=0,yshift=0] {5}
            child{ node[style={draw,circle},xshift=0,yshift=0] {1}
            } 
            child{ node[style={draw,circle},xshift=0,yshift=0] {4}
            }
        }
    };
\end{tikzpicture}}
 }

\newsavebox{\mtfoth}
\sbox{\mtfoth}{
   \scalebox{0.4}{\begin{tikzpicture}[very thick]
\node [style={draw,circle}]{7}
    child{ node[style={draw,circle},xshift=0,yshift=0] {3}
    }
    child{ node[style={draw,circle},yshift=0,xshift=0] {6}
        child{ node[style={draw,circle},xshift=0,yshift=0] {1}
        }
        child{ node[style={draw,circle},xshift=0,yshift=0] {5}
            child{ node[style={draw,circle},xshift=0,yshift=0] {2}
            } 
            child{ node[style={draw,circle},xshift=0,yshift=0] {4}
            }
        }
    };
\end{tikzpicture}}
 }

\newsavebox{\mthfuot}
\sbox{\mthfuot}{
   \scalebox{0.4}{\begin{tikzpicture}[very thick]
\node [style={draw,circle}]{7}
    child{ node[style={draw,circle},xshift=-15,yshift=0] {5}
            child{ node[style={draw,circle},xshift=0,yshift=0] {3}
            } 
            child{ node[style={draw,circle},xshift=0,yshift=0] {4}
            }
        }
    child{ node[style={draw,circle},xshift=15,yshift=0] {6}
        child{ node[style={draw,circle},xshift=0,yshift=0] {1}
        }
        child{ node[style={draw,circle},xshift=0,yshift=0] {2}
    }
    };
\end{tikzpicture}}
 }

\newsavebox{\mtthfo}
\sbox{\mtthfo}{
   \scalebox{0.4}{\begin{tikzpicture}[very thick]
\node [style={draw,circle}]{7}
    child{ node[style={draw,circle},xshift=0,yshift=0] {1}
    }
    child{ node[style={draw,circle},yshift=0,xshift=0] {6}
        child{ node[style={draw,circle},xshift=0,yshift=0] {4}
        }
        child{ node[style={draw,circle},xshift=0,yshift=0] {5}
            child{ node[style={draw,circle},xshift=0,yshift=0] {2}
            } 
            child{ node[style={draw,circle},xshift=0,yshift=0] {3}
            }
        }
    };
\end{tikzpicture}}
 }

\newsavebox{\mtthuof}
\sbox{\mtthuof}{
   \scalebox{0.4}{\begin{tikzpicture}[very thick]
\node [style={draw,circle}]{7}
    child{ node[style={draw,circle},xshift=-15,yshift=0] {5}
            child{ node[style={draw,circle},xshift=0,yshift=0] {2}
            } 
            child{ node[style={draw,circle},xshift=0,yshift=0] {3}
            }
        }
    child{ node[style={draw,circle},xshift=15,yshift=0] {6}
        child{ node[style={draw,circle},xshift=0,yshift=0] {1}
        }
        child{ node[style={draw,circle},xshift=0,yshift=0] {4}
    }
    };
\end{tikzpicture}}
 }

\newsavebox{\moftht}
\sbox{\moftht}{
   \scalebox{0.4}{\begin{tikzpicture}[very thick]
\node [style={draw,circle}]{7}
    child{ node[style={draw,circle},xshift=0,yshift=0] {2}
    }
    child{ node[style={draw,circle},yshift=0,xshift=0] {6}
        child{ node[style={draw,circle},xshift=0,yshift=0] {3}
        }
        child{ node[style={draw,circle},xshift=0,yshift=0] {5}
            child{ node[style={draw,circle},xshift=0,yshift=0] {1}
            } 
            child{ node[style={draw,circle},xshift=0,yshift=0] {4}
            }
        }
    };
\end{tikzpicture}}
 }

\newsavebox{\mtfuoth}
\sbox{\mtfuoth}{
   \scalebox{0.4}{\begin{tikzpicture}[very thick]
\node [style={draw,circle}]{7}
    child{ node[style={draw,circle},xshift=-15,yshift=0] {5}
            child{ node[style={draw,circle},xshift=0,yshift=0] {2}
            } 
            child{ node[style={draw,circle},xshift=0,yshift=0] {4}
            }
        }
    child{ node[style={draw,circle},xshift=15,yshift=0] {6}
        child{ node[style={draw,circle},xshift=0,yshift=0] {1}
        }
        child{ node[style={draw,circle},xshift=0,yshift=0] {3}
    }
    };
\end{tikzpicture}}
 }

\newsavebox{\mthfot}
\sbox{\mthfot}{
   \scalebox{0.4}{\begin{tikzpicture}[very thick]
\node [style={draw,circle}]{7}
    child{ node[style={draw,circle},xshift=0,yshift=0] {2}
    }
    child{ node[style={draw,circle},yshift=0,xshift=0] {6}
        child{ node[style={draw,circle},xshift=0,yshift=0] {1}
        }
        child{ node[style={draw,circle},xshift=0,yshift=0] {5}
            child{ node[style={draw,circle},xshift=0,yshift=0] {3}
            } 
            child{ node[style={draw,circle},xshift=0,yshift=0] {4}
            }
        }
    };
\end{tikzpicture}}
 }

\newsavebox{\mtftho}
\sbox{\mtftho}{
   \scalebox{0.4}{\begin{tikzpicture}[very thick]
\node [style={draw,circle}]{7}
    child{ node[style={draw,circle},xshift=0,yshift=0] {1}
    }
    child{ node[style={draw,circle},yshift=0,xshift=0] {6}
        child{ node[style={draw,circle},xshift=0,yshift=0] {3}
        }
        child{ node[style={draw,circle},xshift=0,yshift=0] {5}
            child{ node[style={draw,circle},xshift=0,yshift=0] {2}
            } 
            child{ node[style={draw,circle},xshift=0,yshift=0] {4}
            }
        }
    };
\end{tikzpicture}}
 }

\newsavebox{\mthfto}
\sbox{\mthfto}{
   \scalebox{0.4}{\begin{tikzpicture}[very thick]
\node [style={draw,circle}]{7}
    child{ node[style={draw,circle},xshift=0,yshift=0] {1}
    }
    child{ node[style={draw,circle},yshift=0,xshift=0] {6}
        child{ node[style={draw,circle},xshift=0,yshift=0] {2}
        }
        child{ node[style={draw,circle},xshift=0,yshift=0] {5}
            child{ node[style={draw,circle},xshift=0,yshift=0] {3}
            } 
            child{ node[style={draw,circle},xshift=0,yshift=0] {4}
            }
        }
    };
\end{tikzpicture}}
 }

\newsavebox{\mofutth}
\sbox{\mofutth}{
   \scalebox{0.4}{\begin{tikzpicture}[very thick]
\node [style={draw,circle}]{7}
    child{ node[style={draw,circle},xshift=-15,yshift=0] {5}
            child{ node[style={draw,circle},xshift=0,yshift=0] {1}
            } 
            child{ node[style={draw,circle},xshift=0,yshift=0] {4}
            }
        }
    child{ node[style={draw,circle},xshift=15,yshift=0] {6}
        child{ node[style={draw,circle},xshift=0,yshift=0] {2}
        }
        child{ node[style={draw,circle},xshift=0,yshift=0] {3}
    }
    };
\end{tikzpicture}}
 }

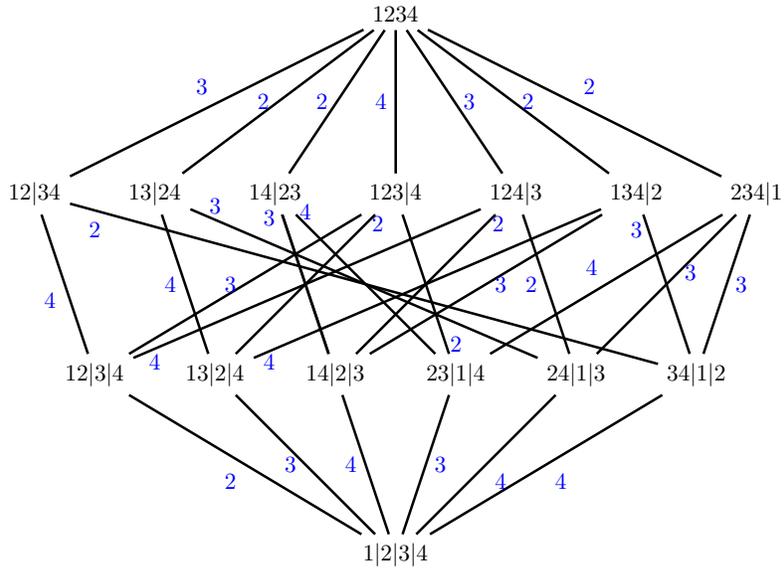
\begin{figure}[H]
    \centering
    \scalebox{0.8}{\begin{tikzpicture}[very thick]
  \node (zh) at (0,0) {\scalebox{1}{$1|2|3|4$}};
  \node (12) at (-5,3) {\scalebox{1}{$12|3|4$}};  
  \node (13) at (-3,3) {\scalebox{1}{$13|2|4$}};
  \node (14) at (-1,3) {\scalebox{1}{$14|2|3$}};
  \node (23) at (1,3) {\scalebox{1}{$23|1|4$}};
  \node (24) at (3,3) {\scalebox{1}{$24|1|3$}};
  \node (34) at (5,3) {\scalebox{1}{$34|1|2$}};
  \node (1234) at (-6,6) {\scalebox{1}{$12|34$}};
  \node (1324) at (-4,6) {\scalebox{1}{$13|24$}};
  \node (1423) at (-2,6) {\scalebox{1}{$14|23$}};
  \node (123) at (0,6) {\scalebox{1}{$123|4$}};
  \node (124) at (2,6) {\scalebox{1}{$124|3$}};
  \node (134) at (4,6) {\scalebox{1}{$134|2$}};
  \node (234) at (6,6) {\scalebox{1}{$234|1$}};
  \node (oneh) at (0,9) {\scalebox{1}{$1234$}};

  \node[blue] (23 1423) at (-1.5,5.7) {$4$};
  \node[blue] (14 1423) at (-2.1,5.6) {$3$};
  \node[blue] (24 1324) at (-3,5.8) {$3$};
  \node[blue] (34 1234) at (-5,5.4) {$2$};
  \node[blue] (12 124) at (-4,3.2) {$4$};
  \node[blue] (13 123) at (-0.3,5.5) {$2$};
  \node[blue] (13 134) at (-2.1,3.2) {$4$};
  \node[blue] (14 124) at (1.7,5.5) {$2$};
  \node[blue] (23 123) at (1,3.5) {$2$};
  \node[blue] (34 134) at (4,5.4) {$3$};
  \node[blue] (24 234) at (4.9,4.7) {$3$};
  
  \draw (zh) --node[below left,blue] {$2$} (12) --node[below left,blue] {$4$} (1234) --node[above left,blue] {$3$} (oneh) --node[above right,blue] {$2$} (234) --node[right,blue] {$3$} (34) --node[below right,blue] {$4$} (zh) --node[left,blue] {$3$} (13) --node[left,blue] {$4$} (1324) --node[left,blue] {$2$} (oneh) --node[right,blue] {$2$} (134) --node[right,blue] {$3$} (14) --node[left,blue] {$4$} (zh);
  \draw (zh) --node[right,blue] {$3$} (23) -- (1423) --node[left,blue] {$2$} (oneh);
  \draw (zh) --node[below right,blue] {$4$} (24) --node[left,blue] {$2$} (124) --node[right,blue] {$3$} (oneh);
  \draw (24) -- (234);
  \draw (13) -- (134);
  \draw (13) -- (123);
  \draw (14) -- (1423);
  \draw (14) -- (1423);
  \draw (12) --node[left,blue] {$3$} (123)--node[left,blue] {$4$} (oneh);
  \draw (12) -- (124);
  \draw (14) -- (124);
  \draw (23) -- (123);
  \draw (23) --node[above left,blue] {$4$} (234);
  \draw (24) -- (1324);
  \draw (34) -- (1234);
  \draw (34) -- (134);
\end{tikzpicture}} 
\caption{The partition lattice $\Pi_4$ with the max-min {\EL}.}
    \label{fig:Pi4 with min max EL}
\end{figure}

\begin{figure}[H]
    \centering
\scalebox{0.8}{\begin{tikzpicture}[very thick]
  \node (m1234) at (0,0) {\usebox{\motthf}}; 
  \node (m12u34) at (1.5,4) {\usebox{\motuthf}};
  \node (m1243) at (-1.5,4) {\usebox{\motfth}};
  \node (m1324) at (-4.5,4) {\usebox{\mothtf}};
  \node (m2314) at (5.5,4) {\usebox{\mtthof}};
  \node (m1342) at (-7.5,8) {\usebox{\mothft}};
  \node (m13u24) at (-4.5,8) {\usebox{\mothutf}};
  \node (m1423) at (-2.5,8) {\usebox{\moftth}};
  \node (m2413) at (0.8,8) {\usebox{\mtfoth}};
  \node (m34u12) at (3,8) {\usebox{\mthfuot}};
  \node (m2341) at (5.5,8) {\usebox{\mtthfo}};
  \node (m23u14) at (7.5,8) {\usebox{\mtthuof}};
  \node (m1432) at (-7.5,12) {\usebox{\moftht}};
  \node (m24u13) at (-4.5,12) {\usebox{\mtfuoth}};
  \node (m3412) at (-1.5,12) {\usebox{\mthfot}};
  \node (m2431) at (1.5,12) {\usebox{\mtftho}};
  \node (m3421) at (4.2,12) {\usebox{\mthfto}};
  \node (m14u23) at (7.5,12) {\usebox{\mofutth}};
  
  \draw (m1234) -- (m1324) -- (m1342) -- (m1432) -- (m1423) -- (m1243) -- (m1234) -- (m12u34) -- (m34u12) -- (m3421) -- (m2341) -- (m2314) -- (m1234);
  \draw (m1324) -- (m13u24) -- (m24u13) -- (m2413) -- (m1243);
  \draw (m1423) -- (m14u23) -- (m23u14) -- (m2314);
  \draw (m1342) -- (m3412) -- (m34u12);
  \draw (m2413) -- (m2431) -- (m2341);
  
\end{tikzpicture} }
    \caption{$\cord{{\Pi_4}}{\lm}$ induced by the max-min {\EL} $\lm$ of $\Pi_4$.}
    \label{fig:Pi4 min max cord}
\end{figure}
\end{example}

\begin{remark}\label{rmk:noncrosssing partition lattice}
We can restrict $PT(n)$ to those trees with ``non-crossing" leaf sets and obtain a subposet of $(PT(n),\preceq)$ which is isomorphic to the maximal chain descent order of the max-min {\EL} $\lm$ restricted to the non-crossing partition lattice $NC_{n+1}$. Alternatively, we could also similarly construct a different poset isomorphic to $\cord{{NC_{n+1}}}{\lm}$ from the rooted $k$-ary trees which Edelman and Simion used to study chains in $NC_{n+1}$ in \cite{chainsncnedelmansimion1994} and \cite{multchainsncpartstreesedelman1982}. A similar simple operation on the trees describes the cover relations.
\end{remark}
\end{subsection}

\section*{Acknowledgements}
The author is grateful to his Ph.D. advisor Patricia Hersh, discussions with whom resulted in the definition of a maximal chain descent order, and whose mentorship and guidance were invaluable to this work. The author also thanks Alex Chandler and Ben Hollering whose long ago conversations about diamonds in thin posets were always in the back of the author's mind.

\bibliography{maximal_chain_descent_orders_bibliography}

\vspace{10mm}

Stephen Lacina

Department of Mathematics, University of Oregon, Eugene, OR 97403

\textit{Email address:} slacina@uoregon.edu
\end{document}